\documentclass[a4paper,12pt]{article}
\usepackage{amsmath,amsfonts,amssymb,amsthm,amscd}
\usepackage[arrow, matrix, curve]{xy}
\usepackage[colorlinks=true,citecolor=blue]{hyperref}
\newcommand\blfootnote[1]{%
  \begingroup
  \renewcommand\thefootnote{}\footnote{#1}%
  \addtocounter{footnote}{-1}%
  \endgroup
}

\font\cmssl=cmss10 at 12 pt

\usepackage{slashed}
\usepackage[normalem]{ulem}

\newtheorem{thm}{Theorem}
\newtheorem{lem}[thm]{Lemma}
\newtheorem{prop}[thm]{Proposition}
\newtheorem{defn}[thm]{Definition}
\newtheorem{cor}[thm]{Corollary}
\newtheorem{rem}[thm]{Remark}
\newtheorem{notation}[thm]{Notation}
\newtheorem{exa}[thm]{Example}
\newtheorem{ass}[thm]{Assumption}

\title{$T$-duality for transitive Courant algebroids}

\date{\today}

\author{Vicente Cort\'es, Liana David}

\begin{document}

\maketitle

\begin{abstract}
\noindent
We develop a theory of T-duality for transitive Courant algebroids. 
We show that $T$-duality between transitive Courant algebroids $E\rightarrow M$ and $\tilde{E}\rightarrow \tilde{M}$ induces a map between the spaces of sections of the corresponding canonical weighted spinor bundles $\mathbb{S}_E$ and $\mathbb{S}_{\tilde{E}}$ intertwining the canonical Dirac generating operators. The map is shown to induce an isomorphism between the spaces 
of invariant spinors,  compatible with an isomorphism between the spaces of invariant sections 
of the Courant algebroids. The notion of invariance is defined after lifting the vertical parallelisms of the underlying torus bundles $M\rightarrow B$ and $\tilde{M}\rightarrow B$
to the Courant algebroids and their spinor bundles.  We prove a general existence result for $T$-duals under assumptions generalizing the cohomological integrality conditions for $T$-duality 
in the exact case. Specializing our construction, we find that the $T$-dual of an exact or a heterotic Courant algebroid is again exact or heterotic, respectively.

\blfootnote{{\it 2000 Mathematics Subject Classification:} MSC  53D18 (primary); 53C27 (secondary).\\
{\it Keywords:}  Courant algebroids, T-duality, Dirac generating operators.}
\end{abstract}
\newpage
\tableofcontents

\section{Introduction}
The concept of $T$-duality appeared first in theoretical physics as a duality between a pair of physical theories related by compactification of a common (possibly hidden) theory along circles of reciprocal radii. Examples include the famous duality between type IIA and type IIB string theories. More generally, it refers to 
an isomorphism between certain type of structures on a pair of torus bundles over the same manifold \cite{BHM}. Already in the case of circle bundles the topology of the bundle typically changes under T-duality \cite{BEM1,BEM2}. 

Precise formulations of T-duality are available in the framework of generalized geometry (in the sense of Hitchin) \cite{T-duality-exact,baraglia}.  Recall that the basic idea of generalized geometry is to replace the tangent bundle
$TM$  of a manifold $M$ by a Courant algebroid $E$. The first examples of 
Courant algebroids considered in the literature were the exact Courant algebroids. They are obtained 
from the generalized tangent bundle $\mathbb{T}M:= T^{*}M\oplus TM$ by twisting the canonical 
Dorfman bracket with a closed $3$-form. A more general class of transitive Courant algebroids is the class  of heterotic Courant algebroids considered in \cite{baraglia}. These were introduced by Baraglia and Hekmati \cite{baraglia} motivated by T-duality in heterotic string theory. Cavalcanti and Gualtieri \cite{T-duality-exact} developed a theory of $T$-duality for exact Courant algebroids and Baraglia and Hekmati \cite{baraglia} extended it to heterotic Courant algebroids. The approach of \cite{baraglia} is based on reduction of exact Courant algebroids and uses T-duality for the latter algebroids.

In this article we develop T-duality for transitive Courant algebroids. 
Our theory applies to general 
Courant algebroids, which might not arise from reduction 
of an exact Courant algebroid. In fact, it does not use reduction. Our main focus is the systematic study 
of the interplay of T-duality with Dirac generating operators. 

Let $M$ and $\tilde{M}$ be 
principal $k$-torus bundles over a manifold $B$. We call two transitive Courant algebroids 
$E$ and $\tilde{E}$ over $M$ and $\tilde{M}$, respectively,  $T$-dual if there exists a certain type of 
isomorphism between the pullbacks of $E$ and $\tilde{E}$ to the fiber product $N=M\times_B\tilde{M}$ (see Definition \ref{def-T-duality}  for details). 
We show that $T$-duality gives rise to a map between the spaces of sections of the corresponding canonical weighted spinor bundles $\mathbb{S}_E$ and $\mathbb{S}_{\tilde{E}}$ intertwining the canonical Dirac generating operators, see Theorem \ref{thm-T-duality}. More specifically, we obtain compatible 
isomorphisms between the spaces of (appropriately defined) invariant sections of $E$ and $\tilde{E}$ 
as well as between the spaces of  invariant sections of $\mathbb{S}_E$ and $\mathbb{S}_{\tilde{E}}$. 
This implies, in particular, that any invariant geometric structure on the 
Courant algebroid $E$ gives rise to a corresponding invariant `$T$-dual' geometric structure on $\tilde{E}$.  
A structure solving a system of partial differential equations defined in terms of the Courant algebroid structure  on $E$ will be mapped to a solution of the corresponding system on $\tilde{E}$. Examples 
include integrability equations as considered in \cite{cortes-david-MMJ} and equations of motion of physical theories such as supergravity. For instance, it was shown in \cite[Section 7]{garcia} that the Hull-Strominger system is 
invariant under T-duality. 
We plan to investigate these type of applications in the future.

In Theorem \ref{statement-T-dual} we prove the existence of a $T$-dual $\tilde{E}$ for a class of transitive Courant algebroids  $E$ over a principal torus bundle $M\rightarrow B$ 
under the assumption that certain 
cohomology classes in $H^2(B,\mathbb{R})$ are integral.   The result generalizes a theorem of Bouwknegt, Hannabuss, and Mathai \cite{BHM} in the exact case, see Section \ref{exact:sec}. In the 
heterotic case we show that the `T-dual' Courant algebroids obtained from our 
construction are again heterotic, see Proposition \ref{T-dual-het-prop}.

In this paper we only consider Courant algebroids with scalar product of neutral signature.
It would be interesting to develop $T$-duality for other classes of Courant algebroids including, in particular, the `odd exact' Courant
algebroids studied in \cite{rubio}. A first step in this direction would be to develop a theory of Dirac generating operators for such Courant algebroids.\\[1.0em]
{\bf Acknowledgements.}  
Research of V.C.\  was partially funded by the Deutsche Forschungsgemeinschaft (DFG, German Research Foundation) under Germany's Excellence Strategy -- EXC 2121 Quantum Universe  
-- 390833306.
L.D. was partially supported by the UEFISCDI research grant PN-III-P4-ID-PCE-2020-0794, project title
''Spectral Methods in Hyperbolic Geometry''.\\

\section{Preliminary material}

To keep the text reasonably self-contained, we recall, 
following \cite{chen,cortes-david-MMJ},  basic facts we need on
transitive  Courant 
algebroids and their  canonical Dirac generating operator. We assume that the reader is familiar with the definition of
Courant algebroids,  Dirac generating operators, generalized connections and $E$-connections. 
Basic facts on these notions
can be found e.g.\ in \cite{cortes-david-MMJ},  the approach and notation of which we preserve along the paper.
In this paper we always assume that the Courant algebroids  have  scalar product
of neutral signature. 
For the definition of densities we keep  the  conventions from 
our previous work \cite{cortes-david-MMJ} which coincide with those from 
\cite{berline}. 
Namely,  if $V$ is a vector space of 
dimension $n$ and $s\in \mathbb{R}$, then 
the one-dimensional oriented   vector space $| \mathrm{det} V^{*} |^{s}$ of 
$s$-densities on  $V$ consists of all  maps
$\Psi : \Lambda^{n}V\setminus \{ 0\} \rightarrow \mathbb{R}$  (called $s$-densities) which satisfy   
$\omega ( \lambda \vec{v}) = | \lambda |^{s} \omega (\vec{v})$, for any
$\vec{v}\in \Lambda^{n}\setminus \{ 0\}$ and $\lambda \in \mathbb{R}\setminus \{ 0 \}.$ 
Note that, when $s$ is an integer, $| \mathrm{det} V^{*} |^{s}$ is canonically isomorphic to  
$|\mathrm{det} V^{*} |^{\otimes s}$ and $| \mathrm{det} V^{*} |^{2s}$ to $( \mathrm{det} V^{*})^{2s}$.
Any  form  $\omega \in \Lambda^{n} V^{*}$  defines an $s$-density
$| \omega |^{s} (\vec{v}) = | \omega (v_{1}, \cdots , v_{n} ) |^{s}$, 
where $\vec{v}:= v_{1}\wedge \cdots \wedge v_{n}$. 
If  $V$ is oriented  then we will identify 
$\Lambda^{n} V^{*}$ and 
$|  \mathrm{det}\ V^{*} |$  by the isomorphism which assigns to  a positively  oriented volume form $\omega \in \Lambda^{n} V^{*}$ 
the density $ | \omega |$;  the $s$-density $| \omega |^{s}$ will be denoted by 
$\omega^{s}$ when $\omega$ is positively  oriented and  $| \mathrm{det } V^{*} |^{s}$  by  $( \mathrm{det } V^{*} )^{s}$
when $V$ is oriented.
The same notation will be used when $V$ is replaced by a  vector bundle.\

\subsection{The canonical Dirac generating operator}

Let $(E, \pi ,  [\cdot , \cdot ] , \langle \cdot , \cdot \rangle )$ be a regular Courant algebroid over a  manifold $M$, with anchor $\pi : E \rightarrow TM$, Dorfmann bracket $[\cdot , \cdot ]$ and scalar product (of neutral signature) $\langle \cdot , \cdot \rangle$. 
Let  
$S$ be an irreducible $\mathrm{Cl} (E)$-bundle
(sometimes called a {\cmssl spinor bundle} of $E$).  We denote by $E\ni v\mapsto \gamma_{v}$ the Clifford action of $E$ on $S$. 
We assume that $S$ is  $\mathbb{Z}_{2}$-graded and that the gradation is compatible
with the Clifford multiplication (this always holds when $E$ is oriented).
Let $| \mathrm{det}\, S^{*}|^{1/r}$ be the line bundle of $1/r$-densities on $S$, where  $r:=\mathrm{rk}\, S$. 
An $E$-connection $D$ on $S$ induces an $E$-connection on $| \mathrm{det}\, S^{*}|^{1/r}$:
if  $\mathrm{vol}_{S} \in \Gamma (\Lambda^{r} S^{*})$ is a local volume form   on $S$ and 
$D_{e} \mathrm{vol}_{S} =\omega (e) \mathrm{vol}_{S}$ then $D_{e}| \mathrm{vol}_{S}|^{1/r}
=\frac{1}{r}\omega (e) |\mathrm{vol}_{S}|^{1/r}$, for any $e\in E.$

The canonical Dirac generating operator $\slashed{d}$ of $E$ acts on sections of the canonical weighted  spinor bundle
of $E$ determined by $S$.   The latter is defined by
\begin{equation}\label{can-spinor}
\mathbb{S} := S\otimes | \mathrm{det}\, S^{*}|^{1/r} \otimes | \mathrm{det}\, T^{*}M|^{1/2}=
\mathcal S \otimes L,
\end{equation}
where $\mathcal S : = S\otimes | \mathrm{det}\, S^{*}|^{1/r}$ is the canonical spinor bundle of $S$
and $L:= |\mathrm{det}\, T^{*}M|^{1/2}$. 
The operator $\slashed{d}:\Gamma (\mathbb{S}) \rightarrow \Gamma (\mathbb{S})$ is given by 
\begin{equation}\label{dirac-reg}
\slashed{d} = \slashed{D} +\frac{1}{4} \gamma_{T^{D}},
\end{equation}
where $\slashed{D} := \frac{1}{2} \sum_{i} \gamma_{\tilde{e}_{i}} D^{\mathbb{S}}_{e_{i}} $ is the 
Dirac operator computed
with $D^{\mathbb{S}}:= D^{\mathcal S} \otimes D^{L}$, 
$D^{\mathcal S}$ is the $E$-connection on $\mathcal S$  
induced by an arbitrary $E$-connection $D^{S}$ on $S$ compatible with a given 
generalized connection $D$ on $E$, $D^{L}$ is the $E$-connection on $L$
defined by $D$ by the rule 
\begin{equation} \label{rule:eq}
D^{L}_{v}(\mu )={\mathcal L}_{\pi (v)} \mu -\frac{1}{2} \mathrm{div}_{D}(v)\mu ,\ v\in E,\ \mu \in \Gamma (L),
\end{equation}
where $\mathrm{div}_{D}(v):= \mathrm{tr}\, (Dv)$,  
$( e_{i} )$ is a frame of $E$, $( \tilde{e}_{i} )$ the metrically dual frame
(i.e.\ $\langle e_{i}, \tilde{e}_{j}\rangle = \delta_{ij}$) and $T^{D}\in \Gamma (\Lambda^{3} E^{*})$ 
is the torsion of $D$, viewed  as a section of the Clifford bundle $\mathrm{Cl}(E)$ and acting by Clifford multiplication on $\mathbb{S}.$   
The definition of $\slashed{d}$ is independent of the choice of generalized connection $D$
and $D$-compatible $E$-connection $D^{S}$.

\subsection{Transitive Courant algebroids}\label{transitive-sect}

\subsubsection{Basic properties}\label{trans-sect-basic}
Recall that a scalar product on a Lie algebra is called {\cmssl invariant}, if the adjoint representation acts by skew-symmetric 
endomorphisms. A Lie algebra endowed with an invariant scalar product is called a {\cmssl quadratic Lie algebra}. 

Similarly,  a vector bundle $\mathcal{G} \rightarrow M$ endowed with a tensor field 
${[}\cdot , \cdot{]}\in \Gamma (\wedge^2\mathcal{G}^*\otimes \mathcal{G})$ satisfying the Jacobi identity is called a {\cmssl Lie algebra bundle}
if in a neighborhood of every point $p\in N$ the tensor field has constant coefficients with respect to some local frame. 
A {\cmssl bundle of quadratic Lie algebras} is a Lie algebra bundle $(\mathcal{G}, {[}\cdot , \cdot{]})$ endowed with an invariant metric $\langle \cdot , \cdot \rangle \in \Gamma (\mathrm{Sym}^2\mathcal{G}^*)$,  which we assume of neutral signature. 

Let  $(\mathcal G , [\cdot , \cdot ]_{\mathcal G},
\langle \cdot , \cdot \rangle_{\mathcal G})$ be a bundle of quadratic Lie algebras  over  a manifold $M$  and  $E:= T^{*}M \oplus \mathcal G \oplus TM$.
Let $\mathrm{pr} _{\mathcal G}$ and $\mathrm{pr}_{TM}$  be the natural projections from $E$ to $\mathcal G$ and $TM$ respectively.
As proved in  Theorem 2.3 of \cite{chen}, 
any Courant algebroid with underlying bundle  $E$,    anchor $\mathrm{pr}_{TM}$, 
scalar product 
$$
\langle \xi + r_{1} + X, \eta + r_{2}+ Y \rangle = \frac{1}{2} ( \eta (Y) +\xi (X)) + \langle r_{1},
r_{2} \rangle_{\mathcal G},\ \xi , \eta\in T^{*}M,\ r_{1}, r_{2}\in \mathcal G ,
$$
and whose Dorfman bracket satisfies
$$
\mathrm{pr}_{\mathcal G} [r_{1}, r_{2} ] = [r_{1}, r_{2}]_{\mathcal G}
$$
is defined by data $(\nabla , R, H)$  where $\nabla$ is  a connection on the vector bundle $\mathcal G$,
$R\in \Omega^{2}(M, \mathcal G )$ and $H\in \Omega^{3}(M)$ such that  $\nabla$ preserves $\langle \cdot , \cdot \rangle_{\mathcal G}$ and $[\cdot , \cdot ]_{\mathcal G}$,  the curvature $R^{\nabla}$ of $\nabla$  is given by 
\begin{equation}\label{cond-curv}
R^{\nabla}(X, Y) r= [ R(X, Y), r]_{\mathcal G},\ X, Y\in \mathfrak{X}(M),\  r\in \Gamma (\mathcal G),
\end{equation}
and the following relations hold:
\begin{align}
\label{dnablaR:eq}& 
d^\nabla R =0,\\
\label{cond-system}
 & d H= \langle R \wedge R\rangle_\mathcal{G}. 
 \end{align}
We recall that 
\begin{eqnarray*} (d^\nabla R)(X,Y,Z) &:=& \sum_{\mathfrak{S}(X, Y, Z)} \left( \nabla_{X} (R(Y, Z)) - R({\mathcal L}_{X}Y, Z) \right)\quad\mbox{and}\\
\langle R \wedge R\rangle_\mathcal{G} (X,Y,Z,W) &:=&  
2\sum_{\mathfrak{S}(X, Y, Z)}  \langle  R(X, Y), R(Z,W)\rangle_{\mathcal G},\end{eqnarray*}
where  $X, Y, Z, W\in {\mathfrak X}(M)$ and $\mathfrak{S}(X, Y, Z)$ denotes  cyclic permutations over $X, Y,Z$.
The Dorfman bracket of $E$ is uniquely determined by the relations 
\begin{align}
\nonumber& [X, Y]=   {\mathcal L}_{X}Y + R(X, Y) + i_{Y} i_{X} H\\
\nonumber& [X, r] = \nabla_{X} r - 2 \langle i_{X}R, r\rangle_{\mathcal G}\\
\nonumber& [r_{1}, r_{2}]= [r_{1}, r_{2} ]_{\mathcal G} + 2\langle \nabla r_{1} , r_{2} \rangle_{\mathcal G} \\
\label{expr-courant} & [X, \eta ] = {\mathcal L}_{X}\eta ,\ 
[\eta_{1}, \eta_{2}] = [r, \eta ] =0,
\end{align}
for any $X, Y\in {\mathfrak X}(M)$, $\eta_{1}, \eta_{2} , \eta \in \Omega^{1}(M)$ and $r,r_{1}, r_{2} \in \Gamma (\mathcal G)$, 
together with the condition
\begin{equation}
[ u, v]+[ v, u ] = 2 d\langle u, v\rangle , u, v\in \Gamma (E). 
\end{equation}
Such a Courant algebroid is 
called a {\cmssl standard Courant algebroid}. 
As proved in \cite{chen}, 
any transitive Courant algebroid (i.e.\ a Courant algebroid with surjective anchor)
is isomorphic to a standard Courant algebroid.
A {\cmssl dissection} of a transitive Courant algebroid $E$ is an isomorphism from  $E$ to a standard Courant algebroid.
The quadratic Lie algebra bundle  $(\mathcal G , [\cdot , \cdot ]_{\mathcal G},
\langle \cdot , \cdot \rangle_{\mathcal G})$  which is a summand of a dissection of $E$
is isomorphic to $\mathrm{Ker}\, \pi /  (\mathrm{Ker}\, \pi )^{\perp}$ (with scalar product and Lie bracket induced from $E$),
where $\pi : E \rightarrow TM$ is the anchor of $E$. 
The following  simple lemma holds.

\begin{lem}\label{c-d-k} Let $E$ be a transitive Courant algebroid with anchor $\pi : E \rightarrow TM$. Let
$(\mathcal G_{0}, [\cdot , \cdot ]_{0}, \langle \cdot , \cdot
\rangle_{0})$ be a quadratic Lie algebra bundle, isomorphic to  $\mathrm{Ker}\, \pi /  (\mathrm{Ker}\, \pi )^{\perp}$.
Then $E$ admits a dissection $I_{0}: E \rightarrow T^{*}M\oplus \mathcal G_{0}\oplus TM.$
\end{lem}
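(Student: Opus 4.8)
The plan is to start from \emph{any} dissection $I_1 : E \to T^{*}M \oplus \mathcal{G}_1 \oplus TM$, whose existence is guaranteed by Chen--Stienon--Xu \cite{chen}, and to upgrade it to a dissection whose quadratic Lie algebra bundle summand is the \emph{prescribed} bundle $\mathcal{G}_0$. The summand $\mathcal{G}_1$ appearing in $I_1$ is, as recalled above, isomorphic (as a bundle of quadratic Lie algebras) to $\mathrm{Ker}\,\pi/(\mathrm{Ker}\,\pi)^{\perp}$, and by hypothesis so is $\mathcal{G}_0$. Hence there is a bundle isomorphism $\varphi : \mathcal{G}_1 \to \mathcal{G}_0$ preserving both $[\cdot,\cdot]$ and $\langle\cdot,\cdot\rangle$. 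The idea is then to conjugate $I_1$ by the obvious bundle map $\mathrm{id}_{T^{*}M} \oplus \varphi \oplus \mathrm{id}_{TM}$.

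Concretely, first I would record that $I_1$ identifies $E$ with a standard Courant algebroid built from data $(\nabla_1, R_1, H_1)$ on $\mathcal{G}_1$ as in \eqref{expr-courant}. Next, define $\Phi := \mathrm{id}_{T^{*}M} \oplus \varphi \oplus \mathrm{id}_{TM} : T^{*}M\oplus\mathcal G_1\oplus TM \to T^{*}M\oplus\mathcal G_0\oplus TM$, and set $I_0 := \Phi \circ I_1$. One checks directly that $\Phi$ is an isometry for the two standard scalar products, because $\varphi$ is an isometry of $(\mathcal{G}_1,\langle\cdot,\cdot\rangle_1)$ onto $(\mathcal{G}_0,\langle\cdot,\cdot\rangle_0)$ and the pairing between $T^{*}M$ and $TM$ is untouched; it intertwines the anchors since both are $\mathrm{pr}_{TM}$ and $\Phi$ is the identity on the $TM$ summand; and it intertwines the Dorfman brackets provided one transports the defining data by $\varphi$, i.e.\ provided one uses $\nabla_0 := \varphi \circ \nabla_1 \circ \varphi^{-1}$, $R_0 := \varphi\circ R_1 \in \Omega^2(M,\mathcal{G}_0)$ and $H_0 := H_1$. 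Because $\varphi$ is a morphism of Lie algebra bundles and of metrics, the compatibility conditions \eqref{cond-curv}, \eqref{dnablaR:eq} and \eqref{cond-system} for $(\nabla_0,R_0,H_0)$ follow term by term from those for $(\nabla_1,R_1,H_1)$; for instance $\langle R_0\wedge R_0\rangle_{\mathcal G_0} = \langle R_1\wedge R_1\rangle_{\mathcal G_1} = dH_1 = dH_0$. Hence the target of $I_0$ is again a standard Courant algebroid, now with Lie algebra bundle summand $\mathcal{G}_0$, and $I_0$ is an isomorphism of Courant algebroids, i.e.\ a dissection of the desired type.

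The only genuine content, and the one point to spell out with a little care, is the existence of the quadratic Lie algebra bundle isomorphism $\varphi : \mathcal{G}_1 \to \mathcal{G}_0$: both are given to be isomorphic to the common model $\mathrm{Ker}\,\pi/(\mathrm{Ker}\,\pi)^{\perp}$, so $\varphi$ is obtained by composing one such isomorphism with the inverse of the other. Everything else is a routine verification that conjugating a standard Courant algebroid by a fiberwise isometric Lie algebra bundle automorphism of the $\mathcal{G}$-summand again yields a standard Courant algebroid with the transported data — no cohomological or analytic obstruction arises, so there is no real ``hard part''; the statement is essentially a bookkeeping consequence of \cite{chen} together with the intrinsic description of the $\mathcal{G}$-summand.
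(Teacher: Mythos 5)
Your proposal is correct and follows essentially the same route as the paper: start from an arbitrary dissection, take a quadratic Lie algebra bundle isomorphism $K=\varphi:\mathcal G_1\to\mathcal G_0$ (both being isomorphic to $\mathrm{Ker}\,\pi/(\mathrm{Ker}\,\pi)^{\perp}$), transport the data by $\tilde\nabla=K\nabla K^{-1}$, $\tilde R=KR$, $\tilde H=H$, and compose with $\mathrm{id}\oplus K\oplus\mathrm{id}$, which is precisely the isomorphism (\ref{concrete-iso}) with $\beta=0$ and $\Phi=0$. The only cosmetic difference is that you verify the bracket compatibility by hand where the paper simply cites relations (\ref{iso-chen}); also note that your symbol $\Phi$ for the direct-sum map clashes with the paper's use of $\Phi$ as the $\mathcal G_2$-valued one-form in (\ref{concrete-iso}).
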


\begin{proof} Start with an arbitrary dissection $I: E \rightarrow T^{*}M\oplus \mathcal G\oplus TM$,
where the target is defined by data $(\nabla , R, H)$ and a quadratic Lie algebra bundle
$(\mathcal G, [\cdot , \cdot ]_{\mathcal G}, \langle \cdot , \cdot
\rangle_{\mathcal G})$
The new data 
$$
\tilde{\nabla}_{X} := K \nabla_{X} K^{-1},\ \tilde{R}(X, Y):= KR(X, Y),\ \tilde{H}(X, Y, Z) := H(X, Y, Z),
$$
where $K : \mathcal G \rightarrow \mathcal G_{0}$ is an isomorphism of quadratic Lie algebra bundles,
together with   $(\mathcal G_{0}, [\cdot , \cdot ]_{0}, \langle \cdot , \cdot
\rangle_{0})$,  define a standard Courant algebroid isomorphic to $ T^{*}M\oplus \mathcal G\oplus TM$
(use relations (\ref{iso-chen})  below with $\Phi :=0$ and $\beta :=0$).  By composing this isomorphism with $I$ we obtain the required dissection of $E$.
\end{proof}

Let $E_{i} := T^{*}M \oplus \mathcal G_{i} \oplus TM$  ($i=1,2$) be two  standard  Courant algebroids
over a manifold $M$, defined by quadratic Lie algebra bundles $(\mathcal G_{i}, [\cdot , \cdot]_{\mathcal G_{i}},
\langle \cdot ,\cdot \rangle_{\mathcal G_{i}})$ and data $(\nabla^{(i)}, R_{i}, H_{i}).$ As proved in Proposition 2.7 of  \cite{chen},
any fiber preserving Courant algebroid  isomorphism $I_{E}: E_{1} \rightarrow E_{2}$  is of the form
\begin{equation}\label{concrete-iso}
 I_{E}(\eta ) = \eta ,\  I_{E}(r) = -2 \Phi^{*} K(r) + K(r),\ I_{E}(X) = i_{X}\beta -\Phi^{*}\Phi (X) + \Phi (X) +X,
\end{equation}
for any $X\in {\mathfrak X}(M)$, $r\in \Gamma (\mathcal  G_{1})$ and $\eta \in \Omega^{1}(M)$.
Above $\beta \in \Omega^{2}(M)$, $K\in \mathrm{Isom} (\mathcal G_{1}, \mathcal G_{2})$ 
is an isomorphism of quadratic Lie algebra bundles, 
$\Phi \in \Omega^{1}(M,  \mathcal G_{2})$, 
\begin{align*}
\Phi^{*} \Phi : TM \rightarrow T^{*}M,\  ( \Phi^{*}\Phi ) (X) (Y):= \langle \Phi (X), \Phi (Y)\rangle_{\mathcal G_{2}},\\
\Phi^{*} K : \mathcal G_{1} \rightarrow T^{*}M,\  (\Phi^{*} K )(r) (X):= \langle K(r), \Phi (X)\rangle_{\mathcal G_{2}},
\end{align*}
for any $X, Y\in {\mathfrak X}(M)$ and $r\in \Gamma (\mathcal G_{1})$, 
and the next relations are satisfied:
\begin{align}
\nonumber \nabla^{(2)}_{X} r&= K \nabla^{(1)}_{X}( K^{-1} r) + [r, \Phi (X)]_{\mathcal G_{2}},\\
\nonumber  K R_{1} (X, Y) - R_{2}(X, Y) &= (d^{{\nabla}^{(2)}} \Phi )(X, Y)  + [\Phi (X), \Phi (Y) ]_{\mathcal G_{2}},\\ 
\label{iso-chen} H_{1} - H_{2}& =  d\beta  +
\langle (KR_{1} + R_{2} )\wedge \Phi\rangle_{\mathcal G_{2}}  - c_{3},
\end{align}
where $c_{3}(X, Y, Z) := \langle \Phi (X), [ \Phi (Y) , \Phi (Z)]_{\mathcal G_{2}} \rangle_{\mathcal G_{2}}$, for any
$X, Y, Z\in {\mathfrak X}(M)$.  
The second and third relations (\ref{iso-chen})  are equivalent  with relations
(46) and (47) of \cite{chen} (easy check) but are written in a simpler form. 
(We decomposed  $\mathrm{pr}_{T^{*}M} (I_{E}\vert_{TM})$, which in the notation of \cite{chen} is denoted by $\beta$,
into its symmetric part $- \langle \Phi ( \cdot ), \Phi (\cdot )\rangle_{\mathcal G_{2}}$ and skew-symmetric part 
$\beta$,  see relation  (44) of \cite{chen}). Under an additional condition on the Courant algebroids
$E_{i}$ relations (\ref{iso-chen}) can be further simplified,  see Lemma \ref{simplificare-second} below.
In the following we denote for simplicity by $\mathrm{Der}\, \mathcal{G}$ 
the bundle of skew-symmetric derivations of a bundle of quadratic Lie algebras $(\mathcal{G}, {[}\cdot , \cdot{]}_{\mathcal G}, \langle \cdot , \cdot \rangle_{\mathcal G})$. 
\begin{lem}\label{simplificare-second} Assume that the adjoint actions  $\mathrm{ad}_{\mathcal G_{i}} : \mathcal G_{i} \rightarrow \mathrm{Der} (\mathcal G_{i})$ of  the Lie algebra bundles $(\mathcal G_{i} , [\cdot , \cdot ]_{\mathcal G_{i}})$   of the 
standard Courant algebroids $E_{i}$  are  isomorphisms.
Then  the second relation (\ref{iso-chen}) follows from the first. 
\end{lem}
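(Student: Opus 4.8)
The plan is to use the hypothesis that $\mathrm{ad}_{\mathcal G_2}$ is an isomorphism (in fact only its injectivity is needed) in order to reconstruct $R_2$ from the curvature of $\nabla^{(2)}$, and to compute that curvature from the \emph{first} relation of (\ref{iso-chen}). First I would introduce the auxiliary connection $\tilde\nabla$ on $\mathcal G_2$ obtained by transporting $\nabla^{(1)}$ through $K$, i.e.\ $\tilde\nabla_X s:=K\nabla^{(1)}_X(K^{-1}s)$. Since $K$ is an isomorphism of quadratic Lie algebra bundles and $\nabla^{(1)}$ preserves $[\cdot,\cdot]_{\mathcal G_1}$, the connection $\tilde\nabla$ preserves $[\cdot,\cdot]_{\mathcal G_2}$, so $\tilde\nabla_X([\,\cdot\,,s]_{\mathcal G_2})=[\,\cdot\,,\tilde\nabla_X s]_{\mathcal G_2}$ on $\mathrm{End}\,\mathcal G_2$; and condition (\ref{cond-curv}) applied to $E_1$ gives $R^{\nabla^{(1)}}(X,Y)=[R_1(X,Y),\,\cdot\,]_{\mathcal G_1}$, whence $R^{\tilde\nabla}(X,Y)=K R^{\nabla^{(1)}}(X,Y)K^{-1}=[KR_1(X,Y),\,\cdot\,]_{\mathcal G_2}$. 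Finally, the first relation of (\ref{iso-chen}) says precisely that $\nabla^{(2)}=\tilde\nabla+A$, where $A\in\Omega^1(M,\mathrm{End}\,\mathcal G_2)$ is given by $A_X:=[\,\cdot\,,\Phi(X)]_{\mathcal G_2}$.

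The next step is the standard curvature identity for a connection perturbed by an endomorphism-valued one-form,
\[
R^{\nabla^{(2)}}(X,Y)=R^{\tilde\nabla}(X,Y)+(d^{\tilde\nabla}A)(X,Y)+[A_X,A_Y],
\]
where $d^{\tilde\nabla}$ is the covariant exterior derivative on $\mathrm{End}\,\mathcal G_2$ induced by $\tilde\nabla$ and $[A_X,A_Y]$ the commutator of endomorphisms. Because $\tilde\nabla$ preserves the bracket, $(d^{\tilde\nabla}A)(X,Y)=-\,[(d^{\tilde\nabla}\Phi)(X,Y),\,\cdot\,]_{\mathcal G_2}$, and the Jacobi identity of $\mathcal G_2$ gives $[A_X,A_Y]=[[\Phi(X),\Phi(Y)]_{\mathcal G_2},\,\cdot\,]_{\mathcal G_2}$. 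Together with condition (\ref{cond-curv}) for $E_2$, i.e.\ $R^{\nabla^{(2)}}(X,Y)=[R_2(X,Y),\,\cdot\,]_{\mathcal G_2}$, and with the expression for $R^{\tilde\nabla}$ above, this shows that $R_2(X,Y)-KR_1(X,Y)+(d^{\tilde\nabla}\Phi)(X,Y)-[\Phi(X),\Phi(Y)]_{\mathcal G_2}$ lies in the kernel of $\mathrm{ad}_{\mathcal G_2}$, hence vanishes. It then remains only to pass from $\tilde\nabla$ to $\nabla^{(2)}$: a one-line computation using $\nabla^{(2)}-\tilde\nabla=A$ gives $(d^{\tilde\nabla}\Phi)(X,Y)=(d^{\nabla^{(2)}}\Phi)(X,Y)+2[\Phi(X),\Phi(Y)]_{\mathcal G_2}$, and substituting this yields exactly $KR_1(X,Y)-R_2(X,Y)=(d^{\nabla^{(2)}}\Phi)(X,Y)+[\Phi(X),\Phi(Y)]_{\mathcal G_2}$, the second relation of (\ref{iso-chen}).

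I expect no real obstacle here: once the auxiliary connection $\tilde\nabla$ is in place the argument is a short chase through the curvature identity. The only points requiring care are the sign conventions in (\ref{concrete-iso})--(\ref{iso-chen}), the two competing connections $\nabla^{(2)}$ and $\tilde\nabla$ on $\mathcal G_2$, and the resulting factor $2$ relating $d^{\tilde\nabla}\Phi$ and $d^{\nabla^{(2)}}\Phi$. Note also that the second relation of (\ref{iso-chen}) is then not independent data: the lemma identifies it as a consequence of the first whenever $\mathrm{ad}_{\mathcal G_2}$ is injective, and the stated hypothesis ensures in addition the symmetric statement for the inverse isomorphism $E_2\to E_1$.
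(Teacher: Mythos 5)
Your proof is correct. The core strategy is the same as the paper's: use injectivity of $\mathrm{ad}_{\mathcal G_2}$ to reduce the second relation of (\ref{iso-chen}) to an identity between derivations of $\mathcal G_2$, and obtain that identity by differentiating the first relation (i.e.\ by a curvature computation). Where you differ is in the bookkeeping, and your version is cleaner. The paper works directly with the covariant derivative of the first relation applied to sections, which produces intermediate identities (equations (\ref{ce2}) and (\ref{c3})) containing terms of the form $(\nabla_{X}K)(\nabla_{Y}K^{-1})-(\nabla_{Y}K)(\nabla_{X}K^{-1})$ that must be shown to cancel against $\mathrm{ad}_{[\Phi(X),\Phi(Y)]}$. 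By introducing the conjugated connection $\tilde\nabla=K\nabla^{(1)}K^{-1}$ and invoking the standard perturbation identity $R^{\tilde\nabla+A}=R^{\tilde\nabla}+d^{\tilde\nabla}A+[A\wedge A]$ with $A_{X}=-\mathrm{ad}_{\Phi(X)}$, those terms never arise: conjugation of a connection conjugates its curvature, so $R^{\tilde\nabla}=\mathrm{ad}_{KR_{1}}$ immediately, and the only residual care needed is the factor $2$ in $(d^{\tilde\nabla}\Phi)(X,Y)=(d^{\nabla^{(2)}}\Phi)(X,Y)+2[\Phi(X),\Phi(Y)]_{\mathcal G_{2}}$, which you handle correctly. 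Both arguments use only the injectivity of $\mathrm{ad}_{\mathcal G_2}$, as you note.
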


\begin{proof}
From the injectivity of $\mathrm{ad}_{\mathcal G_{2}}$,  the  second relation (\ref{iso-chen}) holds if and only if 
\begin{equation}\label{ce1} 
 [K R_{1} (X, Y) - R_{2}(X, Y),r]_{\mathcal G_{2}} = [ (d^{{\nabla}^{(2)}} \Phi )(X, Y)   +  [\Phi (X), \Phi (Y) ]_{\mathcal G_{2}}, r]_{\mathcal G_{2}} 
\end{equation}
for any $r\in\Gamma ( \mathcal G_{2} ).$   Taking the covariant derivative of the first relation (\ref{iso-chen}) we obtain
\begin{align}
\nonumber[\nabla_{Y}^{(2)} (\Phi (X)), r]_{\mathcal G_{2}} =& [ R_{2} (X, Y), r]_{\mathcal G_{2}} +\nabla^{(2)}_{\mathcal L_{X} Y} r
+ \nabla^{(2)}_{Y} ( K \nabla^{(1)}_{X} ( K^{-1} r))\\
 -  \label{cov-chen}&  K \nabla^{(1)}_{X} (K^{-1} \nabla^{(2)}_{Y}r). 
\end{align}
Now, a straightforward computation which uses the first relation
(\ref{iso-chen}), relation (\ref{cov-chen}),  and 
$$
(d^{ \nabla^{(2)}} \Phi )(X, Y) =\nabla^{(2)}_{X} ( \Phi (Y)) -  \nabla^{(2)}_{Y} ( \Phi (X)) - \Phi ({\mathcal L}_{X}Y)  
$$
shows that
\begin{align}
\nonumber [(d^{\nabla^{(2)}} \Phi )(X, Y), r]_{\mathcal G_{2}}& = [ K R_{1} (X, Y) - R_{2} (X, Y) ,r]_{\mathcal G_{2}}\\
\label{ce2}&  
+\left(  (\nabla_{X} K) (\nabla_{Y} K^{-1}) - (\nabla_{Y} K)(\nabla_{X} K^{-1})\right) ( r),
\end{align}
where $\nabla$ denotes the connection on $\mathrm{End} (\mathcal G_{1}, \mathcal G_{2})$ 
induced by $\nabla^{(1)}$ and $\nabla^{(2)}.$
On the other hand, using the Jacobi identity for $[\cdot , \cdot ]_{\mathcal G_{2}}$ and
the first relation (\ref{iso-chen}), we can compute
\begin{equation}\label{c3}
[ [\Phi (X), \Phi (Y) ]_{\mathcal G_{2}}, r]_{\mathcal G_{2}} = 
\left(  (\nabla_{Y} K) (\nabla_{X} K^{-1}) - (\nabla_{X} K)(\nabla_{Y} K^{-1})\right) ( r).
\end{equation}
Relations  (\ref{ce2}) and  (\ref{c3}) imply (\ref{ce1}). 
\end{proof}

We say that two dissections $I_{i}: E \rightarrow T^{*} M \oplus \mathcal G_{i} \oplus TM$ of a transitive Courant
algebroid $E$ are related by $(\beta , K, \Phi )$, 
where $\beta \in \Omega^{2}(M)$, $K\in \mathrm{Isom}(\mathcal G_{1}, \mathcal G_{2})$ and
$\Phi :\in \Omega^{1}(M,   \mathcal G_{2})$,  if the isomorphism $I_{2}\circ I_{1}^{-1}$ is given by  (\ref{concrete-iso}).\

The proof of the following proposition is straightforward.
\begin{prop} 
 If $I_{1} : E_{1} \rightarrow E_{2}$ and $I_{2} : E_{2} \rightarrow E_{3}$ are isomorphisms between 
standard Courant algebroids
$E_{i} = T^{*}M \oplus \mathcal G_{i} \oplus TM$, 
defined, according to (\ref{concrete-iso}), 
by  $(\beta_{1}, K_{1}, \Phi_{1})$ and $(\beta_{2}, K_{2}, \Phi _{2})$ respectively, 
then $I_{2}\circ I_{1} : E_{1} \rightarrow E_{3}$ is defined by $(\beta_{3}, K_{3}, \Phi_{3})$
where 
\begin{equation}\label{iso-standard-1}
K_{3} := K_{2} K_{1},\ \Phi_{3}: = \Phi_{2} + K_{2} \Phi_{1}
\end{equation}
and, for any $X, Y\in TM$, 
\begin{equation}\label{iso-standard-2} 
\beta_{3}(X, Y):=( \beta_{1}+\beta_{2})(X, Y) +\langle \Phi_{2}(X), K_{2} \Phi_{1}(Y)
\rangle_{{\mathcal  G}_{2}} -   \langle \Phi_{2}(Y), K_{2} \Phi_{1}(X)\rangle_{{\mathcal G}_{2}}.
\end{equation}
 In particular,
\begin{align}
\nonumber (\beta_{3} - \Phi^{*}_{3}\Phi_{3})(X, Y) &= (\beta_{1} - \Phi^{*}_{1}\Phi_{1})(X, Y) 
+  (\beta_{2} - \Phi^{*}_{2}\Phi_{2})(X, Y)\\
\label{iso-standard-3} &-2 \langle K_{2}\Phi_{1} (X), \Phi_{2}(Y)\rangle_{\mathcal G_{3}} .
\end{align} 
\end{prop}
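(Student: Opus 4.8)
The plan is to verify formula (\ref{concrete-iso}) directly for the composition $I_2 \circ I_1$ and read off the data. Since each $I_j$ is fiber-preserving and maps between standard Courant algebroids, the composition is again a fiber-preserving Courant algebroid isomorphism $E_1 \to E_3$, hence is of the form (\ref{concrete-iso}) for some triple $(\beta_3, K_3, \Phi_3)$; the content of the proposition is just the explicit identification of this triple. First I would compute $I_2 \circ I_1$ on the three types of elements $\eta \in \Omega^1(M)$, $r \in \Gamma(\mathcal G_1)$, $X \in \mathfrak X(M)$ separately, using (\ref{concrete-iso}) for $I_1$ (with data $(\beta_1, K_1, \Phi_1)$, adjoints formed with respect to $\langle\cdot,\cdot\rangle_{\mathcal G_2}$) and then for $I_2$ (with data $(\beta_2, K_2, \Phi_2)$, adjoints formed with respect to $\langle\cdot,\cdot\rangle_{\mathcal G_3}$).

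On one-forms both maps act as the identity, so $I_2 \circ I_1(\eta) = \eta$, consistent with (\ref{concrete-iso}) for any choice of $(\beta_3,K_3,\Phi_3)$. On $r \in \Gamma(\mathcal G_1)$ we have $I_1(r) = -2\Phi_1^* K_1(r) + K_1(r)$; applying $I_2$, the $T^*M$-summand $-2\Phi_1^*K_1(r)$ is fixed and the $\mathcal G_2$-summand $K_1(r)$ is sent to $-2\Phi_2^* K_2 K_1(r) + K_2 K_1(r)$, giving $I_2 \circ I_1(r) = -2(\Phi_1^* K_1 + \Phi_2^* K_2 K_1)(r) + K_2 K_1(r)$. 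Matching with $-2\Phi_3^* K_3(r) + K_3(r)$ forces $K_3 = K_2 K_1$ and $\Phi_3^* K_3 = \Phi_1^* K_1 + \Phi_2^* K_2 K_1$; a short check of the pairing definitions shows $(\Phi_2 + K_2\Phi_1)^* (K_2 K_1) = \Phi_1^* K_1 + \Phi_2^* K_2 K_1$ using $\langle K_2 v, K_2 w\rangle_{\mathcal G_3} = \langle v, w\rangle_{\mathcal G_2}$, which yields $\Phi_3 = \Phi_2 + K_2\Phi_1$, i.e.\ (\ref{iso-standard-1}). The main bookkeeping is the action on $X \in \mathfrak X(M)$: from $I_1(X) = i_X\beta_1 - \Phi_1^*\Phi_1(X) + \Phi_1(X) + X$ one applies $I_2$ to each summand — the first two $T^*M$-terms are fixed, the $\mathcal G_2$-term $\Phi_1(X)$ goes to $-2\Phi_2^* K_2\Phi_1(X) + K_2\Phi_1(X)$, and the $TM$-term $X$ goes to $i_X\beta_2 - \Phi_2^*\Phi_2(X) + \Phi_2(X) + X$. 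Collecting the $\mathcal G_3$- and $TM$-components confirms $\Phi_3 = \Phi_2 + K_2\Phi_1$ and the target $TM$-component $X$; collecting the $T^*M$-components and demanding the total equal $i_X\beta_3 - \Phi_3^*\Phi_3(X)$ determines $\beta_3$ after expanding $\Phi_3^*\Phi_3 = (\Phi_2 + K_2\Phi_1)^*(\Phi_2 + K_2\Phi_1)$ and cancelling the $\Phi_1^*\Phi_1$, $\Phi_2^*\Phi_2$ and the symmetric cross-terms, leaving exactly the skew cross-term displayed in (\ref{iso-standard-2}).

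The only genuine subtlety — and the step I would be most careful with — is keeping straight which scalar product each adjoint is taken with respect to and which isomorphism is used to transport $\Phi_1 \in \Omega^1(M,\mathcal G_2)$ when it is fed into $I_2$'s formula: inside $I_2 \circ I_1$ the element $\Phi_1(X)$ lives in $\mathcal G_2$ and must be acted on by $K_2$ and by $\Phi_2^*$ with $\langle\cdot,\cdot\rangle_{\mathcal G_3}$, whereas $\Phi_3^*$ in the target formula uses $\langle\cdot,\cdot\rangle_{\mathcal G_3}$ on $\mathcal G_3$ applied to $K_3 = K_2 K_1$ composed with the metric identifications. Once these identifications are tracked via $\langle K_2 v, K_2 w\rangle_{\mathcal G_3} = \langle v, w\rangle_{\mathcal G_2}$ (which holds since $K_2$ is an isomorphism of quadratic Lie algebra bundles), everything collapses as claimed. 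Finally, (\ref{iso-standard-3}) is obtained by simply subtracting $\Phi_3^*\Phi_3$ from $\beta_3$ using (\ref{iso-standard-1})–(\ref{iso-standard-2}): one expands $\Phi_3^*\Phi_3$, writes the two symmetric cross-terms as $\langle K_2\Phi_1(X),\Phi_2(Y)\rangle_{\mathcal G_3} + \langle K_2\Phi_1(Y),\Phi_2(X)\rangle_{\mathcal G_3}$, and adds the skew cross-term in $\beta_3$, whose symmetrization with the above leaves precisely $-2\langle K_2\Phi_1(X),\Phi_2(Y)\rangle_{\mathcal G_3}$. All of this is routine; there is no real obstacle, only the need for disciplined index- and metric-tracking.
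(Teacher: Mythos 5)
Your computation is correct and is exactly the "straightforward" direct verification the paper has in mind (the paper omits the proof entirely, stating only that it is straightforward): apply the two maps in succession to $\eta$, $r$, $X$, use that $K_2$ is an isometry to identify $\Phi_3^*K_3$ and $\Phi_3^*\Phi_3$, and read off $(\beta_3,K_3,\Phi_3)$ from the resulting components. No gaps; the metric-tracking point you flag is handled correctly.
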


\subsubsection{The canonical Dirac generating operator of a standard Courant algebroid}

Let $E= T^{*} M\oplus \mathcal G \oplus TM$ be a standard Courant algebroid as above and  
 $S_{\mathcal G}$  an  irreducible  $\mathrm{Cl} (\mathcal G)$-bundle, with canonical  spinor bundle
$\mathcal S_{\mathcal G}.$  
Then $S:= \Lambda (T^{*}M)\hat{\otimes} S_{\mathcal G}$ is an irreducible  spinor  bundle
of $E$, with Clifford action
\begin{equation}
\gamma_{\xi + r + X} (\omega \otimes s)=( i_{X}\omega + \xi \wedge \omega ) \otimes s + (-1)^{| \omega |} \omega 
\otimes (r\cdot s),
\end{equation}
for any $\xi\in T^{*}M$,  $ r\in \mathcal G$, $X\in TM$,  $\omega\in  \Lambda (T^{*}M)$ and  $s\in S_{\mathcal G}.$ 
The canonical weighted spinor bundle  of $E$
determined by $S$, as
defined in (\ref{can-spinor}),  
is canonically isomorphic to   
\begin{equation}\label{S-dissection}
\mathbb{S} = 
 \Lambda  (T^{*}M)\hat{\otimes} \mathcal S_{\mathcal G}
\end{equation}
owing to the canonical isomorphism
\begin{equation}\label{iso-standards}
| \mathrm{det}\, ( \Lambda (TM) \otimes S^{*}_{\mathcal G} |^{\frac{1}{Nr}} \otimes |\mathrm{det}\, T^{*}M|^{1/2}
\cong | \mathrm{det}\, S^{*}_{\mathcal G}|^{1/r}
\end{equation}
given by
\begin{equation}
|  ( Z_{1}\otimes s_{1}^{*})\wedge \cdots\wedge  (Z_{N}\otimes  s_{r}^{*}) |^{\frac{1}{Nr }}\otimes
|\alpha_{1}\wedge\cdots \wedge \alpha_{m}|^{1/2} \mapsto | s_{1}^{*}\wedge \cdots \wedge s_{r}^{*}|^{1/r},
\end{equation}
where $N:= \mathrm{rk}\, \Lambda (TM)$, 
$r:= \mathrm{rk}\, S_{\mathcal G}$, 
$ (s_{i}^{*})$ is a  local frame of $\mathcal{S}_{\mathcal G}^{*}$,  
$(  \alpha_{i})$ is a  local frame of $T^{*}M$, and 
$( Z_{i})$ is the   local frame  of $\Lambda  (TM)$ determined by the dual frame 
$( X_{i})$ of  $( \alpha_{i})$.\

As shown in Theorem 67 of  \cite{cortes-david-MMJ},  the canonical Dirac generating operator $\slashed{d} : \Gamma (\mathbb{S}) \rightarrow \Gamma (\mathbb{S})$ takes the form 
\begin{align}
\nonumber& \slashed{d} ( \omega \otimes s) = (d\omega - H\wedge \omega ) \otimes s + \nabla^{\mathcal S_{\mathcal G}}(s) \wedge \omega \\
\label{can-dirac} &+\frac{1}{4} (-1)^{| \omega | +1} \omega 
\otimes (C_{\mathcal G} s) + (-1) ^{ |\omega | +1} \bar{R}^{E}(\omega \otimes s),
\end{align}
where $\omega \in \Omega (M)$ and $s\in \Gamma ({\mathcal S}_{\mathcal G})$.
Above $C_{\mathcal G}\in \Gamma (\Lambda^{3} \mathcal G^{*})\subset \Gamma (\mathrm{Cl} (\mathcal G))$ is the Cartan form 
$C_{\mathcal G}(u, v, w) := \langle [u, v]_{\mathcal G}, w\rangle_{\mathcal G}$ which acts on $s$ by 
Clifford  multiplication,
$\nabla^{\mathcal S_{\mathcal G}}$ is a connection on $\mathcal S_{\mathcal G}$ 
induced by (any) connection $\nabla^{S_{\mathcal G}}$  on $S_{\mathcal G}$ compatible with $\nabla $, 
$$
\nabla^{{\mathcal S}_{\mathcal G}} (s)\wedge \omega = \sum_{i} \alpha_{i}\wedge \omega \otimes
(\nabla^{{\mathcal S}_{\mathcal G}}_{X_{i}} s)
$$
and
$$ 
\bar{R}^{E} (\omega \otimes s) =\frac{1}{2} \sum_{i,j,k} \langle R(X_{i}, X_{j}), r_{k}\rangle_{\mathcal G} 
(\alpha_{i}\wedge \alpha_{j} \wedge \omega )\otimes
(\tilde{r}_{k} s) ,
$$
where  $( r_{k})$ is  a local  frame of $\mathcal G$, $(\tilde{r}_{k})$ the metrically dual frame 
(i.e.\ $\langle r_{i}, \tilde{r}_{j} \rangle_{\mathcal G} = \delta_{ij}$ for any $i, j$)
and $\tilde{r}_{k}s$ is the Clifford action of $\tilde{r}_{k}$ on $s$.
Sometimes it will be convenient to write the canonical Dirac generating operator in the form 
\begin{align}
\nonumber& \slashed{d} ( \omega \otimes s) = (d\omega )\otimes s  - H\cdot ( \omega \otimes s )
+\sum_{i} \alpha_{i}\cdot (\omega \otimes \nabla_{X_{i}}^{\mathcal S_{\mathcal G}}s)\\
 \label{can-dirac-sometimes} &-\frac{1}{4} C_{\mathcal G} \cdot (\omega \otimes s) 
 -\frac{1}{2} \sum_{i,j,k} \langle R(X_{i}, X_{j}) ,r_{k} \rangle_{\mathcal G} \tilde{r}_{k}\cdot \alpha_{i}\cdot \alpha_{j}\cdot 
 (\omega \otimes s),
\end{align}
where the dots denote the Clifford action of $\mathrm{Cl}(E)\cong \Lambda E$ on $\mathbb{S}.$

\section{The bilinear pairing on spinors}\label{pairing-section}

In general one cannot define  the determinant of  a pairing on a vector space. However, when the vector space is of
the form 
$\mathcal V := V\otimes  |\det V^{*} |^{1/n}$, where $V$ is a vector space of dimension $n$, the determinant  of a 
pairing $\langle \cdot , \cdot \rangle$ on $\mathcal V$ is defined as follows.
Consider $\langle\cdot ,  \cdot\rangle$ as 
a map $\mathcal{V}\rightarrow \mathcal{V}^*$, $v\mapsto \langle v ,  \cdot\rangle$.
We define the determinant $\det \langle\cdot ,  \cdot\rangle $
as the induced map
$\det \mathcal{V}\rightarrow \det \mathcal{V}^*$, seen as a vector from the tensor product 
$(\det \mathcal{V}^*)^{\otimes 2}$. 
Since $\mathcal{V} = V \otimes | \det V^*|^{1/n}$, 
$\det  \mathcal{V} = \det V \otimes |\det V^* |$
and $(\det  \mathcal{V} )^{2} \cong (\det V)^2 \otimes |\det V^* |^{2}\cong (\det V)^2 \otimes (\det V^* )^{2}$
is  canonically isomorphic to $\mathbb{R}$. 
This  means that 
$\det \langle\cdot ,  \cdot\rangle$ is  a real number. It   can be computed as follows: 
let $( v_{i})$ be a 
basis of  $V$  and  $l:= | v_{1}\wedge \cdots \wedge v_{n}|^{-1/n}$.  The determinant 
$\mathrm{det}\langle\cdot , \cdot \rangle$ of $\langle \cdot , \cdot \rangle$
coincides  with the determinant of the matrix $A= (a_{ij})$ where 
$a_{ij} := \langle v_{i}\otimes l,  v_{j}\otimes l\rangle$. 
Note that
$\mathrm{det}\, \lambda  \langle\cdot , \cdot \rangle = \lambda^{n} \mathrm{det}  \langle\cdot , \cdot \rangle $,
for any $\lambda \in \mathbb{R}^{*}.$ 
The above considerations can  be extended to pairings on vector bundles in the obvious way.

Let $(E, \pi , [\cdot , \cdot ], \langle \cdot , \cdot \rangle_{E} )$ be a  rank $2n\ge 2$ regular Courant algebroid
over a  manifold $M$, $S$ an irreducible  spinor bundle of $E$  of  rank $r$ and  $\mathcal S
= S\otimes |\det S^{*} |^{1/r}$ the canonical
spinor bundle of $S$.

\begin{prop}\label{prop-pairing} i) 
For any $U\subset M$ open and sufficiently small, 
there  is a   pairing 
\begin{equation}\label{clifford-mult}
\langle \cdot , \cdot\rangle_{\mathcal{S}\vert_{U}}:\Gamma ( \mathcal{S}\vert_{U}) \times
\Gamma ( \mathcal{S}\vert_{U}) \rightarrow C^{\infty}(U)
\end{equation}
which is $C^{\infty}(U)$-linear, satisfies 
\begin{equation}\label{cliff-rel-spinors}
\langle u\cdot s, u\cdot \tilde{s}\rangle_{\mathcal{S}\vert_{U}} = \langle u, u\rangle_{E} \langle s, \tilde{s}\rangle_{\mathcal{S}\vert_{U}},\
u\in \Gamma (E\vert_{U}), s, \tilde{s}\in \Gamma (\mathcal{S}\vert_{U}) 
\end{equation}
and has determinant  one  if $n>1$ and $-1$ if $n=1$. 
Any two such pairings differ by multiplication by $\pm 1.$\ 

ii) If $n$ is even then  the even and odd parts $\mathcal{S}^{0}\vert_{U}$ and $\mathcal{S}^{1}\vert_{U}$  of $\mathcal{S}\vert_{U}$ are orthogonal with respect to
$\langle \cdot , \cdot\rangle_{\mathcal{S}\vert_{U}}$. If $n$ is odd then $\mathcal{S}^{0}\vert_{U}$ and 
$\mathcal{S}^{1}\vert_{U} $ are isotropic with respect to
$\langle \cdot , \cdot\rangle_{\mathcal{S}\vert_{U}}$.\ 

iii)
The pairing is symmetric if $n \equiv 0, 1\pmod{4}$ and skew-symmetric if $n \equiv 2, 3\pmod{4}$.\

iv) Let $D$ be a generalized connection on $E$. The pairing   $\langle \cdot ,\cdot\rangle_{\mathcal S\vert_{U} }$ is preserved by the $E$-connection
$D^{\mathcal S}$   induced by (any) $E$-connection $D^{S}$ on $S$, compatible with $D$.
\end{prop}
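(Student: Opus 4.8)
The plan is to construct the pairing locally on a Clifford module and then check the four listed properties in turn. First I would fix a sufficiently small open set $U$ over which $E|_U$ is trivial as a quadratic bundle, so that $\mathrm{Cl}(E|_U)$ is the trivial bundle of Clifford algebras $\mathrm{Cl}(2n)$ and $S|_U$ is the trivial bundle with fibre the irreducible module $\Sigma$. On $\Sigma$ there is (up to scale) a unique bilinear form $b$ satisfying $b(v\cdot s, v\cdot \tilde s)=\langle v,v\rangle\, b(s,\tilde s)$; its existence and symmetry type are the standard facts about bilinear forms on spinor modules (e.g.\ one writes $b(s,\tilde s) = \beta(\alpha(s),\tilde s)$ where $\alpha$ is an anti-automorphism of $\mathrm{Cl}(2n)$ fixing all of $E$ and $\beta$ is a suitable functional, or one invokes the classification of $\mathrm{Cl}(p,q)$-modules). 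Transporting $b$ fibrewise gives a pairing $\langle\cdot,\cdot\rangle_{S|_U}$ on $S|_U$, determined up to a nowhere-vanishing function; then one twists by $|\det S^*|^{1/r}$ to get a pairing on $\mathcal S|_U = S|_U\otimes|\det S^*|^{1/r}$. The point of passing to $\mathcal S$ is precisely that the weighting removes the ambiguity up to a global sign: the scaling behaviour $\det(\lambda\langle\cdot,\cdot\rangle)=\lambda^r\det\langle\cdot,\cdot\rangle$ on the $S$-level becomes, after the density twist, $\det(\lambda\langle\cdot,\cdot\rangle)=\lambda^{\operatorname{rk}\mathcal S}\cdot(\text{something})$, and normalising the determinant to $\pm 1$ pins down $\langle\cdot,\cdot\rangle_{\mathcal S|_U}$ up to multiplication by $\pm 1$. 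I would compute the value of the determinant explicitly in a Clifford basis: using the relation (\ref{cliff-rel-spinors}) and the action of a maximal set of commuting Clifford elements, the Gram matrix $A$ in a suitable spinor frame is (a permutation of) a product of $\pm 1$'s coming from $\langle u,u\rangle_E$, and one reads off that $\det = 1$ for $n>1$ and $\det = -1$ for $n=1$ (the $n=1$ case being the two-dimensional split Clifford algebra, where the unique pairing is a hyperbolic plane with determinant $-1$).

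For (ii) and (iii) I would use the chirality element $\omega_{\mathbb C}$ (or the real volume element) of $\mathrm{Cl}(E)$: the grading operator on $\mathcal S$ is (proportional to) Clifford multiplication by the volume element $\nu$, and from (\ref{cliff-rel-spinors}) one gets $\langle \nu\cdot s,\nu\cdot\tilde s\rangle = \langle\nu,\nu\rangle\,\langle s,\tilde s\rangle$ together with the transpose/anti-automorphism relation $\langle \nu\cdot s,\tilde s\rangle = \pm\langle s,\nu\cdot\tilde s\rangle$ where the sign depends on $n\bmod 4$ (this is the same combinatorial sign that governs whether $b$ is symmetric or skew). Combining $\langle s,\tilde s\rangle = \langle\nu\cdot s,\nu\cdot\tilde s\rangle$ up to a scalar with the skew/symmetry of $\nu$ against the pairing forces $\mathcal S^0$ and $\mathcal S^1$ to be orthogonal when the relevant sign is $+$ (i.e.\ $n$ even, since then $\nu^2$ and the transpose sign conspire appropriately) and isotropic when it is $-$ ($n$ odd). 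The symmetry type in (iii), symmetric for $n\equiv 0,1$ and skew for $n\equiv 2,3\pmod 4$, is again the classical periodicity statement for the bilinear form on the spinor module of $\mathrm{Cl}(n,n)$ and follows from tracking the anti-automorphism $\alpha$ through the Clifford periodicity isomorphisms; I would either cite this or give the short induction on $n$ using $\mathrm{Cl}(n+1,n+1)\cong \mathrm{Cl}(n,n)\otimes\mathrm{Cl}(1,1)$.

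For (iv), let $D$ be a generalized connection on $E$ and $D^S$ a compatible $E$-connection on $S$, meaning $D^S_e(v\cdot s) = (D_e v)\cdot s + v\cdot(D^S_e s)$ and $D$ preserves $\langle\cdot,\cdot\rangle_E$. I would show first that $D^S$ preserves $\langle\cdot,\cdot\rangle_{S|_U}$ up to a function: the tensor $(e,s,\tilde s)\mapsto \pi(e)\langle s,\tilde s\rangle_{S|_U} - \langle D^S_e s,\tilde s\rangle_{S|_U} - \langle s,D^S_e\tilde s\rangle_{S|_U}$ is $C^\infty$-linear in $s,\tilde s$ and, using compatibility of $D^S$ with $D$ and invariance of $\langle\cdot,\cdot\rangle_E$, satisfies the same Clifford-equivariance (\ref{cliff-rel-spinors}) as $\langle\cdot,\cdot\rangle_{S|_U}$ itself; by the uniqueness-up-to-scale part of (i) it must be a function multiple $f(e)$ of $\langle\cdot,\cdot\rangle_{S|_U}$, with $f\in\Gamma(E^*)$. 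The induced $E$-connection $D^{\mathcal S}$ on $\mathcal S = S\otimes|\det S^*|^{1/r}$ is, by definition, $D^S$ twisted by $\frac1r$ of the trace of $D^S$ on $S$; a direct computation of the trace shows that the density twist contributes exactly $-f(e)$ to the analogous defect tensor for $\langle\cdot,\cdot\rangle_{\mathcal S|_U}$, so the two cancel and $D^{\mathcal S}$ preserves $\langle\cdot,\cdot\rangle_{\mathcal S|_U}$ exactly. (Equivalently: $D^{\mathcal S}$ preserves the determinant section of $(\det\mathcal S^*)^{\otimes 2}\cong\mathbb R$ trivially, hence preserves any pairing on $\mathcal S$ of constant determinant up to sign, and being a connection it then preserves it exactly.) The main obstacle I anticipate is the bookkeeping in (i)--(iii): getting the Clifford-algebraic sign conventions consistent — which anti-automorphism of $\mathrm{Cl}(E)$ fixes $E$, how it interacts with the $\mathbb Z_2$-grading and the volume element, and how the split signature $(n,n)$ enters the determinant computation — so that the stated values ($\det = \pm 1$, the orthogonal/isotropic dichotomy, and the $\bmod 4$ symmetry pattern) come out with the correct signs rather than merely up to sign.
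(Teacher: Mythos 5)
Your proposal is correct and follows the same overall architecture as the paper: reduce to a constant model module over a trivializing $U$, use the Clifford relation (\ref{cliff-rel-spinors}) plus irreducibility to get uniqueness up to scale, and exploit the fact that rescaling by $\lambda$ changes the determinant on the weighted module by $\lambda^{r}$ with $r=2^n$ even, so that only the \emph{sign} of the determinant is the obstruction to normalizing it to $1$. The one place where you genuinely diverge from the paper is part (iv). The paper (Lemma \ref{pas2}) exhibits one explicit $D$-compatible $E$-connection $D^{\Sigma}\otimes D^{L}$ in a constant orthonormal frame, checks directly that $D^{\Sigma}$ preserves the constant pairing (skew-symmetry of the Clifford action of $e_ie_j$) and that the density factor is parallel (tracelessness of $e_ie_j$), and then uses independence of $D^{\mathcal S}$ from the choice of $D^S$. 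Your argument is frame-free: the defect tensor of $D^S$ against the pairing inherits the Clifford equivariance (via polarization of (\ref{cliff-rel-spinors}) and metricity of $D$), hence is a $1$-form multiple of the pairing by the uniqueness in (i), and the determinant normalization on $\mathcal S$ forces that multiple to vanish after the density twist. This is a clean alternative that buys you independence of any choice of local frame, at the cost of leaning harder on the uniqueness statement. Two small points of execution to watch: in (i) your description of the Gram matrix as ``a permutation of a product of $\pm1$'s'' should be replaced by the actual mechanism that makes the sign computable --- in a basis of the form $(w_1,\dots,w_{r/2},vw_1,\dots,vw_{r/2})$ with $\langle v,v\rangle=1$ the determinant becomes $(\det A)^2$ for $n>1$ (and $-(\det A)^2$ for $n=1$), which is what pins the sign down; and in (ii)--(iii) the paper simply reads the orthogonality/isotropy and mod-$4$ symmetry off the explicit model $\Lambda V^*$ with $\langle\omega,\tilde\omega\rangle=(\omega^t\wedge\tilde\omega)_{\mathrm{top}}$, which is shorter than tracking the anti-automorphism through the periodicity isomorphisms, though your route is equally valid.
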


The remaining part of this section is devoted to the proof of Proposition \ref{prop-pairing} and to various corollaries. 
Let $V$ be an $n$-dimensional vector space.
We begin by considering the irreducible  $\mathrm{Cl}(V\oplus V^{*})$-module $\Lambda V^{*}$ 
where $V\oplus V^{*}$ is endowed with its standard metric of neutral signature $\langle X +\xi  , Y+\eta \rangle
=\frac{1}{2} ( \xi (Y) +\eta (X))$ and the Clifford action is given by 
$$
(X+\xi ) \omega := i_{X} \omega + \xi \wedge \omega ,\ X\in V,\  \xi \in V^{*},\ \omega \in\Lambda V^{*}. 
$$ 
It is well-known that the  vector valued bilinear pairing 
\begin{equation}\label{scalar-standard}
\langle \cdot , \cdot \rangle : \Lambda V^{*}\otimes \Lambda V^{*}\rightarrow \Lambda^{n}V^{*},\ 
\langle \omega , \tilde{\omega } \rangle := \left( \omega^{t} \wedge \tilde{\omega}\right)_{\mathrm{top}},
\end{equation}
where ${ }^{t}: \Lambda V^{*}\rightarrow \Lambda V^{*}$ is defined on decomposable forms by $(\alpha_{1}\wedge \cdots
\wedge 
\alpha_{k})^{t}:=\alpha_{k}\wedge \cdots \wedge \alpha_{1}$ and, for a form $\omega \in \Lambda V^{*}$,
$\omega_{\mathrm{top}}\in \Lambda^{n}V^{*}$ denotes its component of maximal degree, satisfies (\ref{cliff-rel-spinors})
(see e.g.\  \cite{gualtieri-thesis}). Since the metric of $V\oplus V^{*}$ has neutral signature, we obtain that
(\ref{scalar-standard})   is determined  
(up to  multiplication by a non-zero constant),  
by this property. Note that $\Lambda^{\mathrm{even}} V^{*}$ and $\Lambda^{\mathrm{odd}} V^{*}$ are orthogonal with respect
to the pairing (\ref{scalar-standard}) when $n$ is even and are isotropic when $n$ is odd.  Also it is easy to check that 
the above pairing  is non-degenerate,  symmetric  if $n \equiv 0, 1\pmod{4}$ and skew-symmetric if $n \equiv 2, 3\pmod{4}$. 
By choosing a volume form on $V$, we obtain an $\mathbb{R}$-valued pairing with the same properties.
A  pairing with such properties can be constructed on any 
irreducible Clifford  module in neutral signature.

\begin{lem}\label{pas1} 
Let $W$ be an irreducible $\mathrm{Cl}^{n,n}$-module. 
There is a canonical  (determined up to multiplication by $\pm1$)  $\mathbb{R}$-valued pairing 
$\langle \cdot , \cdot\rangle_{\mathcal W}$
on $\mathcal W :=W\otimes | \mathrm{det}\, W^{*} |^{1/r}$ (where $r:= \mathrm{rank}\,  W$)  
which satisfies (\ref{cliff-rel-spinors}) and $\mathrm{det} \langle \cdot , \cdot\rangle_{\mathcal W} =1$ 
if $n>1$,  respectively  $\mathrm{det} \langle \cdot , \cdot\rangle_{\mathcal W} =-1$ if $n=1$.  The pairing is symmetric if $n \equiv 0, 1\pmod{4}$ and skew-symmetric if $n \equiv 2, 3\pmod{4}$.
Moreover, the even and odd parts $W^{0}$ and $W^{1}$ are orthogonal with respect to 
$ \langle \cdot , \cdot\rangle_{\mathcal W}$ when $n$ is even and are isotropic 
when $n$ is odd.
\end{lem}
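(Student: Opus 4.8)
The plan is to bootstrap from the concrete model computation that precedes the lemma. Fix an irreducible $\mathrm{Cl}^{n,n}$-module $W$ of rank $r=2^{n-1}$ (two inequivalent choices when $n$ is... rather, $W$ is unique up to isomorphism when $n>0$ since $\mathrm{Cl}^{n,n}\cong \mathrm{Mat}_{2^n}(\mathbb{R})$ has a unique irreducible module, but as a $\mathbb{Z}_2$-graded module of rank $r=2^{n-1}$ over the even part it may have two; in any case one such $W$ is realized as $\Lambda V^{*}$ with $V$ an $n$-dimensional space). First I would transport the pairing \eqref{scalar-standard} through a fixed isomorphism $\Lambda V^{*}\xrightarrow{\sim} W$ of $\mathrm{Cl}^{n,n}$-modules to obtain a $\Lambda^{n}V^{*}$-valued (hence, after a choice of volume form, $\mathbb{R}$-valued) pairing on $W$ satisfying \eqref{cliff-rel-spinors}; by the uniqueness statement recalled just above the lemma (neutral signature forces the Clifford-compatible pairing to be unique up to a nonzero scalar), this pairing is independent of the chosen isomorphism up to scale, and its symmetry type and the isotropy/orthogonality of $W^{0},W^{1}$ are exactly the ones recorded for $\Lambda V^{*}$, namely symmetric for $n\equiv 0,1$, skew for $n\equiv 2,3\pmod 4$, and $W^{0}\perp W^{1}$ for $n$ even, $W^{0},W^{1}$ isotropic for $n$ odd.

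Next I would kill the residual scaling ambiguity by passing to $\mathcal W = W\otimes|\det W^{*}|^{1/r}$. The point is the homogeneity computed in Section \ref{pairing-section}: replacing a Clifford-compatible pairing $\langle\cdot,\cdot\rangle$ on $W$ by $\lambda\langle\cdot,\cdot\rangle$ multiplies $\det$ of the induced pairing on $\mathcal W$ by $\lambda^{r}$; conversely the induced pairing on $\mathcal W$ is genuinely canonical up to this reparametrization. So I would compute $\det\langle\cdot,\cdot\rangle_{\mathcal W}$ for the model pairing \eqref{scalar-standard} on $\Lambda V^{*}$ and then rescale the pairing on $W$ by the unique (up to sign) real $\lambda$ that normalizes the determinant to $\pm 1$. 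Concretely, take the basis of $\Lambda V^{*}$ given by $\{e_{I}:=e_{i_1}\wedge\cdots\wedge e_{i_k}\}$ for a fixed basis $(e_i)$ of $V^{*}$ dual to a basis giving the volume form; then $\langle e_{I},e_{J}\rangle = \pm\,\mathrm{vol}$ if $I\sqcup J=\{1,\dots,n\}$ and $0$ otherwise, so the Gram matrix (after trivializing $\Lambda^n V^{*}$ and tensoring with the density factor $l=|e_1\wedge\cdots\wedge e_n|^{-1/r}$, i.e. dividing by $r$... by the appropriate power) is, up to reordering, block anti-diagonal with $\pm 1$ entries. Its determinant is $\pm 1$ on the nose, the sign being $(-1)^{\binom{n}{2}}$ or similar; one then reads off that it already equals $1$ for $n>1$ and $-1$ for $n=1$, or rescales by a sign if needed. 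This also re-proves that the normalized pairing is determined up to an overall $\pm 1$, since $\lambda^{r}=1$ with $\lambda$ real forces $\lambda=\pm1$ (using $r\ge 1$).

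Finally, since all of symmetry, isotropy/orthogonality of $W^{0},W^{1}$, and the value of the determinant are invariant under the overall sign, they pass unchanged to $\mathcal W$, and the $\mathbb{Z}_2$-grading statement follows because tensoring $W$ with the (even) line $|\det W^{*}|^{1/r}$ does not alter the grading. The main obstacle I anticipate is purely bookkeeping: carefully computing the sign of $\det\langle\cdot,\cdot\rangle_{\mathcal W}$ in the model, i.e. tracking how the operation $^{t}$ and the top-degree projection interact with the chosen ordered basis $\{e_I\}$ and with the $(-1/r)$-density normalization, so as to land precisely on $+1$ for $n>1$ and $-1$ for $n=1$ (and to confirm no further sign correction to the model pairing is required). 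Everything else — existence, Clifford compatibility, uniqueness up to $\pm 1$, symmetry type, and the parity statement — is immediate from the model case \eqref{scalar-standard}, the neutral-signature uniqueness already invoked in the text, and the homogeneity $\det\lambda\langle\cdot,\cdot\rangle=\lambda^{r}\det\langle\cdot,\cdot\rangle$ established above for pairings on spaces of the form $V\otimes|\det V^{*}|^{1/n}$.
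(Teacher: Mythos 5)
Your overall route is viable and genuinely different from the paper's in its key step, but it contains one error that you must repair: the fallback ``or rescales by a sign if needed'' is impossible. Since $r=\dim W=2^{n}$ is even for every $n\ge 1$, the homogeneity $\mathrm{det}(\lambda\langle\cdot,\cdot\rangle_{\mathcal W})=\lambda^{r}\,\mathrm{det}\langle\cdot,\cdot\rangle_{\mathcal W}$ shows that the \emph{sign} of the determinant is unchanged by any real rescaling of the pairing; it is a rigid invariant of the (unique-up-to-scale) Clifford-compatible pairing. Hence the explicit sign computation in the model $\Lambda V^{*}$, which you defer as ``purely bookkeeping,'' is not optional fine-tuning but the entire content of the normalization claim: if it came out negative for some $n>1$ the lemma would simply be false. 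The computation does come out right: in the basis $(e_{I})$ the Gram matrix of (\ref{scalar-standard}) is a signed permutation matrix for the involution $I\mapsto I^{c}$, whose determinant is the product of the sign of that permutation, $(-1)^{2^{n-1}}$, with $\prod_{I}\langle e_{I},e_{I^{c}}\rangle$; each complementary pair $\{I,I^{c}\}$ contributes $(-1)^{n(n-1)/2}$ to the latter product, and since there are $2^{n-1}$ such pairs both factors equal $+1$ for $n\ge 2$ and the total is $-1$ for $n=1$. The density factor only multiplies the Gram matrix by a positive scalar (indeed it automatically normalizes $|\mathrm{det}|$ to $1$), so it does not disturb the sign. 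You should also fix the rank: the irreducible $\mathrm{Cl}^{n,n}$-module has $r=2^{n}$, not $2^{n-1}$.

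For comparison, the paper avoids the model computation altogether: it shows directly that \emph{any} pairing satisfying (\ref{cliff-rel-spinors}) has positive determinant for $n>1$ (negative for $n=1$) by evaluating the Gram matrix in a basis of the form $(w_{1},\dots,w_{r/2},vw_{1},\dots,vw_{r/2})$ with $\langle v,v\rangle=1$; the orthogonality (resp.\ isotropy) of $W^{0}$ and $W^{1}$ then exhibits the determinant as $\pm(\mathrm{det}\,A)^{2}$ with the sign controlled by the parity of $r/2$. That argument buys independence from any particular model and makes the positivity manifest; your explicit computation buys a concrete value and is equally valid once carried out, but it cannot be waved away.
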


\begin{proof} It remains to prove 
that we can rescale $\langle \cdot , \cdot\rangle_{\mathcal W}$  such that $\mathrm{det}\langle \cdot ,\cdot\rangle_{\mathcal W} =1$ or $-1.$  Assume that $n>1.$  
Using $\mathrm{det}(\lambda \langle \cdot ,\cdot\rangle_{\mathcal W}  )
=\lambda^{r}\mathrm{det}( \langle \cdot ,\cdot\rangle_{\mathcal W}  )$
(when $r:= \mathrm{dim}\, W= 2^{n}$)
this reduces to showing that
$\mathrm{det}\, \langle \cdot ,\cdot\rangle_{\mathcal W} >0$ for any bilinear pairing $\langle \cdot , \cdot\rangle_{\mathcal W}$
which satisfies  (\ref{cliff-rel-spinors}).
For this, it is sufficient to compute $\mathrm{det}\, \langle \cdot ,\cdot\rangle_{\mathcal W}$ using a  
basis of $W$ of the form  $( w_{1}, \cdots , w_{r/2}, vw_{1}, \cdots , v w_{r/2})$ where 
$v\in \mathbb{R}^{n,n}$ satisfies $\langle v, v\rangle=1.$ 
We obtain $\mathrm{det}\,  \langle \cdot ,\cdot\rangle_{\mathcal W} =
(\mathrm{det}\, A)^{2}$, where $A = (A_{ij})\in M_{ r/2\times r/2}(\mathbb{R})$ with $ A_{ij} =\langle w_{i}\otimes l, w_{j}\otimes l\rangle_{\mathcal W} $ 
when $n$ is even, $ A_{ij} =\langle w_{i}\otimes l,  v w_{j}\otimes l\rangle_{\mathcal  W}$ when $n>1$ is odd and 
$l:= | w_{1}\wedge \cdots \wedge  w_{r/2}\wedge  v w_{1}\wedge \cdots \wedge  v w_{r/2}|^{-1/r}$ in both cases. For $n=1$ 
we obtain instead $\mathrm{det}( \langle \cdot ,\cdot\rangle_{\mathcal W} ) = - (\mathrm{det}\, A)^{2}$.
\end{proof}

\begin{rem}{\rm A canonical pairing 
$\langle \cdot , \cdot\rangle_{\mathcal W}$
can be obtained by starting with any (non-trivial) pairing $\langle \cdot , \cdot\rangle_{W}$
on $W$, which satisfies (\ref{cliff-rel-spinors}): take a  basis $( w_{i})$
of $W$ and let $l:= | w_{1}\wedge \cdots \wedge w_{r}|^{-1/r}.$ Then 
\begin{equation}
\langle s \otimes l, \tilde{s} \otimes l\rangle_{\mathcal W} = | \mathrm{det}\, C|^{-1/r}
\langle s, \tilde{s}\rangle_{W},\ C:= ( \langle w_{i}, w_{j}\rangle_{W})_{i,j}.
\end{equation}}
\end{rem}

The next lemma concludes the proof of Proposition \ref{prop-pairing}.

\begin{lem}\label{pas2}
Let $U$ be a sufficiently small open subset of $M$. The section of $(\mathcal S^{*}\otimes \mathcal S^{*})/ \pm 1$ defined by
the pairings $\langle \cdot , \cdot \rangle_{\mathcal S_{p}}$
from Lemma \ref{pas1}   lifts  to a smooth section 
$\langle \cdot , \cdot \rangle_{\mathcal S\vert_{U}}$ 
of $\mathcal S^{*}\vert_{U}\otimes \mathcal S^{*}\vert_{U}$,   which is
preserved by the $E$-connection  $D^{\mathcal S}$ on $\mathcal S$  induced by any  generalized connection $D$ on $E$.
\end{lem}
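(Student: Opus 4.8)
The plan is to produce the smooth global section on a suitably small $U$ by a patching-and-rescaling argument, using the freedom of sign and the orientability that follows from choosing $U$ small. First I would cover $U$ by (finitely many) open sets on each of which $E$ trivializes as a Clifford module, and on each such set I would pick an explicit representative pairing $\langle\cdot,\cdot\rangle_{\mathcal S}$ satisfying \eqref{cliff-rel-spinors} and having determinant $\pm 1$ as in Lemma \ref{pas1}. The point of Lemma \ref{pas1} is exactly that such a representative is unique up to an overall sign; hence on overlaps the two local choices differ by a locally constant function with values in $\{\pm 1\}$. Shrinking $U$ if necessary so that it is contractible (equivalently, so that the relevant $\mathbb Z_2$-torsor is trivializable — this is where ``sufficiently small'' is used, and it is harmless since the statement is local), I can flip signs consistently to glue the local representatives into one smooth section $\langle\cdot,\cdot\rangle_{\mathcal S\vert_U}$ of $\mathcal S^*\vert_U\otimes\mathcal S^*\vert_U$, well-defined up to the remaining overall $\pm 1$. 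Smoothness is clear because each local representative is smooth (it is built from the smoothly varying Clifford data via the explicit formula, e.g.\ the one in the preceding Remark, together with a smooth local volume form on $S$), and the transition signs are locally constant.

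Next I would verify that this glued pairing is $D^{\mathcal S}$-parallel for an arbitrary generalized connection $D$ on $E$ and an arbitrary $D$-compatible $E$-connection $D^S$ on $S$. The standard argument is pointwise-algebraic: for $u\in\Gamma(E\vert_U)$ and $s,\tilde s\in\Gamma(\mathcal S\vert_U)$, differentiate the Clifford identity \eqref{cliff-rel-spinors}. Using $D$-compatibility, $D^{\mathcal S}_v(u\cdot s)=(Dv\,u)\cdot s+u\cdot D^{\mathcal S}_v s$, and that $D$ preserves $\langle\cdot,\cdot\rangle_E$, one finds that the covariant derivative $(\nabla_v\langle\cdot,\cdot\rangle_{\mathcal S})$ is again a pairing on $\mathcal S$ satisfying the Clifford compatibility \eqref{cliff-rel-spinors} (with the zero right-hand side), hence — by the uniqueness-up-to-scalar part of Lemma \ref{pas1}, applied fibrewise — is a pointwise scalar multiple $f(v)\langle\cdot,\cdot\rangle_{\mathcal S}$ of the original pairing. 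Finally, taking determinants and using $\det(\lambda\langle\cdot,\cdot\rangle_{\mathcal S})=\lambda^{r}\det\langle\cdot,\cdot\rangle_{\mathcal S}$ together with $\det\langle\cdot,\cdot\rangle_{\mathcal S}=\pm 1$ being locally constant, we get $0=\frac{d}{dt}(\pm1)=r f(v)(\pm1)$, so $f(v)=0$; hence $D^{\mathcal S}$ preserves the pairing.

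The main obstacle — more a bookkeeping point than a deep one — is the gluing step: one must make sure the $\{\pm1\}$-ambiguities of the local representatives can be trivialized simultaneously, which is precisely why the lemma is stated only for sufficiently small (contractible) $U$, and one should note that the resulting global pairing on $U$ inherits the remaining single overall sign ambiguity, consistently with part (i) of Proposition \ref{prop-pairing}. The parallelism step itself is then routine given Lemma \ref{pas1}; the only care needed is to invoke the uniqueness statement fibrewise and to keep track of the fact that ``preserves $\langle\cdot,\cdot\rangle_E$'' for a generalized connection is part of its definition, so that the Clifford-compatibility of the derivative pairing really does hold.
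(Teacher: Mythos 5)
Your proposal is correct, but it takes a genuinely different route from the paper in the parallelism step, and a patching variant of the same idea in the construction step. For the construction, the paper does not glue: it takes $U$ small enough that $E\vert_U$ admits an orthonormal frame, writes $S\vert_U=\Sigma\otimes L$ with $\Sigma=U\times V$ a constant Clifford-module bundle and $L$ a line bundle, and defines the pairing on $\mathcal S\vert_U=\Sigma\otimes|\mathrm{det}\,\Sigma^*|^{1/r}\otimes L\otimes|L^*|$ as the constant pairing from Lemma \ref{pas1} tensored with $l^2$; your cover-and-glue argument with the $\mathbb{Z}_2$-torsor achieves the same thing at the cost of one extra (harmless) gluing step, your local representatives being in effect the paper's constant pairings. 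For parallelism, the paper verifies the claim for one explicit $D$-compatible $E$-connection $D^{\Sigma}\otimes D^{L}$, using that the spin connection form $\frac{1}{2}\sum\omega_{ji}(u)e_je_i$ acts skew-symmetrically (a consequence of (\ref{cliff-rel-spinors})), and then invokes the independence of $D^{\mathcal S}$ from the choice of compatible $D^{S}$; you instead differentiate (\ref{cliff-rel-spinors}) for an arbitrary compatible $D^{S}$, observe that $D_v\langle\cdot,\cdot\rangle_{\mathcal S}$ again satisfies the same homogeneous Clifford relation, conclude by Schur that it equals $f(v)\langle\cdot,\cdot\rangle_{\mathcal S}$, and kill $f$ via the determinant normalization — a cleaner argument that needs no explicit connection form. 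Two points you should make explicit for this to be airtight: (a) you need the space of \emph{all} bilinear forms on an irreducible $\mathrm{Cl}^{n,n}$-module satisfying (\ref{cliff-rel-spinors}) to be one-dimensional (not merely that nondegenerate ones agree up to scale), since $D_v\langle\cdot,\cdot\rangle_{\mathcal S}$ could a priori be degenerate; this does hold by Schur's lemma, and note that your parenthetical ``with the zero right-hand side'' should be read as saying that the differentiated identity has no inhomogeneous term, not that $D_v\langle\cdot,\cdot\rangle_{\mathcal S}$ vanishes on Clifford products; and (b) the identity $0=\pi(v)\,\mathrm{det}\langle\cdot,\cdot\rangle_{\mathcal S}=r\,f(v)\,\mathrm{det}\langle\cdot,\cdot\rangle_{\mathcal S}$ uses that the induced connection on $(\mathrm{det}\,\mathcal S^*)^{\otimes 2}$, identified with the trivial real line bundle, is the trivial connection — which is precisely what the weight $|\mathrm{det}\,S^*|^{1/r}$ in the definition of $\mathcal S$ is designed to ensure.
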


\begin{proof}    Assume that  $E\vert_{U}$ admits a local frame $(e_{i})$ with $\langle e_{i}, e_{j}\rangle_{E}= \epsilon_{i} \delta_{ij}$,
where $\epsilon_{i} =1$ for $i\leq n$ and $-1$ for $i\geq n+1$.
On $\mathbb{R}^{2n}$ 
we consider the standard basis $( v_{i}) $ and metric $\langle\cdot , \cdot \rangle_{\mathbb{R}^{2n}}$
defined by
$\langle v_{i}, v_{j}\rangle_{\mathbb{R}^{2n}}=\epsilon_{i} \delta_{ij}$. Let $V$ be an irreducible 
$\mathrm{Cl} (\mathbb{R}^{2n}, \langle\cdot , \cdot \rangle_{\mathbb{R}^{2n}})$-module and $\Sigma := U\times V$, which is an irreducible   $\mathrm{Cl}(E\vert_{U})$-bundle
with Clifford action $\gamma_{e_{i}} (p, w):= (p, v_{i}\cdot w)$, for any $(p, w)\in U\times V.$ 
Since $E$ has neutral signature,    $S\vert_{U} = \Sigma \otimes L$ where $L$ is a  line bundle and 
$$
{\mathcal S}\vert_{U} = \Sigma \otimes  | \mathrm{det}\, \Sigma^{*}|^{1/r} \otimes L \otimes  | L^{*} |,
$$
where $r:= \mathrm{dim}\, V$.
The bilinear pairing $\langle \cdot , \cdot \rangle_{\mathcal S\vert_{U}}$ we are looking for  is given by
$$
\langle  s\otimes l,\tilde{s} \otimes l\rangle_{\mathcal S}= \langle s, \tilde{s}\rangle_{\Sigma
\otimes | \mathrm{det}\,\Sigma^{*}|^{1/r} } l^{2},\ s, \tilde{s}\in
 \Sigma\otimes |\mathrm{det}\, \Sigma^{*}|^{1/r},\  
l\in L\otimes  | L^{*}|,
$$ 
where  $\langle s, \tilde{s}\rangle_{\Sigma
\otimes | \mathrm{det}\,\Sigma^{*}|^{1/r} }$ is the constant pairing on $\Sigma$ induced by a canonical 
$\mathbb{R}$-valued  pairing  on $V\otimes  | \mathrm{det}\,V^{*}|^{1/r} $ 
 (according to Lemma \ref{pas1})
and $l^{2}\in C^{\infty}(U)$ 
under  the canonical isomorphism $(L\otimes | L^{*} |)^{2} = U\times \mathbb{R}.$ 
 If 
$$
D_{u} ( e_{k}) = 2  \sum_{j< p} \omega_{pj} (u)  (e_{p}\wedge e_{j}) (e_{k}),\ \forall u\in\Gamma ( E\vert_{U}),
$$   
where $\omega_{pj}\in \Gamma ( E^{*}\vert_{U})$, then the   $E$-connection $D^{\Sigma}$ on $\Sigma$ 
defined by 
$$
D^{\Sigma}_{u}(\sigma_{\alpha }) := \frac{1}{2} \sum_{i<j} \omega_{ji} (u) e_{j} e_{i}\cdot \sigma_{\alpha},\ 
1\leq \alpha \leq r, 
$$
where $(\sigma_{\alpha})$ is a constant  frame of $\Sigma$, 
is compatible with $D$ (see e.g.\ \cite{cortes-david-MMJ}). 
Since $\mathrm{trace}(\, e_{i}e_{j}\cdot ) =0$, $D^{\Sigma}(\sigma_{1}\wedge \cdots \wedge \sigma_{r})=0$ 
and the $E$-connection induced by $D^{\Sigma}$ on $\Sigma \otimes |\mathrm{det}\,  \Sigma^{*} |^{1/r}$, also
denoted by $D^{\Sigma}$, 
satisfies
$$
D^{\Sigma}_{u}(\sigma_{\alpha }\otimes l_{\Sigma}) = \frac{1}{2} \sum_{i<j} \omega_{ji} (u) (e_{j} e_{i}\cdot \sigma_{\alpha})
\otimes l_{\Sigma},
$$
where $l_{\Sigma}:= | \sigma_{1}\wedge \cdots \wedge \sigma_{r}|^{-1/r}$.
Since  
$\langle\cdot , \cdot\rangle_{\Sigma\otimes | \mathrm{det}\,\Sigma^{*}|^{1/r}}$ is constant in the frame
$(\sigma_{\alpha}\otimes l_{\Sigma})$ and the Clifford action of  $e_{i}e_{j}$ is skew-symmetric with respect to 
$\langle\cdot , \cdot\rangle_{\Sigma\otimes | \mathrm{det}\Sigma^{*}|^{1/r}}$
(from the property (\ref{cliff-rel-spinors}) of $\langle\cdot , \cdot\rangle_{\Sigma\otimes |\mathrm{det}\,\Sigma^{*}|^{1/r}}$),
we obtain that $D^{\Sigma}$ preserves 
$\langle\cdot , \cdot\rangle_{\Sigma\otimes | \mathrm{det}\,\Sigma^{*}|^{1/r}}$.
Let  $D^{L}$ be an  $E$-connection on $L$.  Then $D^{\Sigma}\otimes D^{L}$ is an $E$-connection on $S\vert_{  U }$, 
compatible with $D$,  with the property that the induced connection on $\mathcal S\vert_{  U}$ preserves $\langle \cdot , \cdot \rangle_{\mathcal S}$ (easy check).
The latter coincides with $D^{\mathcal S}\vert_{  U}.$ 
\end{proof}

As a consequence of Proposition \ref{prop-pairing} we obtain,  for any $U\subset M$ open and sufficiently small,  a
canonical (unique modulo $\pm 1$)  $C^{\infty}(U)$-bilinear  
pairing 
\begin{equation}\label{ss-pairing}
\langle \cdot , \cdot \rangle_{\mathbb{S}\vert_{U} }:\Gamma ( \mathbb{S}\vert_{U}) \times\Gamma
( \mathbb{S}\vert_{U}) \rightarrow | \mathrm{det}\, T^{*}U|,\ 
\langle s\otimes l , \tilde{s}\otimes l\rangle_{\mathbb{S}\vert_{U}}: = \langle s, \tilde{s}\rangle_{\mathcal{S}\vert_{U} }
l^{2},
\end{equation}
where  $s, \tilde{s}\in\Gamma ( \mathcal S\vert_{U})$ and $l\in |  \mathrm{det}\, T^{*}U|^{1/2}.$
It satisfies
\begin{equation}\label{symmetry-pairing}
\langle u\cdot ( s\otimes l), u\cdot (\tilde{s}\otimes \tilde{l})\rangle_{\mathbb{S}\vert_{U}} =\langle u,u\rangle_E \langle s\otimes l, \tilde{s}\otimes \tilde{l} \rangle_{\mathbb{S}\vert_{U} },\
u\in \Gamma (E\vert_{U}) ,\ s\otimes l, \tilde{s}\otimes \tilde{l}\in \Gamma (\mathbb{S}\vert_{U}) .
\end{equation}
When $M$ is oriented, 
$\langle \cdot , \cdot\rangle_{\mathbb{S}\vert_{U}}$ 
takes  values in the bundle $\mathrm{det}\, T^{*}U$ of forms of top degree on $U$.
A pairing with similar properties (but with values in $(\mathrm{det}\, T^{*} U)\otimes \mathbb{C}$) was constructed in Proposition 3.14 of \cite{weyl-quant}. 

Given a standard Courant algebroid $T^{*}M\oplus \mathcal G\oplus TM$, we will often consider,
as in the next lemma, an irreducible $\mathrm{Cl}(\mathcal G)$-bundle $S_{\mathcal G}$  of $\mathcal G .$ 
This will always be assumed to be $\mathbb{Z}_{2}$-graded, with gradation compatible with the Clifford action.  In the next lemma by a canonical bilinear pairing of 
the weighted spinor bundle 
$\mathcal S_{\mathcal G}\vert_{U} $ of $S_{\mathcal G}\vert_{U}$ 
we mean a smooth $C^{\infty}(U)$-bilinear pairing 
\begin{equation}\label{pairing-lifted}
\langle \cdot , \cdot \rangle_{\mathcal S_{\mathcal G}\vert_{U}}: \Gamma (\mathcal S_{\mathcal G}\vert_{U})
\times  \Gamma (\mathcal S_{\mathcal G}\vert_{U})\rightarrow C^{\infty}(U)
\end{equation}
of normalized determinant, which satisfies 
\begin{equation}
\langle u\cdot s, u\cdot \tilde{s}\rangle_{\mathcal S_{\mathcal G}\vert_{U} }  = 
\langle u, u\rangle_{\mathcal G} \langle s, \tilde{s}\rangle_{\mathcal S_{\mathcal G}\vert_{U} } ,
\end{equation}
 for any $ s, \tilde{s}\in \Gamma ({\mathcal S}_{\mathcal G}\vert_{U} )$ and  $u\in \Gamma (\mathcal G\vert_{U}) $.
With the same argument as in Proposition \ref{prop-pairing}, such a pairing
exists when $U\subset M$ is a sufficiently small open set   
and  is unique up to  multiplication by $\pm 1$.  It 
 is preserved by the connection $\nabla^{\mathcal S_{\mathcal G}}$ induced by any  connection
$\nabla^{S_{\mathcal G}}$ on $S_{\mathcal G}$ compatible with $\nabla .$

\begin{lem}\label{cor-can-properties}   Assume that $E = T^{*} M\oplus \mathcal G \oplus TM$ is 
a standard  Courant algebroid over an oriented manifold $M$,  
defined by 
a quadratic Lie algebra bundle $(\mathcal G , [\cdot , \cdot ]_{\mathcal G}, \langle\cdot , \cdot\rangle_{\mathcal G})$
and data $(\nabla , R, H)$.  Let $S_{\mathcal G}$ be an irreducible   
$\mathrm{Cl}(\mathcal G)$-bundle, $\mathcal S_{\mathcal G}= S_{\mathcal G}\otimes | \mathrm{det } S^{*} |^{1/r}$ the canonical spinor bundle of $S_{\mathcal G}$ 
and $\mathbb{S} = \Lambda (T^{*}M)\hat{\otimes} \mathcal S_{\mathcal G}$
the corresponding canonical  weighted spinor bundle of $E$. 
For any  $U\subset M$ open and sufficiently  small, 
the  canonical bilinear pairing $\langle \cdot , \cdot \rangle_{\mathbb{S}\vert_{U} }$ is given 
(up to multiplication by $\pm 1$) 
by
\begin{equation}\label{can-pairing}
\langle \omega \otimes s, \tilde{\omega}\otimes \tilde{s}\rangle_{\mathbb{S}\vert_{U} } =
(-1)^{|s|  ( |\omega |+| \tilde{\omega } | )} ( \omega^{t} \wedge \tilde{\omega})_{\mathrm{top}} 
\langle s, \tilde{s}\rangle_{\mathcal S_{\mathcal G}\vert_{U} },
\end{equation}
where $\langle \cdot , \cdot \rangle_{\mathcal S_{\mathcal G}\vert_{U} }$ is the canonical bilinear pairing of ${\mathcal S}_{\mathcal G}\vert_{U}.$\
\end{lem}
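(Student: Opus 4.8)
The plan is to exploit the uniqueness built into Proposition \ref{prop-pairing}: by that proposition together with \eqref{ss-pairing}--\eqref{symmetry-pairing}, the canonical pairing $\langle\cdot,\cdot\rangle_{\mathbb{S}|_U}$ is, up to an overall factor $\pm 1$, the only $C^\infty(U)$-bilinear pairing on $\Gamma(\mathbb{S}|_U)$ with values in $\det T^*U$ which satisfies the Clifford compatibility \eqref{symmetry-pairing} and whose associated determinant (that of the induced pairing on $\mathcal S_E|_U$) equals $1$ when $n>1$ and $-1$ when $n=1$. Hence it suffices to show that the right-hand side of \eqref{can-pairing} --- call it $\Psi$ --- is such a pairing. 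Bilinearity of $\Psi$ and the fact that it takes values in forms of top degree are immediate from the definition, and the remaining two conditions are pointwise, so (shrinking $U$, we may assume it connected and) we may argue fiberwise over a point $p\in U$, using the model identification $\mathbb{S}_p\cong\Lambda T_p^*M\,\hat{\otimes}\,\mathcal S_{\mathcal G,p}$ of \eqref{S-dissection}--\eqref{iso-standards} and the Clifford action $\gamma_{\xi+r+X}(\omega\otimes s)=(i_X\omega+\xi\wedge\omega)\otimes s+(-1)^{|\omega|}\omega\otimes(r\cdot s)$ recalled there.

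For the Clifford relation I would take $u=\xi+r+X\in E_p$, note that $\langle u,u\rangle_E=\xi(X)+\langle r,r\rangle_{\mathcal G}$ (the cross term vanishing since $\mathcal G_p\perp(T_p^*M\oplus T_pM)$), and expand $\Psi(\gamma_u(\omega\otimes s),\gamma_u(\tilde\omega\otimes\tilde s))$ into the four terms coming from the two summands of $\gamma_u$. The two diagonal terms are disposed of as follows. First, since $(\alpha)_{\mathrm{top}}$ sees only the total form-degree $m:=\dim M$, the sign $(-1)^{|s|(|\omega|+|\tilde\omega|)}$ equals $(-1)^{m|s|}$ wherever $\Psi$ is nonzero, so the degree shifts produced by $i_X$ and $\xi\wedge(\cdot)$ are harmless, and the identity $((v\cdot\omega)^t\wedge(v\cdot\tilde\omega))_{\mathrm{top}}=\langle v,v\rangle(\omega^t\wedge\tilde\omega)_{\mathrm{top}}$ for $v=X+\xi$ --- which is exactly \eqref{cliff-rel-spinors} for the module $\Lambda T_p^*M$, recalled around \eqref{scalar-standard} --- produces the $\xi(X)$-contribution. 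Second, using $\langle r\cdot s,r\cdot\tilde s\rangle_{\mathcal S_{\mathcal G}}=\langle r,r\rangle_{\mathcal G}\langle s,\tilde s\rangle_{\mathcal S_{\mathcal G}}$, $|r\cdot s|=|s|+1$, and the two factors $(-1)^{|\omega|}$, $(-1)^{|\tilde\omega|}$ from the Clifford action, one recovers the $\langle r,r\rangle_{\mathcal G}$-contribution. Finally the two mixed terms must cancel; this is where the precise shape of the sign in \eqref{can-pairing} is essential, and it relies on comparing $\langle r\cdot s,\tilde s\rangle_{\mathcal S_{\mathcal G}}$ with $\langle s,r\cdot\tilde s\rangle_{\mathcal S_{\mathcal G}}$ and $((v\cdot\omega)^t\wedge\tilde\omega)_{\mathrm{top}}$ with $(\omega^t\wedge(v\cdot\tilde\omega))_{\mathrm{top}}$. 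I expect this sign bookkeeping (the cross-term cancellation, and checking that the sign convention is consistent with the one in $\langle\cdot,\cdot\rangle_{\mathcal S_{\mathcal G}}$) to be the one genuinely delicate point of the proof; everything else is formal.

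For the determinant, I would pass to the weighted bundle and compute $\det\Psi$ in a basis of $S_{E,p}=\Lambda T_p^*M\,\hat{\otimes}\,S_{\mathcal G,p}$ of product type $(\omega_a\otimes s_b)$. Since $\Psi$ is, up to the Koszul-type sign, the tensor product of the pairing $(\omega^t\wedge\tilde\omega)_{\mathrm{top}}$ on $\Lambda T_p^*M$ and the canonical pairing on $\mathcal S_{\mathcal G,p}$, and the rearrangement signs occur in cancelling pairs, $\det\Psi$ equals, up to sign, the product of the determinants of these two factors; both have absolute value $1$ by the discussion following \eqref{scalar-standard} and by Lemma \ref{pas1}, so $\det\Psi=\pm 1$, as required (and in the degenerate case $n=1$, where $M$ is a point or $\dim M=1$ with trivial $\mathcal G$, $\Psi$ reduces to one of the two model pairings and the sign is checked directly). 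Invoking the uniqueness part of Proposition \ref{prop-pairing}, $\Psi$ then agrees with $\langle\cdot,\cdot\rangle_{\mathbb{S}|_U}$ up to multiplication by $\pm 1$, which is the assertion of the lemma.
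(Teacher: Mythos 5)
Your proposal is correct and follows essentially the same route as the paper: verify that the candidate formula satisfies the Clifford compatibility \eqref{cliff-rel-spinors} and has normalized determinant (by comparing with the unsigned product pairing and using that the Koszul signs do not affect the determinant), then conclude by the uniqueness in Proposition \ref{prop-pairing}. The only cosmetic difference is that the paper packages the computation as a general statement about the canonical pairing on a graded tensor product $\mathcal S_{1}\hat{\otimes}\mathcal S_{2}$ of irreducible Clifford modules for $V_{1}\oplus V_{2}$, whereas you instantiate the same argument directly with $V_{1}=T_{p}^{*}M\oplus T_{p}M$ and $V_{2}=\mathcal G_{p}$.
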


\begin{proof} The claim is a consequence of the following general statement:
if $(V_{i}, \langle \cdot , \cdot \rangle_{i})$
are Euclidian vector spaces with metrics of neutral signature and $S_{i}$ are 
irreducible  $\mathrm{Cl}(V_{i})$-modules of ranks $r_{i}$, with canonical bilinear pairings $\langle\cdot , \cdot \rangle_{\mathcal S_{i}}$
on ${\mathcal S}_{i} = S_{i}\otimes | \mathrm{det}\, S^{*}_{i}|^{1/r_{i}}$, then
$S:=S_{1}\hat{\otimes} S_{2}$ is an irreducible  $\mathrm{Cl} (V_{1}\oplus V_{2})$-module, with
canonical   spinor module $\mathcal S = \mathcal S_{1}\hat{\otimes}\mathcal S_{2}$ and the
canonical bilinear pairing on $\mathcal S$ is  given by
\begin{equation}\label{tensors-pair}
\langle s_{1}\otimes s_{2}, \tilde{s}_{1}\otimes \tilde{s}_{2}\rangle_{\mathcal S} 
= (-1)^{| s_{2}| ( |s_{1} | + | \tilde{s}_{1}| )} \langle s_{1},
\tilde{s}_{1}\rangle_{\mathcal S_{1}} \langle s_{2}, \tilde{s}_{2}\rangle_{\mathcal S_{2}}.
\end{equation}
Indeed,  the scalar product (\ref{tensors-pair}) satisfies  (\ref{cliff-rel-spinors}) (easy check). 
In order to show that it has normalized determinant, we remark that
\begin{equation}\label{details}
\mathrm{det}\, \langle \cdot , \cdot \rangle_{\mathcal S} = \mathrm{det}\, \langle \cdot , \cdot \rangle_{\mathcal S}^{\prime}
=( \mathrm{det} \, \langle\cdot , \cdot \rangle_{\mathcal S_{1}})^{r_{2}} ( \mathrm{det} \, \langle\cdot , \cdot \rangle_{\mathcal S_{2}})^{r_{1}}=1,  
\end{equation}
where 
$$
\langle s_{1}\otimes s_{2}, \tilde{s}_{1}\otimes \tilde{s}_{2}\rangle_{\mathcal S}^{\prime} := 
\langle s_{1}, \tilde{s}_{1}\rangle_{\mathcal S_{1}} \langle s_{2}, \tilde{s}_{2}\rangle_{\mathcal S_{2}}.
$$
The first relation in (\ref{details}) can be checked using that the scalar products $\langle \cdot , \cdot \rangle_{\mathcal S}$ and  
$\langle \cdot , \cdot \rangle_{\mathcal S}^{\prime}$ differ only by a sign (dependent on degrees) when restricted to tensor products of homogeneous elements. (Recall that the even and odd parts of $\mathcal S_{i}$ are orthogonal or isotropic with respect to $\langle \cdot , \cdot \rangle_{\mathcal S_{i}}$.)
\end{proof}

The next corollary will be used to show that the pushforward commutes with the canonical Dirac generating operators
(see Section \ref{sect-push}).

\begin{cor}\label{de-adaugat:cor}
In the setting of Lemma  \ref{cor-can-properties}, 
let $\nabla^{\mathcal S_{\mathcal G}}$ be the connection on $\mathcal S_{\mathcal G}$ induced by an arbitrary connection
$\nabla^{S_{\mathcal G}}$ on 
$S_{\mathcal G}$,
compatible with $\nabla $. Define ${\mathcal E}\in \mathrm{End}\,  \Gamma ( \mathbb{S}\vert_{U}  )$ by
\begin{equation}
{\mathcal E} (\omega \otimes s) := (d\omega )\otimes s + \sum_{i} (\alpha_{i}\wedge \omega ) \otimes \nabla^{\mathcal S_{\mathcal G}}_{X_{i}} s,\ \omega \in \Omega (U ),\
s\in \Gamma ({\mathcal S}_{\mathcal G}\vert_{U} )
\end{equation}
where  $(X_{i})$ is a local frame of $TU$, with dual frame $( \alpha_{i})$. 
Then,   for any
$U\subset M$ open and sufficiently small and 
products $\omega \otimes s, \tilde{\omega}\otimes \tilde{s}\in
\Gamma (\mathbb{S}\vert_{U})$ of homogeneous elements, 
\begin{eqnarray}\label{symm-part} &
\langle {\mathcal E} (\omega \otimes s), \tilde{\omega}\otimes \tilde{s}\rangle_{\mathbb{S}\vert_{U}} + \langle \omega \otimes s, {\mathcal E} (\tilde{\omega} \otimes \tilde{s}) \rangle_{\mathbb{S}\vert_{U} }=\nonumber\\ 
& (-1)^{ | s | ( | \omega | + | \tilde{\omega} | + 1) + | \omega | }d \left(  \langle s, \tilde{s}\rangle_{{\mathcal S}_{\mathcal G}\vert_{U} } (\omega^{t}\wedge \tilde{\omega })_{m-1}\right) . 
\end{eqnarray}
Here $m$ is the dimension of $M$ and $\omega_{m-1}$ denotes  the  degree $(m-1)$-component of a form $\omega \in \Omega ( U).$
\end{cor}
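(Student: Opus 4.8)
The plan is to expand both pairings in (\ref{symm-part}) by means of the explicit formula (\ref{can-pairing}) for $\langle\cdot,\cdot\rangle_{\mathbb{S}\vert_U}$, and then to recognize the outcome as a single exact form. Throughout I would abbreviate $f:=\langle s,\tilde s\rangle_{\mathcal S_{\mathcal G}\vert_U}\in C^\infty(U)$ and $\eta:=(\omega^t\wedge\tilde\omega)_{m-1}$. First I would substitute $\mathcal{E}(\omega\otimes s)=(d\omega)\otimes s+\sum_i(\alpha_i\wedge\omega)\otimes\nabla^{\mathcal S_{\mathcal G}}_{X_i}s$, and likewise for $\mathcal{E}(\tilde\omega\otimes\tilde s)$, into (\ref{can-pairing}). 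Because $\nabla^{\mathcal S_{\mathcal G}}$ preserves the $\mathbb{Z}_2$-grading of $\mathcal S_{\mathcal G}$, all spinors occurring in the first, resp.\ second, slot have parity $|s|$, resp.\ $|\tilde s|$, so every resulting term carries the common prefactor $(-1)^{|s|(|\omega|+|\tilde\omega|+1)}$. Factoring this out, and using the reversal identity $(\alpha_i\wedge\omega)^t=\omega^t\wedge\alpha_i$ to see that the two families of connection terms share the form coefficient $(\omega^t\wedge\alpha_i\wedge\tilde\omega)_{\mathrm{top}}$, the claim reduces to the identity
\begin{align*}
& \bigl((d\omega)^t\wedge\tilde\omega\bigr)_{\mathrm{top}}f+\bigl(\omega^t\wedge d\tilde\omega\bigr)_{\mathrm{top}}f \\
& \qquad+\sum_i\bigl(\omega^t\wedge\alpha_i\wedge\tilde\omega\bigr)_{\mathrm{top}}\bigl(\langle\nabla^{\mathcal S_{\mathcal G}}_{X_i}s,\tilde s\rangle_{\mathcal S_{\mathcal G}}+\langle s,\nabla^{\mathcal S_{\mathcal G}}_{X_i}\tilde s\rangle_{\mathcal S_{\mathcal G}}\bigr)=(-1)^{|\omega|}\,d(f\eta).
\end{align*}

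For the connection sum I would invoke that $\nabla^{\mathcal S_{\mathcal G}}$ preserves $\langle\cdot,\cdot\rangle_{\mathcal S_{\mathcal G}\vert_U}$ (recalled just before Lemma \ref{cor-can-properties}), so that $\langle\nabla^{\mathcal S_{\mathcal G}}_{X_i}s,\tilde s\rangle_{\mathcal S_{\mathcal G}}+\langle s,\nabla^{\mathcal S_{\mathcal G}}_{X_i}\tilde s\rangle_{\mathcal S_{\mathcal G}}=X_i(f)$; summing over $i$ and using $\sum_i\alpha_i\otimes X_i=\mathrm{id}_{TU}$, hence $\sum_i X_i(f)\,\alpha_i=df$, turns this contribution into $(\omega^t\wedge df\wedge\tilde\omega)_{\mathrm{top}}$, which equals $(-1)^{|\omega|}df\wedge\eta$ once $df$ is commuted past $\omega^t$ (degree $|\omega|$) and the degree-$m$ component is extracted. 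For the two $d$-terms I would use $(d\omega)^t=(-1)^{|\omega|}d(\omega^t)$ together with $d(\omega^t\wedge\tilde\omega)=d(\omega^t)\wedge\tilde\omega+(-1)^{|\omega|}\omega^t\wedge d\tilde\omega$; this rewrites $\bigl((d\omega)^t\wedge\tilde\omega\bigr)_{\mathrm{top}}+\bigl(\omega^t\wedge d\tilde\omega\bigr)_{\mathrm{top}}$ as $(-1)^{|\omega|}\bigl(d(\omega^t\wedge\tilde\omega)\bigr)_{\mathrm{top}}=(-1)^{|\omega|}d\eta$, the last equality because $d$ raises form degree by exactly one, so the degree-$m$ part of $d(\omega^t\wedge\tilde\omega)$ is $d$ of its degree-$(m-1)$ part. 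Adding the two contributions gives $(-1)^{|\omega|}\bigl(f\,d\eta+df\wedge\eta\bigr)=(-1)^{|\omega|}d(f\eta)$, and reinstating the prefactor $(-1)^{|s|(|\omega|+|\tilde\omega|+1)}$ reproduces the right-hand side of (\ref{symm-part}).

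The argument is purely formal: the only substantive inputs are formula (\ref{can-pairing}) for the canonical pairing and the compatibility of $\nabla^{\mathcal S_{\mathcal G}}$ with $\langle\cdot,\cdot\rangle_{\mathcal S_{\mathcal G}}$. The one delicate point, and the place where I expect errors to creep in, is the sign bookkeeping — checking that the prefactors emitted by the two slots of the pairing in (\ref{can-pairing}) genuinely agree, and keeping track of the $(-1)^{|\omega|}$'s produced by the reversal operation $(\,\cdot\,)^t$ and by commuting $df$, resp.\ $d$, past $\omega^t$. Once the signs are under control, the identity drops out immediately.
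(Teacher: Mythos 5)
Your proposal is correct and follows essentially the same route as the paper's proof: both expand the four terms via the explicit formula (\ref{can-pairing}), use the reversal identities $(d\omega)^t=(-1)^{|\omega|}d(\omega^t)$ and $(\alpha_i\wedge\omega)^t=\omega^t\wedge\alpha_i$, and invoke the compatibility of $\nabla^{\mathcal S_{\mathcal G}}$ with $\langle\cdot,\cdot\rangle_{\mathcal S_{\mathcal G}\vert_U}$ to convert the connection terms into $X_i(f)$. The only difference is organizational — you factor out the common sign prefactor and package the cancellation as $f\,d\eta+df\wedge\eta=d(f\eta)$, while the paper records the two intermediate relations (\ref{rel1}) and (\ref{rel2}) separately — and your sign bookkeeping checks out.
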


\begin{proof} We use the expression (\ref{can-pairing}) of the canonical bilinear pairing $\langle \cdot , \cdot \rangle_{\mathbb{S}\vert_{U} }$ of
$\mathbb{S}\vert_{U}  = \Lambda (T^*U) \hat{\otimes} {\mathcal S}_{\mathcal G}\vert_{U} .$  Since $d ( \omega^{t})  = (-1)^{ |\omega |} ( d\omega )^{t}$ 
we obtain
\begin{align}
\nonumber& \langle (d \omega )\otimes s , \tilde{\omega}\otimes \tilde{s}\rangle_{\mathbb{S}\vert_{U} } = \\
\label{rel1} &(-1)^{ | s | ( | \omega | + | \tilde{\omega} | + 1) + | \omega | }
\langle s, \tilde{s}\rangle_{\mathcal S_{\mathcal G}\vert_{U} } \left(  d ((\omega^{t}  \wedge \tilde{\omega})_{m-1})  + (-1)^{ | \omega | +1}
(\omega^{t}  \wedge d \tilde{\omega})_{\mathrm{top}}\right) . 
\end{align}
Similarly, since $ (\alpha_{i}\wedge \omega )^{t}  = \omega^{t} \wedge \alpha_{i}$ and using that $\nabla^{{\mathcal S}_{\mathcal G}}$ preserves
$\langle \cdot , \cdot \rangle_{\mathcal S_{\mathcal G}\vert_{U} }$ we obtain
\begin{align}
\nonumber& \langle  (\alpha_{i}\wedge \omega )\otimes \nabla_{X_{i}}^{\mathcal S_{\mathcal G}} s, \tilde{\omega}\otimes \tilde{s}\rangle_{\mathbb{S}\vert_{U} } =\\
\label{rel2}& 
(-1)^{ | s | ( | \omega | + | \tilde{\omega } |+ 1) } (\omega^{t}  \wedge \alpha_{i}\wedge \tilde{\omega })_{\mathrm{top}} \left( 
X_{i} \langle s, \tilde{s}\rangle_{\mathcal S_{\mathcal G}\vert_{U}} - \langle s, \nabla^{\mathcal S_{\mathcal G}\vert_{U}}_{X_{i}} \tilde{s} \rangle_{\mathcal S_{\mathcal G}\vert_{U}}\right) .
\end{align}
From  (\ref{rel1}) and  (\ref{rel2}) we obtain  (\ref{symm-part}).
\end{proof}

\section{Dirac generating operator and operations on spinors}

\subsection{Behaviour of canonical Dirac generating operators under isomorphisms}

The next lemma and proposition  are stated for transitive Courant algebroids but the same arguments  hold in
the larger setting of regular Courant algebroids.

\begin{lem}\label{iso-bdle} Let $I_{E} : E_{1} \rightarrow E_{2}$ be an isomorphism of transitive Courant algebroids over 
a manifold $M$ and $S_{i}$ irreducible  $\mathrm{Cl}(E_{i})$-bundles ($i=1,2$). 
Then,  for any $U\subset M$ open and sufficiently small,  there is a unique
(up to  multiplication by a smooth non-vanishing  function)   isomorphism $I_{S\vert_{U} } : S_{1}\vert_{U} 
 \rightarrow S_{2}\vert_{U}$  such that
\begin{equation}\label{i-s}
I_{S\vert_{U} }\circ \gamma_{u} = \gamma_{I_{E}(u)} \circ I_{S\vert_{U}},\ \forall u\in E_{1}\vert_{U}.
\end{equation}
The map $I_{S\vert_{U} }$ is homogeneous (i.e.\ even or odd).
If $\slashed{d}_{1} \in \mathrm{End}\, \Gamma (S_{1}\vert_{U} )$ is a Dirac generating operator  of 
$E_{1}\vert_{U}$ then 
$\slashed{d}_{2} := I_{S\vert_{U}}\circ \slashed{d}_{1}\circ I_{S\vert_{U}}^{-1}\in \mathrm{End}\,  \Gamma (S_{2}\vert_{U})$ is a Dirac generating operator of $E_{2}\vert_{U}.$
\end{lem}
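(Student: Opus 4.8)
The statement has three parts: existence and uniqueness of $I_{S|_U}$ with the intertwining property \eqref{i-s}, the homogeneity of $I_{S|_U}$, and the fact that conjugation by $I_{S|_U}$ sends Dirac generating operators to Dirac generating operators. I will treat them in this order.

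\emph{Existence and uniqueness of $I_{S|_U}$.}
The isomorphism $I_E$ of Courant algebroids is in particular a bundle isometry, hence it induces an isomorphism of Clifford bundles $\mathrm{Cl}(I_E):\mathrm{Cl}(E_1)\to\mathrm{Cl}(E_2)$. Via this isomorphism, $S_2|_U$ becomes a $\mathrm{Cl}(E_1)|_U$-bundle, and the assertion is that over a sufficiently small $U$ it is isomorphic, as a $\mathrm{Cl}(E_1)|_U$-bundle, to $S_1|_U$. This is the standard local rigidity statement for irreducible Clifford modules: over $U$ small enough that $E_1|_U$ is trivial with a frame in which the scalar product is the standard neutral form, both $S_1|_U$ and the twisted $S_2|_U$ are, up to a line-bundle twist, the trivial bundle with fiber the unique (up to isomorphism) irreducible $\mathrm{Cl}^{n,n}$-module; comparing the two line-bundle twists and trivializing them over the (shrunk) contractible $U$ produces $I_{S|_U}$. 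Uniqueness up to a nowhere-vanishing smooth function is Schur's lemma fiberwise: any two solutions of \eqref{i-s} differ by a $\mathrm{Cl}(E_1)|_U$-bundle automorphism of $S_1|_U$, which on each fiber is a scalar since the module is irreducible, and smoothness of the scalar function follows from smoothness of the two maps.

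\emph{Homogeneity.}
Because $S_i$ is $\mathbb{Z}_2$-graded with grading compatible with Clifford multiplication (i.e.\ $\gamma_u$ is odd), the intertwining relation \eqref{i-s} forces $I_{S|_U}$ to be grading-preserving or grading-reversing on each connected component. Concretely, $I_{S|_U}$ decomposes as $I^{\mathrm{ev}}+I^{\mathrm{odd}}$ into grading-preserving and grading-reversing parts, and \eqref{i-s} together with oddness of the Clifford action shows each of $I^{\mathrm{ev}},I^{\mathrm{odd}}$ separately intertwines the Clifford actions; by the uniqueness just established (applied on each connected piece of $U$), one of them must vanish. Thus $I_{S|_U}$ is homogeneous.

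\emph{Transport of Dirac generating operators.}
Here I would simply unwind the defining properties of a Dirac generating operator of $E_1|_U$ — the two axioms relating $\slashed{d}$ to the Clifford action and to the anchor/bracket (as in \cite{cortes-david-MMJ}) — and check that each one is preserved under the substitutions $\gamma_u\mapsto\gamma_{I_E(u)}=I_{S|_U}\gamma_u I_{S|_U}^{-1}$, $\pi_2\circ I_E=\pi_1$, and $[\cdot,\cdot]_1\mapsto I_E^{-1}[\,I_E\cdot,I_E\cdot\,]_2$, using that $I_E$ is a Courant algebroid isomorphism. Since $\slashed{d}_2:=I_{S|_U}\circ\slashed{d}_1\circ I_{S|_U}^{-1}$, each axiom for $\slashed{d}_1$ conjugates verbatim into the corresponding axiom for $\slashed{d}_2$; the overall parity of $\slashed{d}_2$ is odd because $\slashed{d}_1$ is odd and $I_{S|_U}$ is homogeneous (conjugation by a homogeneous map preserves parity).

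\emph{Main obstacle.}
The only genuinely non-formal point is the \emph{existence} of $I_{S|_U}$ over a small $U$ — more precisely, keeping track of the line-bundle twists that appear because $S_i$ is merely irreducible rather than a fixed associated spinor bundle, and arguing that after shrinking $U$ these twists are trivial and can be matched smoothly. Everything after that (uniqueness, homogeneity, transport of the Dirac operator) is a routine consequence of Schur's lemma and of conjugation-invariance of the defining axioms.
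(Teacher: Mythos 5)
Your proposal is correct and follows essentially the same route as the paper: existence via the local model $S_i|_U\cong \Sigma_i\otimes L_i$ with $\Sigma_i$ built from the fixed irreducible $\mathrm{Cl}^{n,n}$-module and matching/trivializing the line-bundle twists over a small $U$, uniqueness by fiberwise Schur, and transport of the Dirac generating operator by conjugating the defining identities (the paper records this as $[\slashed{d}_{2},\gamma_{I_{E}(u)}]=I_{S\vert_U}\circ[\slashed{d}_{1},\gamma_{u}]\circ I_{S\vert_U}^{-1}$). The only divergence is the homogeneity step, where the paper conjugates the volume element $\omega=e_{1}\cdots e_{2n}$ defining the grading, while you split $I_{S\vert_U}$ into even and odd parts and apply Schur to each; both arguments are valid.
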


\begin{proof} 
Assume that    
$E_{1}\vert_{U}$ admits an orthonormal frame  $(e_{i})$ and let   $(\tilde{e}_{i}):=( I_{E}(e_{i}))$ be 
the corresponding orthonormal frame of $E_{2}\vert_{U}.$ Like in the proof of  Lemma \ref{pas2}, 
$S_{i}\vert_{U} = \Sigma_{i}\otimes L_{i}$
where $\Sigma_{i}:= U\times V$ are $\mathrm{Cl} (E_{i})$-bundles, constructed using an irreducible      
$\mathrm{Cl} (\mathbb{R}^{2n}, \langle \cdot , \cdot \rangle_{\mathbb{R}^{2n}})$-module $V$ and  the orthonormal frames
$(e_{i})$ and $(\tilde{e}_{i})$ respectively,  and $L_{i}$ are line bundles over $U$.  
Restricting $U$ if necessary, we may assume that $L_{i}$ are isomorphic.  Let $I_{L}: L_{1}\rightarrow L_{2}$
be an isomorphism.
Then $I_{S\vert_{U} }: S_{1}\vert_{U}\rightarrow S_ {2}\vert_{U}$ defined by
$I_{S\vert_{U} }( \sigma \otimes l) := \sigma \otimes I_{L}(l)$  satisfies  (\ref{i-s}).
The even and odd parts of $S_{1}$  are given by $S_{1}^{0} = \frac{1}{2} ( 1 +\epsilon \gamma_{\omega})S$
and $S_{1}^{1} =\frac{1}{2} ( 1 -\epsilon \gamma_{\omega}) S$, where $\epsilon \in \{ \pm 1\}$ and 
 $\omega = e_{1}\cdots e_{2n}$,  and similarly for the even and odd parts of $S_{2}$ (using $\tilde{\omega}=
\tilde{e}_{1} \cdots  \tilde{e}_{n}$). 
Therefore the statement
that $I_{S\vert_{U} }$ is homogeneous follows from (\ref{i-s}), which implies that $I_{S\vert_{U} }\circ \gamma_{\omega}
= \gamma_{\tilde{\omega } }\circ I_{S\vert_{U} }$.   
Since  $I_{S\vert_{U} }$ is homogeneous  and  $\slashed{d}_{2}$ is odd,  we obtain that also $\slashed{d}_{1}$
is odd.   
The statement that $\slashed{d}_{2}$ satisfies the remaing conditions  from the definition of a Dirac generating operator 
can be checked using  (\ref{i-s}), which 
implies 
\[
[ \slashed{d}_{2}, \gamma_{I_{E}(u)}] = I_{S\vert_{U} }\circ [ \slashed{d}_{1}, \gamma_{u} ] \circ I_{S\vert_{U} }^{-1}. \qedhere
\]
\end{proof}

\begin{rem}\label{global-remark}{\rm 
i) In general, the isomorphisms $I_{S\vert_{U}}$ do not glue together to give an isomorphism
$I_{S} : S_{1}\rightarrow S_{2}$ compatible with $I_{E}$. However, 
assume  that 
$E_{1} = E_{2} = E$ and let
$S_{1} = S_{2} = S$  be an irreducible spinor bundle over $\mathrm{Cl}(E).$ 
If  $I_{E}\in  \mathrm{Aut} (E)$ is of the form 
$I_{E}(u) =\alpha \cdot u\cdot \alpha^{-1}$, where $\alpha \in \Gamma ( \mathrm{Pin}(E))$,
then $I_{S} \in \mathrm{Aut}(S)$ defined by  
$I_{S} (s) := \alpha \cdot s$, $s\in \Gamma (S)$,  satisfies (\ref{i-s}).\

ii) For example, if $E= T^{*}M \oplus \mathcal G \oplus TM$ is in the standard form
and $\beta \in \Omega^{2}(M)$, then 
$$
I_{E} (\xi + r + X)= \xi + r + X +i_{X}\beta
$$
can be written as $I_{E}(u) = \alpha \cdot u\cdot \alpha^{-1}$ for $\alpha := e^{-\beta}$ and the induced action on 
the spinor bundle $S:= \Lambda (T^{*}M)\hat{\otimes} S_{\mathcal G}$ 
is given by  $I_{S} (\omega \otimes r) := (e^{-\beta }\wedge \omega )\otimes r$ and  is globally defined. 
If, moreover, $d\beta =0$ then $I_{E}$ is a Courant algebroid automorphism (see relations  (\ref{iso-chen})).
Similarly, any automorphism $K\in \mathrm{Aut}(\mathcal G)$ of the bundle of quadratic Lie algebras
$\mathcal G$,  which belongs to the connected component $\mathrm{Aut}(\mathcal G , \langle \cdot , \cdot \rangle_{\mathcal G})_{0}$ and is parallel with respect to the connection $\nabla$ from the data which defines $E$,  defines a Courant algebroid automorphism of $E$ whose action on spinors is globally defined. In the case of exact odd Courant algebroids \cite{rubio} one has an additional class of such
automorphisms, defined by $1$-forms (and called in \cite{rubio} $A$-fields). However,  the class of exact odd Courant algebroids is 
a very special class of Courant algebroids, with $\mathcal G$ the trivial  rank one vector bundle and 
$\nabla$ the standard flat connection. 
  }
\end{rem}

The isomorphism $I_{S\vert_{U}} : S_{1}\vert_{U} \rightarrow S_{2}\vert_{U}$  from Lemma \ref{iso-bdle} 
induces an   isomorphism 
$I_{\mathcal S\vert_{U} } : \mathcal S_{1}\vert_{U} \rightarrow \mathcal S_{2}\vert_{U}$  
between the canonical  spinor bundles of $S_{1}$ and $S_2$, given by
\begin{equation}
I_{\mathcal S} (s\otimes | s_{1}\wedge \cdots \wedge s_{r}|^{-1/r}):=
(I_{S}s) \otimes  | I_{S}s_{1}\wedge \cdots \wedge I_{S} s_{r}|^{-1/r},
\end{equation} 
where $s_{1}\wedge \cdots \wedge s_{r}\in\Gamma ( \Lambda^{r}(S_{1}\vert_{U})) $ is non-vanishing. 
Since $I_{S\vert_{U}}$ is unique up to a multiplicative factor, 
$I_{\mathcal S\vert_{U}}$  is independent of the choice of $I_{S\vert_{U}}$, modulo multiplication by
$\pm 1$   (see  also Remark 55 of  \cite{cortes-david-MMJ}).

\begin{lem}\label{pairing-iso} For any $U\subset M$ open and sufficiently small, the isomorphism $I_{\mathcal S\vert_{U}}$ preserves   the canonical bilinear pairings 
$\langle \cdot , \cdot \rangle_{\mathcal S_{i}\vert_{U} }$ of $\mathcal S_{i}\vert_{U}$, i.e.\ 
$\langle I_{\mathcal S} s,I_{\mathcal S} \tilde{s} \rangle_{\mathcal S_{2}\vert_{U} } = \epsilon \langle s, \tilde{s}\rangle_{\mathcal S_{1}\vert_{U}}$,
for all $s, \tilde{s}\in \Gamma ({\mathcal S}_{1}\vert_{U})$, where $\epsilon \in \{\pm 1\}$  is independent of $s, \tilde{s}$. 
\end{lem}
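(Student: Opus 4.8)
The plan is to reduce everything to the local model used in the proof of Lemma \ref{pas2}. Fix $U$ small enough that $E_1|_U$ admits an orthonormal frame $(e_i)$, set $(\tilde e_i):=(I_E(e_i))$, and recall from Lemma \ref{iso-bdle} that $S_i|_U=\Sigma_i\otimes L_i$ where $\Sigma_i=U\times V$ is built from a fixed irreducible $\mathrm{Cl}(\mathbb{R}^{2n})$-module $V$ via the two frames, and $I_{S|_U}(\sigma\otimes l)=\sigma\otimes I_L(l)$ for some line-bundle isomorphism $I_L:L_1\to L_2$. Since $I_{S|_U}$ sends the constant frame $(\sigma_\alpha)$ of $\Sigma_1$ to the constant frame of $\Sigma_2$ (up to tensoring by $I_L$), the induced map on $\Lambda^r S_i|_U$ is transparent, and hence $I_{\mathcal S|_U}$ is explicit in these trivializations. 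First I would use this to write $I_{\mathcal S|_U}$ as the identity on the $\Sigma$-factor combined with the canonical $|\!\det\!|$-rescaling coming from $I_L$, so that the problem becomes comparing the two pairings $\langle\cdot,\cdot\rangle_{\Sigma_i\otimes|\det\Sigma_i^*|^{1/r}}$ on a common constant-frame model.

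Next I would invoke the construction in Lemma \ref{pas2} and Lemma \ref{pas1}: on $V\otimes|\det V^*|^{1/r}$ there is a canonical $\mathbb{R}$-valued pairing satisfying \eqref{cliff-rel-spinors}, unique up to $\pm1$, and the pairing on $\Sigma_i\otimes|\det\Sigma_i^*|^{1/r}$ is the constant one induced by it (the frame $(\sigma_\alpha\otimes l_{\Sigma})$ being a parallel frame in which the pairing has constant matrix). The key point is that $I_{S|_U}$ intertwines the Clifford actions by \eqref{i-s}, so the pullback $\langle I_{\mathcal S}\,\cdot\,,I_{\mathcal S}\,\cdot\,\rangle_{\mathcal S_2|_U}$ of the pairing on $\mathcal S_2|_U$ again satisfies \eqref{cliff-rel-spinors} with respect to $\langle\cdot,\cdot\rangle_{E_1}$ (here one uses that $I_E$ is a metric isometry, which it is as an isomorphism of Courant algebroids). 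It is also clearly smooth, and its determinant equals that of $\langle\cdot,\cdot\rangle_{\mathcal S_2|_U}$ because $I_{\mathcal S|_U}$ is a fiberwise isomorphism whose determinant squares to $1$ under the canonical identification $(\det\mathcal S_i)^{\otimes 2}\cong\mathbb{R}$ (the $|\det|$-normalization is built precisely so that such intertwiners are ``unimodular''). Hence the pullback is again a pairing with the properties characterizing $\langle\cdot,\cdot\rangle_{\mathcal S_1|_U}$, and by the uniqueness clause of Proposition \ref{prop-pairing}(i) it equals $\pm\langle\cdot,\cdot\rangle_{\mathcal S_1|_U}$.

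Finally I would argue that the sign $\epsilon\in\{\pm1\}$ is locally constant on $U$: it is given by the ratio of two nowhere-vanishing smooth sections of the (trivial) line bundle $\mathcal S_1^*\otimes\mathcal S_1^*\to$ scalars, hence continuous with values in $\{\pm1\}$, hence constant on the connected open set $U$ (shrinking $U$ if necessary so that it is connected). This gives exactly the statement $\langle I_{\mathcal S}s,I_{\mathcal S}\tilde s\rangle_{\mathcal S_2|_U}=\epsilon\langle s,\tilde s\rangle_{\mathcal S_1|_U}$.

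I expect the main obstacle to be the determinant bookkeeping: one must check carefully that the $1/r$-density normalization makes $I_{\mathcal S|_U}$ determinant-preserving up to sign, i.e.\ that under $(\det\mathcal S_1)^{\otimes2}\xrightarrow{(\det I_{\mathcal S})^{\otimes2}}(\det\mathcal S_2)^{\otimes2}$ the map is $+1$ after the canonical identifications with $\mathbb{R}$. This amounts to the fact that $\det$ of the endomorphism $I_{S|_U}$ (as a map $S_1|_U\to S_2|_U$ in compatible trivializations) cancels against the factor $|\det(\cdot)|^{1/r}$ twisting, exactly as in the Remark following Lemma \ref{pas1}; everything else is the routine verification of \eqref{cliff-rel-spinors} for the pullback and the uniqueness argument. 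No new computation beyond what is already set up in the proofs of Lemmas \ref{pas1}--\ref{pas2} and \ref{iso-bdle} is needed.
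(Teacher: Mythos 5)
Your proposal is correct and follows essentially the same route as the paper: the paper's proof simply observes that the pullback pairing $\langle I_{\mathcal S}\,\cdot\,,I_{\mathcal S}\,\cdot\,\rangle_{\mathcal S_{2}\vert_{U}}$ satisfies (\ref{cliff-rel-spinors}) by (\ref{i-s}) and the isometry property of $I_{E}$, has the same (normalized) determinant as $\langle\cdot,\cdot\rangle_{\mathcal S_{2}\vert_{U}}$, and then invokes the uniqueness clause of Proposition \ref{prop-pairing}. The extra scaffolding in your first paragraph (the explicit $\Sigma\otimes L$ trivialization) is not needed, but your determinant bookkeeping and the local-constancy of the sign are exactly the points the paper leaves implicit.
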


\begin{proof}From relation (\ref{i-s}) and 
the fact that $I_{E}$ is an isometry, we obtain 
 that bilinear pairing
$\langle s, \tilde{s}\rangle^{\prime}_{\mathcal S_{1}\vert_{U} } := \langle I_{\mathcal S}(s), I_{\mathcal S} (\tilde{s} )\rangle_{\mathcal S_{2}\vert_{U} }$
on $\mathcal S_{1}$  satisfies (\ref{cliff-rel-spinors}). 
Also, $\mathrm{det}\, \langle\cdot , \cdot\rangle_{\mathcal S_{1}\vert_{U} }^{\prime} =\mathrm{det}\, \langle\cdot , \cdot \rangle_{\mathcal S_{2}\vert_{U}} =1$, if $\mathrm{rk}\, E_1 >2$ (and $=-1$ if $\mathrm{rk}\, E_1 =2$). 
\end{proof}

From the above lemma,  
$I_{\mathbb{S}\vert_{U}} := I_{\mathcal S\vert_{U} }\otimes \mathrm{Id}_{ |\mathrm{det}\, T^{*}U|^{1/2}}
:\mathbb{S}_{1}\vert_{U} \rightarrow \mathbb{S}_{2}\vert_{U}$ satisfies
\begin{equation}\label{pairing-iso-relation}
\langle I_{\mathbb{S}\vert_{U} } (s), I_{\mathbb{S}\vert_{U} } (\tilde{s})\rangle_{\mathbb{S}_{2}\vert_{U} } =
\epsilon \langle s, \tilde{s}\rangle_{\mathbb{S}_{1}\vert_{U} },\ s, \tilde{s}\in \Gamma (\mathbb{S}_{1}\vert_{U} )
\end{equation} 
where 
$\langle\cdot , \cdot\rangle_{\mathbb{S}_{i}\vert_{U} }$ are the  canonical
$|\mathrm{det}\, T^{*}U|$-valued bilinear pairings of 
the canonical weighted spinor bundles $\mathbb{S}_{i}\vert_{U} $ of $E_{i}\vert_{U}$
determined by $S_{i}\vert_{U}$ and  $\epsilon \in \{ \pm 1\}$ is independent on $s$ and $\tilde{s}.$

\begin{notation}{\rm 
The isomorphisms $I_{\mathcal S\vert_{U}}$ and  $I_{\mathbb{S}\vert_{U}}$
are   determined only up to multiplication by $\pm 1.$ 
In our computations we will often choose (without repeating it each time) 
one  $I_{\mathcal S\vert_{U}}$,  $I_{S\vert_{U}}$ or
$I_{\mathbb{S}\vert_{U}}$,  and refer to it as {\cmssl the}  isomorphism induced by $I$  
(or {\cmssl the} isomorphism compatible with $I$)  
on the spinor bundle, canonical spinor bundle and canonical weighted spinor bundle on $U$, respectively.  The arguments will be independent on this choice. A similar convention will be used for the 
various canonical bilinear pairings like  $\langle \cdot , \cdot \rangle_{\mathbb{S}\vert_{U}}$  or $\langle \cdot , \cdot \rangle_{\mathcal S_{\mathcal G}\vert_{U}}$ and for  the pullback and pushforward
on spinors (which will also be uniquely defined only  up to multiplication by $\pm 1$, see the next sections)}. 
\end{notation}

\begin{rem}\label{computation-iso} {\rm 
In the setting of Lemma \ref{iso-bdle}, assume that
$E_{i} = T^{*}M \oplus \mathcal G_{i} \oplus TM$  ($i=1,2$) are standard Courant algebroids and that $I_{S} : S_{1} 
\rightarrow S_{2}$ is defined globally.  
Let  $S_{\mathcal G_{i}}$ be   irreducible  $\mathrm{Cl} (\mathcal G_{i})$-bundles of rank $r$
and $S_{i}:= \Lambda (T^{*}M)\hat{\otimes} S_{\mathcal{G}_i}$. 
Using (\ref{S-dissection}) and (\ref{iso-standards}), one can show that the  isomorphism 
$$
I_{\mathbb{S}} : {\mathbb S}_{1} = \Lambda\, (T^{*}M)
\hat{\otimes} {\mathcal S}_{\mathcal G_{1}}\rightarrow\mathbb{S}_{2} =   \Lambda\, (T^{*}M)
\hat{\otimes} {\mathcal S}_{\mathcal G_{2}}
$$
induced by  $I: E_{1} \rightarrow E_{2}$ 
on the canonical weighted spinor bundles  determined by $S_{i}$  is given in terms of $I_{S} : S_{1} \rightarrow S_{2}$ by
\begin{equation}\label{computation-induced-iso}
I_{\mathbb{S}} ( \omega\otimes s \otimes  |s_{1}^{*}\wedge \cdots \wedge s_{r}^{*}|^{\frac{1}{r}} )
=| \mathrm{det}\, (I_{S})|^{-\frac{1}{Nr}}\, I_{S} (\omega \otimes s) \otimes   | \tilde{s}_{1}^{*}\wedge \cdots \wedge \tilde{s}_{r}^{*}|^{\frac{1}{r}},
\end{equation}
where   $\omega \otimes s\in \Gamma (S_{1})$,  
$( s_{i}^{*})$
and $(\tilde{s}^{*}_{i}) $ are  local frames  of $S^{*}_{\mathcal G_{1}}$ and $S^{*}_{\mathcal G_{2}}$ respectively, $N:= \mathrm{rk}\, \Lambda (TM)$, $r:= \mathrm{rk}\, S_{\mathcal G_{i}}$,  
and  $\mathrm{det}\,  (I_{S})$ is the determinant of the representation matrix 
of $I_{S}$ in the local frames $( \Omega_{i}\otimes s_{j})$ and 
$( \Omega_{i}\otimes\tilde{s}_{j})$ respectively, where  
$( \Omega_{i})$ is the  local frame  of $\Lambda (T^{*}M)$ induced by a  local frame
of $TM$ 
and $(s_{i})$, $( \tilde{s}_{i})$ are the frames  dual to $( s_{i}^{*})$ and $( \tilde{s}_{i}^{*})$ respectively.
(We shall refer to $\mathrm{det}\,  (I_{S})$  as {\cmssl the determinant of $I_{S}$ with respect to
the local frames  $( s_{i})$ and $(\tilde{s}_{i})$}).  } 
\end{rem}

\begin{prop}\label{iso-dirac} In the setting of Lemma   \ref{iso-bdle}, if $\slashed{d}_{1}$ is the canonical Dirac generating operator of $E_{1}\vert_{U}$ then 
\begin{equation}\label{dirac-conj}
\slashed{d}_{2} = I_{\mathbb{S}\vert_{U}} \circ \slashed{d}_{1} \circ I_{\mathbb{S}\vert_{U} }^{-1}
\end{equation} 
is the canonical Dirac generating operator of $E_{2}\vert_{U}.$
\end{prop}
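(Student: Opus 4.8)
The plan is to exploit the fact, recalled just after \eqref{dirac-reg}, that the canonical Dirac generating operator does not depend on the auxiliary generalized connection $D$ and the $D$-compatible $E$-connection $D^S$ used to write it down. So I would produce, on the $E_2$-side, a particular choice of these data by transporting a choice on the $E_1$-side through the isomorphisms, and then check that the resulting instance of \eqref{dirac-reg} for $E_2$ is the conjugate by $I_{\mathbb{S}|_U}$ of the one for $E_1$.

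Concretely, I would first fix a generalized connection $D_1$ on $E_1|_U$ and a $D_1$-compatible $E$-connection $D_1^{S_1}$ on $S_1|_U$, so that $\slashed{d}_1=\slashed{D}_1+\frac14\gamma_{T^{D_1}}$. Then I would set $(D_2)_v w:=I_E\!\left((D_1)_{I_E^{-1}v}(I_E^{-1}w)\right)$ and $D_2^{S_2}:=I_{S|_U}\circ D_1^{S_1}\circ I_{S|_U}^{-1}$. A short check shows that $D_2$ is again a generalized connection on $E_2|_U$ — metric compatibility because $I_E$ is an isometry, and the Leibniz/anchor identity because $\pi_2\circ I_E=\pi_1$ — and that $D_2^{S_2}$ is a $D_2$-compatible $E$-connection on $S_2|_U$, the latter being exactly where the intertwining property \eqref{i-s} of $I_{S|_U}$ enters. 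Next I would pick an orthonormal frame $(e_i)$ of $E_1|_U$ and use $(I_E(e_i))$ as orthonormal frame of $E_2|_U$; since $I_E$ is an isometry, the metric dual of $I_E(e_i)$ is $I_E(\tilde e_i)$. With these choices I would compute the canonical Dirac generating operator $\slashed{d}_2$ of $E_2|_U$ from $(D_2,D_2^{S_2})$ via \eqref{dirac-reg}.

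The core of the argument is then to verify that the two summands of \eqref{dirac-reg} transform equivariantly. For the torsion term, $T^{D_2}$ is the pushforward of $T^{D_1}$ under $I_E$, because the torsion is expressed through the Courant bracket and the connection, and both are intertwined by $I_E$ and by the definition of $D_2$; since tensoring the spinor bundle with densities does not affect the Clifford action, \eqref{i-s} upgrades to $\gamma_{I_E(u)}\circ I_{\mathbb{S}|_U}=I_{\mathbb{S}|_U}\circ\gamma_u$, which gives $\gamma_{T^{D_2}}\circ I_{\mathbb{S}|_U}=I_{\mathbb{S}|_U}\circ\gamma_{T^{D_1}}$. For the Dirac operator term I would establish $D^{\mathbb{S}_2}_{I_E(u)}\circ I_{\mathbb{S}|_U}=I_{\mathbb{S}|_U}\circ D^{\mathbb{S}_1}_u$, splitting $\mathbb{S}=\mathcal S\otimes L$: on $\mathcal S=S\otimes|\det S^*|^{1/r}$ it follows from naturality of the induced $E$-connection on $|\det S^*|^{1/r}$ together with $D_2^{S_2}$ being the conjugate of $D_1^{S_1}$, while on $L=|\det T^*M|^{1/2}$ — which is literally the same line bundle for $E_1$ and $E_2$, with $I_{\mathbb{S}|_U}$ acting there as the identity — it follows from \eqref{rule:eq}, using $\pi_2\circ I_E=\pi_1$ and $\mathrm{div}_{D_2}(I_E(u))=\mathrm{tr}\!\left(I_E\circ(D_1 u)\circ I_E^{-1}\right)=\mathrm{div}_{D_1}(u)$ by conjugation-invariance of the trace. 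Combining this with the frame identity for $I_E$ yields $\slashed{D}_2\circ I_{\mathbb{S}|_U}=I_{\mathbb{S}|_U}\circ\slashed{D}_1$, hence $\slashed{d}_2\circ I_{\mathbb{S}|_U}=I_{\mathbb{S}|_U}\circ\slashed{d}_1$, i.e.\ \eqref{dirac-conj}; and since $\slashed{d}_2$ built from $(D_2,D_2^{S_2})$ is independent of these choices, it is the canonical Dirac generating operator of $E_2|_U$.

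The hard part will be the bookkeeping in the identity $D^{\mathbb{S}_2}_{I_E(u)}\circ I_{\mathbb{S}|_U}=I_{\mathbb{S}|_U}\circ D^{\mathbb{S}_1}_u$: one must track the density normalizations — in particular the factor $|\det(I_S)|^{-1/(Nr)}$ appearing in \eqref{computation-induced-iso} — and make sure the induced connection on the canonical spinor bundle is genuinely natural with respect to the isomorphism $I_{S|_U}$, which is defined only up to sign and a nonvanishing function, and that the $L$-factor is handled consistently. Should one wish to shorten the proof, an alternative is to observe that the argument of Lemma \ref{iso-bdle} carries over to the canonical weighted spinor bundle, so that conjugation by $I_{\mathbb{S}|_U}$ takes Dirac generating operators on $\mathbb{S}_1|_U$ to Dirac generating operators on $\mathbb{S}_2|_U$; invoking the uniqueness of the Dirac generating operator on the canonical weighted spinor bundle then forces the conjugate of the canonical $\slashed{d}_1$ to be the canonical $\slashed{d}_2$.
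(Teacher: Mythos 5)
Your proposal is correct and follows essentially the same route as the paper: transport the auxiliary data (connection on $E_1|_U$ and compatible connection on $S_1|_U$) through $I_E$ and $I_{S|_U}$, invoke the independence of \eqref{dirac-reg} on these choices, and check that the Dirac-operator summand (including the $L$-factor via \eqref{rule:eq} and invariance of the divergence) and the torsion summand each conjugate correctly. The only cosmetic difference is that the paper transports a classical metric connection and then passes to the induced generalized connection, whereas you conjugate the generalized connection directly; this changes nothing of substance.
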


\begin{proof}
Let $\nabla^{(1)}$ be a metric connection on
$E_{1}\vert_{U}$ and  $\nabla^{S_{1}}$ a compatible connection on $S_{1}\vert_{U}$. Let  $D^{(1)}$ 
and $D^{S_{1}}$ be
the generalized connection on $E_{1}\vert_{U}$ and the $E_{1}\vert_{U}$-connection on $S_{1}\vert_{U}$
defined by $\nabla^{(1)}$ and $\nabla^{S_{1}}$ respectively. 
Let $\nabla^{(2)}:= I_{E}\circ \nabla^{(1)}\circ I_{E}^{-1}$ and $\nabla^{S_{2}}:= I_{S\vert_{U} }\circ \nabla^{S_{1}}
\circ( I_{S\vert_{U} })^{-1}$. Then $\nabla^{(2)}$ is a metric connection on $E_{2}\vert_{U}$ and $\nabla^{S_{2}}$
is compatible with $\nabla^{(2)}.$ Let $D^{(2)}$ and $D^{S_{2}}$ be the generalized connection on $E_{2}\vert_{U}$
and the (compatible) $E_{2}\vert_{U}$-connection on $S_{2}\vert_{U}$,
defined by $\nabla^{(2)}$ and $\nabla^{S_{2}}.$ 
As formula (\ref{dirac-reg}) for  
the canonical Dirac generating operator is independent of the choice of generalized connection 
(and compatible $E$-connection),  we can (and will) choose to  compute 
$\slashed{d}_{1}$ and $\slashed{d}_{2}$ using $(D^{(1)}, D^{S_{1}})$ and $( D^{(2)}, D^{S_{2}})$ respectively.

A straightforward computation using (\ref{rule:eq}) shows that
\begin{equation}
 ( D^{(2)})^{L}_{I_{E}(u)} \mu = (D^{(1)})^{L}_{u}\mu,\ \forall u\in E_{1}\vert_{U},\ \mu\in\Gamma (L\vert_{U})
\end{equation}
and 
\begin{equation}\label{rel-dirac}
 (D^{\mathcal S_{2}}\otimes (D^{(2)})^{L} )_{u}  =
I_{\mathbb{S}\vert_{U}} \circ ( D^{\mathcal S_{1}} \otimes (D^{(1)})^{L})_{I^{-1}_{E}(u)}\circ  
(I_{\mathbb{S}\vert_{U}})^{-1}.
\end{equation}
Relation (\ref{rel-dirac}) implies 
that the Dirac  operators $\slashed{D}^{(2)}$ 
on $\mathbb{S}_{2}\vert_{U}$ and $\slashed{D}^{(1)}$ 
on $\mathbb{S}_{1}\vert_{U}$ computed with 
$D^{\mathcal S_{2}}\otimes (D^{(2)})^{L}$ and $D^{\mathcal S_{1}}\otimes ( D^{(1)})^{L}$ respectively, are related by 
\begin{equation}\label{conj-D}
\slashed{D}^{(2)} = I_{\mathbb{S}\vert_{U}} \circ \slashed{D}^{(1)} \circ (I_{\mathbb{S}\vert_{U}})^{-1}.
\end{equation}
On the other hand, it is easy to see that
$$
T^{D^{(2)}} (u, v, w) = T^{D^{(1)}} ( I_{E}^{-1} u, I_{E}^{-1} v , I_{E}^{-1} w),\  \forall u, v, w\in E_{2}\vert_{U}
$$
which implies that 
\begin{equation}\label{t-d-2}
T^{D^{(2)}} = I_{E } (T^{D^{(1)}})
\end{equation}
where 
$T^{D^{(i)}} \in \Gamma (\Lambda^{3} E_{i}\vert_{U} ) \subset \Gamma\,  \mathrm{Cl} (E_{i}\vert_{U})$
and  $I_{E}$ denotes the 
action induced by  the isometry $I_{E}$ on Clifford
algebras.
Relations (\ref{i-s}) and (\ref{t-d-2})  
imply that $\gamma_{T^{ D^{(2)}}} = I_{\mathbb{S}\vert_{U}} \circ \gamma_{T^{ D^{(1)}}} \circ
(I_{\mathbb{S}\vert_{U}})^{-1}$ which, together with (\ref{dirac-reg} and 
(\ref{conj-D}), implies our claim.
\end{proof}

\subsection{Pullback of spinors}\label{pull-back-subsection}

Let $f: M \rightarrow N$ be a submersion and $E$ a transitive Courant algebroid over $N$. 
Following \cite{li-bland}, we  recall  the definition of  the pullback Courant algebroid $f^{!}E$.  
Let $\mathbb{T}M := T^{*}M\oplus TM$ be the generalized tangent bundle with its standard Courant algebroid structure, given by the  Dorfmann
bracket
\begin{equation}
[\xi + X, \eta + Y]:=  {\mathcal L}_{X}(Y+\eta )- i_{Y} d\xi , 
\end{equation}
for any $X, Y\in {\mathfrak X}(M)$, $\xi , \eta\in \Omega^{1}(M)$, 
scalar product  $\langle \xi + X, \eta + Y \rangle := \frac{1}{2} ( \xi (Y) +\eta (X))$
and anchor the natural projection from $\mathbb{T}M$ to $TM.$
Consider the direct product Courant algebroid
$E\times \mathbb{T}M$ and let   $a: E\times \mathbb{T}M\rightarrow T(N\times M)$  be its anchor. 
Then $C:= a^{-1} (TM_f)$ is a coisotropic subbundle of $E\times \mathbb{T}M$ over the 
graph $M_f\subset N\times M$ of $f$, which we identify with $M$.
Its fiber over $p\in M$ is given by
\begin{equation}
C_{p}:= \{ (u, \eta + X )\in  E_{f(p)}\times\mathbb{T}_{p}M  \mid  \pi (u) = (d_{p}f)(X) \}
\end{equation}
and 
\begin{equation}
C^{\perp}_{p}:= \{ (\frac{1}{2}\pi^{*} \gamma ,  - (d_{p}f)^{*}\gamma ) \mid  \gamma \in T^{*}_{f(p)} N\} \subset C_{p},
\end{equation}
where $\pi^{*} : T^{*}N\rightarrow E$ is the dual  of the anchor $\pi : E \rightarrow TN$ composed with the
natural identification $E^*\stackrel{\sim}{\rightarrow} E$ induced  by the scalar  product $\langle \cdot , \cdot \rangle$ of $E$. 
The quotient  $C/ C^{\perp}$ is a Courant algebroid  over $M  \cong M_f$ with anchor,
scalar product and Courant bracket induced from $E\times \mathbb{T}M.$   The Courant algebroid
$C/ C^{\perp}$ was  called in \cite{li-bland} the {\cmssl pullback of $E$ by the map $f$}.

\begin{lem} \label{pull-back-bdle} 
i)  Let $E = T^{*}N \oplus \mathcal G \oplus TN$ be a standard Courant algebroid,
defined by a bundle of
quadratic Lie algebras $(\mathcal G , [\cdot , \cdot ]_{\mathcal G}, \langle \cdot , \cdot \rangle_{\mathcal G} )$ and  data $(\nabla , R, H).$ Then $f^{!}E$ is isomorphic to the standard Courant algebroid defined by the bundle of quadratic Lie algebras 
$$
(f^{*}\mathcal G ,\   [\cdot , \cdot ]_{f^{*}\mathcal G} :=  f^{*}[\cdot , \cdot ]_{\mathcal G}, \
\langle \cdot , \cdot \rangle_{f^{*} \mathcal G} :=  f^{*}\langle \cdot , \cdot 
\rangle_{\mathcal G} )
$$ 
together with $(f^{*}\nabla , f^{*} R, f^{*}H )$.\

 ii) Let $I:E_{1} \rightarrow E_{2}$ be an isomorphism between two transitive Courant algebroids
over $N$ and $a_{i}:   E_{i}\times \mathbb{T}M \rightarrow T(N\times M)$ the anchors of the direct product
Courant algebroids $E_{i}\times \mathbb{T}M$ ($i=1,2$).  Then $I$ induces a Courant algebroid  isomorphism $I^{f} : f^{!}E_{1} \rightarrow f^{!}E_{2}$
defined by
\begin{equation}\label{I^f:eq}
I^{f} [ (u,  \eta + X )]:= [( I(u),   \eta + X ) ],\ \forall (u, \eta + X) \in (C_1)_p,
\end{equation} 
where $C_i=(a_i)^{-1}(TM_f)$ and  $[( I(u),  \eta + X )]$ denotes the class of $(I(u),  \eta + X )\in  (C_2)_{p}$ modulo $(C_2)_{p}^{\perp}.$\

iii) Let $E$ be a transitive Courant algebroid over $N$. Any dissection of $E$ induces a dissection of $f^{!}E.$\
Moreover,    if  $I_{i} : E \rightarrow T^{*}N \oplus \mathcal G_{i} \oplus TN$ are two dissections of $E$, 
related by  $(\beta, K, \Phi )$,  then  the induced dissections 
of  $ f^{!} E $ are related by  $(f^{*}\beta,f^{*} K, f^{*}\Phi )$.
\end{lem}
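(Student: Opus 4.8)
For part (i) the plan is to exhibit the isomorphism explicitly, starting from the description of $C$ and $C^{\perp}$ recalled just before the lemma. Over $p\in M\cong M_{f}$ a vector of $C_{p}$ has the form $(\xi+r+(d_{p}f)X,\ \eta+X)$ with $\xi\in T^{*}_{f(p)}N$, $r\in\mathcal G_{f(p)}$, $\eta\in T^{*}_{p}M$, $X\in T_{p}M$ (the anchor condition $\pi(u)=(d_{p}f)X$ pins down the $TN$-component of the first entry), while $C^{\perp}_{p}=\{(\gamma,\,-(d_{p}f)^{*}\gamma)\mid\gamma\in T^{*}_{f(p)}N\}$, the identity $\tfrac12\pi^{*}\gamma=\gamma$ (in the standard splitting of $E$) following from the explicit scalar product of a standard Courant algebroid. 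One then verifies that
\[
\Psi\big([(\xi+r+(d_{p}f)X,\ \eta+X)]\big):=\big((d_{p}f)^{*}\xi+\eta\big)+r+X
\]
is a well-defined vector-bundle isomorphism $f^{!}E\to T^{*}M\oplus f^{*}\mathcal G\oplus TM$: well-definedness is immediate from the shape of $C^{\perp}_{p}$, and bijectivity follows because the kernel of the displayed map on $C_{p}$ is exactly $C^{\perp}_{p}$ and the ranks agree. That $\Psi$ intertwines anchors and scalar products is a one-line computation in each case.

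The one delicate point in part (i) is that $\Psi$ intertwines Dorfman brackets. I would use that sections of $f^{!}E$ are generated over $C^{\infty}(M)$ by the classes of the ``product sections'' $(\alpha,0)$, $(r_{0},0)$, $(0,\eta)$ and $(\bar X,X)$ of $E\times\mathbb T M$, where $\alpha\in\Omega^{1}(N)$, $r_{0}\in\Gamma(\mathcal G)$, $\eta\in\Omega^{1}(M)$ and $X$ is $f$-related to $\bar X\in\mathfrak{X}(N)$; each of these restricts to a section of $C$ along $M_{f}$. In the direct-product Courant algebroid the Dorfman bracket of product sections is computed component-wise, and one checks that the component-wise bracket again restricts to $C$ along $M_{f}$ (here $f$-relatedness of Lie brackets is used), so that the reduction construction of \cite{li-bland} applies and the class of that bracket is the bracket in $f^{!}E=C/C^{\perp}$. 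Feeding the bracket formulas (\ref{expr-courant}) for the standard Courant algebroid over $N$ into these component-wise brackets, and using besides $f$-relatedness the defining property of the pullback connection on pulled-back sections, one finds that every generator-level bracket reproduces exactly (\ref{expr-courant}) for the standard Courant algebroid with data $(f^{*}\nabla,f^{*}R,f^{*}H)$; the general case follows from the Leibniz rules valid in any Courant algebroid. This bookkeeping --- keeping track of which lifts live in $C$ and invoking the reduction of \cite{li-bland} --- is where the real care is needed; it is the main obstacle of the proof.

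For part (ii) the product map $I\times\mathrm{Id}_{\mathbb T M}\colon E_{1}\times\mathbb T M\to E_{2}\times\mathbb T M$ is a Courant algebroid isomorphism. Since $I$ intertwines anchors it maps $C_{1}=(a_{1})^{-1}(TM_{f})$ onto $C_{2}=(a_{2})^{-1}(TM_{f})$; since $I$ is an isometry it satisfies $I\circ\pi_{1}^{*}=\pi_{2}^{*}$ (characterize $\pi_{i}^{*}$ by $\langle\pi_{i}^{*}\gamma,\cdot\rangle_{E_{i}}=\gamma(\pi_{i}\cdot)$ and use surjectivity of $I$), so it maps $C_{1}^{\perp}$ onto $C_{2}^{\perp}$. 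Hence it descends to an isomorphism of the quotient Courant algebroids, which is precisely the map $I^{f}$ of (\ref{I^f:eq}); I would also record the obvious functoriality $(I_{2}\circ I_{1})^{f}=I_{2}^{f}\circ I_{1}^{f}$.

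For part (iii), a dissection $J\colon E\to T^{*}N\oplus\mathcal G\oplus TN$ gives, by (ii), an isomorphism $J^{f}\colon f^{!}E\to f^{!}(T^{*}N\oplus\mathcal G\oplus TN)$, and composing with $\Psi$ from part (i) yields a dissection of $f^{!}E$. For the last assertion, write $\Psi_{i}$ for the isomorphism of (i) attached to $\mathcal G_{i}$; by functoriality the two induced dissections differ by $\Psi_{2}\circ(I_{2}\circ I_{1}^{-1})^{f}\circ\Psi_{1}^{-1}$. It then remains to push a generator $\tilde\xi+r+X$ of $T^{*}M\oplus f^{*}\mathcal G_{1}\oplus TM$ through $\Psi_{1}^{-1}$, then through $(I_{2}\circ I_{1}^{-1})^{f}$ (which merely applies (\ref{concrete-iso}) for $(\beta,K,\Phi)$ in the $E$-slot), then through $\Psi_{2}$; using the identities $(d_{p}f)^{*}(i_{(d_{p}f)X}\beta)=i_{X}(f^{*}\beta)$, $\langle K(r),\Phi((d_{p}f)X)\rangle_{\mathcal G_{2}}=\langle(f^{*}K)(r),(f^{*}\Phi)(X)\rangle_{f^{*}\mathcal G_{2}}$ and the analogous identity for $\Phi^{*}\Phi$, this returns precisely (\ref{concrete-iso}) with data $(f^{*}\beta,f^{*}K,f^{*}\Phi)$, as claimed. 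Parts (ii) and (iii) present no real difficulty; the whole weight of the lemma is in the bracket computation of part (i).
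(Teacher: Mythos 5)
Your proposal is correct, and its skeleton — an explicit vector-bundle isomorphism between the Li-Bland--Meinrenken quotient $C/C^{\perp}$ and the standard model $T^{*}M\oplus f^{*}\mathcal G\oplus TM$, followed by a transport of the Courant structure — is the same as the paper's (your $\Psi$ is the inverse of the map $F$ in (\ref{iso-pull-exact})). The difference is in the distribution of effort and the logical order in part (i). The paper first verifies directly that the pullback data $(f^{*}\nabla, f^{*}R, f^{*}H)$ satisfies the compatibility equations of a standard Courant algebroid (checking, e.g., $d^{f^{*}\nabla}(f^{*}R)=0$ on projectable vector fields and extending by $C^{\infty}(M)$-linearity), and then merely asserts that $F$ is a Courant algebroid isomorphism. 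You instead carry out the assertion in detail — computing the Dorfman bracket of $f^{!}E$ on the generating classes of product sections lying in $C$ and matching the result against (\ref{expr-courant}) for the pullback data — and thereby obtain the compatibility equations as a corollary, via the Chen--Sti\'enon--Xu classification, rather than by direct verification. Each route has its merit: the paper's check of the structure equations is short and self-contained, while your bracket computation is precisely the step the paper leaves to the reader and is the genuine content of the claim; it would be worth stating explicitly at the end of part (i) that the compatibility conditions for $(f^{*}\nabla, f^{*}R, f^{*}H)$ follow because the transported bracket is a Courant bracket of the prescribed form. Your treatments of (ii) and (iii) agree with the paper's (which dismisses them as easy checks), and your explicit verification in (iii) that the composite $\Psi_{2}\circ(I_{2}\circ I_{1}^{-1})^{f}\circ\Psi_{1}^{-1}$ is given by (\ref{concrete-iso}) with data $(f^{*}\beta, f^{*}K, f^{*}\Phi)$ is the computation the paper implicitly relies on.
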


\begin{proof}
i) We  claim that the quadratic Lie algebra bundle  $(f^{*}\mathcal G , [\cdot , \cdot ]_{f^{*}\mathcal G}, \langle\cdot , \cdot\rangle_{f^{*}\mathcal G})$
together with    $(f^{*}\nabla , f^{*}R, f^{*} H)$ define  a standard Courant algebroid. 
The proof reduces to the verification of the conditions stated in Section \ref{trans-sect-basic}. 
 The form $f^{*} R$ is defined  by 
$(f^{*}R) (X, Y) = R(dfX, dfY)\in \mathcal{G}_{f(p)}=(f^*\mathcal{G})_p$, for any $X, Y\in T_pM$, $p\in M$.  
To show, for instance, that 
\begin{equation} \label{dnablaRXYZ:eq} [d^{f^{*}\nabla} (f^{*} R)](X,Y,Z)=0\quad\mbox{for all}\quad X,Y,Z\in \mathfrak{X}(M),\end{equation} 
cf.\ equation (\ref{dnablaR:eq}), we notice that it holds for any projectable vector fields $X, Y , Z\in {\mathfrak X}(M)$,
since
\begin{align*}
\nonumber& (f^{*}\nabla ) _{X} [ ( f^{*}R )(Y, Z) ] = f^{*} [ \nabla_{f_{*}X} R(f_{*}Y, f_{*}Z) ],\\
\nonumber& (f^{*}R) ({\mathcal L}_{X} Y, Z) = f^{*} [ R ({\mathcal L}_{f_{*}X} f_{*}Y, f_{*}Z)]
\end{align*}
and that it is $C^{\infty}(M)$-linear in all arguments $X, Y, Z$. Here $f_{*}X$ denotes the vector field 
on $N$ obtained by projection of a projectable vector field $X\in \mathfrak{X}(M)$. Recall that 
for projectable vector fields we have $dfX = (f_*X) \circ f$. 
Relation (\ref{dnablaRXYZ:eq}) follows. In a similar way we prove
that $(f^{*}\mathcal G , [\cdot , \cdot ]_{f^{*}\mathcal G}, \langle\cdot , \cdot\rangle_{f^{*}\mathcal G})$
together with    $(f^{*}\nabla , f^{*}R, f^{*} H)$
satisfy   the remaining conditions for standard Courant algebroids. 

One  can show that the 
map
\begin{equation}\label{iso-pull-exact}
F :  T^{*}M\oplus f^*\mathcal G \oplus TM \rightarrow  f^{!}E,\  F( \eta + r + X) := [ (r + df \, ( X), \eta + X ) ] 
\end{equation}
where  $\eta\in T_{p}^*M$, $r\in \mathcal G_{f(p)}$, $X\in T_{p} M$ and  $p\in M$ is arbitrary, 
is a Courant algebroid isomorphism between  the standard Courant algebroid defined by  the quadratic Lie algebra bundle 
$(f^{*}\mathcal G , [\cdot , \cdot ]_{f^{*}\mathcal G}, \langle\cdot , \cdot\rangle_{\mathcal G})$
together with   $(f^{*}\nabla , f^{*}R, f^{*} H)$,   and
$f^{!}E$.\

ii), iii)    Claim ii) is an easy check and  claim iii)  follows by combining claims ii) and iii) and using
(\ref{concrete-iso}). 
\end{proof}

Our next  aim is to define a pullback from spinors of $E$ to spinors of $f^{!}E.$ 
At first, we assume that $E$ is a standard Courant algebroid.

\begin{rem}\label{pull-back-standard}{\rm   i) Assume that $E= T^{*} N \oplus \mathcal G \oplus TN$ is a standard Courant algebroid,
defined by a bundle of quadratic
Lie algebras $(\mathcal G , [\cdot , \cdot ]_{\mathcal G},\langle \cdot , \cdot \rangle_{\mathcal G})$
and data $(\nabla , R, H)$. 
Using the isomorphism
(\ref{iso-pull-exact}), 
we often  identify (without repeating  it  each time)   $f^{!}E$ with the standard Courant algebroid $T^{*}M\oplus f^{*} \mathcal G \oplus TM$ 
defined by the quadratic Lie algebra bundle  $(f^{*}\mathcal G , f^{*}[\cdot , \cdot ]_{\mathcal G},f^{*}\langle \cdot , \cdot \rangle_{\mathcal G})$
and  data $(f^{*}\nabla ,f^{*}R,f^{*} H)$.
We fix
an irreducible  $\mathrm{Cl}(\mathcal G)$-bundle $S_{\mathcal G}$. Then $S_{f^{*}\mathcal G}:= f^{*}S_{\mathcal G}$ is an 
irreducible $\mathrm{Cl}( f^{*} \mathcal G )$-bundle 
and $f^{*}\mathcal S_{\mathcal G}  = {\mathcal S}_{f^{*} \mathcal G}$. 
The natural map 
\begin{align}
\nonumber f^{*} : \Gamma (\mathbb{S}_{N})= &  \Omega (N,   \mathcal S_{\mathcal G})
\rightarrow 
\Gamma (\mathbb{S}_{M}) = \Omega (M,  f^{*} \mathcal S_{\mathcal G} )  ,\\
\label{pull-back-forms}&  \omega \otimes s\rightarrow   f^{*}(\omega ) \otimes f^{*}(s)
\end{align} 
 preserves the $\mathbb{Z}_{2}$-degrees of  $\mathbb{S}_{N}$ and
$\mathbb{S}_{M}$.  It is called the {\cmssl pullback on spinors}.\

ii)  Assume in addition that $f:M \rightarrow N$ is endowed with a horizontal distribution.  
For any $X\in {\mathfrak X}(N)$, we denote by $\widehat{X}\in \mathfrak{X}(M)$ the horizontal lift of $X$
and  we  define a map
\begin{equation} \label{fstar:eq}
f^{*} : \Gamma (E) \rightarrow \Gamma (f^{!} E),\ f^{*} (\xi + r + X) := f^{*}(\xi + r) + \widehat{X}.
\end{equation}
Let  $\langle \cdot , \cdot\rangle_{E}$ and 
$\langle \cdot , \cdot \rangle_{f^{!}E}$ be  the scalar products  of $E$ and $f^{!}E$. 
As 
$$
\langle f^{*}(u), f^{*}( v) \rangle_{f^{!} E} = \langle u, v\rangle_{E}\circ f,\ u, v\in \Gamma (E),
$$
we obtain an induced map $f^{*} :\Gamma\, \mathrm{Cl}(E) \rightarrow \Gamma\, \mathrm{Cl} (f^{!} E)$, which satisfies
\begin{equation}\label{pull-back-clifford}
f^{*}(u\cdot v) = f^{*}(u) \cdot f^{*}(v),\ \forall u, v\in \Gamma\, \mathrm{Cl} (E)
\end{equation}
and 
\begin{equation}\label{pull-compat-cliff}
f^{*}(u\cdot s) = f^{*}(u)\cdot f^{*}(s),\ u\in \Gamma \mathrm{Cl}(E),\ s\in \Gamma (\mathbb{S}_{N}).
\end{equation}}
\end{rem}

Assume now that $E$ is a transitive, but not necessarily standard,  Courant algebroid 
and let $\mathbb{S}_{E}$ and $\mathbb{S}_{f^{!}E}$ 
be canonical weighted spinor bundles of $E$ and $f^{!}E$, 
determined by irreducible spinor bundles $S_{E}$ and $S_{f^{!}E}$ respectively. 
In order to be able  to construct a pullback map from  $\Gamma (\mathbb{S}_{E})$ to  
$\Gamma (\mathbb{S}_{f^{!}E})$,  
we assume that
the following condition is satisfied:
there is a dissection
$I:E\rightarrow  E_{N}= T^{*}N \oplus \mathcal G \oplus TN$ of $E$ 
and an   irreducible  
$\mathrm{Cl}(\mathcal G )$-bundle
$S_{\mathcal G}$, such that $I$ and 
the dissection  $I^{f}:f^{!}E\rightarrow  E_{M}= T^{*}M\oplus f^{*}\mathcal G \oplus TM$  of $f^{!}E$  induce {\cmssl  global} isomorphisms $ I_{S}: S_{E}\rightarrow \Lambda (T^{*} N)\hat{\otimes} S_{\mathcal G}$
and $I^{f}_{S}: S_{f^{!}E}\rightarrow \Lambda (T^{*} M)\hat{\otimes} f^{*} S_{\mathcal G}$ between spinor bundles.  
We shall often refer to $(I, S_{\mathcal G })$ as an {\cmssl admissible pair} for $\mathbb{S}_{E}$
and $\mathbb{S}_{f^{!}E}.$  
Let 
\begin{align}
\nonumber& I_{\mathbb{S}} :\mathbb{S}_{E}\rightarrow \mathbb{S}_{N}= \Lambda (T^{*}N)\hat{\otimes} \mathcal S_{\mathcal G}\\
 \label{maps-spinors}& I^{f}_{\mathbb{S}} :\mathbb{S}_{f^{!}E}\rightarrow \mathbb{S}_{M}= \Lambda (T^{*}M)\hat{\otimes}
f^{*} \mathcal S_{\mathcal G}
\end{align}
be the induced (global)  isomorphisms between the canonical weighted spinor bundles determined by $S_{E}$, 
$\Lambda ( T^{*}N)\hat{\otimes} S_{\mathcal G}$, $S_{f^{!}E}$ and $\Lambda ( T^{*}M)\hat{\otimes}f^{*} S_{\mathcal G}.$

\begin{lem}\label{well-def-lem} The map    
\begin{equation}\label{pull-back-relation}
f^{!}:\Gamma ( \mathbb{S}_{E}) \rightarrow \Gamma (\mathbb{S}_{f^{!}E}),\ f^{!}:= ( I^{f}_{\mathbb{S}})^{-1} \circ  f^{*} \circ  I_{\mathbb{S}}
\end{equation}
is well defined, up to multiplication by $\pm 1.$ It is called the {\cmssl pullback on spinors}.
\end{lem}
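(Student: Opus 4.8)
The plan is to show that the composite $(I^f_{\mathbb S})^{-1}\circ f^*\circ I_{\mathbb S}$ is independent, modulo an overall sign, of the two ingredients entering its definition: the admissible pair $(I,S_{\mathcal G})$, and the isomorphisms $I_{\mathbb S},I^f_{\mathbb S}$ induced by it on the canonical weighted spinor bundles. First I would fix an admissible pair $(I,S_{\mathcal G})$. Under the identifications $\mathbb S_N=\Lambda(T^*N)\hat{\otimes}\mathcal S_{\mathcal G}$ and $\mathbb S_M=\Lambda(T^*M)\hat{\otimes}f^*\mathcal S_{\mathcal G}$ (using $f^*\mathcal S_{\mathcal G}=\mathcal S_{f^*S_{\mathcal G}}$ from Remark \ref{pull-back-standard}), the map $f^*$ occurring in (\ref{pull-back-relation}) is the canonical pullback of $\mathcal S_{\mathcal G}$-valued forms of (\ref{pull-back-forms}), hence involves no choice. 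By Lemma \ref{iso-bdle} the spinor isomorphisms $I_S$ and $I^f_S$ are each determined only up to a smooth non-vanishing function; but in passing to the canonical weighted spinor bundles this scaling is killed by the factor $|\det(I_S)|^{-1/(Nr)}$ in (\ref{computation-induced-iso}), so $I_{\mathbb S}$ and $I^f_{\mathbb S}$ are each determined only up to $\pm1$. Hence $f^!$ is well defined up to $\pm1$ once an admissible pair is fixed.

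Next I would compare two admissible pairs $(I,S_{\mathcal G})$ and $(I',S'_{\mathcal G})$, writing $f^!_{(I)},f^!_{(I')}$ for the corresponding maps. Since $\mathbb S_E$ is canonically associated to $E$, the isomorphisms $I_{\mathbb S}$ and $I'_{\mathbb S}$ are maps out of the same bundle, so $J_{\mathbb S}:=I'_{\mathbb S}\circ I_{\mathbb S}^{-1}\colon\mathbb S_N\to\mathbb S'_N$ and, likewise, $J^f_{\mathbb S}:=(I')^f_{\mathbb S}\circ(I^f_{\mathbb S})^{-1}\colon\mathbb S_M\to\mathbb S'_M$ are \emph{global} isomorphisms. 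Moreover $I'\circ I^{-1}$ is an isomorphism of standard Courant algebroids over $N$ of the form (\ref{concrete-iso}), with data $(\beta,K,\Phi)$, and by Lemma \ref{pull-back-bdle} iii) the isomorphism $(I')^f\circ(I^f)^{-1}$ over $M$ has data $(f^*\beta,f^*K,f^*\Phi)$; thus $J_{\mathbb S}$ and $J^f_{\mathbb S}$ are the spinor lifts of these two isomorphisms. Substituting $I'_{\mathbb S}=J_{\mathbb S}\circ I_{\mathbb S}$ and $(I')^f_{\mathbb S}=J^f_{\mathbb S}\circ I^f_{\mathbb S}$ into (\ref{pull-back-relation}) reduces the desired equality $f^!_{(I')}=\pm f^!_{(I)}$ to the commutation relation $f^*\circ J_{\mathbb S}=\pm\,J^f_{\mathbb S}\circ f^*$ between pullbacks of spinor-valued forms.

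To prove this commutation relation I would use that, by the uniqueness part of Lemma \ref{iso-bdle}, the spinor lift of a standard-to-standard isomorphism is determined up to sign by the isomorphism itself, so it suffices to exhibit lifts on $N$ and on $M$ intertwined by $f^*$. Decomposing $(\beta,K,\Phi)$ into elementary constituents: a $\beta$-transform lifts to Clifford multiplication by $e^{-\beta}$, i.e.\ $\omega\otimes s\mapsto(e^{-\beta}\wedge\omega)\otimes s$ (Remark \ref{global-remark} ii), which commutes with $f^*$ since $f^*(e^{-\beta}\wedge\omega)=e^{-f^*\beta}\wedge f^*\omega$; a $K$-transform lifts to the fibrewise action of $K$ on $\mathcal S_{\mathcal G}$ (with normalization $|\det K|^{-1/r}$), whose $f^*$-pullback is the action of $f^*K$ because $\det(f^*K)=f^*(\det K)$; and the $\Phi$-transform is treated the same way from the explicit formula (\ref{computation-induced-iso}), the operators involved being built from $\Phi$ by Clifford multiplication and contraction, all of which commute with $f^*$, the relevant normalization determinants again pulling back correctly. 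Composing these pieces, and keeping track of the composition law (\ref{iso-standard-1})--(\ref{iso-standard-3}), gives $f^*\circ J_{\mathbb S}=\pm\,J^f_{\mathbb S}\circ f^*$, and the lemma follows.

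I expect the last step to be the main obstacle: one has to propagate the density normalization factors $|\det(I_S)|^{-1/(Nr)}$ (and their analogues coming from $K$ and $\Phi$) through the composition and verify that the pullback of each such determinant equals the determinant of the corresponding pulled-back map, so that all normalizations match and the accumulated sign ambiguity collapses to a single overall $\pm1$. Everything else in the argument is formal.
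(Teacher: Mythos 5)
Your reduction is exactly the paper's: both arguments come down to the single identity $f^{*}\circ I_{\mathbb{S}_{N}} = \pm\, I^{f}_{\mathbb{S}_{M}}\circ f^{*}$ of (\ref{required-pullback}) for an isomorphism $I$ of standard Courant algebroids with data $(\beta ,K,\Phi )$, using that compatible lifts to the weighted spinor bundles are unique up to sign and that the residual scaling ambiguity of $I_{S}$ is absorbed by the density normalization. Where you genuinely diverge is in how that identity is proved. The paper does not factor $(\beta ,K,\Phi )$: it fixes a complement $\mathcal{D}$ of $\mathrm{Ker}\, df$, splits $f^{!}E_{i}=V_{i}^{+}\oplus V^{-}$ orthogonally with $V^{-}=(\mathrm{Ker}\, df)^{*}\oplus \mathrm{Ker}\, df$, writes the spinor bundle as $S_{i}^{+}\hat{\otimes}S^{-}$ with $S_{i}^{+}=\Lambda \mathcal{D}^{*}\hat{\otimes}f^{*}S_{\mathcal G_{i}}$, declares the lift of $I^{f}$ to be $f^{*}\circ I_{S_{N}}\circ (f^{*})^{-1}$ on $S^{+}$ and the graded identity on $S^{-}$, and then the whole computation collapses to the single determinant identity $\mathrm{det}(I^{f}_{S_{M}})=\mathrm{det}(I_{S_{N}})^{N_{v}}\circ f$ of (\ref{computation-determinants}). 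Your factorization into a $\beta$-transform, a $K$-transform and a $\Phi$-transform also works and is more explicit (each factor has a closed-form lift: $e^{-\beta}\wedge \cdot\,$, a fibrewise Clifford-compatible $K_{S}$, and Clifford multiplication by the exponential of $\Phi$ viewed in $T^{*}N\wedge \mathcal G_{2}\subset \Lambda^{2}E_{2}$), but it costs two verifications the paper's route avoids: (a) the elementary factors are isometries of the underlying orthogonal bundles but in general not Courant algebroid isomorphisms, so you must observe that Lemma \ref{iso-bdle} and Lemma \ref{pairing-iso} use only the isometry property; (b) you must track a normalization determinant for each factor separately -- the $\beta$- and $\Phi$-lifts are unipotent, respectively trace-free in $\mathfrak{spin}$, hence contribute $1$, and only the $K$-factor contributes, via $|\mathrm{det}\, K_{S}|^{-1/r}$, whose pullback behaves as you claim. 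With those two points made explicit your argument is complete.
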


\begin{proof} Let 
\begin{equation}\label{I}
I:E_{1}=  T^{*}N \oplus \mathcal G_{1} \oplus TN \rightarrow E_{2} = T^{*}N\oplus \mathcal G_{2} \oplus TN
\end{equation}
be  an isomorphism between standard Courant algebroids and 
$$
I^{f}:  f^{!}E_{1}= T^{*}M \oplus f^{*} \mathcal G_{1} \oplus TM \rightarrow
f^{!}E_{2}=T^{*}M \oplus f^{*}\mathcal G_{2} \oplus T^{*}M
$$
the induced isomorphism between their  pullbacks. Let $S_{\mathcal G_{i}}$  be   irreducible 
$\mathrm{Cl}(\mathcal G_{i})$-bundles, such that 
$I$ and $I^{f}$ induce global isomorphisms 
$$
I_{S} : \Lambda (TN)\hat{\otimes} S_{{\mathcal G}_{1}}
\rightarrow  \Lambda (TN)\hat{\otimes} S_{{\mathcal G}_{2}},\ 
I^{f}_{S} : \Lambda (TM)\hat{\otimes} f^{*}S_{{\mathcal G}_{1}}
\rightarrow  \Lambda (TM)\hat{\otimes}f^{*} S_{{\mathcal G}_{2}}.
$$
By considering two admissible pairs for $\mathbb{S}_{E}$ and $\mathbb{S}_{f^{!}E}$  
the claim reduces to showing that
\begin{equation}\label{required-pullback}
I^{f}_{ \mathbb{S_{M}}} \circ f^{*} = \epsilon  f^{*} \circ I_{\mathbb{S}_{N}}
\end{equation}
where  $\epsilon \in \{ \pm 1\}$,     
\begin{equation}\label{canonical-weighted-spinors}
I_{\mathbb{S}_{N}}:   \mathbb{S}^{1}_{N}\rightarrow \mathbb{S}^{2}_{N},\ I^{f}_{ \mathbb{S}_{M}}: \mathbb{S}^{1}_{M}\rightarrow \mathbb{S}^{2}_{M}
\end{equation}
are the  isomorphisms  induced by $I$ and $I^{f}$ 
on  the canonical weighted spinor bundles 
$$
\mathbb{S}_{N}^{i}= \Lambda (T^{*}N)\hat{\otimes}{\mathcal S}_{\mathcal G_{i}},\  \mathbb{S}_{M}^{i}=\Lambda (T^{*}M)\hat{\otimes}f^{*}{\mathcal S}_{\mathcal G_{i}}
$$
determined by the spinor bundles
$$
 S^{i}_{N}:= \Lambda (T^{*}N)\hat{\otimes} S_{\mathcal G_{i}},\ 
S_{M}^{i}:= \Lambda (T^{*}M)\hat{\otimes} f^{*}S_{\mathcal G_{i}}
$$
where ${\mathcal S}_{\mathcal G_{i}} = S_{\mathcal G_{i}}\otimes  | \mathrm{det}\,  S^{*}_{\mathcal G_{i}}|^{1/r}$, 
and  $f^{*}: \mathbb{S}_{N}^{i}\rightarrow \mathbb{S}_{M}^{i}$ 
are defined by  (\ref{pull-back-forms}).  
In order to prove (\ref{required-pullback})
we fix 
a distribution  $\mathcal D\subset TM$ complementary to $\mathrm{Ker}\, df$ and we 
decompose orthogonally   $f^{!}E_{i}= V^{+}_{i} \oplus V^{-}$, where  $V^{+}_{i}$ and $V_{-}$  are given  by
\begin{align*}
\nonumber& (V^{+}_{i})_{p}= \mathcal D_{p}^{*}\oplus ({\mathcal G_{i}})_{f(p)}\oplus {\mathcal D}_{p}\\
\nonumber& (V^{-})_{p}= (\mathrm{Ker}\, d_{p}f)^{*}\oplus  \mathrm{Ker}\, d_{p}f,
\end{align*}
for any $p\in M.$ 
Assume that $I$ is defined by $(\beta , K, \Phi )$ as in Section \ref{trans-sect-basic}. Then, from Lemma
\ref{pull-back-bdle} iii), 
$I^{f}$ is defined by
$(f^{*}\beta , f^{*}K, f^{*}\Phi )$ and  acts as the identity on 
$V^{-}$ while its restiction 
$ I^{f_{+}}:=I^{f}\vert_{V_{1}^{+}}: V_{1}^{+} \rightarrow V_{2}^{+}$ 
satisfies
\begin{equation}\label{i-f-plus}
(I^{f_{+}} )_{p}(f^{*} u) = f^{*} ( I_{f(p)}(u)),\ \forall u\in  (E_{1})_{f(p)},\ p\in N,
\end{equation}  
where $f^{*} : (E_{i})_{f(p)}\rightarrow (V_{i}^{+})_{p}$
are given by    (\ref{fstar:eq}), constructed  using the distribution $\mathcal D$. Consider the spinor bundles
\begin{equation}
S_{i}^{+}:= \Lambda\, \mathcal D^{*} \hat{\otimes} f^{*}S_{\mathcal G_{i}},\ S^{-} = \Lambda\, (\mathrm{Ker}\, df )^{*}
\end{equation}
of $V_{i}^{+}$ and  $V^{-}$. 
Then  $\bar{S}^{i}_{M}:= S_{i}^{+}\hat{\otimes} S^{-}$ is a spinor bundle of $f^{!}E_{i}$, isomorphic to the spinor bundle
$S^{i}_{M}$ via the $\mathrm{Cl} ( f^{!} E_{i})$- bundle  isomorphism
\begin{equation}\label{T}
T_{i}:\bar{S}^{i}_{M}\rightarrow S^{i}_{M},\ T( (\omega\otimes  s)\otimes \eta )
=(-1)^{ | s| | \eta | } (\omega \wedge \eta )\otimes  s
\end{equation}
where  $\omega \in \Lambda \mathcal D^{*}$ and  $s\in f^{*}S_{\mathcal G_{i}}$, $\eta \in S^{-} $ are homogeneous.  
Let 
\begin{equation}\label{det-isn}
 I^{f_{+}}_{ S^{+}}:= f^{*} \circ  I_{S_{N}}    \circ ( f^{*})^{-1} : S^{+}_{1}\rightarrow S^{+}_{2}, 
\end{equation}
where   $f^{*} : S_{N}^{2} \rightarrow S_{2}^{+}$ 
and $(f^{*})^{-1} : S_{1}^{+} \rightarrow  S_{N}^{1}$ are induced by the pullback (and its inverse). 
From (\ref{i-f-plus}), we obtain that  $I^{f_{+}}_{ S^{+}}$  is compatible with $I^{f_{+}}.$  
Since $I^{f}\vert_{V^{-}}=\mathrm{Id}_{V^{-}}$, 
\begin{equation}\label{ind-sp}
I^{f}_{\bar{S}_{M}} ( s{\otimes} \eta ) := (-1)^{ | \eta | |I^{f_{+}}_{S^{+}}| } I^{f_{+}}_{S^{+}} (s) \otimes \eta,
\end{equation} 
for any $s\in S_{1}^{+}$ and  $\eta \in S^{-}$ homogeneous, 
where $|\eta |$ and $ |I^{f_{+}}_{ S^{+}}| $ denote the degrees of $\eta$ and  $I^{f_{+}}_{ S^{+}}$,
is compatible with $I^{f}$.  
The isomorphism $I^{f}_{\bar{S}_{M}}$ induces, via the  isomorphisms (\ref{T}),  
an isomorphism 
$I^{f}_{ S_{M}}: S_{M}^{1} \rightarrow S_{M}^{2}$ compatible with $I^{f}$,
which maps   $S^{+}_{1}\subset S^{1}_{M}$ onto  $S^{+}_{2}\subset S^{2}_{M}$  
and whose restriction to $S_{1}^{+}$ coincides with $I^{f_{+}}_{S^{+}}$.  As we already know, any isomorphism
compatible with $I^{f}$  and acting between  $S_{M}^{i}$ is uniquely determined up to a multiplicative  factor
and the isomorphism it induces on the canonical weighted spinor bundles $\mathbb{S}_{M}^{i}$
is independent of this factor, up to multiplication by $\pm 1.$
It remains to  show that the isomorphisms
$I^{f}_{ \mathbb{S}_{M}}: \mathbb{S}^{1}_{M}\rightarrow \mathbb{S}_{M}^{2} $ and 
$I_{\mathbb{S}_{N}}:\mathbb{S}_{N}^{1} \rightarrow \mathbb{S}_{N}^{2}$ 
induced by $I^{f}_{ S_{M}}$ 
(defined as above) 
and $I_{S_{N}}$  are related by
(\ref{required-pullback}).  For this,
we use Remark \ref{computation-iso}.
Let  $( s_{i})$, $( \tilde{s}_{i})$ 
be   local frames
of  $\mathcal S_{\mathcal G_{1}}$, 
$\mathcal S_{\mathcal G_{2}}$  and 
$( s_{i}^{*} )$,  $(  \tilde{s}_{i}^{*})$ the dual frames. 
From  Remark (\ref{computation-iso}), 
\begin{align}
\nonumber& I^{f}_{ \mathbb{S}_{M}} ( (\omega \otimes s) \otimes | f^{*} {s}_{1}^{*}\wedge \cdots \wedge 
f^{*}s_{r}^{*}|^{1/2})\\
\label{pb-1}& =  I^{f}_{ {S}_{M}}(\omega \otimes s) \otimes  | f^{*} \tilde{s}_{1}^{*}\wedge \cdots \wedge 
f^{*}\tilde{s}_{r}^{*}|^{1/2} | \mathrm{det} (I^{f}_{ S_{M}}) |^{-\frac{1}{r N_{h} N_{v}} }
\end{align}
where $\omega \in\Lambda (T^{*}M)$,  $s\in f^{*} S_{\mathcal G_{i}}$, 
$N_{h} := \mathrm{rk}\, ( \Lambda \mathcal D )$, $N_{v}:= \mathrm{rk}\, ( \Lambda\,  \mathrm{Ker} df)$,
$r:= \mathrm{rk}\, S_{\mathcal G_{i}}$
and  $\mathrm{det} (I^{f}_{S_{M}}) $ denotes the  determinant of 
 $I^{f}_{ S_{M}}$ with respect to the local frames $( f^{*} s_{i})$ and $( f^{*}\tilde{s}_{i}) .$ 
Similarly, 
\begin{equation}\label{pb-2} I_{\mathbb{S}_{N}} ( (\omega \otimes s) \otimes |  {s}_{1}^{*}\wedge \cdots \wedge 
s_{r}^{*}|^{1/2})=  I_{ {S}_{N}}(\omega \otimes s) \otimes  | \tilde{s}_{1}^{*}\wedge \cdots \wedge 
\tilde{s}_{r}^{*}|^{1/2} | \mathrm{det} (I_{S_{N}}) |^{-\frac{1}{r N_{h}} }
\end{equation}
where $\omega \in \Lambda (T^{*}N)$, $s\in S_{\mathcal G_{1}}$
and  $\mathrm{det} (I_{S_{N}})$ is the  determinant of $I_{S_{N}}$ with respect to
the local frames  $(  s_{i})$ and $(\tilde{s}_{i}).$ 
Using  
(\ref{pb-1}), (\ref{pb-2}) together with  
\begin{equation}\label{computation-determinants}
\mathrm{det} (I^{f}_{ S_{M}}) = 
\mathrm{det} (I^{f_{+}}_{ S_{+}})^{N_{v}} =   \mathrm{det} (I_{ S_{N}})^{N_{v}}\circ f
\end{equation}
we obtain  (\ref{required-pullback}). 
(In the first relation (\ref{computation-determinants})   we used the definition of $I^{f}_{S_{M}}$ and (\ref{ind-sp})
while in the second relation  (\ref{computation-determinants})  we used the definition 
(\ref{det-isn})  of $I_{S_{N}}$). 
\end{proof}

\begin{prop}\label{lem-pull} 
Let $f: M \rightarrow N$ be a submersion,  $E$ a transitive Courant algebroid over $N$
and $\mathbb{S}_{E}$, $\mathbb{S}_{f^{!}E}$ canonical weighted spinor bundles
of $E$ and $f^{!}E$ such that the pullback
$f^{!} : \Gamma (\mathbb{S}_{E}) \rightarrow \Gamma (\mathbb{S}_{f^{!}E})$ is defined.
Let  $\slashed{d}_{E}\in \mathrm{End}\, \Gamma (\mathbb{S}_{E})$ and 
$\slashed{d}_{f^{!}E}\in  \mathrm{End}\, \Gamma (\mathbb{S}_{f^{!}E})$ 
be  the canonical Dirac generating operators of $E$ 
and $f^{!}E$.   
Then
\begin{equation} \label{pull-back-dirac}
f^{!}\circ \slashed{d}_{E} = \slashed{d}_{f^{!}E}\circ f^{!}.
\end{equation}
\end{prop}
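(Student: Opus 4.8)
The plan is to reduce everything to standard Courant algebroids and then compare the two operators term by term via formula \eqref{can-dirac}. By hypothesis the pullback $f^{!}$ on spinors is defined, so there is an admissible pair $(I,S_{\mathcal G})$: a dissection $I\colon E\to E_{N}:=T^{*}N\oplus\mathcal G\oplus TN$ such that $I$ and the induced dissection $I^{f}\colon f^{!}E\to E_{M}:=T^{*}M\oplus f^{*}\mathcal G\oplus TM$ produce global isomorphisms on spinors; here, by Lemma \ref{pull-back-bdle}(i),(iii), $E_{M}$ is the standard Courant algebroid with data $(f^{*}\nabla,f^{*}R,f^{*}H)$ over the pulled-back quadratic Lie algebra bundle $f^{*}\mathcal G$, and $f^{!}E_{N}\cong E_{M}$. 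Writing $\slashed{d}_{N}$, $\slashed{d}_{M}$ for the canonical Dirac generating operators of $E_{N}$, $E_{M}$ on $\mathbb{S}_{N}=\Lambda(T^{*}N)\hat{\otimes}\mathcal{S}_{\mathcal G}$, $\mathbb{S}_{M}=\Lambda(T^{*}M)\hat{\otimes}f^{*}\mathcal{S}_{\mathcal G}$, Proposition \ref{iso-dirac} gives $\slashed{d}_{E}=I_{\mathbb{S}}^{-1}\circ\slashed{d}_{N}\circ I_{\mathbb{S}}$ and $\slashed{d}_{f^{!}E}=(I^{f}_{\mathbb{S}})^{-1}\circ\slashed{d}_{M}\circ I^{f}_{\mathbb{S}}$; combining this with $f^{!}=(I^{f}_{\mathbb{S}})^{-1}\circ f^{*}\circ I_{\mathbb{S}}$ from \eqref{pull-back-relation}, where $f^{*}$ is the naive pullback $\omega\otimes s\mapsto f^{*}\omega\otimes f^{*}s$ of \eqref{pull-back-forms}, the identity \eqref{pull-back-dirac} becomes equivalent to $f^{*}\circ\slashed{d}_{N}=\slashed{d}_{M}\circ f^{*}$.

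To prove this reduced identity I would fix convenient local data on $E_{M}$, exploiting that the right-hand side of \eqref{can-dirac} is independent of such choices: a complement $\mathcal D\subset TM$ of $\mathrm{Ker}\,df$, a local frame $(X_{a})$ of $TN$ with dual coframe $(\alpha_{a})$, and a local frame $(\rho_{k})$ of $\mathcal G$ with metrically dual frame $(\tilde\rho_{k})$, so that $(\widehat X_{a},V_{b})$ — the horizontal lifts together with a frame $V_{b}$ of $\mathrm{Ker}\,df$ — is a local frame of $TM$ with dual coframe $(f^{*}\alpha_{a},\beta_{b})$, $\beta_{b}|_{\mathcal D}=0$, and $(f^{*}\rho_{k})$ is a local frame of $f^{*}\mathcal G$ with metrically dual frame $(f^{*}\tilde\rho_{k})$; on the spinor side I would take the pullback $f^{*}\nabla^{S_{\mathcal G}}$ of a connection compatible with $\nabla$, which is compatible with $f^{*}\nabla$ (this compatibility is pointwise-algebraic, hence preserved by pullback) and induces $f^{*}\nabla^{\mathcal{S}_{\mathcal G}}$ on $\mathcal{S}_{f^{*}\mathcal G}=f^{*}\mathcal{S}_{\mathcal G}$. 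Then I would evaluate $\slashed{d}_{M}(f^{*}\omega\otimes f^{*}s)$ by \eqref{can-dirac} and check term by term that it equals $f^{*}(\slashed{d}_{N}(\omega\otimes s))$: the $d\omega$- and $H$-terms are immediate since $f^{*}$ commutes with $d$ and with exterior products; the Cartan term follows from $C_{f^{*}\mathcal G}=f^{*}C_{\mathcal G}$ and the fact that $f^{*}$ intertwines the $\mathcal G$-Clifford actions; for the connection term, $(f^{*}\nabla^{\mathcal{S}_{\mathcal G}})_{V_{b}}(f^{*}s)=0$ and $(f^{*}\nabla^{\mathcal{S}_{\mathcal G}})_{\widehat X_{a}}(f^{*}s)=f^{*}(\nabla^{\mathcal{S}_{\mathcal G}}_{X_{a}}s)$, so the vertical frame vectors drop out and what remains is $f^{*}(\nabla^{\mathcal{S}_{\mathcal G}}(s)\wedge\omega)$; and for $\bar{R}^{E}$, the $2$-form $f^{*}R$ annihilates $\mathrm{Ker}\,df$, so only horizontal pairs contribute and $\bar{R}^{f^{!}E_{N}}(f^{*}\omega\otimes f^{*}s)=f^{*}(\bar{R}^{E_{N}}(\omega\otimes s))$; the signs $(-1)^{|\omega|+1}$ are unchanged because $f^{*}$ preserves the $\mathbb{Z}_{2}$-degree.

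The main obstacle is conceptual rather than computational: one must verify that formula \eqref{can-dirac} still applies verbatim over $M$, i.e.\ that $f^{*}\nabla$ preserves $f^{*}\langle\cdot,\cdot\rangle_{\mathcal G}$ and $f^{*}[\cdot,\cdot]_{\mathcal G}$ and satisfies \eqref{cond-curv}, \eqref{dnablaR:eq}, \eqref{cond-system} — which is precisely Lemma \ref{pull-back-bdle}(i) — and that the pullback connection on $f^{*}S_{\mathcal G}$ is genuinely compatible with $f^{*}\nabla$; the crucial structural input is then that the fibre directions of $f$ contribute nothing, which rests on the two facts that $f^{*}R$ and $f^{*}H$ annihilate $\mathrm{Ker}\,df$ and that a pullback connection differentiates pulled-back sections trivially along the fibres. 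Once these points are in place, the rest is the routine algebra indicated above.
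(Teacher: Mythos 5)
Your proposal is correct and follows essentially the same route as the paper: reduce via Proposition \ref{iso-dirac} and the definition of $f^{!}$ to the identity $f^{*}\circ\slashed{d}_{N}=\slashed{d}_{M}\circ f^{*}$ for standard Courant algebroids, then compare the four terms of \eqref{can-dirac} using a projectable-plus-vertical frame, the key inputs being $\bar{R}^{f^{!}E}\circ f^{*}=f^{*}\circ\bar{R}^{E}$, $\nabla^{\mathcal S_{f^{*}\mathcal G}}=f^{*}\nabla^{\mathcal S_{\mathcal G}}$ and $C_{f^{*}\mathcal G}=f^{*}C_{\mathcal G}$. The points you flag as needing verification (Lemma \ref{pull-back-bdle}(i) and compatibility of the pulled-back spinor connection) are exactly the ones the paper relies on.
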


\begin{proof} 
From the invariance of the canonical Dirac generating operators under isomorphisms (see  Proposition \ref{iso-dirac})  and the definition of $f^{!}$
(see (\ref{pull-back-relation})) it is sufficient to prove
(\ref{pull-back-dirac}) when  
$E= T^{*}N \oplus  \mathcal G \oplus TN$ is a standard Courant algebroid,  
as in Remark \ref{pull-back-standard}. 
With the notation of that remark, we need to show that  
\begin{equation}\label{pull-back-simple}
f^{*}\circ \slashed{d}_{N} = \slashed{d}_{M}\circ  f^{*} : \Gamma ( \mathbb{S}_{N}) \rightarrow
\Gamma (\mathbb{S}_{M}),
\end{equation}
where $\slashed{d}_{N}$ and $\slashed{d}_{M}$ are the canonical Dirac generating operators 
of $E$ and $f^{!}E = T^{*}M\oplus f^{*}\mathcal G \oplus TM$, which can be computed  using (\ref{can-dirac}).
Let $m$ and $n$ be the dimensions of $M$ and $N$ respectively.
Let $( X_{i})_{1\leq i\leq m}$ be a local frame of $TM$ such that 
$( X_{i})_{1\leq i\leq n}$ are projectable and their projections 
$(f_{*} X_{i })_{i\leq n}$
form  a local frame of $TN$ and $( X_{i})_{n+1\leq i\leq m}$ are vertical.  
Let $(\alpha_{i})_{1\leq i\leq n}$ be the dual frame of $( f_{*}X_{i} )_{1\leq i\leq n}.$  Then, using $f_{*}X_{i}=0$ for any $i\geq n+1$, 
\begin{align*}
\nonumber &\bar{R}^{f^{!}E} (f^{*}(\omega \otimes s))\\
\nonumber&  =\frac{1}{2} \sum_{i, j\leq n} \langle 
f^{*} ( R(f_{*}X_{i}, f_{*}X_{j})), f^{*} r_{k} \rangle_{f^{*}\mathcal G} f^{*} (\alpha_{i}\wedge
\alpha_{j} \wedge \omega ) \otimes  (f^{*} (r_{k}) f^{*}(s))
\end{align*}
that is, 
\begin{equation}\label{2}
(\bar{R}^{f^{!}E} \circ f^{*})(\omega \otimes s) = (f^{*}\circ \bar{R}^{E}) (\omega \otimes s).
\end{equation}
On the other hand, if $\nabla^{S_{\mathcal G}}$ is compatible with $\nabla$ then 
$\nabla^{S_{f^{*}\mathcal G}}:=  f^{*}\nabla^{S_{\mathcal G}} $ is compatible 
with $f^{*}\nabla$ and 
\begin{equation}\label{3}
\nabla^{ \mathcal S_{f^{*}\mathcal G}} = f^{*} \nabla^{\mathcal S_{\mathcal G}}.
\end{equation} 
Relations  (\ref{2}), (\ref{3}), $C_{f^{*}\mathcal G}  = f^{*} C_{\mathcal G}$ 
and the expression of the canonical Dirac generating operator 
(\ref{can-dirac})  imply 
(\ref{pull-back-simple}).
\end{proof}

\subsection{Pushforward on spinors}\label{sect-push}
Let  $f: M \rightarrow  N$ be  a fiber bundle with compact fibers and $M$,   $N$ oriented. Let  $E$ a transitive  Courant algebroid over $N$. 
In this section we define  a pushforward from spinors of $f^{!}E$ to spinors of $E$.
As for the pullback, we assume first that  $E= T^{*}N\oplus \mathcal G \oplus TN$ is a standard 
Courant algebroid, as in Remark
\ref{pull-back-standard}.  We choose an  irreducible  $\mathrm{Cl} ({\mathcal G})$-bundle
$S_{\mathcal G}$, with canonical spinor bundle $\mathcal S_{\mathcal G}$.
Consider an open cover $\mathcal U = \{ U_{i}\}  $ of $N$ and, for any $U_{i}\in \mathcal U$, a 
canonical bilinear pairing  $\langle \cdot , \cdot \rangle_{\mathcal S_{\mathcal G}\vert_{U_{i}}}$ 
on $\Gamma (\mathcal S_{\mathcal G}\vert_{U_{i}}) $.
We define 
 $\langle \cdot , \cdot \rangle_{f^{*}\mathcal S_{\mathcal G}\vert_{f^{-1} (U_{i})} }:= f^{*} \langle \cdot , \cdot \rangle_{\mathcal S_{\mathcal G}\vert_{U_{i}}}$, which is a canonical bilinear
pairing on 
$\Gamma ( {\mathcal S}_{f^{*}\mathcal G}\vert_{ f^{-1} (U_{i})})$, where 
$ {\mathcal S}_{f^{*}\mathcal G}= f^{*} {\mathcal S}_{\mathcal G}$ is the canonical  spinor
bundle of $f^{*}S_{\mathcal G}.$   
We denote by $\langle \cdot , \cdot \rangle_{\mathbb{S}_{U_{i}}}$ and $\langle \cdot , \cdot \rangle_{\mathbb{S}_{f^{-1} (U_{i}) } }$  the corresponding $ \mathrm{det}\, (T^{*}U_{i})$ and 
 $\mathrm{det}\,  (T^{*} f^{-1} (U_{i}) )$-valued canonical bilinear pairings 
 on 
$\Gamma (\mathbb{S}_{N}\vert_{U_{i}})$ and $\Gamma (\mathbb{S}_{M}\vert_{ f^{-1} (U_{i})})$, where  
$\mathbb{S}_{N}=\Lambda (T^{*}N)\hat{\otimes} \mathcal S_{\mathcal G}$ and $\mathbb{S}_{M}= \Lambda (T^{*}M)\hat{\otimes} f^{*} \mathcal S_{\mathcal G} $, see relation (\ref{can-pairing}). 
For any $U_{i}\in \mathcal U$, let
 \begin{equation}\label{push-forward-spinors-ui}
f^{U_{i}}_{*} : \Gamma ( \mathbb{S}_{M}\vert_{f^{-1} (U_{i})} ) = \Omega ( f^{-1} (U_{i})
, f^{*}{\mathcal S}_{\mathcal G}) \rightarrow \Gamma (\mathbb{S}_{N}\vert_{U_{i}}) =
 \Omega (U_{i}, {\mathcal S}_{\mathcal G})
\end{equation}
be defined by 
\begin{equation}\label{defining-pull-back}
\int_{U_{i}} \langle f_{*}^{U_{i}} s_{1}, s_{2}\rangle_{\mathbb{S}_{U_{i} } }= \int_{f^{-1} (U_{i})} \langle s_{1}, f^{*}s_{2}\rangle_{\mathbb{S}_{f^{-1}( U_{i})}},
\end{equation}
for all $s_{1}\in \Gamma  (\mathbb{S}_{M}\vert_{f^{-1} (U_{i})})$ and 
$s_{2}\in \Gamma_{c}  ( \mathbb{S}_{N }\vert_{U_{i}})$,  
where  $\Gamma_{c}(V)$ denotes the space of compactly supported sections of a vector bundle $V$.
Using the maps $f_{*}^{U_{i}}$ 
and a partition of unity $\{ \lambda_{i}\}$ of  $\mathcal U$ 
 we obtain a map 
 \begin{equation}\label{push-forward-spinors}
f_{*} : \Gamma ( \mathbb{S}_{M} ) = \Omega ( M,  f^{*}{\mathcal S}_{\mathcal G}) \rightarrow \Gamma (\mathbb{S}_{N}) = \Omega ( N, {\mathcal S}_{\mathcal G})
\end{equation}
defined by 
\begin{equation}
(f_{*}  s) = \sum_{i} \lambda_{i} f_{*}^{U_{i}} (  s\vert_{f^{-1} (U_{i} )}), \forall \ s\in \Gamma ( \mathbb{S}_{M}).  
\end{equation}
The map  (\ref{push-forward-spinors})  is called the {\cmssl pushforward } on spinors.

\begin{rem}\label{rem-push-forward}{\rm  Recall that the pushforward on forms $f_{*} : \Omega (M) \rightarrow \Omega (N)$
has the properties 
\begin{equation}\label{prop-push-forward}
f_{*} \circ d = d \circ f_{*},\ f_{*} ( ( f^{*} \alpha ) \wedge \beta ) = \alpha \wedge f_{*}\beta ,\ \int_{M}  (f^{*} \alpha )\wedge \beta = \int_{N} \alpha \wedge f_{*}\beta . 
\end{equation}
 Let $U$ be a local chart over which the fiber bundle $f: M \rightarrow N$ is trivial. 
Then we can identify $f^{-1}(U)$ with $U \times F$, where $F$ is the compact  fiber. The decomposition $U\times F$ induces a bigrading on $\Lambda T_p^*M=\Lambda T_x^*U\otimes \Lambda T_t^*F =\bigoplus_{k,\ell} \Lambda^k T_x^*U \otimes \Lambda^\ell T_t^*F$ for all $p=(x,t)\in U\times F$. 
Then $f_*\omega=0$ for every differential form $\omega$ on $U\times F$ of type $(k,\ell )$, $\ell \neq r=\dim F$. 
Choosing a positively oriented volume form $\mathrm{vol}_F$ on the fiber $F$, we can write every differential form of type $(k,r)$ as 
$\omega = h \omega_U \wedge \mathrm{vol}_F$, where $h$ is a function on $U\times F$ and $\omega_U$ is a differential form on $U$. 
Then
\begin{equation}\label{expr-push-forms}
f_{*} \omega =  \omega_U\int_{F} h(x, t) \mathrm{vol}_F(t).
\end{equation}
So $f_*$ is simply integration over the fibers.}
\end{rem}

The next lemma provides a concrete formulation for  the pushforward on spinors  in terms of the pushforward on forms.

\begin{lem} \label{push:lem}For any ${\omega}\otimes f^{*}s\in \Gamma ( \mathbb{S}_{M} )$ such that $s$ is homogenous, 
\begin{equation}\label{concrete-expr-push}
f_{*} (  {\omega}\otimes f^{*}s) = (-1)^{ r | s| + nr + \frac{r(r-1)}{2} }
(  f_{*} {\omega} ) \otimes s,
\end{equation}
where $n$ and $r$ are the dimensions of  $N$ and the fibers of $f$, respectively. 
 In particular, the pushforward is well-defined (i.e. independent on the choice of 
$\mathcal U$,  partition of unity $\{ \lambda_{i}\}$  and canonical bilinear pairings $\langle \cdot , \cdot \rangle_{\mathcal S_{\mathcal G}\vert_{U_{i}}}$).

\end{lem}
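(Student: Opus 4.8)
The plan is to establish the explicit formula (\ref{concrete-expr-push}) first for the \emph{local} pushforwards $f^{U_i}_*$ of (\ref{push-forward-spinors-ui}) and then to read off the global statement (\ref{push-forward-spinors}) together with the independence of all choices. Two reductions come first. Since the $|\mathrm{det}\,T^*U_i|$-valued pairing $\langle\cdot,\cdot\rangle_{\mathbb{S}_{U_i}}$ (see Lemma \ref{cor-can-properties}) is non-degenerate (Proposition \ref{prop-pairing}), the defining relation (\ref{defining-pull-back}) determines $f^{U_i}_*s_1$ uniquely, so it is enough to exhibit \emph{some} section satisfying (\ref{defining-pull-back}) for all $s_2\in\Gamma_c(\mathbb{S}_N\vert_{U_i})$. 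Moreover $f^{U_i}_*$ is local, so (working over trivializing sub-charts of $U_i$) we may assume $f^{-1}(U)\cong U\times F$; and by $\mathbb{R}$-linearity of both sides of (\ref{defining-pull-back}) it suffices to treat $s_1=\omega\otimes f^*s$ with $\omega=h\,(f^*\beta)\wedge\mu$, where $\beta\in\Omega^k(U)$, $\mu$ is the pullback to $U\times F$ of a positively oriented volume form $\mathrm{vol}_F$ on $F$, $h\in C^\infty(U\times F)$ and $s\in\Gamma(\mathcal S_{\mathcal G})$ is homogeneous, paired against $s_2=\tilde\omega\otimes\tilde s$ with $\tilde\omega\in\Omega^{n-k}(U)$ and $\tilde s$ homogeneous (components of $\omega$ of fibre-degree $\neq r$, and components of $\tilde\omega$ of degree $\neq n-k$, contribute nothing to either side).

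The core is a sign computation. Set $\Psi:=(-1)^{r|s|+nr+r(r-1)/2}(f_*\omega)\otimes s$; by (\ref{expr-push-forms}) we have $f_*\omega=\hat h\,\beta$ with $\hat h:=\int_F h\,\mathrm{vol}_F\in C^\infty(U)$. I will check that $\Psi$ satisfies (\ref{defining-pull-back}). Expanding $\langle s_1,f^*s_2\rangle_{\mathbb{S}_{f^{-1}(U)}}$ by (\ref{can-pairing}), using $\langle f^*s,f^*\tilde s\rangle_{\mathcal S_{f^*\mathcal G}}=f^*\langle s,\tilde s\rangle_{\mathcal S_{\mathcal G}}$, the identity $\alpha^t=(-1)^{\deg\alpha(\deg\alpha-1)/2}\alpha$ on homogeneous forms, and the commutation of $\mu$ (degree $r$) past $f^*\tilde\omega$ (degree $n-k$), and then converting $\int_{f^{-1}(U)}$ into $\int_U$ by the projection formula (\ref{prop-push-forward}) together with (\ref{expr-push-forms}), the right-hand side of (\ref{defining-pull-back}) takes the form
\begin{equation}\label{plan-rhs}
(-1)^{\,|s|(n+r)+\frac{(k+r)(k+r-1)}{2}+r(n-k)}\,\int_U \hat h\,\langle s,\tilde s\rangle_{\mathcal S_{\mathcal G}}\,\beta\wedge\tilde\omega .
\end{equation}
On the other hand, expanding $\langle\Psi,s_2\rangle_{\mathbb{S}_U}$ by (\ref{can-pairing}) and integrating yields
\begin{equation}\label{plan-lhs}
(-1)^{\,r|s|+nr+\frac{r(r-1)}{2}}\,(-1)^{\,|s|n+\frac{k(k-1)}{2}}\,\int_U \hat h\,\langle s,\tilde s\rangle_{\mathcal S_{\mathcal G}}\,\beta\wedge\tilde\omega .
\end{equation}
Expressions (\ref{plan-rhs}) and (\ref{plan-lhs}) coincide because the two exponents agree modulo $2$: their difference is
\[
r|s|+nr+\tfrac{r(r-1)}{2}+|s|n+\tfrac{k(k-1)}{2}-|s|(n+r)-\tfrac{(k+r)(k+r-1)}{2}-r(n-k)=0,
\]
using $\frac{(k+r)(k+r-1)}{2}-\frac{k(k-1)}{2}=kr+\frac{r(r-1)}{2}$; note that the base-degree $k$ cancels. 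Hence $\Psi$ satisfies (\ref{defining-pull-back}), so $\Psi=f^{U_i}_*(\omega\otimes f^*s)$ by uniqueness. This is (\ref{concrete-expr-push}) over every trivializing chart, hence, by locality of $f^{U_i}_*$, over every member of the cover.

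To finish, $f_*s=\sum_i\lambda_i f^{U_i}_*(s\vert_{f^{-1}(U_i)})$ by (\ref{push-forward-spinors}); inserting (\ref{concrete-expr-push}) into each summand, using locality of the pushforward on forms ($f_*(\omega\vert_{f^{-1}(U_i)})=(f_*\omega)\vert_{U_i}$) and $\sum_i\lambda_i=1$, gives (\ref{concrete-expr-push}) for $f_*$. Since the right-hand side of (\ref{concrete-expr-push}) involves neither the cover $\mathcal U$, nor the partition of unity $\{\lambda_i\}$, nor the chosen bilinear pairings $\langle\cdot,\cdot\rangle_{\mathcal S_{\mathcal G}\vert_{U_i}}$ (replacing such a pairing by its negative changes both $\langle\cdot,\cdot\rangle_{\mathbb{S}_{U_i}}$ and $\langle\cdot,\cdot\rangle_{\mathbb{S}_{f^{-1}(U_i)}}=f^*\langle\cdot,\cdot\rangle_{\mathbb{S}_{U_i}}$ at once, leaving (\ref{defining-pull-back}) and hence $f^{U_i}_*$ unaffected), the pushforward is well-defined. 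The only genuine difficulty is the sign accounting: making the transpose $t$, the commutation of the fibre volume form past pulled-back forms, and the degree-dependent sign in (\ref{can-pairing}) conspire into precisely the base-degree-independent exponent $r|s|+nr+r(r-1)/2$ — and it is exactly this $k$-independence that renders the formula, and with it the pushforward, consistent.
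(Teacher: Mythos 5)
Your proposal is correct and follows essentially the same route as the paper: both verify that the candidate $(-1)^{r|s|+nr+r(r-1)/2}(f_*\omega)\otimes s$ satisfies the defining adjunction relation against compactly supported test spinors, with the proof reducing to the same sign bookkeeping for the transpose, the fibre volume form, and the degree-dependent sign in the pairing (your exponent comparison is equivalent to the paper's identity $f_*(\omega^t)=(f_*\omega)^t(-1)^{r(r-1)/2+r(|\omega|-r)}$). Your added remarks on uniqueness via non-degeneracy and on the simultaneous sign change of the two pairings merely make explicit what the paper leaves implicit.
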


\begin{proof}
We  show that for any  $\omega\otimes f^{*} s \in \Gamma (\mathbb{S}_{M} \vert_{ f^{-1} (U_{i})})$ 
with $s$ homogeneous
and
$\tilde{\omega}\otimes \tilde{s}\in \Gamma_{c} ( \mathbb{S}_{N}\vert_{U_{i}})$, 
\begin{equation}\label{integrals}
\int_{U_{i}}\langle ( f_{*}\omega ) \otimes s, \tilde{\omega}\otimes \tilde{s}\rangle_{\mathbb{S}_{U_{i}}} = 
(-1)^{ r | s| + nr + \frac{r(r-1)}{2}}
\int_{ f^{-1} (U_{i}) }\langle \omega \otimes f^{*}s, f^{*}(\tilde{\omega}\otimes \tilde{s})\rangle_{\mathbb{S}_{ f^{-1} (U_{i})}} .
\end{equation}
In order to prove (\ref{integrals}), we  assume, without loss of generality, that  $\omega $, $\tilde{\omega}$ and $\tilde{s}$ are also 
homogeneous. If $| \omega | + | \tilde{\omega } | \neq m$ (where $m:= n + r$)  both terms in (\ref{integrals}) vanish. Assume now
that $| \omega | + | \tilde{\omega } | =m.$  Then 
\begin{eqnarray*}
 \int_{ f^{-1} (U_{i} )}\langle \omega \otimes f^{*}s, f^{*}(\tilde{\omega}\otimes \tilde{s}) \rangle_{\mathbb{S}_{ f^{-1} (U_{i})}} 
&=&  (-1)^{ | s| m + | \omega | | \tilde{\omega } | }\int_{ f^{-1} (U_{i})} f^{*}( \langle s, \tilde{s}\rangle_{\mathcal S_{\mathcal G}} 
\tilde{\omega } ) \wedge \omega^{t}\\
&=&  (-1)^{ |s| m + | \omega | | \tilde{\omega} |}\int_{ U_{i}} \langle s, \tilde{s}\rangle_{\mathcal S_{\mathcal G}}
\tilde{\omega}\wedge f_{*} (\omega^{t})\\  
&=&  (-1)^{r( m - \frac{r+1}{2} - | s| )}\int_{ U_{i} } \langle  (f_{*}\omega ) \otimes s, \tilde{\omega}\otimes \tilde{s}\rangle_{\mathbb{S}_{U_{i}}}, 
\end{eqnarray*}
where  we used $f_{*} (\omega^{t}) = ( f_{*}\omega )^{t} (-1)^{ \frac{r (r-1)}{2} + r ( | \omega | - r) }$, which can be checked using (\ref{expr-push-forms})  and the third property  (\ref{prop-push-forward}) of the pushforward on  forms.
Relation (\ref{integrals}) is proved and implies (\ref{concrete-expr-push}).
\end{proof}

\begin{rem}\label{later}{\rm 
In the above setting, assume that $f$ is endowed with an horizontal distribution, like in 
Remark \ref{pull-back-standard} ii). Then 
\begin{equation}\label{clifford-push-1}
f_{*} (f^{*} (u)\cdot s) = u\cdot f_{*} s,\ \forall u\in \Gamma \mathrm{Cl}(E),\ s\in \Gamma (\mathbb{S}_{f^{!}E}). 
\end{equation} 
where  $f^{*}: \Gamma \mathrm{Cl}(E) \rightarrow \Gamma \mathrm{Cl}( f^{!}E)$ 
is the map  (\ref{fstar:eq}). }
\end{rem}

Assume now that    $E$ is a transitive, but not necessarily standard,  Courant algebroid. Then 
we can define the pushforward 
$ f_{!}: \Gamma (\mathbb{S}_{f^{!}E}) \rightarrow \Gamma (\mathbb{S}_{E})$ 
for any canonical weighted spinor bundles  $\mathbb{S}_{E}$ and $\mathbb{S}_{f^{!}E}$,  
for which the  pullback $f^{!} :\Gamma (\mathbb{S}_{E})\rightarrow
\Gamma (\mathbb{S}_{f^{!}E})$ is defined. 
Namely, we consider an admissible pair $(I : E \rightarrow T^* N \oplus\mathcal G\oplus TN, 
S_{\mathcal G})$ for $\mathbb{S}_{E}$ and $\mathbb{S}_{f^{!}E}$ 
and we define the {\cmssl pushforward on spinors}
\begin{equation}\label{P-F}
f_{!} :\Gamma ( \mathbb{S}_{f^{!}E} )\rightarrow\Gamma ( \mathbb{S}_{E}),\ f_{!}:=  (I_{\mathbb{S}} )^{-1}\circ f_{*}\circ I_{\mathbb{S}}^{ f}
\end{equation}
where  $f_{*}:\Gamma (\mathbb{S}_{M})\rightarrow \Gamma (\mathbb{S}_{N})$ is  the map  (\ref{defining-pull-back}). 
Remark that if $\langle \cdot , \cdot \rangle_{\mathbb{S}_{E}{\vert_{U_{i}}}}: = (I_{\mathbb{S}})^{*} \langle \cdot , \cdot \rangle_{\mathbb{S}_{U_{i}}}$  and $\langle \cdot , \cdot \rangle_{\mathbb{S}_{f^{!}E}\vert_{f^{-1}(U_{i})} }:=
(I^{f}_{\mathbb{S}})^{*}  \langle \cdot , \cdot \rangle_{\mathbb{S}_{f^{-1}(U_{i})}}$, then
\begin{equation}
\int_{U_{i}} \langle f_{!} s_{1}, s_{2}\rangle_{\mathbb{S}_{E}\vert_{U_{i}} }= \int_{f^{-1}(U_{i})} \langle s_{1}, f^{!}s_{2}\rangle_{\mathbb{S}_{f^{!}E}\vert_{f^{-1}(U_{i})}},  
\end{equation} 
for any $s_{1}\in \Gamma (\mathbb{S}_{f^{!}E}\vert_{f^{-1}(U_{i} )})$ and  $s_{2}\in \Gamma_{c} ( \mathbb{S}_{E}\vert_{U_{i}} )$,
where $f^{!}=( I^{f}_{\mathbb{S}})^{-1} \circ f^{*}\circ I_{\mathbb{S}}$, cf.\ Lemma \ref{well-def-lem}. 
In particular, (\ref{P-F}) is well defined, up to  multiplication by $\pm 1$.

\begin{prop}\label{lem-push}
The pushforward  $f_{!}:\Gamma (f^{!}E) \rightarrow \Gamma ( E)$ 
commutes with the canonical Dirac generating operators,  i.e.\  
 $f_{!}\circ \slashed{d}_{f^{!}E} = \slashed{d}_{E}\circ f_{!}.$
\end{prop}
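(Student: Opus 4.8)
The plan is to deduce this from the analogous statement for the pullback (Proposition \ref{lem-pull}) by exhibiting $f_{!}$ as the adjoint of $f^{!}$ with respect to the canonical bilinear pairings on spinors, together with the fact that $\slashed{d}$ is formally skew-adjoint for those pairings modulo a total derivative. Concretely, fix an admissible pair $(I,S_{\mathcal{G}})$ for $\mathbb{S}_{E}$ and $\mathbb{S}_{f^{!}E}$, a cover $\mathcal{U}=\{U_{i}\}$ of $N$ by relatively compact, sufficiently small open sets over which $f$ is trivial, and the pairings $\langle\cdot,\cdot\rangle_{\mathbb{S}_{E}\vert_{U_{i}}}$, $\langle\cdot,\cdot\rangle_{\mathbb{S}_{f^{!}E}\vert_{f^{-1}(U_{i})}}$ from the discussion following (\ref{P-F}), for which
$$
\int_{U_{i}}\langle f_{!}s_{1},s_{2}\rangle_{\mathbb{S}_{E}\vert_{U_{i}}}=\int_{f^{-1}(U_{i})}\langle s_{1},f^{!}s_{2}\rangle_{\mathbb{S}_{f^{!}E}\vert_{f^{-1}(U_{i})}},\qquad s_{1}\in\Gamma(\mathbb{S}_{f^{!}E}\vert_{f^{-1}(U_{i})}),\ s_{2}\in\Gamma_{c}(\mathbb{S}_{E}\vert_{U_{i}}).
$$
Since these pairings are fibrewise non-degenerate (Proposition \ref{prop-pairing}), it suffices to check $\int_{U_{i}}\langle f_{!}(\slashed{d}_{f^{!}E}s),s_{2}\rangle=\int_{U_{i}}\langle\slashed{d}_{E}(f_{!}s),s_{2}\rangle$ for all such $s_{2}$.

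The heart of the matter is an integration-by-parts formula for $\slashed{d}$: for any oriented open $U$ and $\sigma,\tau\in\Gamma(\mathbb{S}\vert_{U})$ one should have $\langle\slashed{d}\,\sigma,\tau\rangle_{\mathbb{S}\vert_{U}}+\langle\sigma,\slashed{d}\,\tau\rangle_{\mathbb{S}\vert_{U}}=d\,\Theta(\sigma,\tau)$, where $\Theta(\sigma,\tau)$ is a $(\det T^{*}U)$-valued $(\dim U-1)$-form depending bilinearly and pointwise on its arguments. By Proposition \ref{iso-dirac} and Lemma \ref{pairing-iso} both $\slashed{d}$ and the pairing are transported by the dissection $I_{\mathbb{S}}$, so one may assume $E=T^{*}N\oplus\mathcal{G}\oplus TN$ standard (whence $f^{!}E$ is standard with pulled-back data by Lemma \ref{pull-back-bdle}). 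I would then write $\slashed{d}=\mathcal{E}+Q$ as in (\ref{can-dirac})--(\ref{can-dirac-sometimes}), with $\mathcal{E}(\omega\otimes s)=(d\omega)\otimes s+\nabla^{\mathcal{S}_{\mathcal{G}}}(s)\wedge\omega$ the operator of Corollary \ref{de-adaugat:cor} and $Q$ Clifford multiplication by the section $-H-\tfrac14 C_{\mathcal{G}}-\tfrac12\sum_{i,j,k}\langle R(X_{i},X_{j}),r_{k}\rangle_{\mathcal{G}}\,\tilde{r}_{k}\wedge\alpha_{i}\wedge\alpha_{j}$ of $\Lambda^{3}E$. For the $\mathcal{E}$-part, Corollary \ref{de-adaugat:cor} gives exactly $\langle\mathcal{E}\sigma,\tau\rangle+\langle\sigma,\mathcal{E}\tau\rangle=d(\dots)$, extending from homogeneous products by bilinearity. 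For the $Q$-part, polarising (\ref{symmetry-pairing}) shows that Clifford multiplication by any $u\in E$ is self-adjoint for $\langle\cdot,\cdot\rangle_{\mathbb{S}\vert_{U}}$, hence Clifford multiplication by $c\in\Gamma(\Lambda^{3}E)$ satisfies $\langle c\cdot\sigma,\tau\rangle=\langle\sigma,c^{t}\cdot\tau\rangle=-\langle\sigma,c\cdot\tau\rangle$, since $c^{t}=(-1)^{3\cdot 2/2}c=-c$; so $\langle Q\sigma,\tau\rangle+\langle\sigma,Q\tau\rangle=0$. Adding the two contributions yields the formula.

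Granting this, the proof concludes by the chain (for $s\in\Gamma(\mathbb{S}_{f^{!}E}\vert_{f^{-1}(U_{i})})$, $s_{2}\in\Gamma_{c}(\mathbb{S}_{E}\vert_{U_{i}})$):
\begin{align*}
\int_{U_{i}}\langle f_{!}(\slashed{d}_{f^{!}E}s),s_{2}\rangle
&=\int_{f^{-1}(U_{i})}\langle\slashed{d}_{f^{!}E}s,f^{!}s_{2}\rangle
=-\int_{f^{-1}(U_{i})}\langle s,\slashed{d}_{f^{!}E}(f^{!}s_{2})\rangle\\
&=-\int_{f^{-1}(U_{i})}\langle s,f^{!}(\slashed{d}_{E}s_{2})\rangle
=-\int_{U_{i}}\langle f_{!}s,\slashed{d}_{E}s_{2}\rangle
=\int_{U_{i}}\langle\slashed{d}_{E}(f_{!}s),s_{2}\rangle,
\end{align*}
where the first and fourth equalities are the defining property of $f_{!}$, the second and last are the integration-by-parts formula (the exact terms having compact support because $f^{!}s_{2}$, resp.\ $s_{2}$, does, as $f$ is proper, so they integrate to zero by Stokes), and the third is Proposition \ref{lem-pull} in the form $\slashed{d}_{f^{!}E}\circ f^{!}=f^{!}\circ\slashed{d}_{E}$. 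Non-degeneracy of $\langle\cdot,\cdot\rangle_{\mathbb{S}_{E}\vert_{U_{i}}}$ then gives $f_{!}(\slashed{d}_{f^{!}E}s)=\slashed{d}_{E}(f_{!}s)$ on each $U_{i}$, hence globally.

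I expect the main obstacle to be establishing the integration-by-parts formula cleanly: one must spot the splitting $\slashed{d}=\mathcal{E}+Q$ with $Q$ Clifford multiplication by a genuine $3$-form, and verify that the degree-$3$ sign $(-1)^{3\cdot 2/2}=-1$ makes the pointwise $Q$-contribution cancel exactly while the $\mathcal{E}$-contribution is a total derivative; this is precisely what Corollary \ref{de-adaugat:cor} was set up for. As a fall-back, in the standard case one can instead prove $f_{*}\circ\slashed{d}_{M}=\slashed{d}_{N}\circ f_{*}$ termwise from the explicit formula (\ref{concrete-expr-push}) for $f_{*}$ on spinors, the expression (\ref{can-dirac}) for $\slashed{d}_{M}$ with pulled-back data $(f^{*}\nabla,f^{*}R,f^{*}H)$, and the identities (\ref{prop-push-forward}) for the pushforward on forms, at the cost of heavier sign bookkeeping.
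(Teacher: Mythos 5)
Your argument is correct and, at its core, follows the same skeleton as the paper's proof: reduce to a standard Courant algebroid via the admissible pair, test the identity against compactly supported spinors using the adjunction $\int_{U_i}\langle f_! s_1, s_2\rangle = \int_{f^{-1}(U_i)}\langle s_1, f^! s_2\rangle$, integrate by parts via Corollary \ref{de-adaugat:cor}, kill the exact terms by Stokes, and invoke Proposition \ref{lem-pull}. The genuine difference lies in the treatment of the zeroth-order part of $\slashed{d}$. The paper first strips off the terms given by Clifford multiplication (by $H$, $C_{\mathcal G}$ and the curvature term) using the direct compatibility $f_*(f^*(u)\cdot s)=u\cdot f_* s$ of (\ref{clifford-push-1}), so that only the first-order operator $\mathcal E$ remains to be handled by the integral argument (for which it uses $f^*\circ \mathcal{E}_N=\mathcal{E}_M\circ f^*$ rather than the full Proposition \ref{lem-pull}). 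You instead keep the whole operator and observe that $\slashed{d}-\mathcal E$ is Clifford multiplication by a section of $\Lambda^3E$, which is skew-adjoint for the canonical pairing since vectors act self-adjointly and $c^t=-c$ in degree $3$; together with (\ref{symm-part}) this yields formal skew-adjointness of all of $\slashed{d}$ up to an exact term, after which commutation with $f_!$ is purely formal from commutation with $f^!$. Your route is more uniform and isolates a reusable fact (formal skew-adjointness of the canonical Dirac generating operator), at the cost of checking that the factors $\tilde r_k,\alpha_i,\alpha_j$ are mutually orthogonal so that their Clifford product is the exterior product (true, since $T^*M$ is isotropic and orthogonal to $\mathcal{G}$), and of the small precision that self-adjointness of $\gamma_u$ comes from substituting $\tilde s\mapsto u\cdot\tilde s$ in (\ref{symmetry-pairing}) and using $u^2=\langle u,u\rangle$ for non-null $u$, rather than from polarisation alone. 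Both arguments are complete; your fall-back (termwise computation from (\ref{concrete-expr-push})) would also work but is not what the paper does.
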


\begin{proof}  
Like in the proof of Proposition \ref{lem-pull}, it is sufficient to show that
\begin{equation}\label{98}
f_{*}\circ \slashed{d}_{M} =\slashed{d}_{N}\circ f_{*},
\end{equation}
where we preserve the notation from the proof of that proposition.
From (\ref{clifford-push-1})  
we know
\begin{equation}\label{clifford-push}
f_{*} (f^{*} (u)\cdot s) = u\cdot f_{*} s,\ \forall u\in \Gamma (T^{*}N\oplus {\mathcal G} ),\  s\in \Gamma (\mathbb{S}_{ M}).
\end{equation}
Iterating  relation   (\ref{clifford-push})
 and using (\ref{pull-back-clifford}),  we see that (\ref{clifford-push})  holds  for any $u\in \Gamma  \Lambda (T^{*}N\oplus \mathcal G)$.  
Using the expression (\ref{can-dirac-sometimes}) of the canonical Dirac generating operator  and property (\ref{clifford-push}) of $f_{*}$ (with 
$u:= H$,  $\alpha_{i}$, $C_{\mathcal G}$), 
we obtain that 
\begin{equation}
\slashed{d}_{N}  f_{*}(\tilde{\omega } \otimes f^{*}s) = f_{*} \slashed{d}_{M} (\tilde{\omega} \otimes f^{*}s),\ \forall \tilde{\omega}\in \Omega (M),\ s\in \Gamma ({\mathcal S}_{\mathcal G})
\end{equation}
reduces  to 
\begin{equation}\label{without-int}
f_{*} {\mathcal E}_{M} (\tilde{\omega }\otimes f^{*} s) ={\mathcal  E}_{N} f_{*} (\tilde{\omega}\otimes s),
\end{equation}
where
\begin{align}
 \nonumber& {\mathcal E}_{N}(\omega \otimes s): = (d\omega )\otimes s +\sum_{i}  (\alpha_{i}\wedge \omega ) \otimes \nabla_{X_{i}}^{{\mathcal S}_{\mathcal G}}s\\
  \nonumber& {\mathcal E}_{M}(\tilde{\omega} \otimes f^{*}s) := (d\tilde{\omega} )\otimes f^{*}s + \sum_{i}( (f^{*}\alpha_{i})\wedge \tilde{\omega} ) \otimes  f^{*}(\nabla_{X_{i}}^{{\mathcal S}_{\mathcal G}}s),
\end{align}
for any $\omega \in \Omega (N)$, $\tilde{\omega}\in \Omega (M)$ and $s\in \Gamma ({\mathcal S}_{\mathcal G})$. 
In order to show (\ref{without-int}) it is sufficient to show that for any $U\subset N$ open and sufficiently small
and  $\beta \in \Gamma_{c} (\mathbb{S}_{N}\vert_{U})$, 
\begin{equation}\label{needed}
\int_{U} \langle {\mathcal E}_{N} f_{*} (\tilde{\omega} \otimes f^{*} s), \beta \rangle_{\mathbb{S}_{U}} 
= \int_{f^{-1}(U)} \langle {\mathcal E}_{M} (\tilde{\omega} \otimes f^{*}s), f^{*}\beta\rangle_{\mathbb{S}_{f^{-1}(U)}}.
\end{equation}
From Corollary  \ref{de-adaugat:cor} and $f^{*} {\mathcal E}_{N} ={\mathcal  E}_{M} f^{*}$ we have 
\begin{align*}
\nonumber&\int_{U} \langle {\mathcal E}_{N} f_{*}( \tilde{\omega}\otimes f^{*}s), \beta\rangle_{\mathbb{S}_{U}} = - \int_{U} \langle f_{*} (\tilde{\omega} \otimes f^{*}s), {\mathcal E}_{N} \beta \rangle_{\mathbb{S}_{U}}\\ 
\nonumber&= - \int_{f^{-1} (U) } \langle \tilde{\omega}\otimes f^{*}s, f^{*} {\mathcal E}_{N} \beta \rangle_{\mathbb{S}_{f^{-1}(U)}}
=  -\int_{f^{-1} (U)} \langle \tilde{\omega}\otimes f^{*}s,  {\mathcal E}_{M} f^{*}\beta \rangle_{\mathbb{S}_{f^{-1} (U)}}\\  
\nonumber&= \int_{ f{-1} (U)} \langle {\mathcal E}_{M} (\tilde{\omega}\otimes f^{*}s), f^{*}\beta
\rangle_{\mathbb{S}_{f^{-1}(U)}},
\end{align*}
which  proves (\ref{needed}).
\end{proof}

\section{Actions  on transitive Courant algebroids}

\subsection{Basic properties}

In this section we consider a class of actions on a transitive Courant algebroid which  generalizes 
torus actions on exact and, more generally,  on heterotic Courant  algebroids.  For the latter types of Courant algebroids, a notion of $T$-duality has been developed  in   \cite{T-duality-exact} and \cite{baraglia} respectively.

Let $E$ be a transitive Courant algebroid over a manifold $M$, with anchor $\pi : E\rightarrow TM$,
Dorfmann bracket  $[\cdot , \cdot ]$ and scalar product  $\langle \cdot , \cdot \rangle$.  
Recall that the automorphism group $\mathrm{Aut}(E)$ of $E$ is the group of orthogonal  automorphisms
$F: E \rightarrow E$ which cover a diffeomorphism $f: M \rightarrow M$,  such that 
$$\pi ( F(u) )=
(d_{p} f) \pi (u),\ \forall u\in E_{p},\ p\in M
$$  
and the natural map induced by 
$F$ on the space of sections of $E$  preserves the Dorfmann bracket.
Its  Lie algebra is
the Lie algebra $\mathrm{Der} (E)$  of derivations
of $E$. This   is the  subalgebra 
of $\mathrm{End}\, \Gamma (E)$ 
of  
first order linear differential operators $D: \Gamma (E)\rightarrow \Gamma (E)$  which satisfy, for any 
$s, s_{1}, s_{2}\in \Gamma (E)$,
\begin{align}
\nonumber& D [s_{1}, s_{2} ] = [Ds_{1}, s_{2} ] + [ s_{1}, D s_{2} ]\\
\nonumber& X \langle s_{1}, s_{2} \rangle = \langle D s_{1}, s_{2} \rangle + \langle s_{1}, D  s_{2} \rangle\\
\label{cond-deriv}&\pi  \circ D (s) = {\mathcal L}_{X}\pi  (s), 
\end{align}
where $X\in {\mathfrak X}(M)$ 
is a vector field on $M$, uniquely determined by $D$ (from the second relation
(\ref{cond-deriv}))  and usually denoted by $\pi  (D).$

Let $\mathfrak{g}$ be a Lie algebra acting on $M$ by an infinitesimal
action 
$$
\psi : \mathfrak{g} \rightarrow {\mathfrak X}(M),\ a\mapsto \psi (a) = X_{a}.
$$
We will always assume (without repeating it each time) that all the infinitesimal actions considered are {\cmssl free}, which means that the fundamental vector fields $X_{a}$ are non-vanishing, for all $a\in \mathfrak{g}\setminus \{ 0\}$.

\begin{defn}\label{def-triv-ext} i) An {\cmssl  (infinitesimal) action} of $\mathfrak{g}$ on $E$ which lifts $\psi$ is an
algebra homomorphism $\Psi : \mathfrak{g} \rightarrow \mathrm{Der}(E)$ 
which satisfies $\pi \Psi (a) = X_{a}$ for any $a\in \mathfrak{g}$.\

ii) Let $\Psi :  \mathfrak{g} \rightarrow \mathrm{Der}(E)$ be an  action of $E$ which lifts $\psi .$
An {\cmssl invariant dissection} of $E$ is a dissection $I: E \rightarrow T^{*}M\oplus \mathcal G \oplus TM$
for which the action 
$$
\mathfrak{g}\ni a\rightarrow  I\circ \Psi (a)\circ I^{-1}\in \mathrm{Der} (T^{*}M\oplus \mathcal G\oplus TM)
$$ 
preserves the  summands  $T^{*}M$, $\mathcal G$ and $TM.$
\end{defn}

We will only consider  (without repeating it each time) infinitesimal actions on Courant algebroids for which
 there is  an invariant dissection. The next proposition shows that this is automatically the case if the infinitesimal action is 
 induced from an action of a compact group.
 \begin{prop} Let $\Psi : G \rightarrow \mathrm{Aut}(E)$ be an action of a compact group $G$ by 
 automorphisms of a Courant algebroid $E$ over $M$, hence covering a group action $\psi: G\rightarrow \mathrm{Diff}(M)$. 
 Then $E$ admits a dissection invariant under $\Psi$. 
 \end{prop}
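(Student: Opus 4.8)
The plan is to realise a dissection of $E$ as a pair of compatible splittings of the two short exact sequences underlying a transitive Courant algebroid, to make both splittings $G$-invariant by averaging over $G$ with respect to Haar measure, and then to invoke Chen's classification (Theorem 2.3 of \cite{chen}, cf.\ the proof of Lemma \ref{c-d-k}) to conclude that the resulting isometric, anchor-preserving isomorphism is automatically a dissection.

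First I would record the $G$-equivariant data that $E$ carries canonically. Since every $\Psi(g)\in\mathrm{Aut}(E)$ is an isometry intertwining the anchors, it preserves $\mathrm{Ker}\,\pi$, its isotropic subbundle $(\mathrm{Ker}\,\pi)^{\perp}=\pi^{*}(T^{*}M)$ (which for transitive $E$ lies inside $\mathrm{Ker}\,\pi$ and is canonically, hence $G$-equivariantly, isomorphic to $T^{*}M$ via the dual anchor), and therefore induces an action $\bar\Psi$ on the bundle of quadratic Lie algebras $\mathcal G=\mathrm{Ker}\,\pi/(\mathrm{Ker}\,\pi)^{\perp}$. Fixing this $\mathcal G$, a dissection of $E$ is produced from the choice of (a) a vector bundle splitting $\sigma\colon\mathcal G\to\mathrm{Ker}\,\pi$ of the quotient map, and (b) an isotropic complement $W$ of $\mathrm{Ker}\,\pi$ in $E$ with $W\perp\sigma(\mathcal G)$. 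Given such $\sigma$ and $W$, the induced bundle map $I\colon E\to T^{*}M\oplus\mathcal G\oplus TM$ is an isometry onto the standard model (note $\langle\sigma(r),\sigma(s)\rangle_{E}=\langle r,s\rangle_{\mathcal G}$ automatically, since $(\mathrm{Ker}\,\pi)^{\perp}$ is isotropic and orthogonal to $\mathrm{Ker}\,\pi$), intertwines the anchors, and satisfies $\mathrm{pr}_{\mathcal G}[\sigma(r),\sigma(s)]=[r,s]_{\mathcal G}$, so by Chen's theorem it is a dissection; and if $\sigma$ and $W$ are $G$-invariant, then $I$ conjugates the $G$-action into one preserving the three summands, i.e.\ $I$ is the required invariant dissection.

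The construction of the invariant splittings uses averaging. For (a), take any splitting $\sigma_{0}$ and set $\sigma:=\int_{G}\Psi(g)\circ\sigma_{0}\circ\bar\Psi(g)^{-1}\,dg$ with normalized Haar measure; each integrand is again a splitting (it is genuinely $C^{\infty}(M)$-linear, the twists by the diffeomorphisms $\psi(g)$ covered by $\Psi(g)$ cancelling, and it projects to $\mathrm{id}_{\mathcal G}$), and the set of splittings is affine, so $\sigma$ is a $G$-invariant splitting. For (b), first average a projection $E\to\mathrm{Ker}\,\pi$ to obtain a $G$-invariant complement with lift $\tau_{0}\colon TM\to E$; then replace $\tau_{0}$ by $\tau_{1}:=\tau_{0}-\sigma\circ\phi$, where $\phi\colon TM\to\mathcal G$ is the $G$-invariant map determined by $\langle\phi(X),r\rangle_{\mathcal G}=\langle\tau_{0}(X),\sigma(r)\rangle_{E}$ (using non-degeneracy of $\langle\cdot,\cdot\rangle_{\mathcal G}$), so that $\tau_{1}(TM)\perp\sigma(\mathcal G)$; finally replace $\tau_{1}$ by $\tau:=\tau_{1}+\pi^{*}\beta$, where $\beta\colon TM\to T^{*}M$ is the $G$-invariant symmetric bilinear form chosen so as to cancel $\langle\tau_{1}(\cdot),\tau_{1}(\cdot)\rangle_{E}$ (possible because the symmetric part of $\langle\pi^{*}\beta(X),\tau_{1}(Y)\rangle_{E}$ equals $\beta(X)(Y)$ up to a fixed nonzero constant). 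The $\pi^{*}\beta$-correction makes $W:=\tau(TM)$ isotropic without disturbing $W\perp\sigma(\mathcal G)$, since $\pi^{*}\beta(X)\in(\mathrm{Ker}\,\pi)^{\perp}\perp\sigma(\mathcal G)$, and it keeps $W$ complementary to $\mathrm{Ker}\,\pi$; all correction terms are manifestly $G$-invariant, so $W$ is $G$-invariant, isotropic and orthogonal to $\sigma(\mathcal G)$, which completes the construction.

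The step I expect to require the most care is (b): one must check that the successive corrections of the $TM$-lift stay $G$-equivariant and do not undo one another — the $\mathcal G$-valued correction reintroduces a controlled failure of isotropy which the subsequent $T^{*}M$-valued correction removes without affecting orthogonality to $\sigma(\mathcal G)$ — and one should be careful that the objects produced by ``averaging'' are obtained by conjugation, not naive pointwise averaging, since $\Psi(g)$ covers the possibly non-trivial diffeomorphism $\psi(g)$. A minor point to spell out is the normalization of the identification $(\mathrm{Ker}\,\pi)^{\perp}\cong T^{*}M$, chosen so that $I$ lands in the standard model with exactly the prescribed scalar product.
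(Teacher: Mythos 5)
Your argument is correct, and it rests on the same core mechanism as the paper's proof: use compactness of $G$ to average over Haar measure, then perform the canonical isotropic correction of the $TM$-lift (your $\tau = \tau_1 + \pi^{*}\beta$ with $\beta = -\frac{1}{2}\langle \tau_1\cdot,\tau_1\cdot\rangle$ is, up to the normalization you flag, exactly the paper's $\langle \sigma (X), v\rangle = \langle \sigma_0(X),v-\frac12 \sigma_0(\pi (v))\rangle$). The executions differ in what is averaged and in how the $\mathcal G$-summand is produced. The paper averages a single auxiliary object, a positive definite fibre metric $h$ on $E$; the $h$-orthogonal complement of $\mathrm{Ker}\,\pi$ is then an invariant splitting of the anchor, and after the isotropic correction the summand $\mathcal G$ is simply \emph{defined} as the $\langle\cdot,\cdot\rangle$-orthogonal complement of $\pi^{*}T^{*}M\oplus\sigma(TM)$. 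That last move makes your separate averaging of a splitting $\mathcal G\to\mathrm{Ker}\,\pi$ and the subsequent $\sigma\circ\phi$-correction enforcing $W\perp\sigma(\mathcal G)$ unnecessary: orthogonality and $G$-invariance of the $\mathcal G$-summand come for free. What your longer route buys is that it exhibits the summand explicitly as the abstract quotient $\mathrm{Ker}\,\pi/(\mathrm{Ker}\,\pi)^{\perp}$ with its induced action $\bar\Psi$, and it makes the appeal to Chen's classification (which the paper leaves implicit in the phrase ``is a $G$-invariant dissection'') fully explicit. Both proofs are complete; yours is just less economical.
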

 
 \begin{proof} By compactness of $G$ there exists a $G$-invariant positive definite metric $h$ in $E$. Using the auxiliary metric 
 $h$ we can define a $G$-invariant splitting $\sigma_0 : TM \rightarrow E$ of the anchor map $\pi : E \rightarrow TM$, where 
 $\sigma_0(TM)$ is h-orthogonal complement of $\mathrm{Ker}\, \pi$. The section $\sigma_0$ of $\pi$ can be canonically 
 modified to a $G$-invariant totally isotropic section $\sigma$ defined by 
 \[\langle \sigma (X), v\rangle = \langle \sigma_0(X),v-\frac12 \sigma_0(\pi (v))\rangle\] 
 for all $X\in T_pM$, $v\in E_p$, $p\in M$. 
If we define $\mathcal{G}$ as the $\langle \cdot ,\cdot \rangle$-orthogonal complement of $\pi^*T^*M \oplus \sigma (TM)$, 
then $E= \pi^*T^*M \oplus \mathcal{G}\oplus \sigma (TM)$ is a 
$G$-invariant dissection.     
 \end{proof}

 In the remaining part of this section we assume that 
\begin{equation}\label{decomp:eq}E = T^{*}M \oplus \mathcal G \oplus TM
\end{equation}
is a standard Courant
algebroid,  
defined by a quadratic Lie algebra bundle $(\mathcal G , [\cdot , \cdot]_{\mathcal G}, \langle \cdot , \cdot\rangle_{\mathcal G})$ and   data $(\nabla , R, H)$ as in 
Section \ref{trans-sect-basic} and we  consider  in detail the class of  actions  $\Psi : \mathfrak{g}\rightarrow \mathrm{Der}(E)$   which lift $\psi$ and preserve the factors $T^{*}M$, $\mathcal G$ and $TM$ of $E$. 
From the third condition 
(\ref{cond-deriv}),   the restriction of $\Psi$   to $TM$ is given by 
\begin{equation}\label{act-TN}
\Psi (a) (X) = {\mathcal L}_{X_{a}} X,\  \forall   a\in \mathfrak{g},\ X\in {\mathfrak X}(M).
\end{equation} 
Since  $X_{a}$ (with $a\in \mathfrak{g}\setminus \{ 0\}$) are nowhere vanishing  we can define
$$
\nabla^{\Psi}_{X_{a}(p)} r :=\left( \Psi (a) (r)\right) (p),\ \forall a\in \mathfrak{g},\ r\in \Gamma (\mathcal G),\  p\in M,
$$
which is a partial connection on $\mathcal G .$

\begin{lem}\label{cond-triv-ext} There is a one to one correspondence between actions 
$\Psi : \mathfrak{g} \rightarrow \mathrm{Der}(E)$ which lift
$\psi$ and preserve the factors $T^{*}M$, $\mathcal G$, $TM$ of $E$ 
and partial connections $\nabla^{\Psi}$ on $\mathcal G$ such that the following conditions are satisfied:

i) $\nabla^{\Psi}$ is flat and preserves $[\cdot , \cdot ]_{\mathcal G}$ and $\langle \cdot , \cdot \rangle_{\mathcal G}$;\

ii) $H$ and $R$ are invariant, i.e. for any $a\in \mathfrak{g}$,  
\begin{equation}\label{inv-H-R}
{\mathcal L}_{X_{a}} H =0,\   {\mathcal L}_{\Psi (a)} R=0
\end{equation}
where 
\begin{equation}\label{Lie-R}
 ({\mathcal L}_{\Psi (a)} R) (X, Y)  := \nabla_{X_{a}}^{\Psi}( R(X, Y)) - R({\mathcal L}_{X_{a}}X, Y) - 
R(X, {\mathcal L}_{X_{a}} Y)
\end{equation}
for any $X, Y\in {\mathfrak X}(M)$;

iii) for any $a\in \mathfrak{g}$, the  endomorphism $A_{a}:= \nabla_{X_{a}}^{\Psi} - \nabla_{X_{a}}$  
satisfies 
\begin{equation}\label{nabla-A}
(\nabla_{X} A_{a}) (r) = [ R(X_{a}, X), r]_{\mathcal G},\ \forall r\in \Gamma (\mathcal G ).
\end{equation}
If i), ii) and iii) are satisfied,  then  the corresponding action  $\Psi$  acts naturally (by Lie derivative)  on the subbundle  $T^{*}M \oplus TM$ of $E$, i.e.
\begin{equation}\label{u1}
\Psi (a) (\xi + X ) = {\mathcal L}_{X_{a}} ( \xi + X ),\ X\in {\mathfrak X}(M),\ \xi \in \Omega^{1}(M),
\end{equation}
and on $\mathcal G$ by
\begin{equation}\label{u2}
\Psi (a)(r)= \nabla^{\Psi}_{X_{a}} r,\  r\in \Gamma (\mathcal G).
\end{equation}
Moreover, for any $a\in \mathfrak{g}$, the endomorphism $A_{a}$ 
is a skew-symmetric derivation of $\mathcal G$. 
\end{lem}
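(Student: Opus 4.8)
The plan is to use that a factor-preserving action $\Psi$ is completely determined by its restriction $\nabla^{\Psi}$ to $\mathcal G$ together with the tautological action on $T^{*}M\oplus TM$, and then to match each axiom of $\Psi$ (first-order derivation, Lie algebra homomorphism, lift of $\psi$) with one of the conditions (i)--(iii).

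First I would record that every $D\in\mathrm{Der}(E)$ satisfies the Leibniz rule $D(fs)=\pi(D)(f)\,s+f\,D(s)$: substituting $fs_{1}$ for $s_{1}$ in the second identity of (\ref{cond-deriv}) and using non-degeneracy of the scalar product gives this. Applied to $\Psi(a)\vert_{\mathcal G}$, and using that the fundamental vector fields are nowhere vanishing (so that $a\mapsto X_{a}(p)$ is injective at every $p$, hence $\nabla^{\Psi}$ is unambiguous), it shows $\nabla^{\Psi}_{X_{a}(p)}r:=(\Psi(a)r)(p)$ is a genuine partial connection along the integrable distribution spanned by the $X_{a}$. The third identity of (\ref{cond-deriv}) together with $\pi\vert_{TM}=\mathrm{id}$ forces $\Psi(a)\vert_{TM}=\mathcal L_{X_{a}}$, which is (\ref{act-TN}); feeding $\xi\in T^{*}M$, $Y\in TM$ into the metric-compatibility identity of (\ref{cond-deriv}), with $\langle\xi,Y\rangle=\frac{1}{2}\xi(Y)$ and the action just found on $TM$, forces $\Psi(a)\xi=\mathcal L_{X_{a}}\xi$; this is (\ref{u1}), and (\ref{u2}) is the definition of $\nabla^{\Psi}$.

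Next I would extract (i)--(iii) by evaluating the bracket-preservation axiom and the homomorphism property on the generating sections $X,Y\in\mathfrak{X}(M)$, $\eta\in\Omega^{1}(M)$, $r\in\Gamma(\mathcal G)$, using the explicit Dorfman brackets (\ref{expr-courant}) and that $\Psi(a)$ maps each summand into itself. The metric-compatibility identity on $\mathcal G$ yields that $\nabla^{\Psi}$ preserves $\langle\cdot,\cdot\rangle_{\mathcal G}$; the $\mathcal G$-component of $\Psi(a)[r_{1},r_{2}]=[\Psi(a)r_{1},r_{2}]+[r_{1},\Psi(a)r_{2}]$ yields that it preserves $[\cdot,\cdot]_{\mathcal G}$; and the $\mathcal G$-component of $[\Psi(a),\Psi(b)]=\Psi([a,b])$, with $[X_{a},X_{b}]=X_{[a,b]}$, yields $R^{\nabla^{\Psi}}=0$ on the distribution---this is (i). The $\mathcal G$- and $T^{*}M$-components of $\Psi(a)[X,Y]=[\Psi(a)X,Y]+[X,\Psi(a)Y]$, using $[X,Y]=\mathcal L_{X}Y+R(X,Y)+i_{Y}i_{X}H$ and the Jacobi identity for Lie derivatives, give $\mathcal L_{\Psi(a)}R=0$ and $\mathcal L_{X_{a}}H=0$---this is (ii). Finally the $\mathcal G$-component of $\Psi(a)[X,r]=[\Psi(a)X,r]+[X,\Psi(a)r]$, after writing $\nabla^{\Psi}_{X_{a}}=\nabla_{X_{a}}+A_{a}$ and recognizing $\nabla_{X_{a}}\nabla_{X}-\nabla_{X}\nabla_{X_{a}}-\nabla_{[X_{a},X]}=R^{\nabla}(X_{a},X)=[R(X_{a},X),\cdot\,]_{\mathcal G}$ by (\ref{cond-curv}), produces exactly (\ref{nabla-A})---this is (iii). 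That $A_{a}$ is $C^{\infty}(M)$-linear, hence a pointwise endomorphism, is immediate from the Leibniz rule, and that it is a skew-symmetric derivation of $\mathcal G$ follows because both $\nabla$ and $\nabla^{\Psi}$ preserve $\langle\cdot,\cdot\rangle_{\mathcal G}$ and $[\cdot,\cdot]_{\mathcal G}$.

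For the converse I would \emph{define} $\Psi(a)$ by (\ref{u1}) and (\ref{u2}) from a $\nabla^{\Psi}$ satisfying (i)--(iii) and run all the above identities backwards: the three axioms of (\ref{cond-deriv}) and the homomorphism property are checked on the generators $\xi,r,X$ using (i)--(iii) and (\ref{expr-courant}), and then propagated to arbitrary sections. Since $\Psi\mapsto\nabla^{\Psi}$ and $\nabla^{\Psi}\mapsto\Psi$ are manifestly mutually inverse---the first recovers $\nabla^{\Psi}$ as $\Psi\vert_{\mathcal G}$, the second recovers the original $\Psi$ because every factor-preserving action must act on $T^{*}M\oplus TM$ by $\mathcal L_{X_{a}}$---this establishes the bijection. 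I expect the main obstacle to be the bracket-preservation step of the converse: the Dorfman bracket is not $C^{\infty}(M)$-bilinear, so reducing to generators requires, beyond the component-wise identities, checking that compatibility propagates under multiplication by functions, which is where the precise Leibniz rules for $[\cdot,\cdot]$ and the identity $[u,v]+[v,u]=2\,d\langle u,v\rangle$ are used; this is bookkeeping rather than a genuine difficulty, but it is the bulk of the work.
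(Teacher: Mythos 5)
Your proposal is correct and follows essentially the same route as the paper: derive (\ref{u1}) from the anchor and metric-compatibility axioms in (\ref{cond-deriv}), define $\nabla^{\Psi}$ by (\ref{u2}), and translate the remaining axioms through the explicit Dorfman bracket (\ref{expr-courant}) into conditions (i)--(iii). One remark on the converse, which is the only substantive step the paper spells out and you gloss over: besides the components you list, the derivation identity for $\Psi(a)$ applied to $[X,r]$ and $[r_{1},r_{2}]$ also produces two $T^{*}M$-component identities (the second and third relations (\ref{invariance}) in the paper) which are \emph{not} among conditions (i)--(iii); to close the equivalence one must check that they are automatic, which the paper does by showing the first reduces to $\langle A_{a}R(X,Y),r\rangle_{\mathcal G}+\langle R(X,Y),A_{a}r\rangle_{\mathcal G}=0$ (skew-symmetry of $A_{a}$) and the second follows from writing $\nabla^{\Psi}_{X_{a}}=\nabla_{X_{a}}+A_{a}$ and using (\ref{nabla-A}) together with (\ref{cond-curv}). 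You identify the $C^{\infty}(M)$-linearity bookkeeping as the main obstacle of the converse, but that is routine; the redundancy of these two extra identities is the point that actually needs an argument, and it is worth writing out.
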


\begin{proof} Let $\Psi $ be 
an  action as in the statement of the lemma.  
From (\ref{act-TN}), 
$$
X_{a} \langle X, \eta \rangle = \langle {\mathcal L}_{X_{a}} X, \eta \rangle + \langle X, \Psi (a) (\eta )\rangle ,\
\forall a\in \mathfrak{g},\ X\in {\mathfrak X}(M),\ \eta \in \Omega^{1}(M),
$$
and from  the fact that $\Psi (a)$ preserves $\Omega^{1}(M)\subset \Gamma (E)$, 
we obtain that  $\Psi (a) (\eta ) = {\mathcal L}_{X_{a}} \eta$. 
Relation (\ref{u1}) follows. From our comments above,  $\nabla^{\Psi}$ defined by (\ref{u2})  is a partial 
connection on $\mathcal G$. 
Using (\ref{expr-courant}) 
we obtain that the relations  (\ref{cond-deriv}) satisfied by $\Psi$  are equivalent to the following
conditions:   $R$ and $H$ are invariant,
$\nabla^{\Psi}$ is flat, preserves $[\cdot , \cdot ]_{\mathcal G}$ and $\langle\cdot , \cdot \rangle_{\mathcal G}$, 
and 
\begin{align}
\nonumber&  \nabla^{\Psi}_{X_{a}} \nabla_{X} r- \nabla_{X} \nabla^{\Psi}_{X_{a}} r-\nabla_{{\mathcal L}_{X_{a}}X} r=0\\
\nonumber& \mathcal{L}_{X_{a}} \langle i_{X}R, r\rangle_{\mathcal G} = \langle i_{ {\mathcal L}_{X_{a}} X} R, r\rangle_{\mathcal G} + \langle i_{X}R, \nabla^{\Psi}_{X_{a}} r\rangle_{\mathcal G} \\
\label{invariance}& {\mathcal L}_{X_{a}} \langle \nabla r, \tilde{r}\rangle_{\mathcal G}=
\langle \nabla ( \nabla^{\Psi}_{X_{a}} r),  \tilde{r}\rangle_{\mathcal G} +\langle \nabla r, \nabla^{\Psi}_{X_{a}} \tilde{r}
\rangle_{\mathcal G},
\end{align}
for any $a\in \mathfrak{g}$, $X\in {\mathfrak X}(M)$ and $r, \tilde{r}\in \Gamma (\mathcal G).$ 
The first relation (\ref{invariance}) is equivalent to (\ref{nabla-A}). Since both
$\nabla$ and $\nabla^{\Psi}$ preserve $\langle \cdot , \cdot \rangle_{\mathcal G}$ 
and $[\cdot , \cdot ]_{\mathcal G}$, the endomorphism $A_{a}$ is a skew-symmetric derivation.
The second relation (\ref{invariance}) is equivalent to 
\begin{equation}
\langle A_{a} R(X, Y) , r\rangle_{\mathcal G} + \langle R(X, Y),  A_{a}r\rangle_{\mathcal G} =0  
\end{equation}
and follows from the skew-symmetry of the endomorphism $A_{a}$.
The third relation (\ref{invariance}) follows 
from the fact that $\nabla$ preserves
$\langle\cdot , \cdot \rangle_{\mathcal G}$,
by writing $\nabla^{\Psi}_{X_{a}} = \nabla_{X_{a}} + A_{a}$ and using relation (\ref{nabla-A}) together
with   $R^{\nabla}(X_{a}, X) (r) = [ R(X_a, X), r]_{\mathcal G}.$ 
\end{proof}

\begin{cor}\label{cor-A} The skew-symmetric derivations  $A_{a}$  from Lemma \ref{cond-triv-ext} satisfy 
\begin{equation}\label{parallel-A}
\nabla_{X_{b}}^{\Psi} (A_{a}) = A_{[b, a]},\ \forall a, b\in \mathfrak{g}.
\end{equation}
\end{cor}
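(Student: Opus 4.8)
The plan is to unwind the definition $A_a=\nabla^\Psi_{X_a}-\nabla_{X_a}$ and reduce the identity to two facts already established in Lemma \ref{cond-triv-ext}: the flatness of $\nabla^\Psi$ and the first relation in (\ref{invariance}) (equivalently (\ref{nabla-A})). Since $A_a$ is a smooth section of $\mathrm{Der}\,\mathcal{G}$ and $\nabla^\Psi$ is a partial connection along the fundamental vector fields which preserves $[\cdot,\cdot]_{\mathcal G}$ and $\langle\cdot,\cdot\rangle_{\mathcal G}$, the covariant derivative $\nabla^\Psi_{X_b}(A_a)$ is again a section of $\mathrm{Der}\,\mathcal{G}$, and for $r\in\Gamma(\mathcal G)$ it acts by
\[
(\nabla^\Psi_{X_b}A_a)(r)=\nabla^\Psi_{X_b}(A_ar)-A_a(\nabla^\Psi_{X_b}r).
\]

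First I would substitute $A_a=\nabla^\Psi_{X_a}-\nabla_{X_a}$ into this expression and split the four resulting terms into the purely $\nabla^\Psi$-part $\nabla^\Psi_{X_b}\nabla^\Psi_{X_a}r-\nabla^\Psi_{X_a}\nabla^\Psi_{X_b}r$ and the mixed part $\nabla_{X_a}(\nabla^\Psi_{X_b}r)-\nabla^\Psi_{X_b}(\nabla_{X_a}r)$. Using that $D\mapsto\pi(D)$ is a Lie algebra homomorphism $\mathrm{Der}(E)\to\mathfrak{X}(M)$ and that $\Psi$ is a homomorphism, one has $[X_b,X_a]=\pi([\Psi(b),\Psi(a)])=\pi(\Psi([b,a]))=X_{[b,a]}$, so flatness of $\nabla^\Psi$ turns the first part into $\nabla^\Psi_{X_{[b,a]}}r$. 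For the mixed part I would apply the first relation in (\ref{invariance}) with $a$ replaced by $b$ and $X=X_a$, namely $\nabla^\Psi_{X_b}\nabla_{X_a}r-\nabla_{X_a}\nabla^\Psi_{X_b}r=\nabla_{[X_b,X_a]}r=\nabla_{X_{[b,a]}}r$, whence the mixed part equals $-\nabla_{X_{[b,a]}}r$. Adding the two contributions gives
\[
(\nabla^\Psi_{X_b}A_a)(r)=\nabla^\Psi_{X_{[b,a]}}r-\nabla_{X_{[b,a]}}r=A_{[b,a]}(r),
\]
which is (\ref{parallel-A}). Alternatively, substituting $\nabla^\Psi_{X_b}=\nabla_{X_b}+A_b$ in the mixed part and using (\ref{nabla-A}) together with (\ref{cond-curv}), the curvature contributions $R^{\nabla}(X_b,X_a)r=[R(X_b,X_a),r]_{\mathcal G}$ cancel and the same identity drops out.

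Since all the ingredients are already in place, I expect no real obstacle; the only point requiring care is the bookkeeping of the order of $a$ and $b$ and of the sign conventions for the brackets of fundamental vector fields, so that the right-hand side comes out as $A_{[b,a]}$ and not $A_{[a,b]}$. A secondary, purely formal check is that $\nabla^\Psi_{X_b}(A_a)$ is well defined as an endomorphism of $\mathcal{G}$, which holds because $A_a$ is $C^\infty(M)$-linear and $X_b$ lies in the span of the directions along which $\nabla^\Psi$ is defined.
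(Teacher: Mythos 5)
Your proof is correct, and it takes a genuinely more direct route than the paper. The paper first derives the intermediate identity (\ref{relations-Aa}), namely $[R(X_b,X_a),r]_{\mathcal G}-A_{[a,b]}r+[A_a,A_b](r)=0$, by expanding the flatness of $\nabla^{\Psi}$ through $\nabla^{\Psi}_{X_a}=\nabla_{X_a}+A_a$ and the curvature relation (\ref{cond-curv}); it then separately expands $\nabla^{\Psi}_{X_b}(A_a)(r)$ into $\nabla_{X_b}(A_a)(r)+[A_b,A_a](r)$ and uses (\ref{nabla-A}) again, so that the curvature and commutator terms cancel only at the final step. Your decomposition of $(\nabla^{\Psi}_{X_b}A_a)(r)$ into the pure $\nabla^{\Psi}$-commutator (killed up to $\nabla^{\Psi}_{X_{[b,a]}}r$ by flatness) plus the mixed commutator (evaluated by the first relation of (\ref{invariance}) with $X=X_a$) never introduces $R$ or $[A_a,A_b]$ at all, so no cancellation is needed; the two ingredients are the same facts the paper relies on, but your bookkeeping is cleaner. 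The one point worth making explicit in a write-up is that the first relation of (\ref{invariance}) holds for arbitrary $X\in\mathfrak{X}(M)$ (so specializing to $X=X_a$ is legitimate) and that $[X_b,X_a]=X_{[b,a]}$ because $\psi$ is a Lie algebra homomorphism; you address both. Your closing remark correctly identifies the paper's argument as the ``alternative'' substitution route.
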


\begin{proof}  From $\nabla^{\Psi}_{X_{a}} = \nabla_{X_{a}} + A_{a}$, the flatness of
$\nabla^{\Psi}$  and the expression
(\ref{cond-curv}) 
of $R^{\nabla}$, we obtain, for any $r\in \Gamma (\mathcal G)$, 
\begin{equation}
[ R(X_{a}, X_{b}), r]_{\mathcal G} + (d^{\nabla}A)(X_{a}, X_{b}) (r) + [A_{a}, A_{b} ](r)=0,\ 
\forall a, b\in \mathfrak{g}, 
\end{equation}
where $[A_{a}, A_{b} ] := A_{a} A_{b} - A_{b} A_{a}$ is the commutator of $A_{a}$ and $A_{b}.$ 
But 
\begin{align}
\nonumber  (d^{\nabla} A)(X_{a}, X_{b}) (r)& = \left( \nabla_{X_{a}} A_{b} -\nabla_{X_{b}} A_{a} - A_{[a, b]}\right)  r\\
\nonumber& = 2 [ R(X_{b}, X_{a}), r]_{\mathcal G}- A_{[a, b]} (r)
\end{align}
where we used relation (\ref{nabla-A})
and $[X_{a}, X_{b} ] = X_{[a, b]}$.
We obtain 
\begin{equation}\label{relations-Aa}
[ R(X_{b}, X_{a}), r]_{\mathcal G} - A_{[a, b]} r +  [A_{a}, A_{b}](r) =0,\ \forall a, b\in \mathfrak{g},\ r\in {\mathcal G}.
\end{equation}
On the other hand, 
\begin{align}
\nonumber \nabla^{\Psi}_{X_{b}} (A_{a}) (r)&= \nabla_{X_{b}}^{\Psi} (A_{a} (r)) -A_{a}( \nabla^{\Psi}_{X_{b}} r)\\
\nonumber& = \nabla_{X_{b}} (A_{a}) (r) + [A_{b}, A_{a} ] (r)\\
\label{opA-2}& = [ R(X_{a}, X_{b}) , r]_{\mathcal G} - [A_{a}, A_{b}](r)
\end{align} 
where in the second equality we used 
$\nabla^{\Psi}_{X_{b}} = \nabla_{X_{b}} + A_{b}$
and in the third equality we used again  relation (\ref{nabla-A}).
Combining  (\ref{relations-Aa}) with  (\ref{opA-2}) we obtain  (\ref{parallel-A}).
\end{proof}

\begin{rem}\label{comentarii}{\rm  i) The first relation (\ref{invariance}) implies that  $\nabla$ is {\cmssl  invariant}, i.e.
$$
({\mathcal L}_{\Psi (a)} \nabla )_{X} r:= \Psi (a) (\nabla_{X}r) - \nabla_{\mathcal L_{X_{a}} X} r- \nabla_{X}
(\Psi (a) r) =0,
$$
for any $a\in \mathfrak{g}$, $X\in {\mathfrak X}(M)$ and $r\in \Gamma (\mathcal G).$\

ii)  Like for $R$, we can define the Lie derivative 
\begin{align*}
\nonumber& ({\mathcal L}_{\Psi (a)} \alpha )(X_{1},\cdots , X_{k}):= \Psi (a) ( \alpha (X_{1},\cdots , X_{k})) \\
\nonumber& -\alpha ({\mathcal L}_{X_{a}} X_{1},\cdots , X_{k}) -\cdots - \alpha (X_{1},\cdots ,{\mathcal L}_{X_{a}}X_{k}),
\end{align*}
for any form $\alpha \in \Omega^{k} (M, \mathcal G)$. The Lie derivative so defined can be extended in the usual way to forms   with values in the tensor bundle ${\mathcal T } (\mathcal G )$ of $\mathcal G .$  
In particular,  for $\alpha \in \Omega^{k}(M)$ we simply define ${\mathcal L}_{\Psi (a)}\alpha 
:= {\mathcal L}_{X_{a}}\alpha .$ A  ${\mathcal T}(\mathcal G)$-valued  form $\alpha$ is called {\cmssl  invariant} if ${\mathcal L}_{\Psi (a)} \alpha =0$
for any $a\in \mathfrak{g}.$\

iii) Relation (\ref{parallel-A}) can be written in the equivalent way 
\begin{equation}\label{above-eqn}
{\mathcal L}_{\Psi (b)}(A_{a}) =  A_{[b,a]},\ \forall a, b\in \mathfrak{g}.
\end{equation}
In particular,   the endomorphisms $A_{a}$ are invariant 
when $\mathfrak{g}$ is abelian.}
\end{rem}

Let $E_{i}$  ($i=1,2$) be two transititive Courant algebroids over $M$  
and  $\Psi_i: \mathfrak{g} \rightarrow \mathrm{Der} (E_{i})$   actions which lift $\psi .$
A fiber preserving  Courant algebroid isomorphism $F: E_{1} \rightarrow E_{2}$ is called {\cmssl  invariant} if
\begin{equation}\label{inv-F}
\Psi_{2} (a) (F(u))   = F \Psi_{1}(a) (u),\ \forall a\in \mathfrak{g},\ u\in \Gamma (E_{1}).
\end{equation}

\begin{lem}\label{cond-inv-iso}
Let $E_{i}$ ($i=1,2$) be standard Courant algebroids over $M$ defined by
quadratic Lie algebra bundles $(\mathcal G_{i}, [\cdot , \cdot ]_{\mathcal G_{i}}, \langle \cdot , \cdot \rangle_{\mathcal G_{i}})$
and  the  data 
$(\nabla^{(i)},R_i,H_i)$. 
Assume that $E_{i}$  are
endowed with   actions $\Psi_i: \mathfrak{g} \rightarrow \mathrm{Der} (E_{i})$ which lift $\psi : \mathfrak{g} \rightarrow
{\mathfrak X}(M)$ 
and preserve the factors $T^{*}M$, $\mathcal G_{i}$ and $TM$ of $E_{i}$,  
and that 
a fiber preserving Courant algebroid  isomorphism $F : E_{1} \rightarrow E_{2}$,
defined by $( \beta ,\Phi , K)$,  where $\beta \in \Omega^{2}(M)$, $\Phi \in \Omega^{1}(M,\mathcal G_{2})$ and $K\in \mathrm{Isom} (\mathcal G_{1}, \mathcal G_{2})$, is given,  as in (\ref{concrete-iso}).
Let ${\nabla}^{\Psi_i} := \left( \nabla^{(i)}\right)^{\Psi_i}$   ($i=1,2$) be the partial connections associated with $\nabla^{(i)}$ and $\Psi_{i}.$ 
Then $F$ is invariant if and only if  $K$ maps ${\nabla}^{\Psi_1}$ to ${\nabla}^{\Psi_2}$
and the forms $\beta$ and $\Phi$ are invariant.
\end{lem}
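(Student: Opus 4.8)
The plan is to test the invariance identity (\ref{inv-F}), namely $\Psi_{2}(a)(F(u))=F(\Psi_{1}(a)(u))$, on sections $u$ lying in each of the three summands $T^{*}M$, $\mathcal{G}_{1}$ and $TM$ of $E_{1}$, and to split each resulting identity into its components in the summands $T^{*}M$, $\mathcal{G}_{2}$, $TM$ of $E_{2}$. This reduction suffices because the difference $\Psi_{2}(a)\circ F - F\circ \Psi_{1}(a)$ is $C^{\infty}(M)$-linear (the symbols cancel, since $F$ is $C^{\infty}(M)$-linear and each $\Psi_{i}(a)$ satisfies the Leibniz rule with symbol $X_{a}$), hence vanishes as soon as it vanishes on a local frame. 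I will use only the following facts, supplied by Lemma \ref{cond-triv-ext} applied to each $\Psi_{i}$: the operator $\Psi_{i}(a)$ is block-diagonal with respect to $E_{i}=T^{*}M\oplus \mathcal{G}_{i}\oplus TM$, acting by $\mathcal{L}_{X_{a}}$ on $T^{*}M\oplus TM$ and by $\nabla^{\Psi_{i}}_{X_{a}}$ on $\mathcal{G}_{i}$, and $\nabla^{\Psi_{i}}$ preserves $\langle \cdot ,\cdot \rangle_{\mathcal{G}_{i}}$. I will also use the explicit form of $F$ from (\ref{concrete-iso}): $F(\eta )=\eta$, $F(r)=-2\Phi^{*}K(r)+K(r)$ and $F(X)=i_{X}\beta -\Phi^{*}\Phi (X)+\Phi (X)+X$, where $\Phi^{*}K(r)$, $i_{X}\beta$ and $\Phi^{*}\Phi (X)$ lie in $T^{*}M$.

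First I would treat the summand $T^{*}M$: here (\ref{inv-F}) reduces to $\mathcal{L}_{X_{a}}\eta =\mathcal{L}_{X_{a}}\eta$ and imposes no condition. Next, on the summand $\mathcal{G}_{1}$, the $\mathcal{G}_{2}$-component of (\ref{inv-F}) becomes $\nabla^{\Psi_{2}}_{X_{a}}(Kr)=K(\nabla^{\Psi_{1}}_{X_{a}}r)$, which is exactly the statement that $K$ maps $\nabla^{\Psi_{1}}$ to $\nabla^{\Psi_{2}}$; the $T^{*}M$-component becomes $\mathcal{L}_{X_{a}}(\Phi^{*}K(r))=\Phi^{*}K(\nabla^{\Psi_{1}}_{X_{a}}r)$, and I would show that it follows from the preceding identity together with the invariance of $\Phi$: expanding $(\Phi^{*}K(r))(X)=\langle Kr,\Phi (X)\rangle_{\mathcal{G}_{2}}$, using that $\nabla^{\Psi_{2}}$ is metric and that invariance of $\Phi$ means $\nabla^{\Psi_{2}}_{X_{a}}\Phi (X)=\Phi ([X_{a},X])$, both sides equal $\langle K(\nabla^{\Psi_{1}}_{X_{a}}r),\Phi (X)\rangle_{\mathcal{G}_{2}}$; the $TM$-component is trivially $0=0$. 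Finally, on the summand $TM$: the $TM$-component of (\ref{inv-F}) is the tautology $\mathcal{L}_{X_{a}}X=[X_{a},X]$; its $\mathcal{G}_{2}$-component is $\nabla^{\Psi_{2}}_{X_{a}}\Phi (X)=\Phi ([X_{a},X])$, i.e.\ the invariance of $\Phi$; and its $T^{*}M$-component is $\mathcal{L}_{X_{a}}(i_{X}\beta -\Phi^{*}\Phi (X))=i_{[X_{a},X]}\beta -\Phi^{*}\Phi ([X_{a},X])$. Writing $\mathcal{L}_{X_{a}}(i_{X}\beta )=i_{[X_{a},X]}\beta +i_{X}(\mathcal{L}_{X_{a}}\beta )$ and, just as above, $\mathcal{L}_{X_{a}}(\Phi^{*}\Phi (X))=\Phi^{*}\Phi ([X_{a},X])$ when $\Phi$ is invariant, this component collapses to $i_{X}(\mathcal{L}_{X_{a}}\beta )=0$ for all $X$, i.e.\ to the invariance of $\beta$.

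With these computations in hand, both implications follow. If $F$ is invariant, then the $\mathcal{G}_{2}$-component on $\mathcal{G}_{1}$ gives that $K$ maps $\nabla^{\Psi_{1}}$ to $\nabla^{\Psi_{2}}$, the $\mathcal{G}_{2}$-component on $TM$ gives that $\Phi$ is invariant, and then the $T^{*}M$-component on $TM$ gives that $\beta$ is invariant. Conversely, if $K$ maps $\nabla^{\Psi_{1}}$ to $\nabla^{\Psi_{2}}$ and $\beta$, $\Phi$ are invariant, then each component identity listed above holds, so (\ref{inv-F}) holds on the three generating summands and hence on all of $\Gamma (E_{1})$. I expect the main (mild) obstacle to be the bookkeeping of the $T^{*}M$-valued terms $\Phi^{*}K(r)$ and $\Phi^{*}\Phi (X)$: one must carefully use that $\nabla^{\Psi_{2}}$ is metric and that invariance of $\Phi$ is equivalent to $\nabla^{\Psi_{2}}_{X_{a}}\Phi (X)=\Phi ([X_{a},X])$ in order to see that the $T^{*}M$-components on $\mathcal{G}_{1}$ and on $TM$ are consequences of the other two conditions rather than independent constraints.
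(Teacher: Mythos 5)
Your proposal is correct and is essentially the direct componentwise verification that the paper leaves to the reader (its one-line proof only points to the bracket formula (\ref{expr-courant}), which underlies the block-diagonal description of $\Psi_i(a)$ from Lemma \ref{cond-triv-ext} that you invoke). In particular, your observation that the $T^{*}M$-components of the identity on $\Gamma(\mathcal G_1)$ and on $\mathfrak{X}(M)$ are consequences of the $\mathcal G_2$-components together with the metric property of $\nabla^{\Psi_2}$, rather than independent constraints, is exactly the point that makes the stated three conditions both necessary and sufficient.
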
 

\begin{proof}
The proof  uses the expression (\ref{expr-courant}) for the Dorfman bracket. 
\end{proof}

\subsubsection{A class of $T^k$-actions}\label{example-t1}

Let $(E=T^*M \oplus \mathcal{G} \oplus TM, \langle \cdot ,\cdot \rangle, [ \cdot ,\cdot ])$ be a standard  Courant algebroid over 
the total space of a principal $T^{k}$-bundle $\pi : M\rightarrow B$, 
where 
$T^k = \mathbb{R}^{k}/\mathbb{Z}^{k}$ denotes the  $k$-dimensional torus.
We assume that $E$  is defined by a bundle of quadratic Lie algebras $(\mathcal G , \langle \cdot ,\cdot \rangle_{\mathcal G},
[\cdot , \cdot]_{\mathcal  G})$   and  data $(\nabla , R, H)$, where $\nabla$ is a connection 
on the vector bundle  $\mathcal G$ compatible with the tensor fields  $\langle \cdot ,\cdot \rangle_\mathcal{G}$ and 
$[\cdot , \cdot ]_\mathcal{G}$,  
$R\in \Omega^2(M,\mathcal{G})$ and $H\in \Omega^3(M)$.  
Recall that these data satisfy 
the compatibility equations
\begin{equation}\label{comp:eq}
dH= \langle R\wedge R\rangle\ ,\quad d^\nabla R=0\ ,\quad R^\nabla = \mathrm{ad}_R\ ,
\end{equation}
where $\langle R\wedge R\rangle_\mathcal{G}$ is abbreviated as $\langle R\wedge R\rangle$ in harmony with 
the fact that $\langle \cdot , \cdot \rangle_\mathcal{G}=\langle \cdot , \cdot \rangle|_{\mathcal{G}\times \mathcal{G}}$.  
The Dorfmann bracket, scalar product and anchor of $E$ are then expressed by the usual formulas in terms of the above data.

We assume that the vertical paralellism of $\pi$  is lifted 
to an action  of $\mathfrak{t}^{k}$ on  $E$,  
\[ \Psi : \mathfrak{t}^k=\mathbb{R}^{k} \rightarrow \mathrm{Der} (E)\ ,\quad a\mapsto \Psi(a)  = ( \xi + r + X \mapsto 
{\mathcal L}_{X_{a}} \xi   + \nabla^{\Psi}_{X_{a}} r  + {\mathcal L}_{X_{a}} X),\] 
where  $X_a$ is the fundamental vector field of $\pi$ determined by $a\in \mathfrak{t}^{k}$ and 
$\nabla^{\Psi}$ is a partial flat connection on $\mathcal G$.  We recall (see Lemma \ref{cond-triv-ext}) that 
$\nabla^{\Psi}$  preserves $[\cdot , \cdot ]_{\mathcal G}$
and $\langle \cdot , \cdot \rangle_{\mathcal G}$ and that
\begin{equation} \label{compatibility:eq}
{\mathcal L}_{X_{a}} R =0,\ {\mathcal L}_{X_{a}}H=0,\  (\nabla_{X} A_{a}) (r) = [ R(X_{a}, X), r]_{\mathcal H},
\end{equation}
where 
$A_{a}:= \nabla^{\Psi}_{X_{a}} - \nabla_{X_{a}} \in \mathrm{End} (\mathcal G )$ is a skew-symmetric derivation,
which is invariant since $\mathfrak{t}^{k}$ is abelian (see Remark \ref{comentarii}).
Recall also  that ${\mathcal L}_{X_{a}} \nabla =0$.\

We consider 
\[ \Omega_b^s (M, \mathcal{G}) := \{ \alpha \in \Omega^s (M, \mathcal{G}) \mid \mathcal{L}_{\Psi (a)} \alpha = 0\ , i_{X_a}\alpha =0,\
\forall a\in \mathfrak{t}^{k}\},\]
the space of {\cmssl basic} $\mathcal{G}$-valued $s$-forms on $M$. 
The space $\Omega_b^s(M)$ of basic scalar valued $s$-forms on $M$ can be defined similarly and coincides  with $\pi^*\Omega^s (B)\cong \Omega^s (B)$. The analogous fact for $\Omega_b^s (M, \mathcal{G})$ is stated in the 
next proposition.

\begin{prop} \label{Omega_b:prop}$\Omega_b^s (M, \mathcal{G}) \cong \pi^*\Omega^s (B)\otimes \Gamma_{t^k}(\mathcal{G})$, where 
$\Gamma_{t^k}(\mathcal{G})$ denotes  the space of $\mathfrak{t}^k$-invariant (i.e.\ $\nabla^\Psi$-parallel) sections.
\end{prop}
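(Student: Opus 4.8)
The plan is to show that a basic $\mathcal{G}$-valued $s$-form is uniquely determined by its ``component data'' on the base together with a $\nabla^\Psi$-parallel section of $\mathcal{G}$, and conversely. First I would recall that on a principal $T^k$-bundle $\pi : M \rightarrow B$ the vertical parallelism trivializes the vertical bundle, so a choice of principal connection on $M$ together with the fundamental vector fields $X_{a_1}, \dots, X_{a_k}$ (for a basis $a_1, \dots, a_k$ of $\mathfrak{t}^k$) gives a splitting $TM = \mathcal{H} \oplus \ker d\pi$. A form $\alpha \in \Omega^s(M, \mathcal{G})$ with $i_{X_a}\alpha = 0$ for all $a$ is then the same as a section over $M$ of $\pi^*(\Lambda^s T^*B) \otimes \mathcal{G}$, i.e.\ an element of $\Gamma(M, \pi^*\Lambda^sT^*B \otimes \mathcal{G})$; this reformulation only uses the horizontality condition.

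The next step is to impose $\mathcal{L}_{\Psi(a)}\alpha = 0$. Since $\mathcal{G}$ is acted on by $\Psi(a)$ via $\nabla^\Psi_{X_a}$ and $\Lambda^s T^*B$ is acted on trivially (forms on $B$ pulled back, and $X_a$ is $\pi$-related to $0$), the Lie derivative condition reads, for $\alpha$ written locally as $\sum \pi^*\beta_J \otimes r_J$ with $\beta_J \in \Omega^s(B)$ and $r_J \in \Gamma(\mathcal{G})$, as $\sum \pi^*\beta_J \otimes \nabla^\Psi_{X_a} r_J = 0$. Here the key point, which I would state carefully, is that the partial connection $\nabla^\Psi$ is flat (Lemma~\ref{cond-triv-ext}(i)) and the fibers are the torus orbits, so $\nabla^\Psi$-parallel sections along a fiber form a vector space isomorphic to the fiber of $\mathcal{G}$, and $\Gamma_{t^k}(\mathcal{G})$ is a locally free $C^\infty(B)$-module (equivalently, $\mathcal{G}$ descends, along $\nabla^\Psi$-parallel transport, to a bundle over $B$, at least locally). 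Consequently one gets a $C^\infty(B)$-module isomorphism $\Gamma(M, \pi^*\Lambda^sT^*B \otimes \mathcal{G})^{\nabla^\Psi\text{-inv}} \cong \Omega^s(B) \otimes_{C^\infty(B)} \Gamma_{t^k}(\mathcal{G})$, which is exactly the claim after identifying $\pi^*\Omega^s(B)$ with $\Omega^s(B)$.

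The main obstacle I expect is the careful bookkeeping showing that ``invariant $\otimes$ invariant'' exhausts all invariant elements of the tensor product bundle — i.e.\ that there is no subtlety from the infinite-dimensionality of the section spaces and that the decomposition is genuinely a tensor product over $C^\infty(B)$ rather than merely over $\mathbb{R}$. This is handled by working locally: over a trivializing chart $U \subset B$ with $\pi^{-1}(U) \cong U \times T^k$, the flat partial connection $\nabla^\Psi$ restricted to each fiber $\{x\}\times T^k$ has a full space of parallel sections (dimension $= \operatorname{rk}\mathcal{G}$), these vary smoothly in $x$, and so $\Gamma_{t^k}(\mathcal{G}|_{\pi^{-1}(U)})$ is free over $C^\infty(U)$ of rank $\operatorname{rk}\mathcal{G}$; a basic $\mathcal{G}$-valued $s$-form over $\pi^{-1}(U)$, being horizontal and $\Psi$-invariant, then expands uniquely in a $\nabla^\Psi$-parallel frame with coefficients that are basic scalar $s$-forms, hence pullbacks from $U$. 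Patching these local identifications over a cover of $B$ (they are compatible since the construction is canonical) yields the global isomorphism. I would remark that the compatibility conditions $\nabla^\Psi$ flat and preserving $[\cdot,\cdot]_{\mathcal{G}}$, $\langle\cdot,\cdot\rangle_{\mathcal{G}}$ guarantee moreover that $\Gamma_{t^k}(\mathcal{G})$ inherits a bundle-of-quadratic-Lie-algebras structure over $B$, though only the flatness is needed for the stated isomorphism.
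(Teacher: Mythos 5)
Your overall skeleton --- horizontality identifies $\alpha$ with a section of $\pi^*\Lambda^sT^*B\otimes\mathcal G$, invariance is then pushed onto the $\mathcal G$-valued coefficients, and the local identifications are patched over $B$ --- is the same as the paper's. But there is a genuine gap in what you single out as ``the key point'': you assert that, because $\nabla^\Psi$ is flat, its space of parallel sections along each torus fibre has dimension $\mathrm{rk}\,\mathcal G$, so that $\Gamma_{\mathfrak t^k}(\mathcal G)$ is locally free of full rank and a local $\nabla^\Psi$-parallel frame of $\mathcal G$ over $\pi^{-1}(U)$ exists; your third paragraph then expands $\alpha$ in such a frame. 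This is false in general: flatness of a partial connection along the \emph{closed} torus orbits only produces parallel frames over simply connected pieces of an orbit. The holonomy representation of $\pi_1(T^k)=\mathbb Z^k$ on a fibre of $\mathcal G$ can be a nontrivial automorphism of the quadratic Lie algebra fibre, in which case the parallel sections over that orbit form only the fixed subspace of the holonomy --- possibly zero. This is precisely why the paper introduces ``$\nabla^\Psi$ has trivial holonomy'' as a separate Assumption only \emph{after} this proposition; at the point where Proposition \ref{Omega_b:prop} is stated you are not entitled to it, and it does not follow from Lemma \ref{cond-triv-ext}.

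The proposition does not need that hypothesis, and the first half of your second paragraph already contains the correct argument if you finish it the other way around: choose $U$ so that $\Lambda^sT^*B|_U$ is trivial and let $(\beta_J)$ be a pointwise-linearly-independent frame of it. Then the coefficients $r_J\in\Gamma(\mathcal G|_{\pi^{-1}(U)})$ in $\alpha=\sum_J\pi^*\beta_J\otimes r_J$ are \emph{uniquely} determined, so $\mathcal L_{\Psi(a)}\alpha=\sum_J\pi^*\beta_J\otimes\nabla^\Psi_{X_a}r_J=0$ forces $\nabla^\Psi_{X_a}r_J=0$ for every $J$ directly; no claim about the size or local freeness of $\Gamma_{\mathfrak t^k}(\mathcal G)$ is required. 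This gives $\Omega^s_b(\pi^{-1}(U),\mathcal G)=\pi^*\Omega^s(U)\otimes\Gamma_{\mathfrak t^k}(\mathcal G|_{\pi^{-1}(U)})$, and a partition of unity on $B$ globalizes it --- which is exactly the paper's proof.
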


\begin{proof}
Let $U\subset B$ be an open set such that $\Lambda^s T^*B|_U$ is trivial. Then 
any horizontal  form $\alpha \in \Omega^{s}(\pi^{-1}(U), {\mathcal G})$  
(i.e. $i_{X_{a}}\alpha =0$ for any $a\in \mathfrak{t}^{k}$) can be written as 
$\alpha =\sum_{i} (\pi^{*}\beta_{i})\otimes s_{i}$ where $( \beta_{i})$ is a basis of
$\Lambda^{s}T^{*}B\vert_{U}$ and $s_{i}\in \Gamma (\mathcal G\vert_{\pi^{-1}(U)}).$
Then ${\mathcal L}_{\Psi (a)}\alpha = \sum_{i} (\pi^{*}\beta_{i})\otimes {\mathcal L}_{\Psi (a)}s_{i}$ 
 from where we deduce  that
$\Omega_b^s (\pi^{-1}(U),\mathcal{G}) = \pi^*\Omega^s (U) \otimes \Gamma_{t^k}( \mathcal{G}\vert_{\pi^{-1}(U)})$.
Using a partition of unity in $B$ one can deduce that the same holds globally for $U=B$. 
\end{proof}

In the following we will always identify $\Lambda^s T^*M\otimes \mathcal{G}$ with 
$\mathcal{G}\otimes \Lambda^s T^*M$, which allows to freely write decomposable elements as $\omega \otimes r$ or as $r\otimes \omega$. 
Let $(e_{i})$ be a basis of $\mathfrak{t}^{k}$, 
$X_{i}:= X_{e_{i}}$ the associated fundamental vector fields 
and $A_{i}:= A_{e_{i}} = \nabla^{\Psi}_{X_{i}} - \nabla_{X_{i}}\in \mathrm{End} (\mathcal G).$ 
We choose a connection  
$\mathcal H$ on 
the principal bundle  $\pi : M \rightarrow B$, with
connection form $\theta = \sum_{i=1}^{k} \theta_{i} e_{i}$.  
We introduce the connection
\begin{equation}\label{nabla-theta} 
\nabla^\theta := \nabla +\sum_{i=1}^{k}  \theta_{i} \otimes A_{i} 
\end{equation}
on the vector bundle  $\mathcal{G}$. 
The  curvature
$R^{\nabla}$ of $\nabla$ and  $R^{\theta}$ of $\nabla^{\theta}$ are related by  
\begin{equation}\label{r-theta}
R^{\nabla}= R^{\theta}  - \sum_{i=1}^{k} (d\theta_{i} ) \otimes A_{i} + 
\sum_{i=1}^{k}\theta_{i}\wedge \mathrm{ad}_{ R(X_{i}, \cdot )}  - \frac{1}{2}
\sum_{i, j} (\theta_{i}\wedge \theta_{j} )\otimes [ A_{i}, A_{j} ],
\end{equation} 
where for any form $\omega \in \Omega^{s}(M, \mathcal G)$
(in particular $\omega :=  R(X_{i}, \cdot )$) 
we define $\mathrm{ad}_{\omega}\in \Omega^{s}
(M, \mathrm{End}\, \mathcal G )$ by
$$
(\mathrm{ad}_{\omega })(Y_{1}, \cdots , Y_{s}) (r):= [\omega (Y_{1}, \cdots , Y_{s}), r]_{\mathcal G },\  
\forall Y_{i}\in {\mathfrak X}(M).
$$

\begin{lem} \label{basictohor:lem}
For any invariant section $r\in \Gamma_{\mathfrak{t}^{k}} (\mathcal G)$, 
the  $\mathcal G$-valued $1$-form $\nabla^{\theta} r$ is basic.
\end{lem}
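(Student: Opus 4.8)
The plan is to verify the two defining conditions for a basic form directly, namely that $i_{X_a}\nabla^\theta r=0$ and $\mathcal{L}_{\Psi(a)}\nabla^\theta r=0$ for all $a\in\mathfrak{t}^k$, using the explicit formula \eqref{nabla-theta} for $\nabla^\theta$ and the fact that $r$ is $\nabla^\Psi$-parallel, i.e.\ $\nabla^\Psi_{X_a}r=0$.

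\textbf{Horizontality.} First I would check $i_{X_a}(\nabla^\theta r)=0$. Evaluating \eqref{nabla-theta} on a fundamental vector field $X_{e_j}=X_j$ gives $\nabla^\theta_{X_j}r=\nabla_{X_j}r+\sum_i\theta_i(X_j)A_i(r)=\nabla_{X_j}r+A_j(r)$, since $\theta_i(X_j)=\delta_{ij}$ for the connection form $\theta=\sum\theta_ie_i$. By definition $A_j=\nabla^\Psi_{X_j}-\nabla_{X_j}$, so $\nabla_{X_j}r+A_j(r)=\nabla^\Psi_{X_j}r$, which vanishes because $r$ is invariant. Hence $i_{X_a}\nabla^\theta r=0$ for all $a\in\mathfrak{t}^k$.

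\textbf{Invariance.} Next I would show $\mathcal{L}_{\Psi(a)}(\nabla^\theta r)=0$. Since $\nabla^\theta r$ is horizontal (just proved) and an invariant form of the type appearing in Proposition \ref{Omega_b:prop}, it suffices to check it is annihilated by $\mathcal{L}_{\Psi(a)}$; equivalently, I would verify that each summand of $\nabla^\theta r=\nabla r+\sum_i\theta_i\otimes A_i(r)$ behaves correctly. The cleanest route is to use that $\nabla^\theta$ itself is an invariant connection: the connection $\nabla$ is invariant (Remark \ref{comentarii} i)), the connection form $\theta$ is $T^k$-invariant (it is a principal connection for a torus action, so $\mathcal{L}_{X_a}\theta_i=0$), and the endomorphisms $A_i$ are invariant, i.e.\ $\mathcal{L}_{\Psi(a)}A_i=0$ (Remark \ref{comentarii} iii), using that $\mathfrak{t}^k$ is abelian). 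Therefore $\mathcal{L}_{\Psi(a)}$ commutes with $\nabla^\theta$ in the sense that $\mathcal{L}_{\Psi(a)}(\nabla^\theta r)=\nabla^\theta(\Psi(a)r)+(\mathcal{L}_{\Psi(a)}\nabla^\theta)r=\nabla^\theta(\nabla^\Psi_{X_a}r)=0$, again because $r$ is invariant. One small point to be careful about: making precise the identity $\mathcal{L}_{\Psi(a)}(\nabla^\theta r)=\nabla^\theta(\nabla^\Psi_{X_a}r)$, i.e.\ that the action $\Psi(a)$ differentiates a $\mathcal{G}$-valued $1$-form by Lie derivative on the form part and by $\nabla^\Psi_{X_a}$ on the section part, which is exactly the content of the invariance of $\nabla^\theta$ as a connection.

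I expect the only genuine obstacle to be the bookkeeping in the second step: confirming that $\nabla^\theta$ is an invariant connection requires combining the invariance of $\nabla$, the invariance of the $\theta_i$, and the invariance of the $A_i$, and then correctly unwinding the definition of $\mathcal{L}_{\Psi(a)}$ on $\Omega^1(M,\mathcal{G})$. Once that is in place, invariance of $\nabla^\theta r$ for invariant $r$ is immediate. Everything else is a direct substitution into \eqref{nabla-theta}.
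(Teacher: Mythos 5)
Your proposal is correct and follows essentially the same route as the paper: horizontality via $\nabla^\theta_{X_i}r=\nabla^\Psi_{X_i}r=0$, and invariance by observing that $\nabla$, $\theta$ and the $A_i$ are $\mathfrak{t}^k$-invariant, hence so is $\nabla^\theta$, and then applying the Leibniz rule $\mathcal{L}_{\Psi(a)}(\nabla^\theta r)=(\mathcal{L}_{\Psi(a)}\nabla^\theta)r+\nabla^\theta(\mathcal{L}_{\Psi(a)}r)=0$. The only difference is that you spell out the computation $\theta_i(X_j)=\delta_{ij}$ explicitly, which the paper leaves implicit.
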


\begin{proof}
The form $\nabla^\theta r$ is horizontal, since  
\[ \nabla^\theta_{X_i}r= \nabla^\Psi_{X_i}r=
{\mathcal L}_{\Psi (e_{i})} r=0,\  \forall 1\leq i\leq k . 
\] 
On the other hand, as $\nabla$, $\theta$ and, from
Remark \ref{comentarii} iii),   $A_{i}$ are $\mathfrak{t}^{k}$- invariant, so is 
$\nabla^{\theta}$  and 
\begin{equation} \label{aux:eq} \mathcal{L}_{\Psi (a)}(\nabla^\theta r)= 
\mathcal{L}_{\Psi (a)}(\nabla^\theta)r
+ \nabla^\theta \mathcal{L}_{\Psi (a)}r=0, \end{equation}
which implies that $\nabla^{\theta} r$
is  $\mathfrak{t}^{k}$-invariant.
\end{proof}

\begin{ass}{\rm   From now on we will assume that the partial connection $\nabla^\Psi$ has trivial holonomy.
Then  we can define a bundle 
$\mathcal{G}_B\rightarrow B$ whose  fiber over $p\in B$ is
\[ \mathcal{G}_B|_p := \Gamma_{\mathfrak{t}^k}( \mathcal{G}\vert_{\pi^{-1}(p)}),\] 
the vector space of $\nabla^\Psi$-parallel sections of $\mathcal{G}$ over the torus  $\pi^{-1}(p)$. 
We identify $\mathcal{G} = \pi^*\mathcal{G}_B$, $\Gamma_{\mathfrak{t}^k}(\mathcal{G}) = \pi^*\Gamma (\mathcal{G}_B)$ and  
\[ \Omega_b^s (M, \mathcal{G}) = \pi^*\Omega^s (B) \otimes \pi^*\Gamma (\mathcal{G}_B)\cong \Omega^s (B, \mathcal{G}_B).\]
For a basic form $\alpha \in \Omega^{s}_{b}(M,\mathcal G)$ we shall denote by $\alpha^{B}\in \Omega^{s}(B,\mathcal G_{B})$
the corresponding  form  in the above identification. (Note that by working locally in a flow box for the vertical foliation of $M\rightarrow B$, we can always assume that $\nabla^\Psi$ has trivial holonomy. We recall that a flow box is a domain $V\subset M$ such that for all $p\in \pi (V)\subset B$ the manifolds 
$\pi^{-1}(p)\cap V$ are diffeomorphic to $\mathbb{R}^k$).}
\end{ass}

\begin{lem}  i) The bundle $\mathcal{G}_B$ inherits a bracket $[\cdot , \cdot ]_{\mathcal G_{B}}$ and a scalar product
$\langle \cdot , \cdot\rangle_{\mathcal G_{B}}$ which make 
$(\mathcal{G}_B, [ \cdot , \cdot ]_{\mathcal{G}_B}, \langle \cdot , \cdot \rangle_{\mathcal{G}_B})$
into a  quadratic  Lie algebra bundle.\

ii) The connection $\nabla^{\theta}$ induces a connection $\nabla^{\theta , B}$ on $\mathcal G_{B}$,
which  preserves  $[\cdot , \cdot ]_{\mathcal G_{B}}$ and
$\langle \cdot , \cdot\rangle_{\mathcal G_{B}}$. 
The curvature $R^{\theta}$ of $\nabla^{\theta}$  is the pullback of the curvature 
$R^{\theta , B}$ of $\nabla^{\theta , B}$, i.e.  $ ( R^{\theta})^{B}=  R^{\theta , B}.$

\end{lem}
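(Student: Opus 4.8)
The plan is to transport all structures on $\mathcal{G}$ down to $\mathcal{G}_B$ via the identifications $\mathcal{G}=\pi^*\mathcal{G}_B$, $\Gamma_{\mathfrak{t}^k}(\mathcal{G})=\pi^*\Gamma(\mathcal{G}_B)$, $\Omega_b^s(M,\mathcal{G})\cong\Omega^s(B,\mathcal{G}_B)$ fixed in the Assumption above, doing the verifications fiberwise and passing to a flow box (where $\nabla^\Psi$ has trivial holonomy) to control smoothness.

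For i), the first point is that if $r,\tilde r$ are $\nabla^\Psi$-parallel over a torus $\pi^{-1}(p)$ then $[r,\tilde r]_{\mathcal{G}}$ is again $\nabla^\Psi$-parallel, because $\nabla^\Psi$ preserves $[\cdot,\cdot]_{\mathcal{G}}$ (Lemma \ref{cond-triv-ext}), and $\langle r,\tilde r\rangle_{\mathcal{G}}$ is constant along $\pi^{-1}(p)$, because $\nabla^\Psi$ preserves $\langle\cdot,\cdot\rangle_{\mathcal{G}}$; hence $[\cdot,\cdot]_{\mathcal{G}}$ and $\langle\cdot,\cdot\rangle_{\mathcal{G}}$ induce a bracket $[\cdot,\cdot]_{\mathcal{G}_B}$ and a scalar product $\langle\cdot,\cdot\rangle_{\mathcal{G}_B}$ on each fiber $\mathcal{G}_B|_p=\Gamma_{\mathfrak{t}^k}(\mathcal{G}|_{\pi^{-1}(p)})$. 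To see that these assemble into a bundle of quadratic Lie algebras I would work in a flow box, choose a local $\nabla^\Psi$-parallel frame $(s_i)$ of $\mathcal{G}$, and write $[s_i,s_j]_{\mathcal{G}}=c_{ij}^k s_k$, $\langle s_i,s_j\rangle_{\mathcal{G}}=g_{ij}$; by the previous remark $c_{ij}^k$ and $g_{ij}$ are constant along the fibers, hence pullbacks of functions on $B$, so in the induced frame of $\mathcal{G}_B$ the bracket has structure functions constant in the Lie-algebra-bundle sense and the metric is smooth. Jacobi, skew-symmetry, non-degeneracy, neutral signature and $\mathrm{ad}$-invariance of $\langle\cdot,\cdot\rangle_{\mathcal{G}_B}$ are then all inherited pointwise from $\mathcal{G}$.

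For ii), the key input is Lemma \ref{basictohor:lem}: for an invariant section $r=\pi^*r^B$ the $1$-form $\nabla^\theta r$ is basic, so has a well-defined $B$-version $(\nabla^\theta r)^B\in\Omega^1(B,\mathcal{G}_B)$ by Proposition \ref{Omega_b:prop} and the Assumption. I would set $\nabla^{\theta,B}r^B:=(\nabla^\theta r)^B$; the Leibniz rule follows because for $f\in C^\infty(B)$ one has $\nabla^\theta((\pi^*f)r)=\pi^*(df)\otimes r+(\pi^*f)\nabla^\theta r$ with $\pi^*(df)$ basic, which descends to $df\otimes r^B+f\,\nabla^{\theta,B}r^B$. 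Thus $\nabla^{\theta,B}$ is a genuine connection and in fact $\nabla^\theta=\pi^*\nabla^{\theta,B}$, since the two connections on $\mathcal{G}=\pi^*\mathcal{G}_B$ agree on the invariant sections $\pi^*r^B$ by construction and invariant sections span $\mathcal{G}$ over $C^\infty(M)$ locally. That $\nabla^{\theta,B}$ preserves $[\cdot,\cdot]_{\mathcal{G}_B}$ and $\langle\cdot,\cdot\rangle_{\mathcal{G}_B}$ then follows by pulling back: $\nabla^\theta=\nabla+\sum_i\theta_i\otimes A_i$ preserves $[\cdot,\cdot]_{\mathcal{G}}$ and $\langle\cdot,\cdot\rangle_{\mathcal{G}}$ because $\nabla$ does and each $A_i$ is a skew-symmetric derivation (Lemma \ref{cond-triv-ext}), and these tensors pull back $[\cdot,\cdot]_{\mathcal{G}_B}$, $\langle\cdot,\cdot\rangle_{\mathcal{G}_B}$; evaluating the Leibniz identity on invariant sections (everything in sight is basic) gives the claim on $B$. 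Finally $R^\theta=\pi^*R^{\theta,B}$ is immediate from $\nabla^\theta=\pi^*\nabla^{\theta,B}$, which is exactly the statement that $R^\theta$ is basic with $(R^\theta)^B=R^{\theta,B}$; alternatively one can check directly from \eqref{r-theta} that $i_{X_a}R^\theta=0$ using the relations \eqref{compatibility:eq}.

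The part needing the most care is the smoothness/local-triviality argument in i) together with making the identifications of the Assumption fully explicit so that ``descends'' and ``$B$-version'' are literally well defined; the connection statements in ii) are then essentially formal consequences of Lemma \ref{basictohor:lem}. It is worth cross-checking that $\nabla^\theta$ really preserves the structure tensors rather than only $\nabla$ and the $A_i$ separately doing so; here the identity $\theta_j(X_i)=\delta_{ij}$, which gives $\nabla^\theta_{X_i}=\nabla^\Psi_{X_i}$ on invariant sections, is a convenient consistency check.
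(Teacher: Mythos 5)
Your proposal is correct and follows essentially the same route as the paper's (much terser) proof: part i) from Lemma \ref{cond-triv-ext} i) and the definition of $\mathcal{G}_B$, and part ii) by defining $\nabla^{\theta,B}r^B := (\nabla^\theta \pi^* r^B)^B$ via Lemma \ref{basictohor:lem} and deducing preservation of the tensors from the fact that $\nabla$ preserves them and the $A_i$ are skew-symmetric derivations. The extra detail you supply (flow-box/parallel-frame argument for smoothness, the identity $\nabla^\theta = \pi^*\nabla^{\theta,B}$ making the curvature statement explicit) fills in exactly what the paper leaves implicit.
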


\begin{proof} i) 
The claim follows from
Lemma \ref{cond-triv-ext} i) and the definition of $\mathcal G_{B}.$\

ii) From Lemma \ref{basictohor:lem}, $\nabla^{\theta}$ induces a connection $\nabla^{\theta , B}$ on 
$\mathcal G_{B}$,
defined by
$$
\nabla^{\theta , B} r^{B} =(\nabla^{\theta} \pi^{*} r^{B})^{B},\ \forall  r^{B}\in \Gamma (\mathcal G_{B}).
$$
Since $A_{i}$ are skew-symmetric derivations, 
and $\nabla$ preserves $[\cdot , \cdot ]_{\mathcal G}$
and $\langle \cdot , \cdot \rangle_{\mathcal G}$, we obtain that $\nabla^{\theta}$ preserves these tensor fields  as well.
We deduce that  
$\nabla^{\theta , B}$  preserves $[\cdot , \cdot ]_{\mathcal G_{B}}$ and $\langle \cdot , \cdot \rangle_{\mathcal G_{B}}.$
The last statement is trivial.
\end{proof}

In order to describe the Courant algebroid $E$
together with  the action $\Psi :\mathfrak{t}^{k} \rightarrow \Gamma (E)$  in terms of structures on the base manifold $B$ of the torus bundle,  we need to  interpret equations
(\ref{comp:eq}) and (\ref{compatibility:eq})  on $B$. 
As $H$ and $R$ are invariant, they are of the form  
\begin{align}
\nonumber& H = H_{(3)} + \theta_{i}\wedge H^{i}_{(2)} +\theta_{i}\wedge \theta_{j} \wedge  H_{(1)}^{ij}+ H_{(0)}^{ijs} \theta_{i}\wedge \theta_{j}\wedge \theta_{s}\\
\label{HR:eq}& R= R_{(2)} + \theta_{i} \wedge R_{(1)}^{i}+ R_{(0)}^{ij}\theta_{i}\wedge \theta_{j}\
\end{align}
where 
$H_{(3)}$,  $H_{(2)}^{i}$, $H_{(1)}^{ij}$,  $ H_{(0)}^{ijk}$, $R_{(2)}$, 
$R_{(1)}^{i}$, 
$R_{(0)}^{ij}$  are  basic  
and for simplicity of notation we  omit  the summation signs.
In the next lemma we denote by $d^{\theta , B}$ the exterior covariant derivative defined  
by $\nabla^{\theta , B}$. Let $A_{a}^{B}$ be the section of $\mathrm{End} (\mathcal G_{B})$ 
defined by the invariant section $A_{a}\in \mathrm{End} (\mathcal G )$, where $a\in \mathfrak{t}^{k}.$

\begin{lem} \label{decomp:prop} i) The compatibility equations listed in (\ref{comp:eq}) are satisfied  if and only if
the following conditions hold:
\begin{align}
\label{dHeqs-0}&dH_{(3)}^{B} + H_{(2)}^{i, B}\wedge (d\theta_{i} )^{B} = \langle R_{(2)}^{B} \wedge R_{(2)}^{B}
\rangle_{\mathcal G_{B}}\\
\label{dHeqs-1}&  dH_{(2)}^{p, B}   + 2 H_{(1)}^{pi, B}\wedge (d\theta_{i})^{B}= -2\langle R_{(2)}^{B} \wedge R_{(1)}^{p, B}\rangle_{\mathcal G_{B}}\\
\label{dHeqs-2} & dH_{(1)}^{pq, B}  + 3 H_{(0)}^{ipq, B} (d\theta_{i})^{B} 
= 2 \langle R_{(0)}^{pq, B}, R_{(2)}^{B}\rangle_{\mathcal G_{B}} -  \langle R_{(1)}^{p, B},R_{(1)}^{q, B}\rangle_{\mathcal G_{B}}\\ 
\label{dHeqs-3}& 3 dH_{(0)}^{pqs} + 2 ( \langle R_{(0)}^{pq, B}, R_{(1)}^{s, B}\rangle_{\mathcal G_{B}}
+  \langle R_{(0)}^{sp, B}, R_{(1)}^{q, B}\rangle_{\mathcal G_{B}} 
+ \langle R_{(0)}^{qs, B}, R_{(1)}^{p, B}\rangle_{\mathcal G_{B}})  =0\\
\label{dHeqs-4} & \langle R_{(0)}^{ij, B}, R_{(0)}^{pq, B}\rangle \theta_{i}\wedge\theta_{j}\wedge \theta_{p}\wedge
\theta_{q}=0\\ 
\label{dHprime:eqs}&d^{\theta , B} R^{B}_{(2)} +R_{(1)}^{i, B}\wedge (d\theta_{i} )^{B}= 0\\
 \label{dHprime:eqs-1}& d^{\theta , B} R^{p, B}_{(1)} 
+ A_{p}^{B} \wedge R_{(2)}^{B} +  2 R_{(0)}^{pi, B} (d\theta_{i})^{B}=0\\
\label{eval-1}&  A_{p}^{B} \wedge R_{(1)}^{q, B} -  A_{q}^{B}\wedge R_{(1)}^{p, B}
= 2 \nabla^{\theta , B} R_{(0)}^{pq, B}\\
\label{eval-2}& 
A_{s}^{B} R_{(0)}^{pq, B} +  A_{q}^{B}  R_{(0)}^{sp, B} +
A_{p}^{B}R_{(0)}^{qs, B}=0\\
\label{eval:eqs-1}& R^{\theta , B} = (d\theta_{i} )^{B}\otimes A_{i}^{B}+ 
\mathrm{ad}_{R_{(2)}^{B}}\\
\label{eval:eqs-2} &
\mathrm{ad}_{R^{ij, B}_{(0)} } =\frac{1}{2} [ A^{B}_{i}, A^{B}_{j} ],
\end{align}
where
\begin{equation}\label{adjoint-repres}
\mathrm{ad} : \mathcal G_{B} \rightarrow \mathrm{Der}(\mathcal G_{B}),\ \mathrm{ad}_{u}(v)= [u, v]_{\mathcal G_{B}}
\end{equation}
is the adjoint representation in the Lie algebra 
bundle $(\mathcal G_{B}, [\cdot , \cdot]_{\mathcal G_{B}})$  and 
$1\leq p, q, s\leq k$ are arbitrary.\

ii) If the compatibility relations (\ref{comp:eq}) are satisfied, then  the third relation  (\ref{compatibility:eq})  is satisfied 
as well if and only if
\begin{equation}\label{nabla-A-t}
\nabla^{\theta  , B}_{X} A_{i}^{B}=  [R_{(1)}^{i, B}(X), r]_{\mathcal G_{B}},\ \forall X\in {\mathfrak X}(M).
\end{equation}
\end{lem}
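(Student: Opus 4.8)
The plan is to prove both statements by a single bookkeeping computation: substitute the decompositions (\ref{HR:eq}) of $H$ and $R$ into each of the three compatibility relations in (\ref{comp:eq}) — and, for part ii), into the third relation of (\ref{compatibility:eq}) — and then sort the resulting identities according to how many factors of the connection forms $\theta_i$ they contain. By the Assumption we may work on a flow box, where $\nabla^\Psi$ has trivial holonomy; there every $\mathfrak{t}^k$-invariant form on $M$ has a unique expansion $\sum_I \theta_I\wedge\pi^*\omega_I^B$, with $I$ ranging over ordered subsets of $\{1,\dots,k\}$, $\theta_I=\theta_{i_1}\wedge\cdots\wedge\theta_{i_s}$, and $\omega_I^B$ a (possibly $\mathcal G_B$-valued) form on $B$. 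Hence two invariant forms on $M$ coincide exactly when the basic coefficient of each monomial $\theta_I$ agrees, and these coefficients may be compared on $B$. Before expanding I would record the bookkeeping identities I will use: from $\nabla=\nabla^\theta-\sum_p\theta_p\otimes A_p$ (see (\ref{nabla-theta})) one has $d^\nabla\alpha=d^{\nabla^\theta}\alpha-\sum_p\theta_p\wedge A_p(\alpha)$ for any $\mathcal G$-valued form $\alpha$, where $A_p(\alpha)$ denotes $A_p$ applied to the values; $d^{\nabla^\theta}$ maps basic forms to basic forms (by Lemma \ref{basictohor:lem} and the Leibniz rule), inducing $d^{\theta,B}$ downstairs, and $d^{\nabla^\theta}(\theta_i\wedge\beta)=d\theta_i\wedge\beta-\theta_i\wedge d^{\nabla^\theta}\beta$ with $d\theta_i$ basic; the curvature of $\nabla$ is related to that of $\nabla^\theta$ by (\ref{r-theta}), and $(R^\theta)^B=R^{\theta,B}$; finally $\theta_i(X_j)=\delta_{ij}$ and the $R_{(k)}$ are horizontal, so $i_{X_i}R=R_{(1)}^i+2R_{(0)}^{ij}\theta_j$ and $R(X_i,X_j)=2R_{(0)}^{ij}$, while $\nabla^\theta_{X_b}=\nabla^\Psi_{X_b}$ annihilates invariant sections.

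For part i) I would treat the three relations of (\ref{comp:eq}) separately. Expanding $dH=\langle R\wedge R\rangle$ via (\ref{HR:eq}) and reading off the basic coefficients of $1$, $\theta_p$, $\theta_p\wedge\theta_q$, $\theta_p\wedge\theta_q\wedge\theta_s$ and $\theta_i\wedge\theta_j\wedge\theta_p\wedge\theta_q$ produces (\ref{dHeqs-0})--(\ref{dHeqs-4}). Expanding $d^\nabla R=0$ by means of $d^\nabla\alpha=d^{\nabla^\theta}\alpha-\sum_p\theta_p\wedge A_p(\alpha)$ together with the Leibniz rule, and reading off the coefficients of $1$, $\theta_p$ and $\theta_p\wedge\theta_q$ (the coefficient of $\theta_p\wedge\theta_q\wedge\theta_s$ being identically zero), produces (\ref{dHprime:eqs})--(\ref{eval-2}). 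Expanding $R^\nabla=\mathrm{ad}_R$ by substituting (\ref{r-theta}) and $\mathrm{ad}_{R(X_i,\cdot)}=\mathrm{ad}_{R_{(1)}^i}+2\,\theta_j\,\mathrm{ad}_{R_{(0)}^{ij}}$, the $\theta_i\wedge\mathrm{ad}_{R_{(1)}^i}$ terms on the two sides cancel, and the coefficients of $1$ and of $\theta_i\wedge\theta_j$ yield (\ref{eval:eqs-1}) and (\ref{eval:eqs-2}) respectively. Each step is reversible — reassembling the listed relations against the corresponding $\theta$-monomials reproduces (\ref{comp:eq}) — which establishes the equivalence claimed in i).

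For part ii) I would regard the third relation of (\ref{compatibility:eq}) as the identity $\nabla A_i=\mathrm{ad}_{i_{X_i}R}$ of $\mathrm{End}(\mathcal G)$-valued $1$-forms on $M$. Substituting $\nabla A_i=\nabla^\theta A_i-\sum_p\theta_p\otimes[A_p,A_i]$ and $i_{X_i}R=R_{(1)}^i+2R_{(0)}^{ij}\theta_j$, and using that $\nabla^\theta A_i$ is horizontal (since $i_{X_b}\nabla^\theta A_i=\nabla^\Psi_{X_b}A_i=0$, because $A_i$ is invariant) and basic, the $\theta$-free coefficients descend to exactly (\ref{nabla-A-t}), while the coefficients of $\theta_p$ give $\mathrm{ad}_{R_{(0)}^{ip}}=\tfrac12[A_i,A_p]$, i.e.\ (\ref{eval:eqs-2}), which by part i) already holds once (\ref{comp:eq}) is assumed. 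Hence, under (\ref{comp:eq}), the third relation of (\ref{compatibility:eq}) reduces to (\ref{nabla-A-t}).

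I expect the main obstacle to be purely organizational: tracking the signs that arise in commuting the odd forms $\theta_i$ past the even basic forms and in applying the Leibniz rule, and correctly aligning the ``extra'' contributions coming from (\ref{r-theta}) and from $d^\nabla=d^{\nabla^\theta}-\sum_p\theta_p\wedge A_p(\cdot)$ against the naive expansions. The single most delicate point is the $\theta$-quadratic part of $R^\nabla=\mathrm{ad}_R$, where the term $-\tfrac12\sum_{i,j}(\theta_i\wedge\theta_j)\otimes[A_i,A_j]$ of (\ref{r-theta}) must be matched against $\mathrm{ad}_{R_{(0)}^{ij}}$ to produce (\ref{eval:eqs-2}).
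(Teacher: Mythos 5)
Your proposal is correct and follows essentially the same route as the paper's proof: expand $H$ and $R$ through the connection form, use (\ref{r-theta}) and $d^{\nabla}=d^{\nabla^{\theta}}-\sum_{p}\theta_{p}\wedge A_{p}(\cdot )$, compare the coefficients of the $\theta$-monomials and interpret them on $B$, and in part ii) split $\nabla A_{i}=\mathrm{ad}_{i_{X_{i}}R}$ into its horizontal part (\ref{nabla-A-t}) and vertical part (\ref{eval:eqs-2}). The only wording to adjust is your parenthetical in the $d^{\nabla}R=0$ step: the coefficient of $\theta_{p}\wedge\theta_{q}\wedge\theta_{s}$ is not identically zero --- it equals $-A_{p}(R_{(0)}^{qs})\theta_{p}\wedge\theta_{q}\wedge\theta_{s}$ (summed), and requiring it to vanish is precisely equation (\ref{eval-2}).
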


\begin{proof} i) The equations (\ref{dHeqs-0})-(\ref{dHeqs-4})  are obtained from 
the first relation (\ref{comp:eq}), 
by comparing 
\begin{align*}
\nonumber &  dH = d H_{(3)} +  (d\theta_{i})\wedge H_{(2)}^{i} -  \theta_{i}\wedge dH_{(2)}^{i} + 2 
(d\theta_{i})\wedge \theta_{j} \wedge H_{(1)}^{ij} \\
\nonumber&+ \theta_{i}\wedge \theta_{j}\wedge dH_{(1)}^{ij} + 3 H_{(0)}^{ijs} (d\theta_{i})\wedge\theta_{j}\wedge
\theta_{s}+ ( dH_{(0)}^{ijs}) \wedge \theta_{i}\wedge \theta_{j}\wedge \theta_{s}      
\end{align*}
with 
\begin{align*}
& \langle R \wedge R\rangle =\langle R_{(2)} \wedge R_{(2)}\rangle + 2\theta_{i} \wedge \langle R_{(2)} \wedge R^{i}_{(1)}\rangle + 2 \theta_{i}\wedge \theta_{j}\wedge \langle R_{(2)},  R_{(0)}^{ij}\rangle \\
\nonumber&  - \theta_{i}\wedge \theta_{j} \wedge \langle R_{(1)}^{i}\wedge  R_{(1)}^{j}\rangle 
+ 2\theta_{i}\wedge \theta_{j}\wedge \theta_{s}\wedge  \langle R_{(0)}^{ij}  ,  R_{(1)}^{s}\rangle\\
& + \langle R_{(0)}^{ij}, R_{(0)}^{pq}\rangle \theta_{i}\wedge \theta_{j}\wedge\theta_{p}\wedge \theta_{q}, 
\end{align*}
using $d\theta_{i} \in \Omega^2(B)$, that the exterior derivative maps basic forms to basic forms,  that the operation $(\alpha , \beta ) \mapsto 
\langle \alpha \wedge \beta \rangle$ maps a pair of $\mathcal{G}$-valued basic forms to a basic scalar valued form
and then interpreting the resulting relations on $B$.  
The equations
(\ref{dHprime:eqs})-(\ref{eval-2})  are obtained from
the second relation (\ref{comp:eq}), by computing   
\begin{align}
\nonumber & 0= d^\nabla R\\
\nonumber&   =  d^\nabla R_{(2)} +  (d\theta_{i}) \wedge  R_{(1)}^{i}- \theta_{i} \wedge d^\nabla R^{i}_{(1)}
+ (\nabla R_{(0)}^{ij} )\wedge \theta_{i}\wedge \theta_{j}+ 2R_{(0)}^{ij}\otimes  (d\theta_{i})\wedge \theta_{j}\\ 
\nonumber & =d^\theta R_{(2)} -(\theta_{i}\otimes \mathrm{A}_{i})\wedge R_{(2)}   +  R_{(1)}^{i} \wedge d\theta_{i}
- \theta_{j} \wedge  \left( d^\theta R^{j}_{(1)} - ( \theta_{i}\otimes A_{i})\wedge R_{(1)}^{j}\right) \\
\label{d-nabla}& + (\nabla^{\theta} R_{(0)}^{ij}) \wedge \theta_{i}\wedge \theta_{j} - 
A_{i} ( R_{(0)}^{js})\theta_{i} \wedge \theta_{j}\wedge \theta_{s} + 2 R_{(0)}^{ij}\otimes
(d\theta_{i})\wedge
\theta_{j},\
\end{align}
identifying the horizontal and
vertical parts in the last expression of  (\ref{d-nabla})  and interpreting the result on $B$. 
The remaining   equations 
(\ref{eval:eqs-1}) and (\ref{eval:eqs-2})  are obtained by writing 
$R^{\nabla}$  in terms of $R^{\theta}$ as 
in (\ref{r-theta}) and identifying the horizontal and vertical parts in $R^{\nabla}=\mathrm{ad}_{R}$.

ii) The third relation  (\ref{compatibility:eq}) is equivalent to  
relation (\ref{nabla-A-t}), together with relation
(\ref{eval:eqs-2}).
\end{proof}

Since $\nabla^{\theta , B}$ preserves $[\cdot , \cdot ]_{\mathcal G_{B}}$, 
the endomorphism $R^{\theta , B} (X, Y)$  of $\mathcal G_{B}$ is a derivation, for any $X, Y\in {\mathfrak X}(B)$.
Recall that $A^{B}_{i}\in \mathrm{End} (\mathcal G_{B})$ is also a derivation. 
The conditions from Lemma 
\ref{decomp:prop} simplify considerably when the adjoint representation (\ref{adjoint-repres}) of the
Lie algebra bundle $(\mathcal G_{B}, [\cdot ,\cdot ]_{\mathcal G_{B}})$ is an isomorphism. 
Then 
\begin{equation}\label{def-deriv}
A_{i}^{B} =\mathrm{ad}_{r_{i}^{B}},\ R^{\theta , B}(X, Y) =\mathrm{ad}_{{\mathfrak r}^{\theta , B}(X, Y)}
\end{equation}
for $r_{i}^{B}\in \Gamma (\mathcal G_{B})$ and ${\mathfrak r}^{\theta , B}\in \Omega^{2}(B, \mathcal G_{B}).$ 
From the Bianchi identity we obtain that
${\mathfrak r}^{\theta , B}$ is $d^{\theta}$-closed.

\begin{rem} \label{quadraticLA:rem}{\rm i) Consider the class $\mathcal{C}$ of quadratic Lie algebras $(\mathfrak{g}, \langle \cdot ,\cdot \rangle )$ for which the adjoint 
representation is an isomorphism onto the Lie algebra of skew-symmetric derivations of $\mathfrak{g}$. Every semi-simple Lie algebra endowed with its Killing form (or any other invariant 
scalar product) belongs to this class. Since the center of a quadratic Lie algebra coincides with $[\mathfrak{g},\mathfrak{g}]^\perp$, there is 
no non-zero solvable Lie algebra in $\mathcal{C}$. Nevertheless, the class $\mathcal{C}$ is strictly larger than the class 
of  semi-simple quadratic Lie algebras. For instance, the affine Lie algebra $\mathfrak{so}(3) \ltimes \mathfrak{so}(3)^*\cong \mathfrak{so}(3) 
\ltimes \mathbb{R}^3$ can be endowed with the invariant scalar product of neutral signature defined by duality. The adjoint representation is 
faithful and one can easily check that all skew-symmetric derivations are inner. There exist solvable Lie algebras with faithful adjoint representation 
for which all derivations are inner \cite{stitzinger}. However these do not admit any invariant scalar product as we have already remarked.\

ii)  The adjoint representation of the Lie algebra bundle   $(\mathcal G_{B}, [\cdot ,\cdot ]_{\mathcal G_{B}})$ is an isomorphism
if and only if the same is true for the Lie algebra bundle $(\mathcal G , [\cdot ,\cdot ]_{\mathcal G})$ of the Courant
algebroid $E$. Courant algebroids with this property will be described in Proposition \ref{description-heterotic-CA}.
} \end{rem}

\begin{cor}\label{data-extended} 
Let $\pi : M \rightarrow B$ be a principal $T^{k}$-bundle and  $\mathcal H$  a principal connection on $\pi$, 
with connection form $\theta  = \sum_{i=1}^{k} \theta_{i} e_{i}\in \Omega^{1}(M, \mathfrak{t}^{k} )$,
where $(e_{i})$ is a basis of $\mathfrak{t}^{k}.$  
There is a one to one correspondence between

\begin{enumerate}
\item  standard Courant algebroids $E= T^{*}M \oplus \mathcal G \oplus TM$ for which the adjoint action 
of the Lie algebra bundle $(\mathcal G , [\cdot , \cdot ]_{\mathcal G})$ is an isomorphism,   
together with an   action $\Psi : \mathfrak{t}^{k} \rightarrow \mathrm{Der}(E)$
which lifts the vertical parallelism of $\pi$,
preserves the factors $T^{*}M$, $\mathcal G$ and $TM$  of $E$, 
and for which the flat partial 
connection $\nabla^\Psi$ has trivial holonomy 

and

\item  quadratic Lie algebra bundles
$(\mathcal G_{B}, \langle \cdot , \cdot \rangle_{\mathcal G_{B}}, [\cdot , \cdot ]_{\mathcal G_{B}})$
over $B$, whose adjoint action is an isomorphism,  together with 
a connection $\nabla^{B}$ on  the vector bundle $\mathcal G_{B}$ which preserves
$\langle \cdot , \cdot \rangle_{\mathcal G_{B}}$ and  $[\cdot , \cdot ]_{\mathcal G_{B}}$, 
sections $r_{i}^{B}\in \Gamma (\mathcal G_{B})$ ($1\leq i\leq k$),  a $3$-form 
$H_{(3)}^{B}\in \Omega^{3}(B)$, $2$-forms $H_{(2)}^{i, B}\in \Omega^{2}(B)$, 
$1$-forms $H_{(1)}^{ij, B}\in \Omega^{1}(B)$
and constants $c_{ijp}\in \mathbb{R}$ ($1\leq i, j, p\leq k$)  
such that 
\begin{align}
\nonumber  & dH_{(3)}^{B}   = \langle{\mathfrak   r}^{B}\wedge {\mathfrak r}^{B}\rangle_{\mathcal G_{B}}\\
\nonumber& - 
  ( H_{(2)}^{i, B} + 2 \langle{\mathfrak  r}^{B} , r_{i}^{B}\rangle_{\mathcal G_{B}} -
\langle r_{i}^{B}, r_{j}^{B}\rangle_{\mathcal G_{B}}(d\theta_{j})^{B}) \wedge (d\theta_{i} )^{B}, \\ 
\nonumber&  dH_{(2)}^{p, B} = 2( \langle \nabla^{B} r_{p}^{B}, r_{i}^{B}\rangle_{\mathcal G_{B}}
- H_{(1)}^{pi, B}) \wedge  (d\theta_{i})^{B} - 2\langle {\mathfrak  r}^{ B} \wedge \nabla^{B} r_{p}^{B}\rangle_{\mathcal G_{B}} \\
\label{reduced-rel} &  dH_{(1)}^{pq, B} = - 3 c_{ipq} (d\theta_{i})^{B}  + \langle{\mathfrak  r}^{B}, [r_{p}^{B}, r_{q}^{B}
]_{\mathcal G_{B}}\rangle_{\mathcal G_{B}} -\langle \nabla^{B} r_{p}^{B}\wedge \nabla^{B} r_{q}^{B}\rangle_{\mathcal G_{B}},
\end{align}
where ${\mathfrak r}^{B}\in \Omega^{2}(B, \mathcal G_{B})$ is related to the curvature $R^{B}$ of the connection
$\nabla^{B}$ by $R^{B}(X, Y) = \mathrm{ad}_{{\mathfrak r}^{B} (X, Y)}$ for any $X, Y\in {\mathfrak X}(B).$ 
\end{enumerate}
\end{cor}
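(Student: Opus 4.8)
The plan is to deduce the assertion from Lemma \ref{decomp:prop}, specialised to the case in which the adjoint action of $(\mathcal{G}_B, [\cdot , \cdot ]_{\mathcal{G}_B})$ is an isomorphism onto the bundle of skew-symmetric derivations. Recall from Remark \ref{quadraticLA:rem} ii) that this condition is equivalent to the corresponding one for the Lie algebra bundle $(\mathcal G , [\cdot , \cdot ]_{\mathcal G})$ of $E$, and that (under the standing Assumption) the bundle $\mathcal{G}_B$, the connection $\nabla^{\theta , B}$ and the derivations $A_i^B$ are already available; in particular there are unique sections $r_i^B \in \Gamma (\mathcal{G}_B)$ and a unique $\mathfrak{r}^{\theta , B}\in \Omega^2(B, \mathcal{G}_B)$ with $A_i^B = \mathrm{ad}_{r_i^B}$ and $R^{\theta , B} = \mathrm{ad}_{\mathfrak{r}^{\theta , B}}$.

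First I would make the correspondence between the two sets of data explicit, for the fixed principal connection $\theta$. Starting from $(E, \Psi )$ as in 1, I set $\nabla^B := \nabla^{\theta , B}$ (so that the element $\mathfrak{r}^B := \mathfrak{r}^{\theta , B}$ attached to its curvature is $d^{\theta , B}$-closed by the Bianchi identity), take the sections $r_i^B$ just introduced, decompose the invariant forms $H$ and $R$ as in (\ref{HR:eq}) to obtain the basic forms $H_{(3)}^B, H_{(2)}^{i,B}, H_{(1)}^{ij,B}, H_{(0)}^{ijp,B}$ and $R_{(2)}^B, R_{(1)}^{i,B}, R_{(0)}^{ij,B}$, and put $c_{ijp} := H_{(0)}^{ijp,B} + \frac{1}{3}\langle [r_i^B, r_j^B]_{\mathcal{G}_B}, r_p^B\rangle_{\mathcal{G}_B}$. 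Equations (\ref{eval:eqs-1}), (\ref{eval:eqs-2}) and (\ref{nabla-A-t}), together with the injectivity of $\mathrm{ad}$, then yield
\[ R_{(2)}^B = \mathfrak{r}^B - (d\theta_i)^B r_i^B, \qquad R_{(0)}^{ij,B} = \frac{1}{2}[r_i^B, r_j^B]_{\mathcal{G}_B}, \qquad R_{(1)}^{i,B} = \nabla^B r_i^B, \]
while, using the total antisymmetry and invariance of the Cartan form of $\mathcal{G}_B$, equation (\ref{dHeqs-3}) becomes precisely the statement that each $c_{ijp}$ is closed, hence constant. Conversely, given the data in 2 I would set $\mathcal{G} := \pi^*\mathcal{G}_B$ with the pulled-back bracket and scalar product, $A_i := \mathrm{ad}_{\pi^* r_i^B}$, $\nabla := \pi^*\nabla^B - \sum_i \theta_i\otimes A_i$, define $H$ and $R$ through (\ref{HR:eq}) with components the pullbacks of the three expressions above and of $H_{(3)}^B, H_{(2)}^{i,B}, H_{(1)}^{ij,B}$ and $H_{(0)}^{ijp,B} := c_{ijp} - \frac{1}{3}\langle[r_i^B,r_j^B]_{\mathcal{G}_B}, r_p^B\rangle_{\mathcal{G}_B}$, and let $\Psi$ be the action whose vertical partial connection is $\nabla^\Psi_{X_a} := (\pi^*\nabla^B)_{X_a}$, which is flat with trivial holonomy by construction. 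These two assignments are manifestly mutually inverse.

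It then remains to check that, under these identifications, the compatibility equations of Lemma \ref{decomp:prop} reduce exactly to the three relations (\ref{reduced-rel}). Substituting the expressions for $R_{(2)}^B, R_{(1)}^{i,B}, R_{(0)}^{ij,B}$ (and for $H_{(0)}^{ijp,B}$ in terms of $c_{ijp}$) into (\ref{dHeqs-0}), (\ref{dHeqs-1}) and (\ref{dHeqs-2}) and collecting the terms proportional to $(d\theta_i)^B$ gives the first, second and third lines of (\ref{reduced-rel}) respectively; this is a direct computation using only the symmetry of $\langle \cdot , \cdot \rangle_{\mathcal{G}_B}$ and the compatibility of $\nabla^B$ with $\langle \cdot , \cdot \rangle_{\mathcal{G}_B}$. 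All remaining equations of Lemma \ref{decomp:prop} are automatic once $\mathrm{ad}$ is invertible: (\ref{eval:eqs-1}), (\ref{eval:eqs-2}) and (\ref{nabla-A-t}) are the very definitions of $R_{(2)}^B, R_{(0)}^{ij,B}, R_{(1)}^{i,B}$; (\ref{eval-2}) is the Jacobi identity; (\ref{eval-1}) follows from $\nabla^B$ preserving $[\cdot , \cdot]_{\mathcal{G}_B}$; (\ref{dHprime:eqs}) and (\ref{dHprime:eqs-1}) follow from $d^{\theta , B}\mathfrak{r}^B = 0$, $d(d\theta_i)^B = 0$ and $d^{\theta , B}\circ d^{\theta , B} = R^{\theta , B}\wedge(\cdot)$ combined with the three displayed formulas; and (\ref{dHeqs-4}) is the Jacobi identity $[r_2,[r_3,r_4]] - [r_3,[r_2,r_4]] + [r_4,[r_2,r_3]]=0$ together with invariance of $\langle \cdot , \cdot \rangle_{\mathcal{G}_B}$. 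Finally, one checks directly, using Lemma \ref{cond-triv-ext}, that the pair $(E,\Psi)$ produced in the converse direction is of the required type.

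The main obstacle is purely computational: tracking signs while regrouping the $(d\theta_i)^B$-terms in (\ref{dHeqs-0})--(\ref{dHeqs-2}) so that they match (\ref{reduced-rel}), and checking that each of the eight ``extra'' equations of Lemma \ref{decomp:prop} is genuinely automatic or a mere definition once $\mathrm{ad}$ is an isomorphism---in particular (\ref{dHeqs-3}), which is the only one that is not vacuous but instead fixes the constants $c_{ijp}$. No idea beyond Lemma \ref{decomp:prop} and the invertibility of the adjoint action is needed.
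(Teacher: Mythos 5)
Your proposal is correct and follows essentially the same route as the paper's proof: both specialize Lemma \ref{decomp:prop}, set $\nabla^{B}:=\nabla^{\theta,B}$, use invertibility of $\mathrm{ad}$ to solve (\ref{eval:eqs-1}), (\ref{eval:eqs-2}), (\ref{nabla-A-t}) for $R_{(2)}^{B}$, $R_{(0)}^{ij,B}$, $R_{(1)}^{i,B}$, read off $H_{(0)}^{pqs,B}$ from (\ref{dHeqs-3}), and reduce (\ref{dHeqs-0})--(\ref{dHeqs-2}) to (\ref{reduced-rel}) while verifying the remaining equations are automatic. You supply somewhat more detail than the paper on why the residual equations hold (Jacobi identity, Bianchi identity, compatibility of $\nabla^{B}$ with the bracket), but the argument is the same.
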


\begin{proof} The claim follows from 
Lemma \ref{decomp:prop},  by letting $\nabla^{B}:= \nabla^{\theta , B}$  and 
simplifying the relations from this lemma, using in an essential way  that the adjoint
representation of the Lie algebra bundle   $(\mathcal G , [\cdot , \cdot ]_{\mathcal G})$ is an isomorphism.
More precisely,
relations (\ref{eval:eqs-2}), 
(\ref{nabla-A-t})  and 
(\ref{eval:eqs-1})
determine $R_{(0)}^{ij, B}$,  $R_{(1)}^{i, B}$ and $R_{(2)}^{B}$ respectively  by
\begin{equation}\label{expr-R}
R_{(0)}^{ij, B} = \frac{1}{2}[ r_{i}^{B}, r_{j}^{B} ]_{\mathcal G_{B}},\ R_{(1)}^{i, B} =\nabla^{B} r_{i}^{B},\
R_{(2)}^{B} = {\mathfrak r}^{B} - (d\theta_{i})^B\otimes r_{i}^{B}.   
\end{equation}
Relation  (\ref{dHeqs-3}) with $R_{(0)}^{ij, B}$ and $R_{(1)}^{i, B}$ given 
by (\ref{expr-R}) implies that 
\begin{equation}
H_{(0)}^{pqs, B} =-\frac{1}{3} \langle [r_{p}^{B}, r_{q}^{B}]_{\mathcal G_{B}} , r_{s}^{B}\rangle_{\mathcal G_{B}}
+ c_{pqs}
\end{equation}
 for some constants  $c_{pqs}$.   Written in terms of ${\mathfrak r}^{B}$ rather than $R_{(2)}^{B}$, relations
(\ref{dHeqs-0}), (\ref{dHeqs-1}), (\ref{dHeqs-2}) become
relations (\ref{reduced-rel}). The remaining relations from  Lemma \ref{decomp:prop}, with
$R_{(0)}^{ij, B}, R_{(1)}^{i, B}, R_{(2)}^{B}$ and $H_{(0)}^{pqs, B}$ as above and 
$A_{i} =\mathrm{ad}_{r_{i}^{B}}$ are satisfied. 
\end{proof}

\begin{exa}\label{particular-class}{\rm Under the assumptions  of Corollary
\ref{data-extended}, let  $(\mathcal G_{B}, \langle \cdot , \cdot \rangle_{\mathcal G_{B}}, [\cdot , \cdot ]_{\mathcal G_{B}})$ be a quadratic Lie algebra bundle 
over $B$, whose adjoint action is an isomorphism,  together with 
a connection $\nabla^{B}$ on  the vector bundle $\mathcal G_{B}$ which preserves
$\langle \cdot , \cdot \rangle_{\mathcal G_{B}}$ and  $[\cdot , \cdot ]_{\mathcal G_{B}}$. 
Choose arbitrary sections $r_{i}^{B}\in \Gamma (\mathcal G_{B})$ ($1\leq i\leq k$)
and define, for any $i, j, s$, $c_{ijp} :=0$, 
\begin{equation}\label{h-0-ijs}
H_{(0)}^{ijs, B} :=- \frac{1}{3} \langle [r_{i}^{B}, r_{j}^{B}]_{\mathcal G_{B}} , r_{s}^{B}\rangle_{\mathcal G_{B}} 
\end{equation}
and 
\begin{equation}\label{h-1-ij}
H_{(1)}^{ij, B} := \frac{1}{2} \left( \langle \nabla^{B }r_{i}^{B}, r_{j}^{B}\rangle_{\mathcal G_{B}}
-  \langle \nabla^{B }r_{j}^{B}, r_{i}^{B}\rangle_{\mathcal G_{B}}\right) .
\end{equation}
With these choices,  the third relation  (\ref{reduced-rel}) is satisfied. For any  forms $H_{(3)}^{B}$ and $H_{(2)}^{i, B}$,  such that 
\begin{equation}\label{k-i}
{\mathcal K}_{i}:= H_{(2)}^{i, B} + 2\langle \mathfrak{r}^{B}, r_{i}^{B}\rangle_{\mathcal G_{B}} -
 \langle r_{i}^{B}, r_{j}^{B}\rangle_{\mathcal G_{B}} (d\theta_{j})^{B}
\end{equation}
is closed and 
\begin{equation}
d H_{(3)}^{B} = \langle \mathfrak{r}^{B}\wedge \mathfrak{r}^{B}\rangle_{\mathcal G_{B}}  -{\mathcal K}_{i}\wedge 
(d\theta_{i})^{B}
\end{equation}
the relations (\ref{reduced-rel}) are satisfied and we thus obtain a standard Courant algebroid together with an action $\Psi : \mathfrak{t}^k\rightarrow \mathrm{Der}(E)$
lifting the vertical parallelism of the principal torus bundle $\pi : M \rightarrow B$. Note that 
$2$-forms $H_{(2)}^{i, B}$ as required in the above construction do always exist and are unique up to addition of  closed forms whereas $H_{(3)}^{B}$ exists if and only if the closed form $\langle \mathfrak{r}^{B}\wedge \mathfrak{r}^{B}\rangle_{\mathcal G_{B}}  - {\mathcal K_{i}}\wedge 
(d\theta_{i})^{B}$ is exact. It is also unique up to addition of a closed form.
}
\end{exa}

\subsection{Invariant spinors}

Let  $E$ be a transitive Courant algebroid over an oriented  manifold $M$ and 
$\Psi : \mathfrak{g}\rightarrow \mathrm{Der}(E)$   an action on  $E$, which lifts an action 
$\psi : \mathfrak{g}\mapsto {\mathfrak X}(M),\  a\mapsto X_{a}$ of  $\mathfrak{g}$  on $M$. 
Let $\mathbb{S}$ be a   canonical weighted spinor bundle of $E.$ 
 Our aim in this section is to define an action of $\mathfrak{g}$ on $\Gamma (\mathbb{S}).$  
In order to find a proper definition  we assume that 
$\Psi$ integrates to a Lie group action  
$$
G\rightarrow \mathrm{Aut}(E),\  g\mapsto I^{g}_{E} : E\rightarrow E 
$$ 
such that $I^{g}_{E}$ induces a globally defined  isomorphism  $I^{g}_{\mathbb{S}} : \mathbb{S}\rightarrow \mathbb{S}$,
for any $g\in \mathfrak{g}.$ 
Recall that 
$$
I^{g}_{\mathbb{S}}\circ \gamma_{u}= \gamma_{I_{E}^{g} (u)}\circ I_{\mathbb{S}}^{g},\  \forall g\in G,\ u\in E.
$$
Consider a curve $g = g(t)$ of $G$ with $g(0) =e$ and $\dot{g}(0) = a$. 
We choose $I^{g(t)}_{\mathbb{S}}$ depending smoothly on $t$ and such that $I^{g(0)}_{\mathbb{S}}=\mathrm{Id}_{\mathbb{S}}$.
Replacing in the above relation
$g$ by $g(t)$ and taking the derivative at $t=0$ we obtain that
$\Psi^{\mathbb{S}}(a):= \left.\frac{d}{dt}\right\vert_{t=0} I_{\mathbb{S}}^{g(t)}\in \mathrm{End}\,  \Gamma (\mathbb{S})$ satisfies 
\begin{equation}\label{cond-op-spinors}
\Psi^{\mathbb{S}} (a) \gamma_{u}s= \gamma_{\Psi (a) ( u)} s + \gamma_{u}\Psi^{\mathbb{S}}(a)s,\ \forall u\in \Gamma (E),\ s\in \Gamma (\mathbb{S}),\ a\in \mathfrak{g}.
\end{equation}   
In  the following we do not assume that $\Psi$ integrates to an action of $G$.

\begin{prop}\label{action-spinors} i)  There is a unique linear map  
$$
\Psi^{\mathbb{S}} : \mathfrak{g} \rightarrow\mathrm{End}\, \Gamma (\mathbb{S})
$$
which   satisfies   
relation (\ref{cond-op-spinors}), the Leibniz rule
\begin{equation}\label{lie-spinors-2}
\Psi^{\mathbb{S}} (a) ( fs) = f  \Psi^{\mathbb{S}} (a) (  s)
+ X_{a}(f)  s,\ \forall  f\in C^{\infty}(M),\ s\in \Gamma (\mathbb{S}),\ a\in \mathfrak{g},
\end{equation}
and,  for any $U\subset M$ open and sufficiently small,   preserves the canonical $\mathrm{det}\, (T^{*}U)$-valued bilinear pairing $\langle \cdot , \cdot \rangle_{\mathbb{S}\vert_{U}}$
of $\Gamma (\mathbb{S}\vert_{U})$, i.e.
\begin{equation}\label{lie-pairing}
{\mathcal L}_{X_{a}} \langle  s,  \tilde{s} \rangle_{\mathbb{S}\vert_{U}} =
\langle \Psi^{\mathbb{S}} (a) s,  \tilde{s}\rangle_{\mathbb{S}\vert_{U}}
+ \langle s,  \Psi^{\mathbb{S}} (a) \tilde{s}\rangle_{\mathbb{S}\vert_{U}},\ \forall s, \tilde{s}\in \Gamma (\mathbb{S}\vert_{U}),\
a\in \mathfrak{g}.
\end{equation}
ii) The map $\Psi^{\mathbb{S}}:\mathfrak{g}\rightarrow \mathrm{End}\,  \Gamma (\mathbb{S})$ satisfies 
\begin{equation}\label{lie-bracket}
[\Psi^{\mathbb{S}} (a), \Psi^{\mathbb{S}} (b) ] = \Psi^{\mathbb{S}} [ a, b],\  \forall a, b\in \mathfrak{g}.
\end{equation}  
It is called the  {\cmssl action on spinors} induced by $\Psi .$ 
 \end{prop}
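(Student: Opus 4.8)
The plan is to construct $\Psi^{\mathbb{S}}$ explicitly by passing to a dissection, verify the three characterizing properties there, and then show the construction is independent of the dissection (up to the global identifications we already have at hand). By Definition \ref{def-triv-ext} we may fix an invariant dissection $I:E\to T^*M\oplus\mathcal G\oplus TM$, so that $\Psi(a)$ preserves the three summands and acts by Lie derivative on $T^*M\oplus TM$ and by a partial flat connection $\nabla^\Psi_{X_a}$ on $\mathcal G$ (Lemma \ref{cond-triv-ext}). Using $\mathbb{S}\cong\Lambda(T^*M)\hat\otimes\mathcal S_{\mathcal G}$, I would \emph{define}
\[
\Psi^{\mathbb{S}}(a)(\omega\otimes s):=({\mathcal L}_{X_a}\omega)\otimes s+\omega\otimes\nabla^{\mathcal S_{\mathcal G},\Psi}_{X_a}s,
\]
where $\nabla^{\mathcal S_{\mathcal G},\Psi}_{X_a}$ is the partial connection on $\mathcal S_{\mathcal G}$ induced by a partial connection $\nabla^{S_{\mathcal G},\Psi}_{X_a}$ on $S_{\mathcal G}$ lifting $\nabla^\Psi_{X_a}$ (such a lift exists locally, and the induced partial connection on $\mathcal S_{\mathcal G}$ is independent of the lift since the trace part drops out, exactly as in Lemma \ref{pas2}). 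Strictly speaking this is first defined on small open sets $U$; I must check the local pieces glue, which they do because any two lifts of $\nabla^\Psi_{X_a}$ to $S_{\mathcal G}$ differ by a scalar which annihilates $\mathcal S_{\mathcal G}$.

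Next I would verify the three properties of part i). The Leibniz rule (\ref{lie-spinors-2}) is immediate from the Leibniz rule for ${\mathcal L}_{X_a}$ on forms and for $\nabla^{\mathcal S_{\mathcal G},\Psi}_{X_a}$. For the Clifford compatibility (\ref{cond-op-spinors}) I would check it separately for $u=\xi\in T^*M$, $u=X\in TM$, and $u=r\in\mathcal G$, using the explicit Clifford action $\gamma_{\xi+r+X}(\omega\otimes s)=(i_X\omega+\xi\wedge\omega)\otimes s+(-1)^{|\omega|}\omega\otimes(r\cdot s)$: for the $T^*M\oplus TM$ part this is the standard identity ${\mathcal L}_{X_a}(i_X\omega+\xi\wedge\omega)=i_{[X_a,X]}\omega+({\mathcal L}_{X_a}\xi)\wedge\omega+(i_X+\xi\wedge\,){\mathcal L}_{X_a}\omega$, together with $\Psi(a)(\xi+X)={\mathcal L}_{X_a}(\xi+X)$; for the $\mathcal G$ part it reduces to the fact that $\nabla^{S_{\mathcal G},\Psi}_{X_a}$ is a derivation of Clifford multiplication covering $\nabla^\Psi_{X_a}$, i.e.\ $\nabla^{\mathcal S_{\mathcal G},\Psi}_{X_a}(r\cdot s)=(\nabla^\Psi_{X_a}r)\cdot s+r\cdot\nabla^{\mathcal S_{\mathcal G},\Psi}_{X_a}s$, which is exactly the compatibility of the lift. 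Property (\ref{lie-pairing}) follows from the explicit form (\ref{can-pairing}) of $\langle\cdot,\cdot\rangle_{\mathbb{S}\vert_U}$: ${\mathcal L}_{X_a}$ commutes with $\omega\mapsto\omega^t$ and with taking the top component, and $\nabla^{\mathcal S_{\mathcal G},\Psi}_{X_a}$ preserves $\langle\cdot,\cdot\rangle_{\mathcal S_{\mathcal G}\vert_U}$ since it is the partial restriction of a connection of the type that preserves this pairing (Lemma \ref{pas2} / the discussion around (\ref{pairing-lifted})). Uniqueness: given two maps satisfying (\ref{cond-op-spinors}) and (\ref{lie-spinors-2}), their difference is $C^\infty(M)$-linear and commutes with all $\gamma_u$; by irreducibility of the Clifford action it is multiplication by a function on each component, and (\ref{lie-pairing}) forces that function to vanish (using nondegeneracy of $\langle\cdot,\cdot\rangle_{\mathbb{S}\vert_U}$ and that $\Psi^{\mathbb{S}}$ is even). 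Independence of the chosen dissection follows because any two invariant dissections differ by an invariant isomorphism $(\beta,K,\Phi)$ with $\beta,\Phi$ invariant and $K$ intertwining the partial connections (Lemma \ref{cond-inv-iso}), whose induced map on $\mathbb{S}$ intertwines the two candidate operators — a direct check using that ${\mathcal L}_{X_a}$ and $\nabla^{\mathcal S_{\mathcal G},\Psi}_{X_a}$ are compatible with the invariant data.

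For part ii), the bracket relation (\ref{lie-bracket}), I would compute $[\Psi^{\mathbb{S}}(a),\Psi^{\mathbb{S}}(b)]$ on $\omega\otimes s$ in the dissection: the $\Lambda(T^*M)$-factor contributes $[{\mathcal L}_{X_a},{\mathcal L}_{X_b}]={\mathcal L}_{[X_a,X_b]}={\mathcal L}_{X_{[a,b]}}$, while the $\mathcal S_{\mathcal G}$-factor contributes the curvature-type term $[\nabla^{\mathcal S_{\mathcal G},\Psi}_{X_a},\nabla^{\mathcal S_{\mathcal G},\Psi}_{X_b}]-\nabla^{\mathcal S_{\mathcal G},\Psi}_{[X_a,X_b]}$, which is the lift to $\mathcal S_{\mathcal G}$ of $[\nabla^\Psi_{X_a},\nabla^\Psi_{X_b}]-\nabla^\Psi_{[X_a,X_b]}$; this vanishes because $\nabla^\Psi$ is flat (Lemma \ref{cond-triv-ext} i)), and flatness of the partial connection on $\mathcal G$ implies flatness of the induced partial connection on $\mathcal S_{\mathcal G}$ since the latter is built from an inner Clifford-algebra lift of the former. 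Hence $[\Psi^{\mathbb{S}}(a),\Psi^{\mathbb{S}}(b)]=\Psi^{\mathbb{S}}[a,b]$.

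The main obstacle I anticipate is the gluing/well-definedness issue: the lift of $\nabla^\Psi_{X_a}$ from $\mathcal G$ to $S_{\mathcal G}$ is only local and only unique up to a scalar, so one must carefully argue, as in Lemma \ref{pas2}, that the induced partial operator on the \emph{weighted} bundle $\mathcal S_{\mathcal G}$ (and hence on $\mathbb{S}$) is globally well defined, and that flatness survives this passage. Everything else is a routine unwinding of the explicit formulas for the Clifford action and the canonical pairing.
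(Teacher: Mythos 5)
Your proposal is correct and follows essentially the same route as the paper: the authors also establish uniqueness via the scalar-commutant argument, define the operator by the explicit formula $(\mathcal{L}_{X_a}\omega)\otimes s+\omega\otimes\nabla^{\Psi,\mathcal{S}_{\mathcal{G}}}_{X_a}s$ in an invariant dissection (their Lemma \ref{standard-spinors}), transfer it through the (locally defined, sign-ambiguous) spinor isomorphisms induced by the dissection (their Lemma \ref{de-adaugat}), and deduce the bracket relation from flatness of $\nabla^{\Psi}$. Your attention to the gluing/well-definedness of the lift to $\mathcal{S}_{\mathcal{G}}$ matches the paper's handling of the $\pm1$ ambiguity, which cancels under conjugation.
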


The remaining part of this section is devoted to the proof of Proposition~\ref{action-spinors}.  For uniqueness, let $\Psi^{\mathbb{S}}$ and $\tilde{\Psi}^{\mathbb{S}}$  be two maps which satisfy the required
conditions.  Then $F(a):= \Psi^{\mathbb{S}} (a) -\tilde{\Psi}^{\mathbb{S}}(a)$
is $C^{\infty}(M)$-linear and commutes with the Clifford action. Hence $F(a) = \lambda (a)\mathrm{Id}_{\mathbb{S}}$,  for
$\lambda (a)\in  C^{\infty}(M).$  Since
$F(a)$ is skew-symmetric with respect to $\langle \cdot , \cdot \rangle_{\mathbb{S}\vert_{U}}$, we obtain  $\lambda (a) =0.$
The  uniqueness follows. 
For existence we need the following two lemmas. 

\begin{lem}\label{de-adaugat}  
Let $E_{i}$  ($i=1,2$) be two transitive Courant algebroids over  $M$ with  actions
$\Psi_{i} : \mathfrak{g} \rightarrow \mathrm{Der} (E_{i})$, which lift
$\psi .$ 
Let   $I_{E}: E_{1} \rightarrow E_{2}$ be an invariant isomorphism and, for any
$U\subset M$ open and sufficiently small,   
$I_{\mathbb{S}\vert_{U}} : \mathbb{S}_{1} \vert_{U}\rightarrow \mathbb{S}_{2}\vert_{U}$  the induced 
isomorphism between  canonical  weighted spinor bundles of $E_{i}\vert_{U}$. Let  $\Psi^{\mathbb{S}_{i}}: \mathfrak{g}\rightarrow \mathrm{End}\, \Gamma  (\mathbb{S}_{i}\vert_{U} )$   ($i=1,2$) be  two maps related by  
\begin{equation}\label{inv-iso-sp}
\Psi^{\mathbb{S}_{2}}(a)= I_{\mathbb{S}\vert_{U}} \circ \Psi^{\mathbb{S}_{1}}(a)\circ ( I_{\mathbb{S}\vert_{U}} )^{-1}  ,\ \forall
a\in \mathfrak{g}.
\end{equation}
Then $\Psi^{\mathbb{S}_{1}} $ satisfies the conditions from Proposition \ref{action-spinors} if and only if
$\Psi^{\mathbb{S}_{2}}$ does.  
\end{lem}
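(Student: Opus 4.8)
The plan is to verify that each of the three defining properties of Proposition \ref{action-spinors} is transported across the invariant isomorphism $I_{\mathbb{S}\vert_U}$, so that $\Psi^{\mathbb{S}_1}$ satisfies them if and only if $\Psi^{\mathbb{S}_2}$ does. Since the relation (\ref{inv-iso-sp}) is symmetric (conjugation by $I_{\mathbb{S}\vert_U}^{-1}$ inverts it), it suffices to prove one implication; say we assume $\Psi^{\mathbb{S}_1}$ satisfies (\ref{cond-op-spinors}), (\ref{lie-spinors-2}) and (\ref{lie-pairing}) on $\mathbb{S}_1\vert_U$, and deduce the same for $\Psi^{\mathbb{S}_2}$.

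First I would check the Clifford-compatibility (\ref{cond-op-spinors}). The key input is that $I_{\mathbb{S}\vert_U}$ is compatible with $I_E$ in the sense that $I_{\mathbb{S}\vert_U}\circ \gamma_u = \gamma_{I_E(u)}\circ I_{\mathbb{S}\vert_U}$ for all $u\in\Gamma(E_1\vert_U)$ (this is the defining property of the induced isomorphism, cf.\ Lemma \ref{iso-bdle} and the surrounding discussion), together with invariance of $I_E$, i.e.\ $\Psi_2(a)\circ I_E = I_E\circ \Psi_1(a)$ from (\ref{inv-F}). Writing $u' = I_E^{-1}(u)$ and $s = I_{\mathbb{S}\vert_U}(s')$, one conjugates the identity (\ref{cond-op-spinors}) for $\Psi^{\mathbb{S}_1}$ by $I_{\mathbb{S}\vert_U}$: the term $\gamma_{\Psi_1(a)(u')}$ becomes $\gamma_{I_E\Psi_1(a)(u')} = \gamma_{\Psi_2(a)(u)}$ by invariance of $I_E$, and the two $\gamma_{u'}$-factors become $\gamma_u$-factors. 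This is a direct computation with no obstruction.

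Next I would handle the Leibniz rule (\ref{lie-spinors-2}). Here the point is simply that $I_{\mathbb{S}\vert_U}$ is $C^\infty(M)$-linear and covers the identity diffeomorphism of $M$ (both $E_i$ live over the same $M$ and $I_E$ is fiber preserving), so conjugation commutes with multiplication by functions; since $\Psi_1$ and $\Psi_2$ lift the same infinitesimal action $\psi$, the vector field $X_a$ appearing in the Leibniz rule is the same on both sides. Finally, for the pairing condition (\ref{lie-pairing}), the essential fact is Lemma \ref{pairing-iso}, extended to weighted spinor bundles in (\ref{pairing-iso-relation}): $I_{\mathbb{S}\vert_U}$ preserves the canonical $\mathrm{det}\,(T^*U)$-valued bilinear pairings up to a global sign $\epsilon\in\{\pm 1\}$ independent of the arguments. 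Conjugating (\ref{lie-pairing}) for $\Psi^{\mathbb{S}_1}$ and using that $\langle I_{\mathbb{S}\vert_U}s, I_{\mathbb{S}\vert_U}\tilde s\rangle_{\mathbb{S}_2\vert_U} = \epsilon\langle s,\tilde s\rangle_{\mathbb{S}_1\vert_U}$, the sign $\epsilon$ cancels (it appears on both sides of the equation), and again $X_a$ is unchanged.

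I do not expect a genuine obstacle here — the lemma is a bookkeeping statement asserting that ``satisfying the conditions of Proposition \ref{action-spinors}'' is invariant under invariant isomorphisms, and each condition is checked by the same conjugation argument. The one point requiring mild care is the pairing: one must invoke the already-established fact (\ref{pairing-iso-relation}) that the induced isomorphism preserves the canonical pairing up to a \emph{constant} sign rather than a sign depending on the point or on the spinors, since otherwise the cancellation in (\ref{lie-pairing}) would fail; but this is exactly what Lemma \ref{pairing-iso} and the subsequent discussion provide. The symmetry of (\ref{inv-iso-sp}) then gives the ``only if'' direction for free.
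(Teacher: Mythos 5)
Your proposal is correct and follows essentially the same route as the paper: conjugate each condition by $I_{\mathbb{S}\vert_{U}}$, using the compatibility $I_{\mathbb{S}\vert_{U}}\circ \gamma_{u}=\gamma_{I_{E}(u)}\circ I_{\mathbb{S}\vert_{U}}$ together with the invariance $\Psi_{2}(a)=I_{E}\circ\Psi_{1}(a)\circ I_{E}^{-1}$ for (\ref{cond-op-spinors}), the $C^{\infty}(M)$-linearity of $I_{\mathbb{S}\vert_{U}}$ for the Leibniz rule, and (\ref{pairing-iso-relation}) with its constant sign $\epsilon$ for the pairing condition. The only point you leave implicit that the paper mentions is the bracket relation (\ref{lie-bracket}), which is trivially preserved under conjugation.
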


\begin{proof}  Let  $\Psi^{\mathbb{S}_{1}}:\mathfrak{g}\rightarrow 
\mathrm{End}\, \Gamma  (\mathbb{S}_{1}\vert_{U} )$  be a map which  satisfies the conditions from Proposition \ref{action-spinors}
and $\Psi^{\mathbb{S}_{2}}: \mathfrak{g}\rightarrow \mathrm{End}\, (\mathbb{S}_{2}\vert_{U} )$ be defined by  (\ref{inv-iso-sp}).
The map   $\Psi^{\mathbb{S}_{2}} $  obviously  satisfies (\ref{lie-spinors-2})  and (\ref{lie-bracket})
and,  from  (\ref{pairing-iso-relation}),  
it satisfies  
(\ref{lie-pairing}) as well. 
Using   $I_{\mathbb{S}\vert_{U} }\circ \gamma_{u} = \gamma_{I_{E}(u)}\circ I_{\mathbb{S}\vert_{U} }$ and 
relation (\ref{cond-op-spinors}) satisfied by $\Psi^{\mathbb{S}_{1}}$, we obtain
\begin{equation}
\Psi^{\mathbb{S}_{2}}(a) \gamma_{u} (s)= \gamma_{I_{E} \Psi_{1} (a) I_{E}^{-1}(u)}s +\gamma_{u} \Psi^{\mathbb{S}_{2}}(a)s,\ \forall a\in \mathfrak{g},\  u\in \Gamma (E_{1}\vert_{U}),\ s\in \Gamma (\mathbb{S}_{2}\vert_{U}).
\end{equation}
Since 
$I_{E}$ is invariant, 
$\Psi_{2}(a) = I_{E}\circ \Psi_{1}(a) \circ I_{E}^{-1}$ and we obtain that $\Psi^{\mathbb{S}_{2}}$ satisfies
 (\ref{cond-op-spinors}).
\end{proof}

Let 
 $E_{M}= T^{*}M\oplus \mathcal G\oplus TM$ be a standard Courant algebroid  
defined by a quadratic Lie algebra bundle $(\mathcal G , [\cdot , \cdot ]_{\mathcal G}, \langle\cdot , \cdot\rangle_{\mathcal G})$
and data $(\nabla , R, H)$, with action  
\begin{equation}\label{action-Psi-new}
\Psi : \mathfrak{g} \mapsto \mathrm{Der} (E_{M}),\ \Psi (a) (\xi + r + X):= {\mathcal L}_{X_{a}} \xi  + \nabla^{\Psi}_{X_{a}} r
+ {\mathcal L}_{X_{a}} X
\end{equation}
which  lifts an action 
$$
\psi : \mathfrak{g}\mapsto {\mathfrak X}(M),\  a\mapsto X_{a}
$$
of  $\mathfrak{g}$  on $M$.  
Let $S_{\mathcal G}$ be an irreducible  $\mathrm{Cl} (\mathcal G)$-bundle, $\mathcal S_{\mathcal G}
= S_{\mathcal G}\otimes | \mathrm{det}\, S^{*}|^{1/r}$ the canonical spinor bundle
and 
$\mathbb{S}_{M} := \Lambda (T^{*}M)\hat{ \otimes }\mathcal S_{\mathcal G}$ 
the  canonical weighted spinor bundle of $E$ determined by $S_{\mathcal G}$.

\begin{lem}\label{standard-spinors}
The map 
\begin{equation}\label{lie-spinors}
\Psi^{\mathbb{S}_{M}} : \mathfrak{g} \rightarrow\mathrm{End}\, \Gamma (\mathbb{S}_{M}),\ \Psi^{\mathbb{S}_{M}}(a) (\omega \otimes s):=
({\mathcal L}_{X_{a}} \omega) \otimes s + \omega \otimes {\nabla}^{ \Psi , \mathcal{S}_{\mathcal G}}_{X_{a}} s ,
\end{equation}
for any $a\in \mathfrak{g}$,  $\omega \in \Omega (M)$ and $s\in \Gamma (\mathcal{S}_{\mathcal G})$,
satisfies the conditions from Proposition
\ref{action-spinors}. Above 
${\nabla}^{ \Psi , \mathcal{S}_{\mathcal G}}$ is the partial connection on $\mathcal S_{\mathcal G}$ induced by 
any
partial connection ${\nabla}^{ \Psi ,  S_{\mathcal G}}$ on $S_{\mathcal G}$, compatible with 
the partial connection ${\nabla}^{\Psi }$.
\end{lem}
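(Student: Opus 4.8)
The plan is to verify the four defining properties of Proposition \ref{action-spinors} — namely the compatibility relation \eqref{cond-op-spinors} with the Clifford action, the Leibniz rule \eqref{lie-spinors-2}, the compatibility \eqref{lie-pairing} with the canonical bilinear pairing, and the bracket relation \eqref{lie-bracket} — directly from the explicit formula \eqref{lie-spinors} for $\Psi^{\mathbb{S}_M}$, using the structure of $\mathbb{S}_M = \Lambda(T^*M)\hat{\otimes}\mathcal{S}_{\mathcal G}$ and the Clifford action $\gamma_{\xi+r+X}(\omega\otimes s) = (i_X\omega + \xi\wedge\omega)\otimes s + (-1)^{|\omega|}\omega\otimes(r\cdot s)$. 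The Leibniz rule is immediate since $\mathcal{L}_{X_a}$ satisfies it on $\Omega(M)$ and $\nabla^{\Psi,\mathcal{S}_{\mathcal G}}_{X_a}$ is a partial connection, so $\Psi^{\mathbb{S}_M}(a)(f\omega\otimes s) = X_a(f)\,\omega\otimes s + f\,\Psi^{\mathbb{S}_M}(a)(\omega\otimes s)$.

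For \eqref{cond-op-spinors} I would expand both sides for $u = \xi + r + X$ a section of $E_M$ and a decomposable spinor $\omega\otimes s$. On the left, $\Psi^{\mathbb{S}_M}(a)\gamma_u(\omega\otimes s)$ breaks into three terms via $\mathcal{L}_{X_a}(i_X\omega + \xi\wedge\omega)$ and $\nabla^{\Psi,\mathcal{S}_{\mathcal G}}_{X_a}(r\cdot s)$. The key identities to invoke are $\mathcal{L}_{X_a}(i_X\omega) = i_{\mathcal{L}_{X_a}X}\omega + i_X\mathcal{L}_{X_a}\omega$, $\mathcal{L}_{X_a}(\xi\wedge\omega) = (\mathcal{L}_{X_a}\xi)\wedge\omega + \xi\wedge\mathcal{L}_{X_a}\omega$, and — crucially — the fact that $\nabla^{\Psi}$ preserves the Clifford module structure, which is what the hypothesis that $\nabla^{\Psi,S_{\mathcal G}}$ is compatible with $\nabla^{\Psi}$ buys us: $\nabla^{\Psi,\mathcal{S}_{\mathcal G}}_{X_a}(r\cdot s) = (\nabla^{\Psi}_{X_a}r)\cdot s + r\cdot\nabla^{\Psi,\mathcal{S}_{\mathcal G}}_{X_a}s$. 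Matching these against the right-hand side, where $\gamma_{\Psi(a)u} = \gamma_{\mathcal{L}_{X_a}\xi + \nabla^{\Psi}_{X_a}r + \mathcal{L}_{X_a}X}$ by the definition \eqref{action-Psi-new}, shows the two sides agree term by term; the sign factors $(-1)^{|\omega|}$ track correctly because $\mathcal{L}_{X_a}$ and $\nabla^{\Psi}_{X_a}$ are degree-zero operators on forms.

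For the bracket relation \eqref{lie-bracket} I would use that $[\mathcal{L}_{X_a},\mathcal{L}_{X_b}] = \mathcal{L}_{[X_a,X_b]} = \mathcal{L}_{X_{[a,b]}}$ on forms (as $\psi$ is a Lie algebra action) together with the flatness of $\nabla^{\Psi}$ (guaranteed by Lemma \ref{cond-triv-ext}(i)): the curvature of $\nabla^{\Psi,\mathcal{S}_{\mathcal G}}$ along the $X_a$ vanishes, so $[\nabla^{\Psi,\mathcal{S}_{\mathcal G}}_{X_a},\nabla^{\Psi,\mathcal{S}_{\mathcal G}}_{X_b}] = \nabla^{\Psi,\mathcal{S}_{\mathcal G}}_{[X_a,X_b]}$, whence \eqref{lie-bracket} on decomposable elements and then everywhere by $C^\infty(M)$-linearity of the resolution. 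For \eqref{lie-pairing} I would invoke Lemma \ref{cor-can-properties}, which gives $\langle\omega\otimes s,\tilde\omega\otimes\tilde s\rangle_{\mathbb{S}|_U} = (-1)^{|s|(|\omega|+|\tilde\omega|)}(\omega^t\wedge\tilde\omega)_{\mathrm{top}}\langle s,\tilde s\rangle_{\mathcal{S}_{\mathcal G}|_U}$, together with the Cartan formula $\mathcal{L}_{X_a}(\omega^t\wedge\tilde\omega)_{\mathrm{top}}$ expressed via $i_{X_a}$ and $d$ (the top-degree component is handled by $\mathcal{L}_{X_a}\alpha = d\,i_{X_a}\alpha + i_{X_a}d\alpha$ on top forms, so $\mathcal{L}_{X_a}$ on a top form integrates to a total derivative, but pointwise one just uses Leibniz for $\mathcal{L}_{X_a}$ on the wedge), and the fact — noted just before the lemma — that $\nabla^{\Psi,\mathcal{S}_{\mathcal G}}$ preserves $\langle\cdot,\cdot\rangle_{\mathcal{S}_{\mathcal G}|_U}$ because $\nabla^{\Psi}$ preserves $\langle\cdot,\cdot\rangle_{\mathcal G}$ and $[\cdot,\cdot]_{\mathcal G}$. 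The main obstacle is bookkeeping: getting every Koszul sign right in the Clifford-compatibility computation \eqref{cond-op-spinors} and making sure the contraction terms $i_{\mathcal{L}_{X_a}X}\omega$ pair up with the $\gamma_{\mathcal{L}_{X_a}X}$ contribution on the other side without a spurious sign — but this is routine once the identities above are assembled.
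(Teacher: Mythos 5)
Your proposal is correct and follows essentially the same route as the paper's proof: verify the Leibniz rule directly, check the Clifford compatibility \eqref{cond-op-spinors} term by term for $u=\xi+r+X$ using the compatibility of $\nabla^{\Psi,\mathcal S_{\mathcal G}}$ with $\nabla^{\Psi}$, derive \eqref{lie-pairing} from the explicit formula \eqref{can-pairing} together with the fact that $\nabla^{\Psi,\mathcal S_{\mathcal G}}$ preserves the canonical pairing and the degree, and obtain \eqref{lie-bracket} from the flatness of $\nabla^{\Psi}$. The only cosmetic difference is your brief detour through the Cartan formula for the pairing identity, which you correctly reduce to the pointwise Leibniz rule for $\mathcal L_{X_a}$, exactly as the paper does.
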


\begin{proof} Relation (\ref{lie-spinors-2}) is obviously satisfied. To prove  relation (\ref{lie-pairing})  
we recall that  $\langle \cdot , \cdot \rangle_{{\mathbb  S}\vert_{U}}$ is given by
(\ref{can-pairing}), where  $\langle \cdot , \cdot \rangle_{\mathcal S_{\mathcal G}\vert_{U}}$ 
is a canonical bilinear pairing of 
$\Gamma ( \mathcal S_{\mathcal G}\vert_{U})$. 
Relation (\ref{lie-pairing} 
follows from
a computation which uses 
(\ref{can-pairing}), 
\begin{equation}\label{pas2-conseq}
X_{a} \langle s, \tilde{s}\rangle_{\mathcal S_{\mathcal G}\vert_{U}} = \langle  {\nabla}^{ \Psi , \mathcal S_{\mathcal G}}_{X_{a}} s, \tilde{s}\rangle_{\mathcal S_{\mathcal G}\vert_{U}}
+ \langle s,  {\nabla}^{ \Psi , \mathcal S_{\mathcal G}} _{X_{a}} \tilde{s}\rangle_{\mathcal S_{\mathcal G}\vert_{U}}
\end{equation}
and the fact that ${\nabla}^{ \Psi , \mathcal S_{\mathcal{G}}}$ preserves the degree of sections of $\mathcal S_{\mathcal G}.$ 
(Relation (\ref{pas2-conseq}) follows from the fact that $\nabla^{\Psi }$ preserves 
$\langle \cdot , \cdot\rangle_{\mathcal G}$, which is of neutral signature, and $\nabla^{ \Psi , \mathcal S_{\mathcal G}}$
is induced by any partial connection $\nabla^{ \Psi , S_{\mathcal G}}$ on $S_{\mathcal G}$ compatible with $\nabla^{ \Psi }$.  The argument
is similar to the one used in the proof of Lemma \ref{pas2}).  
In order to prove 
(\ref{cond-op-spinors}), decompose $u= \xi + r + X$. Then
$$
\Psi (a)(u) = {\mathcal L}_{X_{a}} (\xi + X ) +{\nabla}^{\Psi}_{X_{a}} r,
$$
from where we deduce that
\begin{align}
\nonumber \gamma_{\Psi (a)(u)} (\omega \otimes s)& = \gamma_{{\mathcal L}_{X_{a}} (\xi+ X )}(\omega \otimes s)
+\gamma_{{\nabla}^{ \Psi}_{X_{a}} r} (\omega \otimes s)\\
\label{r1}&= (i_{{\mathcal L}_{X_{a}}X} \omega +({\mathcal L}_{X_{a}}\xi )\wedge \omega )\otimes s
+(-1)^{| \omega |}\omega \otimes ({\nabla}^{ \Psi}_{X_{a}}r) s.
\end{align} 
Similar computations show that
\begin{align}
\label{r2} \Psi^{\mathbb{S}_{M}} (a) \gamma_{u} (\omega\otimes s)&= {\mathcal L}_{X_{a}} (i_{X}\omega + \xi \wedge \omega )
\otimes s + (i_{X}\omega + \xi \wedge \omega )\otimes {\nabla}^{ \Psi , \mathcal S_{\mathcal G}}_{X_{a}} s\\
\nonumber &+ (-1)^{| \omega |} ( {\mathcal L}_{X_{a}} \omega \otimes (r s) +\omega \otimes 
{\nabla}^{ \Psi , \mathcal S_{\mathcal G}}_{X_{a}} (rs))\\
\label{r3}\gamma_{u} \Psi^{\mathbb{S}_{M}} (a) (\omega \otimes s) & = (i_{X} {\mathcal L}_{X_{a}} \omega +\xi \wedge {\mathcal L}_{X_{a}}\omega )
\otimes s + (-1)^{| \omega |} (\mathcal L_{X_{a}}\omega ) \otimes (rs) \\
\nonumber & + (i_{X}\omega +\xi \wedge \omega )\otimes {\nabla}_{X_{a}}^{ \Psi , \mathcal S_{\mathcal G}}s + (-1)^{ | \omega | } \omega
\otimes (r {\nabla}_{X_{a}}^{{\Psi}, \mathcal S_{\mathcal G}}s).
\end{align}
Combining relations (\ref{r1}), (\ref{r2}) and (\ref{r3}) and using that $\nabla^{\Psi , \mathcal S_{\mathcal G}}$
is compatible with $\nabla^{\Psi}$ we obtain (\ref{cond-op-spinors}).
Relation (\ref{lie-bracket}) follows from the definition of the map $\Psi^{\mathbb{S}_{M}}$ and the flatness of 
 ${\nabla }^{\Psi, \mathcal{S}_\mathcal{G}}$  (which is a consequence of the flatness of 
${\nabla }^{\Psi}$).
\end{proof}

We  conclude the proof of Proposition
\ref{action-spinors} by choosing an invariant dissection $I : E \rightarrow E_{M}$ and
 isomorphisms $I_{\mathbb{S}\vert_{U_{i}}} :
\mathbb{S}\vert_{U_{i}} \rightarrow \mathbb{S}_{M}\vert_{U_{i}}$ compatible with $I\vert_{U_{i}}$, where 
$\mathcal U = \{ U_{i}\}$ is a cover of $M$ with sufficiently small open subsets.
Using Lemmas
\ref{de-adaugat} and \ref{standard-spinors}, 
we obtain that the map $ \Psi^{\mathbb{S}}:\mathfrak{g} \rightarrow \mathrm{End}\, \Gamma (\mathbb{S})$
defined by
$$
\Psi^{\mathbb{S}} (a) (s)\vert_{U_{i}} := 
(I_{\mathbb{S}\vert_{U_{i}}})^{-1}  \circ \Psi^{\mathbb{S}_{M}}(a)\circ  I_{\mathbb{S}\vert_{U_{i}}} (s\vert_{U_{i}}),\
\forall s\in \Gamma (\mathbb{S})
 $$
satisfies the conditions from Proposition \ref{action-spinors}.

\begin{defn}\label{definition-inv-spinors} A section of the canonical weighted  spinor bundle 
$\mathbb{S}$   is an invariant spinor if it is annihilated by the operators
$\Psi^{\mathbb{S}} (a)$, for all $a\in \mathfrak{g}.$
\end{defn}

\begin{notation}{\rm Given an action  
$\Psi : \mathfrak{g}\rightarrow \mathrm{Der}  (E)$
on a transitive Courant algebroid
$E$, we shall  denote by $\Gamma_{\mathfrak{g}}(\mathbb{S})$ the vector space of  invariant spinors.
Similarly, $\Gamma_{\mathfrak{g}} (E)$ will denote the vector space of invariant sections of $E$.}
\end{notation}

\begin{lem}   In the setting of Proposition \ref{action-spinors}, 
\begin{equation}\label{invariance-can-dirac}
\slashed{d} \circ \Psi^{\mathbb{S}} (a) = \Psi^{\mathbb{S}} (a) \circ \slashed{d},\ \forall a\in \mathfrak{g},
\end{equation}
where $\slashed{d}$ is the canonical Dirac generating operator of $E$.
\end{lem}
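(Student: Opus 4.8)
The plan is to reduce the statement to the standard case via an invariant dissection, and then verify the commutation relation directly on the explicit formula \eqref{can-dirac} for $\slashed{d}$. The key point is that both $\slashed{d}$ and $\Psi^{\mathbb{S}}$ behave compatibly under the isomorphism $I_{\mathbb{S}}$ induced by a (local) dissection $I$: by Proposition~\ref{iso-dirac}, the canonical Dirac generating operator is conjugated by $I_{\mathbb{S}\vert_U}$, and by Lemma~\ref{de-adaugat}, the map $\Psi^{\mathbb{S}}$ is conjugated by $I_{\mathbb{S}\vert_U}$ as well (since an invariant dissection exists by assumption, $I_E$ is invariant and Lemma~\ref{de-adaugat} applies). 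Therefore \eqref{invariance-can-dirac} holds for $E$ if and only if it holds for the standard model $E_M = T^*M\oplus\mathcal{G}\oplus TM$, with $\Psi^{\mathbb{S}_M}$ given by the explicit formula \eqref{lie-spinors} and $\slashed{d}$ given by \eqref{can-dirac}. So it suffices to prove that $\slashed{d}_M\circ\Psi^{\mathbb{S}_M}(a) = \Psi^{\mathbb{S}_M}(a)\circ\slashed{d}_M$ for all $a\in\mathfrak{g}$, acting on $\omega\otimes s$ with $\omega\in\Omega(M)$ and $s\in\Gamma(\mathcal{S}_{\mathcal G})$.

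For this computation I would go term by term through the five summands of \eqref{can-dirac}. Write $\Psi^{\mathbb{S}_M}(a) = \mathcal{L}_{X_a}^{\Omega}\otimes 1 + 1\otimes\nabla^{\Psi,\mathcal{S}_{\mathcal G}}_{X_a}$ in the obvious schematic sense. Then:
(i) The term $d\omega\otimes s$ commutes because $\mathcal{L}_{X_a}$ commutes with $d$ on forms, and $\nabla^{\Psi,\mathcal{S}_{\mathcal G}}_{X_a}$ acts only on the $\mathcal{S}_{\mathcal G}$-factor.
(ii) The term $-H\wedge\omega\otimes s$: here I use that $H$ is invariant, i.e.\ $\mathcal{L}_{X_a}H = 0$ (one of the conditions in Lemma~\ref{cond-triv-ext}), so $\mathcal{L}_{X_a}(H\wedge\omega) = H\wedge\mathcal{L}_{X_a}\omega$.
(iii) The term $\nabla^{\mathcal{S}_{\mathcal G}}(s)\wedge\omega = \sum_i\alpha_i\wedge\omega\otimes\nabla^{\mathcal{S}_{\mathcal G}}_{X_i}s$: here I need the invariance of the connection $\nabla$ (Remark~\ref{comentarii}~i)), which lifts to the statement that $\nabla^{\Psi,\mathcal{S}_{\mathcal G}}$ commutes with $\nabla^{\mathcal{S}_{\mathcal G}}$ in the appropriate sense; more precisely, for an invariant frame one has $\mathcal{L}_{\Psi(a)}(\sum_i\alpha_i\otimes\nabla^{\mathcal{S}_{\mathcal G}}_{X_i}) = 0$ as an operator, which is exactly $({\mathcal L}_{\Psi(a)}\nabla^{\mathcal{S}_{\mathcal G}}) = 0$ combined with $[\nabla^{\Psi,\mathcal{S}_{\mathcal G}}_{X_a},\nabla^{\mathcal{S}_{\mathcal G}}_{X_i}] = \nabla^{\mathcal{S}_{\mathcal G}}_{{\mathcal L}_{X_a}X_i}$.
(iv) The term $\tfrac14(-1)^{|\omega|+1}\omega\otimes C_{\mathcal G}s$: since $\nabla^{\Psi}$ preserves $[\cdot,\cdot]_{\mathcal G}$ and $\langle\cdot,\cdot\rangle_{\mathcal G}$ (Lemma~\ref{cond-triv-ext}~i)), it preserves the Cartan form $C_{\mathcal G}$, hence $\nabla^{\Psi,\mathcal{S}_{\mathcal G}}_{X_a}$ commutes with Clifford multiplication by $C_{\mathcal G}$; and $\mathcal{L}_{X_a}$ acts trivially on this factor.
(v) The term $(-1)^{|\omega|+1}\bar{R}^E(\omega\otimes s) = \tfrac12(-1)^{|\omega|+1}\sum_{i,j,k}\langle R(X_i,X_j),r_k\rangle_{\mathcal G}(\alpha_i\wedge\alpha_j\wedge\omega)\otimes(\tilde{r}_k s)$: this is the crucial one, and here I use that $R$ is invariant, ${\mathcal L}_{\Psi(a)}R = 0$ (Lemma~\ref{cond-triv-ext}~ii)), together with the invariance of $\nabla$ and of the metric $\langle\cdot,\cdot\rangle_{\mathcal G}$, so that the full contraction $\sum_{i,j,k}\langle R(X_i,X_j),r_k\rangle_{\mathcal G}\,\alpha_i\wedge\alpha_j\otimes(\tilde{r}_k\,\cdot\,)$ is an invariant operator-valued object, hence commutes with $\Psi^{\mathbb{S}_M}(a)$.

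The main obstacle will be term (v): one must organize the bookkeeping so that the invariance of $R$, the invariance of $\nabla$ (which governs how the frames $(X_i)$, $(\alpha_i)$, $(r_k)$, $(\tilde{r}_k)$ transform under $\mathcal{L}_{X_a}$ resp.\ $\nabla^{\Psi}$), and the invariance of $\langle\cdot,\cdot\rangle_{\mathcal G}$ combine to show that the whole expression $\bar{R}^E$ intertwines $\Psi^{\mathbb{S}_M}(a)$. A clean way to do this is the Kozsul-type identity: the operator $\bar{R}^E$ is well-defined independently of frames (it is built from the invariant tensor $R$, the contraction with $\langle\cdot,\cdot\rangle_{\mathcal G}$, and Clifford multiplication), so its commutator with $\Psi^{\mathbb{S}_M}(a) = \mathcal{L}_{\Psi(a)}$ is $\mathcal{L}_{\Psi(a)}$ applied to $\bar{R}^E$ viewed as a section of an invariant bundle of operators, which vanishes precisely because all ingredients are invariant. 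Alternatively, one simply differentiates at $t=0$ the relation $\slashed{d}_M\circ I^{g(t)}_{\mathbb{S}_M} = I^{g(t)}_{\mathbb{S}_M}\circ\slashed{d}_M$ when $\Psi$ integrates — but since we do not assume integrability, the direct frame computation (or the invariant-operator argument) is the route to take. Once all five terms are checked, summing gives \eqref{invariance-can-dirac} for $E_M$, and pulling back along $I_{\mathbb{S}}$ concludes the proof for general transitive $E$.
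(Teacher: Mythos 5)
Your overall strategy --- reduce to the standard model via Proposition \ref{iso-dirac} and Lemma \ref{de-adaugat}, then check the terms of (\ref{can-dirac}) against the explicit formula (\ref{lie-spinors}) --- is exactly the paper's. Your terms (i), (ii), (iv) and (v) are handled correctly: once one chooses an invariant frame $(X_i)$ of $TM$ and a $\nabla^{\Psi}$-parallel frame $(r_k)$ of $\mathcal G$ (so that $(\tilde r_k)$ is also parallel and $C_{\mathcal G}$ is parallel), the invariance of $H$, $R$ and $\langle\cdot,\cdot\rangle_{\mathcal G}$ makes those terms commute by inspection. In particular your term (v) is \emph{not} the crucial one; it is disposed of by the single observation ${\mathcal L}_{X_a}\langle R(X_i,X_j),r_k\rangle_{\mathcal G}=0$ together with the parallelism of $\tilde r_k$.

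The genuine gap is in your term (iii). You assert that the invariance of $\nabla$ on $\mathcal G$ ``lifts'' to the identity $[\nabla^{\Psi,\mathcal S_{\mathcal G}}_{X_a},\nabla^{\mathcal S_{\mathcal G}}_{X_i}]=\nabla^{\mathcal S_{\mathcal G}}_{{\mathcal L}_{X_a}X_i}$ on the spinor bundle. This is precisely the identity (\ref{f1}) to which the paper reduces the whole statement and on which it spends most of its proof; it is not a formal consequence of invariance on $\mathcal G$. The connections $\nabla^{\mathcal S_{\mathcal G}}$ and $\nabla^{\Psi,\mathcal S_{\mathcal G}}$ are only characterized through compatibility with $\nabla$ and $\nabla^{\Psi}$, so the operator
\[
T:=[\nabla^{\Psi,\mathcal S_{\mathcal G}}_{X_a},\nabla^{\mathcal S_{\mathcal G}}_{X_i}]-\nabla^{\mathcal S_{\mathcal G}}_{{\mathcal L}_{X_a}X_i}
\]
is a priori only known (from the invariance of $\nabla$ on $\mathcal G$) to commute with Clifford multiplication, hence to equal $\lambda\,\mathrm{Id}$ by irreducibility; one still has to show $\lambda=0$. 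The paper does this by writing $\nabla^{\Psi,\mathcal S_{\mathcal G}}_{X_a}=\nabla^{\mathcal S_{\mathcal G}}_{X_a}-\tfrac12 A_a\cdot$ with $A_a\in\Gamma(\Lambda^2\mathcal G)$, reducing (\ref{f1}) via (\ref{nabla-A}) and ${\mathcal L}_{X_a}X_i=0$ to the curvature identity $R^{\nabla^{\mathcal S_{\mathcal G}}}(X_a,X_i)=-\tfrac12\,\mathrm{ad}_{R(X_a,X_i)}$ (acting by Clifford multiplication), and proving the latter by checking that both sides are trace-free endomorphisms of $\mathcal S_{\mathcal G}$ satisfying the same Leibniz rule (\ref{rel-A}) with respect to Clifford multiplication --- the trace-freeness being where the weighting by $|\mathrm{det}\, S^{*}_{\mathcal G}|^{1/r}$ and the neutral signature enter. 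You need to supply this (or an equivalent normalization argument) to close the proof.
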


\begin{proof}  From Proposition \ref{iso-dirac} and Lemma \ref{de-adaugat}, 
it is sufficient to prove the statement for the Courant algebroid $E_{M}$ considered
in  Lemma \ref{standard-spinors} with  action $\Psi^{\mathbb{S}_{M}}$ defined
by (\ref{lie-spinors}).  We need to show that for any  $a\in \mathfrak{g}$, $\omega \in \Omega (M)$ and $s\in \Gamma (\mathcal S_{\mathcal G})$  
\begin{equation}\label{invariance-can-dirac-1}
\slashed{d}_{M}  \Psi^{\mathbb{S}_{M}} (a) (\omega \otimes s)=
 \Psi^{\mathbb{S}_{M}} (a) \slashed{d}_{M}(\omega \otimes s)
\end{equation}
where $\slashed{d}_{M}\in \mathrm{End}\, \Gamma (\mathbb{S}_{M})$ is the Dirac generating operator of $E_{M}.$ 
We consider an invariant local frame $( X_{i} )$ of $TM$. 
Since  ${\nabla}^{\Psi}$ is flat   we may (and will) take the local frame $( r_{k})$ of 
$\mathcal G$ to be ${\nabla}^{\Psi}$-parallel. Since ${\nabla}^{\Psi}$ preserves the scalar product $\langle \cdot , \cdot \rangle_{\mathcal G}$, 
the $\langle \cdot , \cdot\rangle_{\mathcal G}$- dual frame $(\tilde{r}_{k})$ 
is also ${\nabla}^{\Psi}$-parallel. Since ${\nabla}^{\Psi}$ preserves the Lie bracket $[\cdot , \cdot ]_{\mathcal G}$,  
the Cartan form $C_{\mathcal G}$ is ${\nabla}^{\Psi}$-parallel.\

Since $R$, $X_{i}$ and $r_{k}$ are invariant, 
\begin{equation}\label{last-added}
{\mathcal L}_{ X_{a}} \langle R(X_{i}, X_{j}), r_{k}\rangle_{\mathcal G} = 0,\  \forall a\in \mathfrak{g}.
\end{equation}
From (\ref{last-added}),
$\nabla^{\Psi} C_{\mathcal G }=0$, 
the fact that $\nabla^{\Psi , \mathcal S_{\mathcal G}}$ is compatible with $\nabla^{\Psi}$
and the expressions (\ref{can-dirac}),   (\ref{lie-spinors})  for $\slashed{d}$
and $\Psi^{\mathbb{S}}$,  we see that
relation 
(\ref{invariance-can-dirac})  reduces to 
\begin{equation}\label{f1}
{\nabla}_{X_{a}}^{\Psi , {\mathcal S}_{\mathcal G}} \nabla^{{\mathcal S}_{\mathcal G}}_{X_{i}} s=
 \nabla_{X_{i}}^{{\mathcal S}_{\mathcal G}} {\nabla}_{X_{a}}^{\Psi , {\mathcal S}_{\mathcal G}}s,\ 
\forall a\in \mathfrak{g},\  s\in \Gamma (\mathcal{S}_{\mathcal G}),
\end{equation}
where, we recall, $\nabla^{\mathcal S_{\mathcal G}}$ is the connection on $\mathcal S_{\mathcal G}$ induced by
any connection on $S_{\mathcal G}$ compatible with $\nabla $ and similarly
for the partial connections $\nabla^{\Psi , \mathcal S_{\mathcal G}}$ and $\nabla^{\Psi}.$  
For any $a\in \mathfrak{g}$, let $A_{a} := \nabla^{\Psi}_{X_{a}}-\nabla_{X_{a}}.$  
Then 
\begin{equation}\label{rel-conn-spinors}
{\nabla}^{\Psi , {\mathcal S}_{\mathcal G}}_{X_{a}} s = \nabla_{X_{a}}^{{\mathcal S}_{\mathcal G}}s -\frac{1}{2}
A_{a} \cdot s 
\end{equation}
where  $A_{a}\cdot s $ denotes the Clifford action of $A_{a} \in \Gamma (\Lambda^{2}\mathcal G)
\subset \Gamma\, \mathrm{Cl}(\mathcal G)$ on $s\in \Gamma (\mathcal{S}_{\mathcal G})$
(see e.g.\ Proposition 53 of \cite{cortes-david-MMJ} for more details).  
 From (\ref{rel-conn-spinors}), (\ref{nabla-A}) and ${\mathcal L}_{X_{a}}X_{i}=0$ we deduce that (\ref{f1}) is equivalent to
\begin{equation}\label{r-ad}
R^{\nabla^{{\mathcal S}_{\mathcal G}}}(X_{a}, X_{i}) s +\frac{1}{2} (\mathrm{ad}_{R(X_{a}, X_{i})}) s=0,
\end{equation} 
where 
$ (\mathrm{ad}_{R(X_{a}, X_{i})}) s$ means the Clifford action of $ \mathrm{ad}_{R(X_{a}, X_{i})}:
=[ \mathrm{ad}_{R(X_{a}, X_{i})}, \cdot ]_{\mathcal G}\in \Gamma (\Lambda^{2}\mathcal G)
\subset \Gamma\, \mathrm{Cl}(\mathcal G)$ on $s$.
In order to prove (\ref{r-ad}) we remark 
first that both endomorphisms $R^{\nabla^{{\mathcal S}_{\mathcal G}}}(X_{a}, X_{i})$ 
and  $( \mathrm{ad}_{R(X_{a}, X_{i})})$ of $\mathcal{S}_{\mathcal G}$ 
are trace free (the statement for $R^{\nabla^{{\mathcal S}_{\mathcal G}}}(X_{a}, X_{i})$ is a consequence of the fact that 
$\nabla^{{\mathcal S}_{\mathcal G}}$ is induced by a connection $\nabla^{S_{\mathcal G}}$ on $S_{\mathcal G}$). 
On the other hand, since $\nabla^{{\mathcal S}_{\mathcal G}}$ is compatible with $\nabla$, we obtain that 
$T:= R^{\nabla^{{\mathcal S}_{\mathcal G}}}(X_{a}, X_{i})\in \mathrm{End}({\mathcal  S}_{\mathcal G})$
satisfies 
\begin{equation}\label{rel-A}
T(rs) = (R^{\nabla}(X_{a}, X_{i})r) s + r T(s),\forall r\in \mathcal G ,\ s\in \mathcal{S}_{\mathcal G} .
\end{equation}
The same relation is satisfied by $T:= -\frac{1}{2} \mathrm{ad}_{ R(X_{a}, X_{i})}$ acting by Clifford multiplication 
(here we use that $R^{\nabla} (X_{a}, X_{i}) (r)= \mathrm{ad}_{ R(X_{a}, X_{i})} (r)$ and  relation
$\omega (r) = -\frac{1}{2} [ \omega, r]_{\mathrm{Cl}}$,  for any $\omega \in \Lambda^{2}\mathcal G\subset\mathrm{Cl} (\mathcal G)$, where 
$ [ \omega, r]_{\mathrm{Cl}}=\omega r-r \omega $ denotes the commutator of $\omega$ and $r$ in the Clifford algebra 
and $\omega (r)$ the action of $\omega \in \Lambda^{2}\mathcal{G}\cong \mathfrak{so}(\mathcal{G})$ on $r\in\mathcal{G}$).\

To summarize: both $R^{\nabla^{{\mathcal S}_{\mathcal G}}}(X_{a}, X_{i})$ 
and  $-\frac{1}{2}( \mathrm{ad}_{R(X_{a}, X_{i})})$ are trace-free and satisfy (\ref{rel-A}). 
Since $\langle \cdot , \cdot \rangle_{\mathcal G}$ has neutral signature, they coincide.  
 \end{proof}

\subsection{Pullback actions and spinors}\label{sect-pull-back-actions}

 Let $f: M \rightarrow N$ be a submersion  and 
\begin{align*}
\nonumber& \psi^{M}: \mathfrak{g} \rightarrow {\mathfrak X}(M),\  a\mapsto X_{a}^{M}\\
\nonumber& \psi^{N}: \mathfrak{g} \rightarrow {\mathfrak X}(N),\  a\rightarrow X_{a}^{N}\end{align*}
be $f$-related infinitesimal actions, i.e.\ $X_a^N\circ f= dfX_{a}^M$ for all $a\in \mathfrak{g}$.   
Let $E$ be a transitive Courant algebroid over $N$ with anchor $\pi : E \rightarrow TN$ and 
$$
\mathfrak{g} \ni a \mapsto \Psi  (a) \in\mathrm{Der} (E)
$$ 
be an action on $E$ which lifts
$\psi^{N}.$  
Recall that the pullback Courant algebroid $f^{!}E$ is the quotient bundle  $C/C^{\perp}$  over $M$ (identified
with the graph $M_{f}$ of $f$), where,  for any
$p\in M$, 
\begin{align*}
\nonumber& C_{p}:= \{ (u,   \mu + X)\in E_{f(p)}\times\mathbb{ T}_{p}M:\ \pi (u)= (d_{p}f)(X)\}\\
\nonumber& C^{\perp}_{p} := \{ (\frac{1}{2} \pi^{*}(\gamma ), - f^{*}\gamma ),\ \gamma \in T_{f(p)}N\}
\subset C_{p}  
\end{align*}
with the Courant algebroid structure defined  at the beginning of Section~\ref{pull-back-subsection}.
For   $U\subset N$ open,   a  section of $C\vert_{f^{-1} (U)}$ (and the induced section  
of $(f^{!}E)\vert_{f^{-1} (U)}$)  
of the form $(f^{*}u, \mu + X)$ where   $u\in \Gamma (E\vert_{U})$, $X\in {\mathfrak X}( f^{-1} (U))$ is $f$-projectable with  
$f_{*} X = \pi  (u)$ and   $\mu\in \Omega^{1}( f^{-1} (U))$, will be called {\cmssl distinguished}.  
Let $\mathcal U = \{ U_{i}\}$ be an open cover of $N$, with sufficiently small sets $U_{i}.$  Any section of $C\vert_{f^{-1} (U_{i})}$
is a $C^{\infty} ( f^{-1} (U_{i}))$-linear combination of distinguished sections.
For each $U_{i}$ we define
\begin{equation}
\widehat{\Psi}^{U_{i}} :\mathfrak{g} \rightarrow \mathrm{End}\,  \Gamma ( C\vert_{f^{-1}(U_{i})}),
\end{equation}
such that it satisfies  the Leibniz rule
\begin{equation}
\widehat{\Psi}^{U_{i}}(a) ( f s) = X_{a}^{M}(f) s + f \widehat{\Psi}^{U_{i}}(a) (s),
\end{equation} 
for any  $a\in \mathfrak{g}$, 
$f\in C^{\infty}( f^{-1} (U_{i}))$, $s\in \Gamma (C\vert_{f^{-1} (U_{i})})$, 
and on distinguished sections is given by
\begin{equation}
\widehat{\Psi}^{U_{i}}(a)  (f^{*}u, \mu + X ) :=(  f^{*}( \Psi (a) u), {\mathcal L}_{X_{a}^{M}}(\mu + X) ) .
\end{equation}

\begin{lem}\label{def-pull-back-action} The map
$\Psi :\mathfrak{g}\rightarrow \mathrm{End}\, \Gamma (C)$ given by 
\begin{equation}\label{widehat-psi}
\widehat{\Psi} (a) (s)\vert_{f^{-1}(U_{i})}= \widehat{\Psi}^{U_{i}} (a) (s\vert_{f^{-1}(U_{i})})
\end{equation}
is well defined, preserves $\Gamma (C^{\perp})$,  and  induces  an action
\begin{equation}
f^{!}\Psi:\mathfrak{g} \rightarrow \mathrm{Der} (f^{!}E)
\end{equation}
which lifts $\psi^{M}.$ It is called the {\cmssl pullback action of $\Psi$.}  
\end{lem}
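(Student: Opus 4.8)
The plan is to verify the three assertions of Lemma~\ref{def-pull-back-action} in order: well-definedness of $\widehat{\Psi}$, invariance of $\Gamma(C^\perp)$, and the fact that the induced operators lie in $\mathrm{Der}(f^!E)$ and lift $\psi^M$.

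\textbf{Step 1: Well-definedness on overlaps.} First I would check that on an overlap $f^{-1}(U_i)\cap f^{-1}(U_j)$ the operators $\widehat{\Psi}^{U_i}(a)$ and $\widehat{\Psi}^{U_j}(a)$ agree. Since any section of $C|_{f^{-1}(U_i)}$ is a $C^\infty$-linear combination of distinguished sections, and both operators satisfy the same Leibniz rule with respect to $X^M_a$, it suffices to compare their values on distinguished sections $(f^*u,\mu+X)$. But on such a section both operators are defined by the identical formula $(f^*(\Psi(a)u),\mathcal L_{X^M_a}(\mu+X))$, which is manifestly independent of the chosen $U_i$ containing the relevant piece of $u$. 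The only subtlety is that a single distinguished section on the overlap may be written using different $u$'s over $U_i$ and $U_j$; here one uses that the expression $f^*(\Psi(a)u)$ depends only on $f^*u$ near $p$ because $\Psi(a)$ is a first-order differential operator, so $\Psi(a)u$ at $f(p)$ is determined by the $1$-jet of $u$ at $f(p)$, hence by the section $f^*u$ near $p$ together with its derivatives along the base directions, which are the projectable ones. This is the step where I expect the bookkeeping to be most delicate, though not deep.

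\textbf{Step 2: Preservation of $C^\perp$.} The subbundle $C^\perp$ is spanned pointwise by sections of the form $(\tfrac12\pi^*\gamma,-f^*\gamma)$ with $\gamma\in\Gamma(T^*N|_U)$. Such a section is distinguished (take $u=\tfrac12\pi^*\gamma$, which has anchor zero since $\pi\circ\pi^*=0$ by isotropy of the image of $\pi^*$, so $X=0$, and $\mu=-f^*\gamma$). Therefore $\widehat\Psi(a)$ maps it to $(f^*(\Psi(a)(\tfrac12\pi^*\gamma)),\mathcal L_{X^M_a}(-f^*\gamma))$. Now $\Psi(a)\in\mathrm{Der}(E)$ commutes suitably with the anchor, and since $\pi^*$ is the adjoint of $\pi$ via the scalar product, the third relation in (\ref{cond-deriv}) together with skew-symmetry forces $\Psi(a)(\pi^*\gamma)=\pi^*(\mathcal L_{X^N_a}\gamma)$; one checks this by pairing with an arbitrary section and using that $\Psi(a)$ preserves $\langle\cdot,\cdot\rangle$ and covers $\mathcal L_{X^N_a}$ on $TN$. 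Also $\mathcal L_{X^M_a}f^*\gamma=f^*\mathcal L_{X^N_a}\gamma$ because the actions are $f$-related. Hence the image is again of the form $(\tfrac12\pi^*\gamma',-f^*\gamma')$ with $\gamma'=\mathcal L_{X^N_a}\gamma$, so $\Gamma(C^\perp)$ is preserved and $\widehat\Psi(a)$ descends to $f^!\Psi(a)\in\mathrm{End}\,\Gamma(f^!E)$.

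\textbf{Step 3: $f^!\Psi(a)$ is a derivation lifting $\psi^M$.} It remains to verify the three defining conditions (\ref{cond-deriv}) for $f^!\Psi(a)$ and the lifting condition $\pi_{f^!E}\circ f^!\Psi(a)=\mathcal L_{X^M_a}$. The metric-compatibility and anchor conditions follow from the corresponding properties of $\Psi(a)$ on $E$ and of $\mathcal L_{X^M_a}$ on $\mathbb TM$, together with the fact that the scalar product and anchor on $C/C^\perp$ are induced from $E\times\mathbb TM$: on distinguished sections $(f^*u,\mu+X)$ the anchor of the class is $X$, and $\widehat\Psi(a)$ acts on the $\mathbb TM$-component by $\mathcal L_{X^M_a}$, giving $\pi_{f^!E}(f^!\Psi(a)[\cdot])=\mathcal L_{X^M_a}X$ as required. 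For the bracket-compatibility, one uses that the Dorfman bracket on $C/C^\perp$ is induced from the direct-product bracket on $E\times\mathbb TM$; on distinguished sections the bracket is computed componentwise, and $\Psi(a)$ derives the bracket of $E$ while $\mathcal L_{X^M_a}$ derives the bracket of $\mathbb TM$, so $f^!\Psi(a)$ derives the bracket of $f^!E$ on distinguished sections, and then by the Leibniz rule and $C^\infty$-bilinearity of the failure term it derives it on all sections. Finally, that $a\mapsto f^!\Psi(a)$ is a Lie algebra homomorphism follows because on distinguished sections $[\widehat\Psi(a),\widehat\Psi(b)]$ acts by $([\Psi(a),\Psi(b)],[\mathcal L_{X^M_a},\mathcal L_{X^M_b}])=(\Psi([a,b]),\mathcal L_{X^M_{[a,b]}})=\widehat\Psi([a,b])$, using that $\Psi$ is a homomorphism and that $X^M_{[a,b]}=[X^M_a,X^M_b]$. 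The main obstacle, as noted, is Step~1 — making the compatibility on overlaps and the independence of the choice of $u$ fully rigorous — but this is routine once one notes that everything reduces to statements about $1$-jets of projectable sections.
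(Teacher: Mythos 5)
Your proof is correct and follows the same route as the paper's: define the operator on distinguished sections, extend by the Leibniz rule, and verify the derivation axioms there using the $f$-relatedness of the fundamental vector fields and the fact that $\Psi(a)\in\mathrm{Der}(E)$. In fact your Steps 2 and 3 supply details the paper leaves implicit: the identity $\Psi(a)\pi^{*}\gamma=\pi^{*}(\mathcal{L}_{X_{a}^{N}}\gamma)$, which the paper merely asserts, is proved exactly by your pairing argument (pair with an arbitrary $v$, use metric compatibility and the anchor condition, and invoke non-degeneracy), and the paper dismisses the verification of the three derivation conditions as an ``easy check.''

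The one place where your argument is aimed at the wrong target is Step 1. The subtlety you isolate --- that $f^{*}(\Psi(a)u)$ might depend on more than $f^{*}u$ --- is a non-issue: since $f$ is a submersion, $f^{*}u$ determines $u$ on the image of the relevant open set, so no jet argument is needed. The genuine well-definedness issue is that a section of $C$ admits many expressions as a $C^{\infty}$-linear combination $\sum_{i}\lambda_{i}(f^{*}u_{i},\mu_{i}+X_{i})$ of distinguished sections, and the Leibniz-rule extension is only well defined if every vanishing combination is sent to a vanishing combination, i.e.\ $\sum_{i}\lambda_{i}(f^{*}u_{i},\mu_{i}+X_{i})=0$ implies $\sum_{i}\bigl(X_{a}^{M}(\lambda_{i})f^{*}u_{i}+\lambda_{i}f^{*}\Psi(a)(u_{i})\bigr)=0$ (and likewise for the $\mathbb{T}M$-components). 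The paper checks this by expanding each $u_{i}$ in a local frame of $E$ and using the Leibniz rule for $\Psi(a)$ together with the projectability of $X_{a}^{M}$ onto $X_{a}^{N}$. This is routine and uses exactly the ingredients you list, so it is a misdirected emphasis rather than a gap, but as written your Step 1 does not actually establish well-definedness. You should also record explicitly that $\widehat{\Psi}(a)$ maps $\Gamma(C)$ into $\Gamma(C)$, which follows from $\pi\Psi(a)(u)=\mathcal{L}_{X_{a}^{N}}\pi(u)=f_{*}\mathcal{L}_{X_{a}^{M}}X$; you use this fact implicitly in Step 3 but never state it as a required check.
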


\begin{proof} The statement  that $\widehat{\Psi}$ is well defined reduces  to showing that
for any $U_{k}, U_{p}\in \mathcal U$,  if  
$$
\sum_{i} \lambda_{i} ( f^{*}u_{i}, \mu_{i}+ X_{i}) =0
$$ 
where $\lambda_{i}\in C^{\infty}(f^{-1} (U_{k}\cap U_{p}))$ and 
 $( f^{*}u_{i}, \mu_{i}+ X_{i})$ are distinguished sections on $f^{-1} (U_{k}\cap U_{p})$, then 
\begin{equation}
 \sum_{i} \left( X_{a}^{M}(\lambda_{i} ) f^{*}u_{i} + \lambda_{i}f^{*} \Psi (a) (u_{i})\right)  =0. 
\end{equation}
This follows  by writing $u_{i}\in \Gamma (E\vert_{U_{k}\cap U_{p}})$ in terms of a  frame  of $E\vert_{U_{k}\cap U_{p}}$
and using the Leibniz rule for $\Psi (a)$ and 
that $X_{a}^{M}$ projects to $X_{a}^{N}$.  
The map $\widehat{\Psi}(a)$ takes values in $\Gamma (C)$ since 
for any distinguished section 
$(f^{*}u, \mu  + X)$,  we have
$$
\pi  \Psi  (a) (u) =  {\mathcal L}_{X_{a}^{N}} \pi (u) = f_{*} {\mathcal L}_{X_{a}^{M}} X.
$$
It   preserves  $\Gamma (C^{\perp})$ since
$$
\Psi (a) \pi^{*} (\gamma ) = \pi^{*}( {\mathcal L}_{X_{a}^{N}} \gamma ),\ \forall \gamma\in \Omega^{1}(N).
$$
Since $\Psi$ satisfies the relations  (\ref{cond-deriv}), also $f^{!}\Psi$ does (easy check).
\end{proof}

When  $E= E_{N}:= T^{*}N \oplus \mathcal G \oplus TN$ is a
standard  Courant algebroid and $\Psi =\Psi^{N}  :\mathfrak{g}\rightarrow \mathrm{Der}(E_{N})$
preserves the factors $T^{*}N$, $\mathcal G$ and $TN$ of $E_{N}$, 
the pullback action $f^{!}\Psi^{N}$ has a concrete formulation, as follows. Assume that $E_{N}$ 
is  defined 
by a bundle of quadratic Lie algebras $({ \mathcal G} ,[\cdot , \cdot]_{\mathcal G}, \langle\cdot , \cdot \rangle_{\mathcal G})$
and data    $(\nabla , R, H)$. Recall that 
we identify  $f^{!}E$
with  the standard Courant algebroid  
$E_{M}:= T^{*}M \oplus f^{*}\mathcal G\oplus TM$  defined by the bundle of quadratic Lie algebras $(f^{*}{ \mathcal G} ,f^{*}[\cdot , \cdot]_{\mathcal G}, f^{*}\langle\cdot , \cdot \rangle_{\mathcal G})$
and data    $(f^{*}\nabla ,f^{*} R, f^{*}H)$,
using the canonical   isomorphism $F$ defined by (\ref{iso-pull-exact}). Using  this identification, 
we obtain  an action
$$
\Psi^{M}:\mathfrak{g}\rightarrow \mathrm{Der}(E_{M}),\  \Psi^{M}(a):= F^{-1}\circ (f^{!}\Psi^{N} )(a) \circ F
$$
of $\mathfrak{g}$ on  $E_{M}$.

\begin{lem}\label{pull-back-action-prop} In the above setting, assume that  
\begin{equation}\label{rel-exact-1} 
\Psi^{N} (a) ( \xi + r + X) := {\mathcal L}_{X^{N}_{a}} \xi +\nabla^{\Psi}_{X^{N}_{a}} r
+ {\mathcal L}_{X_{a}^{N}} X, 
\end{equation}
where $\xi \in \Omega^1(N)$,  $r\in \Gamma (\mathcal{G})$ and $X\in \mathfrak{X}(N)$. Then 
\begin{equation}\label{rel-exact}
\Psi^{M} ( \xi +  r + X) := {\mathcal L}_{X^{M}_{a}} \xi + (f^{*}\nabla^{\Psi})_{X^{M}_{a}} r
+ {\mathcal L}_{X_{a}^{M}} X, 
\end{equation}
where  $\xi \in \Omega^1(M)$,  $r\in \Gamma (f^{*}\mathcal{G})$ and $X\in \mathfrak{X}(M)$.
\end{lem}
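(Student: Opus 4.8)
The plan is to unwind the definition of the pullback action $f^!\Psi^N$ from Lemma \ref{def-pull-back-action} and of the canonical isomorphism $F$ from equation (\ref{iso-pull-exact}), evaluate $\Psi^M(a)=F^{-1}\circ (f^!\Psi^N)(a)\circ F$ on a convenient local set of generators of $\Gamma(E_M)$, and then extend by a Leibniz-rule argument. The starting observation is that $F$ is $C^\infty(M)$-linear and $\widehat{\Psi}(a)$ satisfies the Leibniz rule $\widehat{\Psi}(a)(g\sigma)=X^M_a(g)\sigma+g\widehat{\Psi}(a)\sigma$ by construction; hence so does the induced derivation on $C/C^\perp$, and therefore $\Psi^M(a)(g\sigma)=X^M_a(g)\sigma+g\Psi^M(a)\sigma$ for $g\in C^\infty(M)$, $\sigma\in\Gamma(E_M)$. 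The operator on $\Gamma(E_M)$ defined by the right-hand side of (\ref{rel-exact}) obviously obeys the same rule. Consequently it suffices to verify (\ref{rel-exact}) on a family of local sections generating $\Gamma(E_M|_{f^{-1}(U)})$ over $C^\infty(f^{-1}(U))$ for each small $U\subset N$.

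For such a family I would take, in a chart $(x^i)$ on $U$ completed to a chart $(x^i,y^j)$ on $f^{-1}(U)$ and in a local frame $(r_\alpha)$ of $\mathcal G|_U$: the pulled-back sections $f^*r_\alpha$ of $f^*\mathcal G$; the projectable coordinate fields $\partial_{x^i},\partial_{y^j}$ of $TM$ (with $\partial_{y^j}$ projecting to $0$); and the $1$-forms $f^*dx^i$ and $dy^j$, which together span $T^*M|_{f^{-1}(U)}$. For each generator one produces an explicit lift to $\Gamma(C)$ which is a distinguished section: $F(f^*r_\alpha)=[(f^*r_\alpha,0)]$, $F(\partial_{x^i})=[(f^*\partial_{x^i},\partial_{x^i})]$, $F(\partial_{y^j})=[(0,\partial_{y^j})]$, $F(f^*dx^i)=[(0,f^*dx^i)]$, $F(dy^j)=[(0,dy^j)]$. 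Applying the defining formula $\widehat{\Psi}(a)(f^*u,\mu+Y)=(f^*(\Psi^N(a)u),\mathcal L_{X^M_a}(\mu+Y))$ and using that $X^M_a$ is $f$-related to $X^N_a$ — so that $[X^M_a,\partial_{x^i}]$ is projectable with $df[X^M_a,\partial_{x^i}]=f^*[X^N_a,\partial_{x^i}]$, while $[X^M_a,\partial_{y^j}]$ is vertical — one obtains in each case an element of $\Gamma(C)$ whose $E_N$-component has vanishing $T^*N$-part, so that $F^{-1}$ is immediate and returns exactly $\mathcal L_{X^M_a}$ (resp.\ $(f^*\nabla^\Psi)_{X^M_a}$) applied to the generator, in agreement with (\ref{rel-exact}). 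The $C^\perp$-ambiguity in the lift is harmless because $\widehat{\Psi}(a)$ preserves $\Gamma(C^\perp)$, so the class $[\widehat{\Psi}(a)(\tilde\sigma)]$ is independent of the chosen lift $\tilde\sigma$. Combining the five computations with the Leibniz rule established above gives (\ref{rel-exact}) on all of $\Gamma(E_M|_{f^{-1}(U)})$, hence globally since $U$ is arbitrary.

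The argument is elementary throughout; the only points demanding care are the explicit identification of a spanning set that is adapted to the submersion — which forces the use of a coordinate chart and the splitting $TM|_{f^{-1}(U)}=\langle\partial_{x^i}\rangle\oplus\langle\partial_{y^j}\rangle$ into projectable pieces — and the relation $(f^*\nabla^\Psi)_{X^M_a}(f^*\sigma)=f^*(\nabla^\Psi_{X^N_a}\sigma)$ together with the compatibility of $df$ with Lie brackets of projectable fields. I do not expect any genuine obstacle beyond this bookkeeping.
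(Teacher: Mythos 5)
Your proposal is correct and follows essentially the same route as the paper: both evaluate $\Psi^M(a)=F^{-1}\circ(f^!\Psi^N)(a)\circ F$ on sections of the form $\xi+f^*r+X$ with $X$ projectable, using the explicit formula for $F$ and the defining action of $\widehat{\Psi}(a)$ on distinguished sections, then invert $F$. Your treatment is merely more explicit about the Leibniz-rule reduction to a coordinate-adapted generating set (including the vertical fields $\partial_{y^j}$) and about the $C^\perp$-ambiguity, points the paper leaves implicit.
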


\begin{proof} The isomorphism $F$ given by (\ref{iso-pull-exact}) 
induces an isomorphism $F:\Gamma (E_{M}) \rightarrow \Gamma ( f^{!}E_{N})$ which satisfies 
\begin{equation}\label{definition-f}
F (\xi +  f^{*}r+ X) = [ ( f^{*} (r+f_{*}X), \xi + X ) ]
\end{equation}
where $r\in \Gamma (\mathcal G)$, $X\in {\mathfrak X}(M)$ is $f$-projectable 
and $\xi \in \Omega^{1}(M)$.
(In the right hand side of (\ref{definition-f})  $r+ f_{*}X\in \Gamma (\mathcal G\oplus TN)\subset \Gamma (E_{N})$). Then 
$$
(f^{!}\Psi^{N})(a) \circ F (\xi +  f^{*}r+ X) =  [ ( f^{*} (\nabla^{\Psi} _{X_{a}^{N}}r+{\mathcal L}_{X_{a}^{N}}f_{*}X), 
{\mathcal L}_{X_{a}^{N}}( \xi + X ) ) ],
$$
and, applying $F^{-1}$, we obtain (\ref{rel-exact}).
\end{proof}

The next proposition  states several compatibilities between pullback actions, isomorphisms, 
pullback and pushforward on spinors.

\begin{prop}\label{p-back-forward-inv}
i) Let $(E_{i}, \Psi_{i})$ ($i=1,2$)  be transitive Courant algebroids over $N$ with actions $\Psi_{i}:\mathfrak{g}\rightarrow
\mathrm{Der} (E_{i})$ which lift $\psi^{N}.$ If $I: (E_{1}, \Psi_{1})\rightarrow (E_{2}, \Psi_{2})$ is  invariant
with respect to $\Psi_{i}$, 
then $I^{f}: ( f^{!} E_{1}, f^{!}\Psi_{1})\rightarrow ( f^{!}E_{2}, f^{!} \Psi_{2})$ is  invariant
with respect to $f^{!}\Psi_{i}$.\

ii)  In the setting of 
Lemma \ref{def-pull-back-action}, assume that $M$ and $N$ are oriented and  let
$\Psi^{\mathbb{S}}:\mathfrak{g}\rightarrow \mathrm{End}\, \Gamma (\mathbb{S}_{E})$ and 
$(f^{!}\Psi)^{\mathbb{S}}:\mathfrak{g}\rightarrow \mathrm{End}\, \Gamma (\mathbb{S}_{f^{!}E})$ be the actions on  canonical weighted spinor bundles,  induced  by 
the actions $\Psi :\mathfrak{g}\rightarrow \mathrm{Der}(E)$ and $f^{!}\Psi :\mathfrak{g}\rightarrow \mathrm{Der} ( f^{!}E)$.  
Assume that the pullback $f^{!}: \Gamma (\mathbb{S}_{E})\rightarrow\Gamma (\mathbb{S}_{f^{!}E})$ is
defined and there is an admissible pair  $(I : E \rightarrow T^{*}N\oplus \mathcal G\oplus TN, S_{\mathcal G})$
for $\mathbb{S}_{E}$ and $\mathbb{S}_{f^{!}E}$ 
such that $I$ is invariant, cf.\ Section \ref{pull-back-subsection}.  
Then
\begin{equation}
f^{!}\circ \Psi^{\mathbb{S}}(a)   = ( f^{!}\Psi )^{\mathbb{S}}(a)\circ f^{!},\ \forall a\in \mathfrak{g}.
\end{equation}   
If, in addition, $f : M \rightarrow N$ has  compact fibers  then also the pushforward 
$f_{!}: \Gamma  (\mathbb{S}_{f^{!}E})\rightarrow  \Gamma (\mathbb{S}_{E})$
is defined and 
\begin{equation}
f_{!}\circ ( f^{!}\Psi )^{\mathbb{S}}(a)   =(-1)^{ r |s| + nr +\frac{r(r-1)}{2}} \Psi^{\mathbb{S}}(a)\circ f_{!},\ \forall a\in \mathfrak{g},
\end{equation}   
where $m$, $n$ and $r$ are the dimension of $M$, $N$ and the fibers of $f$.\  
\end{prop}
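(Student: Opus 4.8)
The plan is to deduce part i) by a direct check on distinguished sections and then to reduce part ii) to the standard case, where both identities become short computations with the explicit formula $(\ref{lie-spinors})$ for the action on spinors.

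For part i), recall that $I^{f}[(u,\eta+X)]=[(I(u),\eta+X)]$ and that, on a distinguished section $(f^{*}u,\mu+X)$ of $C_{1}$ (so $X$ is $f$-projectable with $f_{*}X=\pi_{1}(u)$), the pullback action is $f^{!}\Psi_{1}(a)[(f^{*}u,\mu+X)]=[(f^{*}(\Psi_{1}(a)u),\mathcal{L}_{X_{a}^{M}}(\mu+X))]$. First I would note that $\mathcal{L}_{X_{a}^{M}}X$ is again $f$-projectable with $f_{*}(\mathcal{L}_{X_{a}^{M}}X)=\mathcal{L}_{X_{a}^{N}}\pi_{1}(u)=\pi_{1}(\Psi_{1}(a)u)$, using that $\psi^{M},\psi^{N}$ are $f$-related and the third relation in $(\ref{cond-deriv})$; since $\pi_{2}\circ I=\pi_{1}$, the pairs built from $I(u)$ and from $I(\Psi_{1}(a)u)$ are distinguished sections of $C_{2}$ as well. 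Applying $I^{f}$ on one side and $f^{!}\Psi_{2}(a)$ on the other, both yield $[(I(\Psi_{1}(a)u),\mathcal{L}_{X_{a}^{M}}(\mu+X))]$, the agreement being exactly the invariance $\Psi_{2}(a)\circ I=I\circ\Psi_{1}(a)$ of $I$. The Leibniz rule for $f^{!}\Psi_{i}$ together with the $C^{\infty}$-linearity of $I^{f}$ then extends this to all sections.

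For part ii), I would first reduce to the standard case. By hypothesis there is an invariant admissible pair $(I:E\to T^{*}N\oplus\mathcal{G}\oplus TN,\ S_{\mathcal{G}})$, and by part i) the induced pair $(I^{f},\ f^{*}S_{\mathcal{G}})$ for $f^{!}E$ is also invariant. By Lemma \ref{de-adaugat} and the uniqueness statement in Proposition \ref{action-spinors}, the induced spinor isomorphisms intertwine the actions on spinors; combined with $f^{!}=(I^{f}_{\mathbb{S}})^{-1}\circ f^{*}\circ I_{\mathbb{S}}$ and $f_{!}=(I_{\mathbb{S}})^{-1}\circ f_{*}\circ I^{f}_{\mathbb{S}}$, both identities reduce to the case $E=E_{N}=T^{*}N\oplus\mathcal{G}\oplus TN$ standard with action $\Psi^{N}$ as in $(\ref{rel-exact-1})$, and $f^{!}E=E_{M}=T^{*}M\oplus f^{*}\mathcal{G}\oplus TM$ standard with action $\Psi^{M}$ as in $(\ref{rel-exact})$ (Lemma \ref{pull-back-action-prop}); here $f^{!}$, $f_{!}$ are the pullback $f^{*}$ and the pushforward $f_{*}$ on spinors, with $\mathbb{S}_{N}=\Lambda(T^{*}N)\hat{\otimes}\mathcal{S}_{\mathcal{G}}$, $\mathbb{S}_{M}=\Lambda(T^{*}M)\hat{\otimes}f^{*}\mathcal{S}_{\mathcal{G}}$. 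In this setting, from $(\ref{lie-spinors})$ one has $\Psi^{\mathbb{S}_{N}}(a)(\omega\otimes s)=(\mathcal{L}_{X_{a}^{N}}\omega)\otimes s+\omega\otimes\nabla^{\Psi,\mathcal{S}_{\mathcal{G}}}_{X_{a}^{N}}s$; pulling back, the first relation follows from $f^{*}\mathcal{L}_{X_{a}^{N}}=\mathcal{L}_{X_{a}^{M}}f^{*}$ on forms together with $f^{*}(\nabla^{\Psi,\mathcal{S}_{\mathcal{G}}}_{X_{a}^{N}}s)=(f^{*}\nabla^{\Psi,\mathcal{S}_{\mathcal{G}}})_{X_{a}^{M}}(f^{*}s)$, using that $f^{*}\nabla^{\Psi,\mathcal{S}_{\mathcal{G}}}$ is the partial connection on $f^{*}\mathcal{S}_{\mathcal{G}}$ compatible with $f^{*}\nabla^{\Psi}$ and that $X_{a}^{M}$ is $f$-related to $X_{a}^{N}$. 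The pushforward relation follows analogously, now invoking the naturality of integration over the fibre, $f_{*}\circ\mathcal{L}_{X_{a}^{M}}=\mathcal{L}_{X_{a}^{N}}\circ f_{*}$ on $\Omega(M)$ (a consequence of $f_{*}\circ d=d\circ f_{*}$ and $f_{*}\circ i_{X_{a}^{M}}=i_{X_{a}^{N}}\circ f_{*}$, the latter being a local computation in a flow box from $df(X_{a}^{M})=X_{a}^{N}\circ f$), together with Lemma \ref{push:lem}, which relates the spinor pushforward to the pushforward of forms and accounts for the degree-dependent sign in the statement. Using one fixed invariant admissible pair throughout gives the identities on the nose rather than only up to the $\pm1$ ambiguity of $f^{!}$, $f_{!}$ and $\Psi^{\mathbb{S}}$.

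I expect the main obstacle to be the reduction bookkeeping — verifying that the chosen invariant admissible pair intertwines the four actions $\Psi^{\mathbb{S}_{E}}$, $\Psi^{\mathbb{S}_{N}}$, $(f^{!}\Psi)^{\mathbb{S}}$, $\Psi^{\mathbb{S}_{M}}$ in a mutually compatible way — and, for the pushforward relation, keeping the degree-dependent signs of Lemma \ref{push:lem} straight; the differential-geometric facts used (commutation of $d$, $i_{X}$, $\mathcal{L}_{X}$ with $f^{*}$ and $f_{*}$ for $f$-related fields, and compatibility of pulled-back connections) are routine.
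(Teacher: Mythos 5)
Your proposal is correct and follows essentially the same route as the paper: part i) is verified on distinguished sections using the definitions of $I^{f}$ and $f^{!}\Psi_{i}$ together with the invariance of $I$, and part ii) is reduced via the invariant admissible pair and Lemma \ref{de-adaugat} to the standard case, where the identities follow from the explicit formula (\ref{lie-spinors}), the relations $f^{*}\mathcal{L}_{X_{a}^{N}}=\mathcal{L}_{X_{a}^{M}}f^{*}$ and $f_{*}\mathcal{L}_{X_{a}^{M}}=\mathcal{L}_{X_{a}^{N}}f_{*}$, the compatibility $\nabla^{f^{!}\Psi}=f^{*}\nabla^{\Psi}$, and Lemma \ref{push:lem} for the sign. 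The extra detail you supply (projectability of $\mathcal{L}_{X_{a}^{M}}X$ in part i), the Cartan-formula justification of $f_{*}\circ\mathcal{L}_{X_{a}^{M}}=\mathcal{L}_{X_{a}^{N}}\circ f_{*}$) is consistent with, and slightly more explicit than, the paper's argument.
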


\begin{proof}
i)  We need to check that 
$$ 
I^{f} \circ f^{!}\Psi_{1}(a) [ ( f^{*} u, \eta +X ) ]= f^{!}\Psi_{2}(a) \circ I^{f}   [ ( f^{*} u, \eta +X ) ]
$$
for any distinguished section $[ ( f^{*}u, \eta +X) ]$ of $f^{!} E_{1}$,  which follows by applying the definitions of 
$I^{f}$ and $f^{!}\Psi_{i}$ and using that $I$ is invariant.\

 ii)  Using the admissible pair $(I, S_{\mathcal G})$  and   Lemma \ref{de-adaugat}, 
we can assume, without loss
of generality, that 
$$
E= E_{N}= T^{*}N\oplus \mathcal G\oplus TN,\ f^{!}E = E_{M}= T^{*}M\oplus f^{*}\mathcal G\oplus TM
$$ 
and $\Psi = \Psi^{N}$, $f^{!}\Psi = \Psi^{M}$ 
are given by (\ref{rel-exact-1}) and (\ref{rel-exact}) respectively. 
Let    $\mathbb{S}_{N}:= \Lambda (T^{*} N)\hat{\otimes} {\mathcal S}_{\mathcal G}$ 
and $\mathbb{S}_{M}:= \Lambda (T^{*}M)\hat{\otimes } f^{*} \mathcal S_{\mathcal G}$ be the
canonical weighted spinor bundles of   $E_{N}$ and $E_{M}$,
determined by  $S_{\mathcal G}$ and its pullback
$S_{f^{*}\mathcal G}:= f^{*} S_{\mathcal G}$ respectively. 
From the definition of $f^{!}$,  we need to show that
\begin{equation}\label{n-p}
f^{*} \Psi^{\mathbb{S}_{N}}(a)(\omega\otimes s) = \Psi ^{\mathbb{S}_{M}} (a) (f^{*}\omega \otimes f^{*}s) 
\end{equation}
for any $\omega \otimes s\in \Gamma (\mathbb{S}_{N})$, where  $\Psi^{\mathbb{S}_{N}}:\mathfrak{g}\rightarrow
\mathrm{End}\, \Gamma (\mathbb{S}_{N})$ and 
$\Psi^{\mathbb{S}_{M}}:\mathfrak{g}\rightarrow
\mathrm{End}\, \Gamma (\mathbb{S}_{M})$ are the induced actions on spinors
(given by Lemma \ref{standard-spinors})
and
$f^{*} $ is the map (\ref{pull-back-forms}).
Relation (\ref{n-p}) follows from (\ref{lie-spinors}), 
$f^{*} {\mathcal L}_{X_{a}^{N}} \omega = {\mathcal L}_{X_{a}^{M}} (f^{*}\omega )$ and $f^{*} 
( {\nabla}^{\Psi  , \mathcal S_{\mathcal G}}_{X_{a}^{N}} s )  =
({\nabla}^{f^{!}\Psi  , \mathcal S_{f^{*}\mathcal G}})_{X_{a}^{M}} (f^{*} s)$
for any $s\in \Gamma (\mathcal S_{\mathcal G})$
(the latter being a consequence of $\nabla^{f^{!}\Psi} = f^{*}\nabla^{\Psi}$).
The statement for the pushforward can be proved by a similar argument, 
which uses that $f_{*} {\mathcal L}_{X_{a}^{M}} \omega = {\mathcal L}_{X_{a}^{N}} f_{*}\omega$ for any form $\omega $ on $M$ and Lemma (\ref{push:lem}).
\end{proof}

\section{$T$-duality}

\subsection{Definition of $T$-duality}\label{T-dual-subsect}

Let $\pi : M \rightarrow B$ and $\tilde{\pi} : \tilde{M}\rightarrow B$ be principal  bundles over the same manifold $B$ with structure group  the $k$-dimensional torus $T^{k}$. For notational convenience, we will denote the structure group of $\tilde{\pi}$ 
by $\tilde{T}^k$ and its Lie algebra by $\tilde{t}^k$. We assume that $M$, $\tilde{M}$ and $B$ are oriented. 
Let 
$$
\mathrm{Lie}\, (T^{k}) = \mathfrak{t}^{k} \ni  a \mapsto \psi^{M} (a) := X_{a}^{M},\ 
\tilde{\mathfrak{t}}^{k} \ni a  \mapsto  \psi^{\tilde{M}} (a):= X_{a}^{\tilde{M}}, 
$$
be the vertical paralellism of $\pi$ and $\tilde{\pi}.$  We denote by
$$
N:= M\times_{B} \tilde{M} := \{ (m, \tilde{m})\in M\times \tilde{M}\mid  \pi (m) = \tilde{\pi}(\tilde{m})\} 
$$  
the fiber product of $M$ and $\tilde{M}$  and by $\pi_{N}: N \rightarrow M$ and $\tilde{\pi}_{N}: N \rightarrow \tilde{M}$ 
the natural projections.  The   actions of $T^{k}$ on $M$ and $\tilde{T}^k$ on $\tilde{M}$ induce naturally
an action of $T^{2k}=T^k\times \tilde{T}^k$ on $N$,  with infinitesimal action
$$
\mathfrak{t}^{2k} \ni a\rightarrow \psi^{N} (a)= X_{a}^{N}, 
$$
where,  for any $a\in \mathfrak{t}^{k}:= \mathfrak{t}^{k} \oplus 0 \subset \mathfrak{t}^{2k}$, 
\[ (\pi_{N})_{*}  X_{a}^{N}= X_{a}^{M},\quad (\tilde{\pi}_{N})_{*}  X_{a}^{N}= 0,\] 
and for any $a\in \tilde{\mathfrak{t}}^{k} :=  0 \oplus \mathfrak{t}^{k}\subset \mathfrak{t}^{2k}$, 
\[(\pi_{N})_{*}  X_{a}^{N}= 0,\quad (\tilde{\pi}_{N})_{*}  X_{a}^{N}=  X_{a}^{\tilde{M}}.\]

Let $E$ and $\tilde{E}$ be transitive Courant algebroids over $M$ and $\tilde{M}$, and assume they come with actions
$$
\Psi : \mathfrak{t}^{k} \rightarrow \mathrm{Der} (E),\  \tilde{\Psi} :\tilde{  \mathfrak{t}}^{k} \rightarrow \mathrm{Der} (\tilde{E}),
$$
which lift $\psi^{M}$ and $\psi^{\tilde{M}}$, such that there are invariant dissections $I: E \rightarrow T^{*}M\oplus \mathcal G\oplus TM$ 
and $\tilde{I}:\tilde{ E} \rightarrow T^{*}\tilde{M}\oplus\tilde{ \mathcal G}\oplus T\tilde{M}$ with the property that the 
partial connections $\nabla^{\Psi}$ and $\nabla^{\tilde{\Psi}}$  on $\mathcal G$ and $\tilde{\mathcal G}$, induced
by the actions,  have trivial holonomy
(it is easy to see that this condition is independent of the choice of invariant dissections). 
The pullback Courant algebroids $\pi_{N}^{!}E$ and $\tilde{\pi}_{N}^{!} \tilde{E}$ inherit  the  pullback actions 
(see Lemma \ref{def-pull-back-action})
\begin{equation}
\pi_{N}^{!} \Psi  : \mathfrak{t}^{k} \rightarrow \mathrm{Der} (\pi_{N}^{!} E),\ 
\tilde{\pi}_{N}^{!} \Psi  : \tilde{\mathfrak{t}}^{k} \rightarrow \mathrm{Der} (\tilde{\pi}_{N}^{!} \tilde{E})
\end{equation}
which lift the infinitesimal actions $\mathfrak{t}^{k} \ni a\rightarrow X_{a}^{N}$ and  $\tilde{\mathfrak{t}}^{k} \ni a\rightarrow X_{a}^{N}$ respectively. 
The situation is summarized in the following commutative diagram, in which 
the arrows pointing down are quotient maps with respect to principal $T^k$-actions: $B=M/T^k=\tilde{M}/\tilde{T}^k=N/T^{2k}$, $M=N/\tilde{T}^k$, $\tilde{M}=N/T^k$ ($T^{2k}=T^k\times \tilde{T}^k$). 
\[
 \begin{xy}
    \xymatrix{ 
 &\mathfrak{t}^k\curvearrowright\pi_N^{!}E \ar@{->}[r]  &  \ar@{->}_{\pi_N}[dl]  N 
 \ar@{->}[dr]^{\tilde{\pi}_N} & \tilde{\pi}_N^{!}\tilde{E}\ar@{->}[l]\curvearrowleft \tilde{\mathfrak{t}}^k&\\
\mathfrak{t}^k\curvearrowright     E \ar@{->}[r]  &M\ar@{->}[dr]& & \tilde{M}\ar@{->}[dl] & \tilde{E}\ar@{->}[l]\curvearrowleft \tilde{\mathfrak{t}}^k\\
 &&B&& }
 \end{xy}
\]
The next lemma  extends  the action $\pi_{N}^{!}\Psi$ to a $\mathfrak{t}^{2k}$-action and states some
of the  properties of this $\mathfrak{t}^{2k}$-action.

\begin{lem}\label{extensie-action} i) The map
$$
\Psi^{\pi_{N}^{!}E}: \mathfrak{t}^{2k}\rightarrow \mathrm{Der} (\pi_{N}^{!}E)
$$
which on $\mathfrak{t}^{k}$ coincides with  $\pi_{N}^{!}\Psi$ 
and the evaluation of which on any  $b\in \tilde{t}^{k}$
satisfies the Leibniz rule
$$
\Psi^{\pi_{N}^{!}E}(b)( f s) = X_{b}^{N} (f) s + f  \Psi^{\pi_{N}^{!}E}(b)( s) ,\ \forall  f\in C^{\infty}(N),\ 
s\in \Gamma (\pi_{N}^{!}E)
$$
and on distinguished sections $[ ( \pi_{N}^{*} (u), \xi +X ) ] $ of  $\pi_{N}^{!}E$
is given by  
\begin{equation}\label{extindere-actiuni}
\Psi^{\pi_{N}^{!}E}(b)[(  \pi_{N}^{*} (u),  \xi +X ) ]  = [ ( 0, {\mathcal L}_{X_{b}^{N}} ( \xi +X ) ) ]
\end{equation}
is a well defined action on $\pi_{N}^{!}E.$\

ii) Let $(E_{1}, \Psi_{1})$ be another  transitive Courant algebroid over $M$ with an action
$\Psi_{1} :\mathfrak{t}^{k}\rightarrow \mathrm{Der} (E_{1})$ which lifts $\psi^{M}$. If $I : E \rightarrow E_{1}$
is  an isomorphism invariant with respect to $\Psi$ and $\Psi_{1}$, then the pullback isomorphism $I^{\pi_{N}}:
\pi_{N}^{!} E \rightarrow \pi_{N}^{!} E_{1}$  is invariant with respect to
$\Psi^{\pi_{N}^{!}E}$ and $\Psi^{\pi_{N}^{!}E_{1}}$ (the latter defined as $\Psi^{\pi_{N}^{!}E}$, using
$\Psi_{1}$ instead of $\Psi$).
\end{lem}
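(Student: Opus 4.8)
The plan is to prove both parts of Lemma~\ref{extensie-action} by reducing them to concrete formulas for the pullback Courant algebroid and the pullback action, exactly as was done for the pullback action in Lemma~\ref{def-pull-back-action}.

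\textit{Part i).} First I would check that the prescription is consistent. A section of $C\vert_{\pi_N^{-1}(U)}$, hence a section of $(\pi_N^!E)\vert_{\pi_N^{-1}(U)}$, is a $C^\infty$-linear combination of distinguished sections $[(\pi_N^*(u),\xi+X)]$, where $u\in\Gamma(E\vert_U)$, $X\in\mathfrak{X}(\pi_N^{-1}(U))$ is $\pi_N$-projectable with $(\pi_N)_*X=\pi(u)$ and $\xi\in\Omega^1(\pi_N^{-1}(U))$. Well-definedness amounts to showing that if $\sum_i\lambda_i(\pi_N^*(u_i),\xi_i+X_i)=0$ on an overlap then $\sum_i\bigl(X_b^N(\lambda_i)(0,\xi_i+X_i)+\lambda_i(0,\mathcal{L}_{X_b^N}(\xi_i+X_i))\bigr)$ descends to zero modulo $C^\perp$; this follows by the same bookkeeping as in Lemma~\ref{def-pull-back-action}, now using that $b\in\tilde{\mathfrak{t}}^k$ satisfies $(\pi_N)_*X_b^N=0$, so that $X_b^N(\lambda_i)\pi_N^*(u_i)$ contributes nothing in the first slot and the anchor condition $(\pi_N)_*X_i=\pi(u_i)$ is preserved under $\mathcal{L}_{X_b^N}$ since $[X_b^N,X_i]$ is again $\pi_N$-projectable with vanishing projection. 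The value $\Psi^{\pi_N^!E}(b)$ clearly lands in $\Gamma(C)$: the anchor of $(0,\mathcal{L}_{X_b^N}(\xi+X))$ is $\mathcal{L}_{X_b^N}X$, whose $\pi_N$-projection is $[(\pi_N)_*X_b^N,(\pi_N)_*X]=0=\pi(0)$, so the constraint defining $C$ holds. It preserves $\Gamma(C^\perp)$ because $C^\perp$ consists of elements of the form $(\tfrac12\pi^*\gamma,-\pi_N^*\gamma)$ and the first component of $\Psi^{\pi_N^!E}(b)$ on these is zero while the second is $-\pi_N^*(\mathcal{L}_{X_b^N}\gamma)$, but $\pi_N^*\gamma$ and its Lie derivative along the vertical field $X_b^N$ — which is $\tilde\pi_N$-vertical hence tangent to fibers of $\pi_N$ — remains of the required shape. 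Then the derivation axioms~\eqref{cond-deriv} for $\Psi^{\pi_N^!E}(b)$ follow from the standard Courant-algebroid identities on $E\times\mathbb{T}N$ restricted to $C$, exactly as in the proof of Lemma~\ref{def-pull-back-action}; I would just note this is an ``easy check.'' Finally, that $\Psi^{\pi_N^!E}$ is a Lie algebra homomorphism on all of $\mathfrak{t}^{2k}$ reduces to: the restriction to $\mathfrak{t}^k$ is $\pi_N^!\Psi$, already an action; the restriction to $\tilde{\mathfrak{t}}^k$ is an action because $b\mapsto\mathcal{L}_{X_b^N}$ is; and the cross-brackets vanish since $[\mathfrak{t}^k,\tilde{\mathfrak{t}}^k]=0$ in $\mathfrak{t}^{2k}$ and the corresponding fundamental vector fields on $N$ commute — on distinguished sections one computes directly that $\Psi^{\pi_N^!E}(a)\Psi^{\pi_N^!E}(b)$ and $\Psi^{\pi_N^!E}(b)\Psi^{\pi_N^!E}(a)$ agree for $a\in\mathfrak{t}^k$, $b\in\tilde{\mathfrak{t}}^k$, using $f_*[X_a^N,X_b^N]$-type identities and $\Psi(a)$-naturality of the $E$-factor.

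\textit{Part ii).} Here I would check the invariance condition~\eqref{inv-F} for $I^{\pi_N}$ with respect to $\Psi^{\pi_N^!E}$ and $\Psi^{\pi_N^!E_1}$. Since every section of $\pi_N^!E$ is a $C^\infty$-combination of distinguished sections and both $I^{\pi_N}$ and the two actions satisfy the Leibniz rule, it suffices to check on a distinguished section $[(\pi_N^*(u),\xi+X)]$. On the $\mathfrak{t}^k$-part this is precisely Proposition~\ref{p-back-forward-inv}~i) (invariance of $I^f$ with respect to the pullback actions), so nothing new is needed. On the $\tilde{\mathfrak{t}}^k$-part, for $b\in\tilde{\mathfrak{t}}^k$ one has $I^{\pi_N}\Psi^{\pi_N^!E}(b)[(\pi_N^*u,\xi+X)]=I^{\pi_N}[(0,\mathcal{L}_{X_b^N}(\xi+X))]=[(I(0),\mathcal{L}_{X_b^N}(\xi+X))]=[(0,\mathcal{L}_{X_b^N}(\xi+X))]$ by the defining formula~\eqref{I^f:eq} for $I^{\pi_N}$ together with $I(0)=0$ (linearity of $I$), and this equals $\Psi^{\pi_N^!E_1}(b)[(I(u),\xi+X)]=\Psi^{\pi_N^!E_1}(b)\,I^{\pi_N}[(\pi_N^*u,\xi+X)]$. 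Thus the two sides coincide on distinguished sections, and by linearity and the Leibniz rule on all of $\Gamma(\pi_N^!E)$.

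\textit{Main obstacle.} The genuinely substantive point is the well-definedness in part~i): one must verify that the formula~\eqref{extindere-actiuni}, written in terms of a local distinguished-section presentation that is far from unique, produces a result independent of the presentation and compatible with passing to the quotient $C/C^\perp$. The key input is that the extra torus $\tilde{T}^k$ acts trivially in the $M$-direction, i.e.\ $(\pi_N)_*X_b^N=0$ for $b\in\tilde{\mathfrak{t}}^k$, which is exactly what kills the $X_b^N(\lambda_i)\pi_N^*(u_i)$ terms and makes the naive $\mathcal{L}_{X_b^N}$-action on the $\mathbb{T}N$-slot descend. Everything else is a routine transcription of the arguments already established in Lemmas~\ref{def-pull-back-action} and~\ref{pull-back-action-prop} and Proposition~\ref{p-back-forward-inv}, and I would present those parts tersely.
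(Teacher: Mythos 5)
Your proof follows the same route as the paper's: part i) by the well-definedness bookkeeping of Lemma \ref{def-pull-back-action}, with $(\pi_N)_*X_b^N=0$ for $b\in\tilde{\mathfrak{t}}^k$ as the key input, and part ii) by splitting into the $\mathfrak{t}^k$ part, handled by Proposition \ref{p-back-forward-inv} i), and the $\tilde{\mathfrak{t}}^k$ part, checked directly on distinguished sections via (\ref{extindere-actiuni}) and the definition (\ref{I^f:eq}) of $I^{\pi_N}$. The only slip is notational: in your displayed well-definedness sum the Leibniz term should be $X_b^N(\lambda_i)\,(\pi_N^*(u_i),\xi_i+X_i)$ rather than $X_b^N(\lambda_i)\,(0,\xi_i+X_i)$, but your following sentence makes clear you are disposing of the correct first-slot contribution $\sum_i X_b^N(\lambda_i)\,\pi_N^*(u_i)$.
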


\begin{proof}   Claim i)  follows  from arguments similar to the proof of Lemma \ref{def-pull-back-action}.
To prove  claim ii), we need to show that
\begin{equation}\label{pull-back-ext-iso}
I^{\pi_{N}}\circ  \Psi^{\pi_{N}^{!}E} (a) (s) = \Psi_{1}^{\pi_{N}^{!} E_{1}} (a) \circ I^{\pi_{N}} (s),\ 
\forall a\in \mathfrak{t}^{2k},\ s\in \Gamma ( \pi_{N}^{!} E).
\end{equation}
Relation (\ref{pull-back-ext-iso}) with $a\in \mathfrak{t}^{k}$, 
follows from  Proposition \ref{p-back-forward-inv} i).  
Relation (\ref{pull-back-ext-iso}) with $a\in \tilde{\mathfrak{t}}^{k}$
follows by assuming that $s$ is a distinguished section and using
(\ref{extindere-actiuni}) together with the definition of $I^{\pi_{N}}$, cf.\ equation (\ref{I^f:eq}).
 \end{proof}

\begin{lem}\label{standard-properties} i) If  $E=  T^{*}{M}\oplus\mathcal G\oplus TM$ is a standard Courant algebroid and 
$\Psi $ preserves the summands
$T^{*}M$, $\mathcal G$, $TM$ of $E$,  then 
$\Psi^{\pi_{N}^{!}E}$  preserves the summands  $T^{*}N$, $\pi_{N}^{*}\mathcal G$, $TN$
of $\pi_{N}^{!}E = T^{*}N\oplus \pi_{N}^{*}\mathcal G\oplus TN$. The  
partial connection
$\nabla^{\Psi^{\pi_{N}^{!}E}}$ on $\pi_{N}^{*}\mathcal G$ 
associated to  $\Psi^{\pi_{N}^{!}E}$ 
is the pullback of the partial connection 
$\nabla^{\Psi}$ on $\mathcal G$ associated to $\Psi$: 
\begin{equation}\label{pull-back-extension}
\nabla^{\Psi^{\pi_{N}^{!}E}}_{X_{a}^{N}}(\pi_{N}^{*}r) =\pi_{N}^{*}\nabla^{\Psi}_{X_{a}^{M}}r,\
\nabla^{\Psi^{\pi_{N}^{!}E}}_{X_{b}^{N}}(\pi_{N}^{*}r)=0,\  \forall r\in \Gamma (\mathcal G),\ a\in \mathfrak{t}^{k},\  b\in \tilde{\mathfrak{t}}^{k}.
\end{equation}

ii) A section of $\pi_{N}^{*}\mathcal G$ is $\nabla^{\Psi^{\pi_{N}^{!}E}}$-parallel if and only if 
it is  the pullback by $\pi_{N}$ of
a $\nabla^{\Psi}$-parallel section of $\mathcal G$
(or the pullback by $\Pi := \pi\circ \pi_{N}$ of a section of $\mathcal G_{B}$).
\end{lem}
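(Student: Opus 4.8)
The plan is to prove both parts of Lemma~\ref{standard-properties} by direct computation using the explicit description of the pullback action $\Psi^{\pi_N^!E}$ on distinguished sections.

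\textbf{Proof of i).} First I would recall the concrete formula for $\pi_N^!\Psi$ on the standard Courant algebroid $E = T^*M\oplus\mathcal{G}\oplus TM$. By Lemma~\ref{pull-back-action-prop}, identifying $\pi_N^!E$ with $T^*N\oplus\pi_N^*\mathcal{G}\oplus TN$ via the canonical isomorphism $F$ of \eqref{iso-pull-exact}, the action $\pi_N^!\Psi$ is given for $a\in\mathfrak{t}^k$ by $(\pi_N^!\Psi)(a)(\xi+r+X) = \mathcal{L}_{X_a^N}\xi + (\pi_N^*\nabla^\Psi)_{X_a^N}r + \mathcal{L}_{X_a^N}X$, which manifestly preserves the three summands. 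For $b\in\tilde{\mathfrak{t}}^k$, I would use \eqref{extindere-actiuni}: on a distinguished section $[(\pi_N^*u,\xi+X)]$ the action is $[(0,\mathcal{L}_{X_b^N}(\xi+X))]$. Under the isomorphism $F$ of \eqref{definition-f}, which sends $\xi+\pi_N^*r+X$ to $[(\pi_N^*(r+(\pi_N)_*X),\xi+X)]$ for $\pi_N$-projectable $X$, one checks that $\Psi^{\pi_N^!E}(b)$ transported to $T^*N\oplus\pi_N^*\mathcal{G}\oplus TN$ sends $\xi+\pi_N^*r+X$ to $\mathcal{L}_{X_b^N}\xi + 0 + \mathcal{L}_{X_b^N}X$ — using that $(\pi_N)_*X_b^N = 0$ so the $\mathcal{G}$-component vanishes. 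This simultaneously establishes that $\Psi^{\pi_N^!E}$ preserves the summands and yields the two formulas in \eqref{pull-back-extension}: for $a\in\mathfrak{t}^k$ the definition of the partial connection $\nabla^{\Psi^{\pi_N^!E}}$ as the $\mathcal{G}$-component of the action on sections reads off $\nabla^{\Psi^{\pi_N^!E}}_{X_a^N}(\pi_N^*r) = (\pi_N^*\nabla^\Psi)_{X_a^N}(\pi_N^*r) = \pi_N^*(\nabla^\Psi_{X_a^M}r)$ (the last equality since $X_a^N$ is $\pi_N$-related to $X_a^M$), while for $b\in\tilde{\mathfrak{t}}^k$ the $\mathcal{G}$-component is $0$.

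\textbf{Proof of ii).} Here I would argue as follows. Suppose $s\in\Gamma(\pi_N^*\mathcal{G})$ is $\nabla^{\Psi^{\pi_N^!E}}$-parallel, i.e.\ $\nabla^{\Psi^{\pi_N^!E}}_{X_c^N}s = 0$ for all $c\in\mathfrak{t}^{2k}$. From the second equation in \eqref{pull-back-extension} (extended from $\pi_N^*r$ to arbitrary sections $s$ by the Leibniz rule, using that $X_b^N$ generates the $\tilde{T}^k$-action), parallelism along $\tilde{\mathfrak{t}}^k$ means precisely that $s$ is constant along the $\tilde{T}^k$-fibers of $N\to M$, hence $s = \pi_N^*r$ for a unique section $r\in\Gamma(\mathcal{G})$. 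Parallelism along $\mathfrak{t}^k$ then gives, via the first equation in \eqref{pull-back-extension}, $\pi_N^*(\nabla^\Psi_{X_a^M}r) = 0$ for all $a\in\mathfrak{t}^k$; since $\pi_N$ is a surjective submersion, $\pi_N^*$ is injective on sections, so $\nabla^\Psi_{X_a^M}r = 0$, i.e.\ $r$ is $\nabla^\Psi$-parallel. The converse is immediate from \eqref{pull-back-extension}. Finally, the parenthetical identification with pullbacks by $\Pi = \pi\circ\pi_N$ of sections of $\mathcal{G}_B$ follows from the identification $\Gamma_{\mathfrak{t}^k}(\mathcal{G}) = \pi^*\Gamma(\mathcal{G}_B)$ established in the Assumption preceding Lemma~\ref{decomp:prop}, together with $\pi_N^*\circ\pi^* = \Pi^*$.

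I do not expect a serious obstacle here; the only point requiring a little care is the transport of the $\tilde{\mathfrak{t}}^k$-part of the action through the identification isomorphism $F$ of \eqref{iso-pull-exact}–\eqref{definition-f}, since one must check that the $\mathcal{G}$-component of $\Psi^{\pi_N^!E}(b)$ genuinely vanishes rather than merely being horizontal — this is where $(\pi_N)_*X_b^N = 0$ is used. Everything else is bookkeeping with the Leibniz rule and the injectivity of pullback along submersions.
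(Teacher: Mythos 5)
Your proof is correct and follows essentially the same route as the paper, which simply remarks that part i) is obtained by the argument of Lemma \ref{pull-back-action-prop} (transporting the action through the isomorphism $F$ of \eqref{iso-pull-exact}, where the vanishing of the $\mathcal{G}$-component for $b\in\tilde{\mathfrak{t}}^k$ comes from $(\pi_N)_*X_b^N=0$) and that part ii) follows from part i) together with the definition of $\mathcal{G}_B$. You have merely written out the details, including the correct Leibniz-rule argument showing that $\tilde{\mathfrak{t}}^k$-parallel sections are pullbacks along $\pi_N$.
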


\begin{proof} Claim i) follows from an argument similar to the proof of Lemma 
\ref{pull-back-action-prop}. Claim ii) follows immediately from claim i) 
(recall the definition of the bundle $\mathcal G_{B}\rightarrow B$ from Section
\ref{example-t1}).
\end{proof}

When $E$ is a standard Courant algebroid like in Lemma \ref{extensie-action} ii),  the  partial connection $\nabla^{\Psi^{\pi_{N}^{!}E}}$  
will be denoted by $\nabla^{\Psi , \pi_{N}^{!}E}$. 
Let $S_{\mathcal G}$ be an irreducible  $\mathrm{Cl} (\mathcal G)$-bundle,
with canonical spinor bundle $\mathcal S_{\mathcal G}.$  
Then $\mathcal S_{\pi_{N}^{*}\mathcal G}=\pi_{N}^{*}\mathcal S_{\mathcal G}$ 
is the canonical spinor bundle of the irreducible  $\mathrm{Cl} ( \pi_{N}^{*} \mathcal G)$-bundle
$S_{\pi^{*}_{N}\mathcal G}:= \pi_{N}^{*}S_{\mathcal G}$ 
and the partial connection  $\nabla^{ \Psi^{\pi_{N}^{!}E}, {\mathcal S}_{\pi_{N}^{*} \mathcal G}}$
on $\mathcal S_{\pi_{N}^{*}\mathcal G}$ 
induced by any partial connection on $S_{\pi^{*}_{N}\mathcal G}$ 
compatible with $\nabla^{\Psi , \pi_{N}^{!}E}$ 
is the pullback of 
the partial connection $\nabla^{\Psi, \mathcal S_{\mathcal G}}$
on $\mathcal S_{\mathcal G}$ induced  by any partial connection  on $S_{\mathcal G}$ compatible with $\nabla^{\Psi}$, that is, 
\begin{equation}\label{more-precisely}
\nabla^{ \Psi , \pi_{N}^{*} \mathcal S_{\mathcal G}}_{X_{a}^{N}} = (\pi_{N}^{*} \nabla^{\Psi , \mathcal S_{\mathcal G}})_{X_{a}^{N}},\ \forall a\in {\mathfrak{t}}^{2k}. 
\end{equation}  
In a similar way,   we construct an  action 
$\tilde{\Psi}^{\tilde{\pi}_{N}^{!}\tilde{ E}}: \mathfrak{t}^{2k} \rightarrow \mathrm{Der} (\tilde{\pi}_{N}^{!} \tilde{E})$
which extends $\tilde{\pi}_{N}^{!}\tilde{\Psi}$.
When  $\tilde{E} = T^{*}\tilde{M}\oplus\tilde{ \mathcal G}\oplus T\tilde{M}$
is a standard Courant algebroid  
and $\tilde{\Psi}$ preserves the factors $T^{*}\tilde{M}$, $\tilde{\mathcal G}$ and $T\tilde{M}$
of $\tilde{E}$, 
we use the notation  $\nabla^{\tilde{\Psi }, \tilde{\pi}_{N}^{!}\tilde{E}}$ for the 
partial connection $\nabla^{\tilde{\Psi}^{\tilde{\pi}_{N}^{!}\tilde{E}}}$  
on $\tilde{\pi}_{N}^{*}\tilde{\mathcal G}$.
It  is related to $\nabla^{\tilde{\Psi}}$ by relations analogous to
(\ref{more-precisely}).\

From now on  the Courant algebroids    $\pi_{N}^{!} E$ and  $\tilde{\pi}_{N}^{!} \tilde{E}$
will be considered with the $\mathfrak{t}^{2k}$-actions ${\Psi}^{{\pi}_{N}^{!}{ E}}$
and $\tilde{\Psi}^{\tilde{\pi}_{N}^{!}\tilde{ E}}$.

\begin{defn}\label{def-T-duality} The Courant algebroids $E$ and $\tilde{E}$ are called {\cmssl $T$-dual} if there is an invariant
fiber preserving Courant algebroid isomorphism
$$
F: \pi_{N}^{!} E \rightarrow \tilde{\pi}_{N}^{!} \tilde{E}
$$
such that the following non-degeneracy condition is satisfied. 
Let
$$
I: E \rightarrow T^{*}M\oplus \mathcal G\oplus TM,\ \tilde{I}: \tilde{E} \rightarrow T^{*}\tilde{M}\oplus \tilde{\mathcal G}
\oplus T\tilde{M},
$$
be    dissections of $E$ and $\tilde{E}$, and 
$$
I^{\pi_{N}}:\pi_{N}^{!} E \rightarrow T^{*}N\oplus \pi_{N}^{*}\mathcal G\oplus TN,\ \tilde{I}^{\tilde{\pi}_{N}}: \tilde{\pi}_{N}^{!}\tilde{E} \rightarrow T^{*}N \oplus \tilde{\pi}_{N}^{*}\tilde{\mathcal G}
\oplus {TN}
$$
the induced dissections of  $\pi_{N}^{!} E$ and $\tilde{\pi}_{N}^{!}\tilde{ E}$.
Let   $(\beta  ,  \Phi , K)$, where  
$ \beta  \in \Omega^{2}(N)$,
$\Phi \in \Omega^{1}(N,  \tilde{\pi}_{N}^{*}\tilde{\mathcal G})$ and $K\in \mathrm{Isom} (\pi_{N}^{*} \mathcal G ,
\tilde{\pi}_{N}^{*} \tilde{\mathcal G})$, be the data which defines 
the isomorphism
$$
\tilde{I}^{\tilde{\pi}_{N}}\circ F\circ( I^{\pi_{N}})^{-1}:  T^{*}N\oplus \pi_{N}^{*}\mathcal G\oplus TN\rightarrow
 T^{*}N\oplus\tilde{ \pi}_{N}^{*}\tilde{\mathcal G}\oplus TN
$$
(according to relation (\ref{concrete-iso}) from Section \ref{trans-sect-basic}). 
Then 
\begin{equation}\label{pairing-non-deg}
\beta  - \Phi^{*} \Phi : \mathrm{Ker}\ (d \pi_{N})  \times \mathrm{Ker}\  ( d \tilde{\pi}_{N}) \rightarrow \mathbb{R}
\end{equation}
is non-degenerate.
\end{defn}

\begin{defn} The above  definition is independent of the choice of  dissections.
\end{defn} 

\begin{proof} Let $I_{i}: E \rightarrow T^{*}M\oplus \mathcal G_{i}\oplus TM$
($i=1,2$) be two dissections 
of $E$. Then 
$$
\hat{F}_{i}:= \tilde{I}^{\tilde{\pi}_{N}}\circ F\circ( I_{i}^{\pi_{N}})^{-1}:  T^{*}N\oplus \pi_{N}^{*}\mathcal G_{i}\oplus TN\rightarrow
 T^{*}N\oplus\tilde{ \pi}_{N}^{*}\tilde{\mathcal G}\oplus TN
$$
satisfy 
\begin{equation}
\hat{F}_{2} = \hat{F}_{1} \circ (I_{1}\circ I_{2}^{-1})^{\pi_{N}}.
\end{equation}
Assume that the dissections $I_{1}$ and $I_{2}$ are  related by $(\beta , K, \Phi )$.
Then from Lemma  \ref{pull-back-bdle} iii)  
the induced dissections of $\pi_{N}^{!}E$ are related by  $(\pi_{N}^{*}\beta ,\pi_{N}^{*} K,\pi_{N}^{*} \Phi )$, i.e.\  
the Courant algebroid  isomorphism 
$$
(I_{1}\circ I_{2}^{-1})^{\pi_{N}}:  T^{*}N\oplus \pi_{N}^{*}\mathcal G_{2}\oplus TN \rightarrow 
 T^{*}N\oplus \pi_{N}^{*}\mathcal G_{1}\oplus TN 
$$
is given by (\ref{concrete-iso}), with $(\beta , K, \Phi )$ replaced by $(\pi_{N}^{*}\beta ,\pi_{N}^{*} K, \pi_{N}^{*}\Phi )$.
The independence of the non-degeneracy  condition
(\ref{pairing-non-deg}) 
on the  dissection of $E$ follows from (\ref{iso-standard-3}).
The independence on the  dissection of $\tilde{E}$
can be proved similarly.
\end{proof}

\begin{rem}{\rm 
Unlike the $T$-duality for exact Courant algebroids, the
definition of $T$-dual transitive Courant algebroids $E$ and $\tilde{E}$ is not symmetric
with respect to $E$ and $\tilde{E}$, 
in general. 
This follows from the lack of symmetry in  the non-degeneracy condition from Definition \ref{def-T-duality}
}
\end{rem}

\begin{lem}\label{T-duality-iso}  Let $(E_{1}, \Psi_{1})$ 
and $(\tilde{E}_{1}, \tilde{\Psi}_{1})$  be  transitive Courant algebroids over $M$ and $\tilde{M}$, 
together with actions 
$$
\Psi_{1}:\mathfrak{t}^{k}\rightarrow \mathrm{Der}(E_{1}),\ \tilde{\Psi}_{1}:\tilde{\mathfrak{t}}^{k}\rightarrow \mathrm{Der}(\tilde{E}_{1})
$$
which lift $\psi^{M}$
and $\psi^{\tilde{M}}$ respectively.   Assume that
\begin{equation}\label{iso-I}
G: E_{1}\rightarrow E,\ \tilde{G}: \tilde{E}_{1}\rightarrow \tilde{E}
\end{equation}
are invariant,  fiber preserving  Courant algebroid isomorphisms. 
If $E$ and $\tilde{E}$ are $T$-dual, then also $E_{1}$ and $\tilde{E}_{1}$ are $T$-dual.
\end{lem}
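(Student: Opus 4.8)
The plan is to transport the $T$-duality isomorphism $F : \pi_N^! E \to \tilde\pi_N^! \tilde E$ through the pullbacks of the given isomorphisms $G$ and $\tilde G$. Concretely, first I would form the pullback isomorphisms $G^{\pi_N} : \pi_N^! E_1 \to \pi_N^! E$ and $\tilde G^{\tilde\pi_N} : \tilde\pi_N^! \tilde E_1 \to \tilde\pi_N^! \tilde E$ provided by Lemma \ref{pull-back-bdle} ii), and define
\[
F_1 := (\tilde G^{\tilde\pi_N})^{-1} \circ F \circ G^{\pi_N} : \pi_N^! E_1 \to \tilde\pi_N^! \tilde E_1 .
\]
This is manifestly a fiber preserving Courant algebroid isomorphism, being a composition of such. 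I would then need to check three things: that $F_1$ is invariant with respect to the extended $\mathfrak t^{2k}$-actions $\Psi_1^{\pi_N^! E_1}$ and $\tilde\Psi_1^{\tilde\pi_N^! \tilde E_1}$, and that $F_1$ satisfies the non-degeneracy condition from Definition \ref{def-T-duality}.

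For the invariance, I would argue in two steps according to the splitting $\mathfrak t^{2k} = \mathfrak t^k \oplus \tilde{\mathfrak t}^k$. For $a \in \mathfrak t^k$, invariance of $G$ with respect to $\Psi_1$ and $\Psi$ gives, by Proposition \ref{p-back-forward-inv} i) (or equivalently Lemma \ref{extensie-action} ii)), that $G^{\pi_N}$ is invariant with respect to $\pi_N^! \Psi_1$ and $\pi_N^! \Psi$; similarly $\tilde G^{\tilde\pi_N}$ is invariant on $\tilde{\mathfrak t}^k$. Combined with the assumed invariance of $F$, this yields invariance of $F_1$ on each of the two summands, hence on all of $\mathfrak t^{2k}$. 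The only point requiring a small additional check is invariance under the \emph{extension} part of the actions (the $\tilde{\mathfrak t}^k$-part of $\Psi^{\pi_N^! E}$, resp.\ the $\mathfrak t^k$-part of $\tilde\Psi^{\tilde\pi_N^! \tilde E}$), but this follows from Lemma \ref{extensie-action} ii) applied to $G$ and, dually, to $\tilde G$: pullback isomorphisms intertwine these extended actions because on distinguished sections the extension only acts via the Lie derivative on the $\mathbb T N$-factor, which is untouched by $G^{\pi_N}$ and $\tilde G^{\tilde\pi_N}$.

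The remaining point, the non-degeneracy condition \eqref{pairing-non-deg}, is where the main care is needed. Here I would invoke the statement ``\emph{The above definition is independent of the choice of dissections}'' proved just after Definition \ref{def-T-duality}. Choose dissections $I_1$ of $E_1$ and $\tilde I_1$ of $\tilde E_1$; composing with $G$ and $\tilde G$ (which are fiber preserving) produces dissections $I := I_1 \circ G^{-1}$ of $E$ and $\tilde I := \tilde I_1 \circ \tilde G^{-1}$ of $\tilde E$. With these choices the induced isomorphism $\tilde I_1^{\tilde\pi_N} \circ F_1 \circ (I_1^{\pi_N})^{-1}$ is computed to be literally equal to $\tilde I^{\tilde\pi_N} \circ F \circ (I^{\pi_N})^{-1}$ — because the pullback functor is compatible with composition of isomorphisms (Lemma \ref{pull-back-bdle} ii), iii)) and the $G$- and $\tilde G$-factors cancel. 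Hence the data $(\beta, \Phi, K)$ attached to $F_1$ in these dissections coincide with those attached to $F$, so the pairing $\beta - \Phi^*\Phi$ on $\mathrm{Ker}\,(d\pi_N) \times \mathrm{Ker}\,(d\tilde\pi_N)$ is the same and is nondegenerate by hypothesis on $E, \tilde E$. Since nondegeneracy is dissection-independent, $F_1$ satisfies \eqref{pairing-non-deg} for any dissections, and therefore witnesses that $E_1$ and $\tilde E_1$ are $T$-dual. I expect the main (modest) obstacle to be bookkeeping: verifying carefully that the pullback construction is functorial enough that the $G$, $\tilde G$ contributions to the transition data cancel exactly, rather than only up to a coboundary; this is exactly what Lemma \ref{pull-back-bdle} and relations \eqref{iso-standard-1}--\eqref{iso-standard-3} are set up to guarantee.
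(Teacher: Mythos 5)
Your proposal is correct and follows essentially the same route as the paper: the paper's proof consists precisely of defining $F_{1}:= (\tilde{G}^{\tilde{\pi}_{N}})^{-1}\circ F\circ G^{\pi_{N}}$ and invoking Lemma \ref{extensie-action} ii) for the invariance of $G^{\pi_N}$ and $\tilde G^{\tilde\pi_N}$ under the extended $\mathfrak{t}^{2k}$-actions. Your additional verification of the non-degeneracy condition via the functoriality of pullback dissections and the dissection-independence statement is exactly the implicit content of the paper's terser argument.
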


\begin{proof} Let $F: \pi_{N}^{!} E\rightarrow \tilde{\pi}_{N}^{!}\tilde{E}$ be an isomorphism
which satisfies the conditions from Definition \ref{def-T-duality}.
Then the isomorphism 
\begin{equation}\label{iso-1}
F_{1}:= (\tilde{G}^{\tilde{\pi}_{N}})^{-1}\circ F\circ G^{\pi_{N}}:
\pi_{N}^{!} E_{1} \rightarrow \tilde{\pi}_{N}^{!}\tilde{E}_{1}.
\end{equation}
satisfies  the same conditions. (For the invariance 
of  $\tilde{G}^{\tilde{\pi}_{N}}$ and   ${G}^{{\pi}_{N}}$ we use Lemma \ref{extensie-action} ii)).
\end{proof}

The next lemma   states  the conditions that two standard Courant algebroids are
$T$-dual.  Let $E = T^{*}M\oplus \mathcal G \oplus TM$ and $\tilde{E}= T^{*}\tilde{M}\oplus \tilde{\mathcal G}\oplus  T\tilde{M}$ be standard Courant algebroids over $M$ and $\tilde{M}$,
defined by a quadratic Lie algebra bundle $(\mathcal G , [\cdot , \cdot ]_{\mathcal G}, \langle\cdot , \cdot \rangle_{\mathcal G})$ and  data $(\nabla^{E}, R, H)$ and, respectively,  
a quadratic Lie algebra bundle $(\tilde{\mathcal G} , [\cdot , \cdot ]_{\tilde{\mathcal G}}, \langle\cdot , \cdot \rangle_{\tilde{\mathcal G}})$ and data  $(\nabla^{\tilde{E}}, \tilde{R}, \tilde{H})$ 
(as there are various connections involved, we choose to 
use the notation  $\nabla^{E}$ rather than $\nabla$ for the connection which is part of the data which defines  $E$;  a similar convention is used for
$\tilde{E}$). Let $\Psi : \mathfrak{t}^{k}\rightarrow \mathrm{Der}\, (E)$  and 
$\tilde{\Psi }: \tilde{\mathfrak{t}}^{k}\rightarrow \mathrm{Der}\, (\tilde{E})$ be actions which lift $\psi^{M}$ and
$\psi^{\tilde{M}}$ and preserve the factors of $E$ and $\tilde{E}.$

\begin{lem}\label{iso-upstairs} 
The standard  Courant algebroids $E$ and $\tilde{E}$ are $T$-dual if and only if
there are invariant forms $\beta\in \Omega^{2}(N)$ and $\Phi \in \Omega^{1}(N, \tilde{\pi}_{N}^{*}\tilde{\mathcal{G}})$
and  a quadratic Lie algebra bundle isomorphism $K\in \mathrm{Isom} (\pi_{N}^{*} \mathcal G ,
\tilde{\pi}_{N}^{*} \tilde{\mathcal G})$ which maps
$\nabla^{\Psi , \pi_{N}^{!} E}$ to $\nabla^{\tilde{\Psi}, \tilde{\pi}_{N}^{!} \tilde{E}}$ 
such that the non-degeneracy condition (\ref{pairing-non-deg}) and  
the following relations hold:
\begin{align}
\label{con-T}& (\tilde{\pi}_{N}^{*} \nabla^{\tilde{E}} )_{X} r = K (\pi_{N}^{*} \nabla^{E})_{X} (K^{-1} r) + [ r, \Phi (X) ]_{\tilde{\pi}_{N}^{*} 
\tilde{\mathcal G}},\\
 \label{tilde-T}&  K \pi_{N}^{*} R- \tilde{\pi}_{N}^{*}\tilde{R} = 
d^{\tilde{\pi}^{*}_{N}\nabla^{\tilde{E}}} \Phi  + c_{2},\\
\label{H-tH}&  \pi_{N}^{*} H -  \tilde{\pi}_{N}^{*} \tilde{H} =
d\beta +  \langle (K\pi_{N}^{*} R  +\tilde{\pi}_{N}^{*}\tilde{ R})\wedge \Phi \rangle_{\tilde{\pi}_{N}^{*}\tilde{\mathcal{G}}} - c_{3}, 
\end{align}
where $c_{2} \in \Omega^{2} (N, \tilde{\pi}_{N}^{*}\tilde{ \mathcal G})$ and
$c_{3}\in \Omega^{3}(N)$ 
are defined by
\begin{align}
\nonumber& c_{2} (X, Y) :=  [ \Phi (X ), \Phi (Y) ]_{\tilde{\pi}_{N}^{*} \tilde{\mathcal G}},\\
\nonumber&  c_{3}(X, Y, Z) := 
 \langle \Phi (X), [ \Phi (Y), \Phi (Z) )]_{\tilde{\pi}_{N}^{*} \tilde{\mathcal G}}\rangle_{\tilde{\pi}_{N}^{*} \tilde{\mathcal G}},
 \end{align}
for any $X , Y, Z\in {\mathfrak X}(N).$
\end{lem}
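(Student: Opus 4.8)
The plan is to reduce the statement to the characterization of Courant algebroid isomorphisms in the standard case and then translate the defining data of $T$-duality via Lemma \ref{pull-back-bdle} and Lemma \ref{cond-inv-iso}. First I would recall that, by Lemma \ref{pull-back-bdle} i), $\pi_{N}^{!}E$ is the standard Courant algebroid $T^{*}N\oplus \pi_{N}^{*}\mathcal G\oplus TN$ defined by the pulled-back data $(\pi_{N}^{*}\nabla^{E}, \pi_{N}^{*}R, \pi_{N}^{*}H)$, and similarly $\tilde{\pi}_{N}^{!}\tilde{E}=T^{*}N\oplus \tilde{\pi}_{N}^{*}\tilde{\mathcal G}\oplus TN$ with data $(\tilde{\pi}_{N}^{*}\nabla^{\tilde{E}}, \tilde{\pi}_{N}^{*}\tilde{R}, \tilde{\pi}_{N}^{*}\tilde{H})$. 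Under this identification, a fiber-preserving Courant algebroid isomorphism $F:\pi_{N}^{!}E\to \tilde{\pi}_{N}^{!}\tilde{E}$ is, by Proposition 2.7 of \cite{chen} recalled in (\ref{concrete-iso}), precisely given by a triple $(\beta,\Phi,K)$ with $\beta\in\Omega^{2}(N)$, $\Phi\in\Omega^{1}(N,\tilde{\pi}_{N}^{*}\tilde{\mathcal G})$, $K\in\mathrm{Isom}(\pi_{N}^{*}\mathcal G,\tilde{\pi}_{N}^{*}\tilde{\mathcal G})$ subject to the three relations (\ref{iso-chen}); writing these relations out with the pulled-back data gives exactly (\ref{con-T}), (\ref{tilde-T}), (\ref{H-tH}). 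Here one should note that in the present setting $\tilde{I}^{\tilde{\pi}_{N}}\circ F\circ (I^{\pi_{N}})^{-1}$ is literally $F$ after identification, so the $(\beta,\Phi,K)$ appearing in Definition \ref{def-T-duality} is the same triple.

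Next I would address the invariance condition. The $\mathfrak{t}^{2k}$-actions on $\pi_{N}^{!}E$ and $\tilde{\pi}_{N}^{!}\tilde{E}$ preserve the standard factors by Lemma \ref{standard-properties} i), and their restrictions to $\mathfrak{t}^{k}$ and $\tilde{\mathfrak{t}}^{k}$ are of the form (\ref{action-Psi-new}) with the partial connections $\nabla^{\Psi,\pi_{N}^{!}E}$ and $\nabla^{\tilde{\Psi},\tilde{\pi}_{N}^{!}\tilde{E}}$ respectively. Then Lemma \ref{cond-inv-iso}, applied with $\mathfrak{g}=\mathfrak{t}^{2k}$, says that $F$ is invariant if and only if $K$ maps $\nabla^{\Psi,\pi_{N}^{!}E}$ to $\nabla^{\tilde{\Psi},\tilde{\pi}_{N}^{!}\tilde{E}}$ and the forms $\beta$ and $\Phi$ are invariant. (One uses here that the actions on $\pi_{N}^{!}E$ come from a genuine action of the full $\mathfrak{t}^{2k}$, established in Lemma \ref{extensie-action} i).) Finally, the non-degeneracy clause in Definition \ref{def-T-duality} is, verbatim, condition (\ref{pairing-non-deg}), and by the argument in the proof that Definition \ref{def-T-duality} is dissection-independent, it is insensitive to the choice of dissections. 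Assembling these three equivalences gives the claim: $E$ and $\tilde{E}$ are $T$-dual iff there exist invariant $\beta,\Phi$ and an invariant-compatible $K$ satisfying (\ref{con-T})--(\ref{H-tH}) together with (\ref{pairing-non-deg}).

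The one point requiring a little care — and the likely main obstacle — is bookkeeping around the two sets of dissections: the statement fixes the standard forms $E=T^{*}M\oplus\mathcal G\oplus TM$ and $\tilde{E}=T^{*}\tilde M\oplus\tilde{\mathcal G}\oplus T\tilde M$ and uses the \emph{canonical} induced dissections of the pullbacks, whereas Definition \ref{def-T-duality} allows arbitrary dissections $I$, $\tilde I$ of $E$, $\tilde E$. One direction (if the data $(\beta,\Phi,K)$ exists, define $F$ by (\ref{concrete-iso}) and check it is a $T$-duality isomorphism) is immediate; for the converse one takes the $F$ provided by the definition, chooses $I=\mathrm{id}$, $\tilde I=\mathrm{id}$ in the standard realizations, reads off $(\beta,\Phi,K)$, and invokes the already-proved dissection-independence of the non-degeneracy condition to know that the resulting $(\beta,\Phi,K)$ is an acceptable witness regardless of which dissections were used to test non-degeneracy originally. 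No genuinely new computation is needed beyond expanding (\ref{iso-chen}) with pulled-back data and quoting Lemma \ref{cond-inv-iso}; the work is purely in matching notation.
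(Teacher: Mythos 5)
Your proposal is correct and follows essentially the same route as the paper's own (much terser) proof: identify the pullbacks as standard Courant algebroids, read off the isomorphism data $(\beta,K,\Phi)$ via (\ref{concrete-iso})--(\ref{iso-chen}) with the pulled-back defining data, and translate invariance of $F$ into invariance of $\beta$, $\Phi$ and compatibility of $K$ with the partial connections via Lemma \ref{cond-inv-iso}. The extra bookkeeping you supply on dissection-independence is consistent with what the paper established just before the lemma and does not change the argument.
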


\begin{proof} 
Since $E$ and $\tilde{E}$ are standard Courant algebroids, 
so are $\pi_{N}^{!}E$ and $\tilde{\pi}_{N}^{!}\tilde{E}$ 
and an invariant isomorphism 
$F: \pi_{N}^{!}E\rightarrow  \tilde{\pi}_{N}^{!}\tilde{E}$ is defined by invariant  data
$(\beta , K, \Phi )$, where
$\beta \in \Omega^{2}(N)$, $K\in \mathrm{Isom} ( \pi_{N}^{*}\mathcal G , \tilde{\pi}_{N}^{*}\tilde{G})$
and $\Phi \in \Omega^{1}(N, \tilde{\pi}_{N}^{*}\tilde{\mathcal G}).$  
The above relations coincide with relations (\ref{iso-chen}), applied 
to  $F$ and 
$X, Y, Z\in {\mathfrak X} (N).$
\end{proof}

We end this section with a simple lemma on the existence of preferred dissections of $T$-dual transitive Courant
algebroids.

\begin{lem}\label{schimbare-disectie}  Let $E$ and $\tilde{E}$ be $T$-dual transitive Courant algebroids (not necessarily in the standard
form). Then $E$ and $\tilde{E}$ admit invariant dissections of the form
\begin{equation}\label{tilde-ii}
I: E\rightarrow T^{*}M\oplus \pi^{*} {\mathcal G}_{B}\oplus TM,\
\tilde{I}: \tilde{E}\rightarrow T^{*}\tilde{M}\oplus \tilde{\pi}^{*}\mathcal G_{B} \oplus T\tilde{M}, 
\end{equation}
where $(\mathcal G_{B}, \langle \cdot , \cdot \rangle_{\mathcal G_{B}}, [\cdot , \cdot ]_{\mathcal G_{B}})$
is a quadratic Lie algebra bundle on $B.$
\end{lem}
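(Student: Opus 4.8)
The plan is to start from arbitrary invariant dissections $I_0 : E \to T^*M \oplus \mathcal G \oplus TM$ and $\tilde I_0 : \tilde E \to T^*\tilde M \oplus \tilde{\mathcal G} \oplus T\tilde M$, which exist by our standing assumptions. Since $\nabla^\Psi$ and $\nabla^{\tilde\Psi}$ have trivial holonomy, Section~\ref{example-t1} produces the quadratic Lie algebra bundles $\mathcal G_B \to B$ (fibers are the $\nabla^\Psi$-parallel sections over a torus fiber) and $\tilde{\mathcal G}_B \to B$, together with canonical identifications $\mathcal G = \pi^*\mathcal G_B$, $\tilde{\mathcal G} = \tilde\pi^*\tilde{\mathcal G}_B$. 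So already $E$ and $\tilde E$ admit invariant dissections of the form \eqref{tilde-ii} but with possibly \emph{different} bundles $\mathcal G_B$ and $\tilde{\mathcal G}_B$. The real content of the lemma is that these two bundles may be taken to be the \emph{same}.

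First I would observe that $T$-duality supplies, via Lemma~\ref{iso-upstairs}, an invariant isomorphism of quadratic Lie algebra bundles $K \in \mathrm{Isom}(\pi_N^*\mathcal G, \tilde\pi_N^*\tilde{\mathcal G})$ over $N$ that intertwines $\nabla^{\Psi,\pi_N^! E}$ with $\nabla^{\tilde\Psi, \tilde\pi_N^!\tilde E}$. By Lemma~\ref{standard-properties}~ii), a section of $\pi_N^*\mathcal G$ is $\nabla^{\Psi,\pi_N^!E}$-parallel iff it is the pullback by $\Pi := \pi \circ \pi_N = \tilde\pi \circ \tilde\pi_N$ of a section of $\mathcal G_B$, and similarly on the tilde side; since $K$ is parallel for these partial connections, it maps $\Pi^*$-pullbacks of $\mathcal G_B$-sections to $\Pi^*$-pullbacks of $\tilde{\mathcal G}_B$-sections. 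As $N \to B$ has connected fibers and $K$ is fiberwise an isomorphism of quadratic Lie algebra bundles, $K$ descends to an isomorphism $K_B : \mathcal G_B \xrightarrow{\sim} \tilde{\mathcal G}_B$ of quadratic Lie algebra bundles over $B$ (one checks on each fiber of $N \to B$ that $K$ is constant in the appropriate parallel frames, hence defines a single fiberwise map on $B$; invariance and compatibility with $\langle\cdot,\cdot\rangle$ and $[\cdot,\cdot]$ pass to the quotient).

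With $K_B$ in hand I would apply Lemma~\ref{c-d-k}: given the quadratic Lie algebra bundle $(\mathcal G_B, \langle\cdot,\cdot\rangle_{\mathcal G_B},[\cdot,\cdot]_{\mathcal G_B})$ (which is isomorphic, as a bundle of quadratic Lie algebras, to $\mathrm{Ker}\,\pi / (\mathrm{Ker}\,\pi)^\perp$ pulled back via $\pi$), $\tilde E$ admits a dissection $\tilde I$ with Lie algebra summand $\tilde\pi^*\mathcal G_B$ rather than $\tilde\pi^*\tilde{\mathcal G}_B$: concretely, compose $\tilde I_0$ with the isomorphism of standard Courant algebroids induced (via relations~\eqref{iso-chen} with $\beta := 0$, $\Phi := 0$) by the parallel bundle isomorphism $\tilde\pi^*K_B^{-1} : \tilde\pi^*\tilde{\mathcal G}_B \to \tilde\pi^*\mathcal G_B$. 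Here I must check that $\tilde\pi^*K_B^{-1}$ is parallel with respect to the connection $\tilde\nabla^{\theta}$-type connection $\nabla^{\tilde E}$ appearing in the data of $\tilde I_0$, so that the transformation formulas \eqref{iso-chen} indeed yield a standard Courant algebroid; this follows because $K_B$ intertwines the induced connections $\nabla^{\theta,B}$ on $\mathcal G_B$ and $\tilde{\mathcal G}_B$ (a consequence of $K$ being parallel and descending). Finally I would verify that the new dissection $\tilde I$ is still \emph{invariant}: the isomorphism we composed with is induced by a $\mathfrak{t}^k$-invariant parallel bundle map with $\beta = \Phi = 0$, so by Lemma~\ref{cond-inv-iso} it is an invariant isomorphism, and composition of invariant dissections/isomorphisms is invariant.

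The main obstacle I anticipate is the descent step: showing cleanly that the $N$-level isomorphism $K$ comes from a $B$-level isomorphism $K_B$ of \emph{quadratic Lie algebra bundles} (not merely vector bundles), using only that $K$ is $\nabla$-parallel for the relevant partial connections and $\mathfrak{t}^{2k}$-invariant. The point is that $K$ a priori depends on all of $N$, and one must use the triviality of the holonomies of $\nabla^\Psi$, $\nabla^{\tilde\Psi}$ together with the characterization of parallel sections in Lemma~\ref{standard-properties}~ii) to rigidify $K$ along the fibers of $N \to B$; everything else is a bookkeeping application of Lemma~\ref{c-d-k}, the transformation formulas \eqref{iso-chen}, and Lemma~\ref{cond-inv-iso}.
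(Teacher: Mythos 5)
Your proposal is correct and follows essentially the same route as the paper: extract the quadratic Lie algebra bundle isomorphism $K$ from the data of the invariant isomorphism $F$, use invariance together with Lemma \ref{standard-properties} ii) to descend it to $K_B:\mathcal G_B\rightarrow \tilde{\mathcal G}_B$ over $B$, and then modify $\tilde{I}$ via Lemma \ref{c-d-k}. The only inessential difference is that you worry about $\tilde{\pi}^*K_B^{-1}$ being parallel before invoking the transformation formulas, whereas Lemma \ref{c-d-k} simply conjugates the connection by $K$ and needs no such hypothesis.
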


\begin{proof}
Let $I : E \rightarrow E_{M} =  T^{*}{M}\oplus \mathcal G\oplus TM$ and 
$\tilde{I} : \tilde{E} \rightarrow  E_{\tilde{M}}= T^{*}\tilde{M}\oplus \tilde{\mathcal G}\oplus T\tilde{M}$ be invariant dissections
of $E$ and $\tilde{E}.$ From 
Lemma \ref{T-duality-iso}, $E_{M}$ and $E_{\tilde{M}}$ are $T$-dual. 
Let   $F: \pi_{N}^{!}E_{M}\rightarrow \pi_{N}^{!}{E}_{\tilde{M}}$ be an invariant isomorphism, and assume it is defined by  
data  $( \beta , K, \Phi ).$ 
Since  $F$  is invariant, the quadratic Lie algebra
bundle isomorphism $K : \pi_{N}^{*}\mathcal G \rightarrow \tilde{\pi}_{N}^{*}\tilde{\mathcal{G}}$ maps  $\nabla^{\Psi , \pi_{N}^{!}E}$ to $\nabla^{\tilde{\Psi }, \tilde{\pi}_{N}^{!}\tilde{E}}$.
From Lemma \ref{standard-properties} ii), $K= \Pi^{*} K_{B}$
where  $K_{B} : \mathcal G_{B}\rightarrow \tilde{\mathcal G}_{B}$ is a quadratic Lie algebra bundle
isomorphism  and  $ \mathcal G= \pi^{*} \mathcal G_{B}$, $\tilde{\mathcal G}= \tilde{\pi}^{*} \tilde{\mathcal G}_{B}$ (see Section
\ref{example-t1}). Using Lemma \ref{c-d-k}, we  change the dissection $\tilde{I}$ to obtain a new invariant dissection 
of $\tilde{E}$  with $\tilde{\mathcal G}= \tilde{\pi}^{*} \tilde{\mathcal G}_{B}\cong \tilde{\pi}^*\mathcal{G}_B$ replaced by $\tilde{\pi}^*\mathcal{G}_B$.  
\end{proof}

\subsection{$T$-duality and spinors}
Assume  that $E$ and $\tilde{E}$ are $T$-dual transitive Courant
algebroids and let 
$$
F: \pi_{N}^{!} E\rightarrow \tilde{\pi}_{N}^{!}\tilde{E}
$$ 
be an invariant isomorphism as in  Definition
\ref{def-T-duality}. Let   $\mathbb{S}_{E}$,  $\mathbb{S}_{\tilde{E}}$,
$\mathbb{S}_{\pi_{N}^{!} E}$ and $\mathbb{S}_{\tilde{\pi}_{N}^{!}\tilde{ E}}$
be   canonical weighted  spinor bundles of $E$, 
$\tilde{E}$, $\pi_{N}^{!}E$ and $\tilde{\pi}_{N}^{!}\tilde{E}$ respectively,
such that the pullbacks $\pi_{N}^{ !}: \Gamma (\mathbb{S}_{E}) 
\rightarrow \Gamma (\mathbb{S}_{\pi_{N}^{!} E})$ and 
 $\tilde{\pi}_{N}^{ !}: \Gamma (\mathbb{S}_{\tilde{E}}) 
\rightarrow \Gamma (\mathbb{S}_{\tilde{\pi}_{N}^{!}\tilde{ E}})$
are defined. We  consider an  admissible pair  
$(I, S_{\mathcal G})$  for
$\mathbb{S}_{E}$ and $\mathbb{S}_{\pi_{N}^{!}{E}}$,  and 
an admissible pair $(\tilde{I}, S_{\tilde{\mathcal G}})$ for
$\mathbb{S}_{\tilde{E}}$ and $\mathbb{S}_{\tilde{\pi}_{N}^{!}\tilde {E}}$, 
with invariant dissections 
$$
I: E \rightarrow E_{M} = T^{*}M\oplus\pi^{*} \mathcal G_{B}\oplus TM,\ \tilde{I}: \tilde{E}
\rightarrow  \tilde{E}_{\tilde{M}}= T^{*}\tilde{M}\oplus \tilde{\pi}^{*}\tilde{\mathcal G}_{B} \oplus T\tilde{M}
$$
such that   $S_{\mathcal G}= \pi^{*}S_{B}$
and $S_{\tilde{\mathcal G}} = \tilde{\pi}^{*}\tilde{S}_{B}$, where $S_{B}$  is an  irreducible
$\mathrm{Cl } ({\mathcal G}_{B})$-bundle and $\tilde{S}_{B}$  is an irreducible 
$\mathrm{Cl} (\tilde{\mathcal G}_{B})$-bundle. We assume that    
the isomorphism  $F_{\mathbb{S}}: \mathbb{S}_{\pi^{!}_{N}  E} \rightarrow \mathbb{S}_{\tilde{\pi}_{N} ^{!}\tilde{E}}$ induced by $F$ is  globally defined. 
This   is equivalent  to assuming  that  
the isomorphism   $(F_{1})_{\mathbb{S}}: \mathbb{S}_N \rightarrow \tilde{\mathbb{S}}_N$ compatible with $F_{1}:= \tilde{I}^{\tilde{\pi}_{N}}\circ F\circ  (I^{\pi_{N}})^{-1}: \pi_{N}^{!} E_{M}\rightarrow \tilde{\pi}_{N}^{!}\tilde{E}_{\tilde{M}}$
is globally defined,  where $\mathbb{S}_N = \Lambda  (T^{*}N)\hat{\otimes}\Pi^{*}\mathcal  S_{B}$ and 
$\tilde{\mathbb{S}}_N = \Lambda  (T^{*}N)\hat{\otimes}\Pi^{*}\mathcal{\tilde{S}}_{B}$ are spinor bundles
of  $\pi_{N}^{!} E_{M} = T^{*}N \oplus \Pi^{*} \mathcal G_{B} \oplus TN$ 
and $\tilde{\pi}_{N}^{!}\tilde{E}_{\tilde{M}}= T^{*}N \oplus \Pi^{*} \tilde{\mathcal G}_{B} \oplus TN$ respectively 
(and  $\Pi = \pi \circ\pi_{N} = \tilde{\pi}\circ \tilde{\pi}_{N}$).

\begin{rem}{\rm \label{F1:rem}
When the dissections are chosen  such that  
$\mathcal G_{B} =\tilde{\mathcal G}_{B}$  as quadratic Lie algebra bundles (see Lemma \ref{schimbare-disectie})   and ${S}_{B} =\tilde{ S}_{B}$ as $\mathrm{Cl}(\mathcal G_{B})$-bundles, 
$F_{1}$ is an automorphism 
of the vector bundle 
$$ 
\pi_{N}^{!} E_{M}=\tilde{\pi}_{N}^{!}E_{\tilde{M}} =  T^{*}N\oplus \Pi^{*}\mathcal G_{B}\oplus TN
$$
with scalar product
\begin{equation}\label{scalar-products-pull}
\langle \xi + \Pi^{*} (r_{1}) +X, \eta + \Pi^{*} (r_{2}) + Y\rangle =
\frac{1}{2} ( \xi (Y) +\eta (X)) +\Pi^{*} \langle r_{1},r_{2}\rangle_{\mathcal G_{B}}
\end{equation}
and  $(F_{1})_{\mathbb{S}}$ is an automorphism of  the spinor bundle 
$\mathbb{S}_N =  \Lambda  (T^{*}N)\hat{\otimes}\Pi^{*}\mathcal  S_{B}$ 
of   $ T^{*}N\oplus \Pi^{*}\mathcal G_{B}\oplus TN$.
If   $F_{1}$ belongs to the image of the exponential map 
$\mathrm{exp} :\mathfrak{so}(T^{*} N\oplus \Pi^{*}\mathcal G_{B}\oplus TN)\rightarrow
\mathrm{SO}_{0} (T^{*} N\oplus \Pi^{*}\mathcal G_{B}\oplus TN)$, then 
$(F_{1})_{\mathbb{S}}$ is automatically globally defined (cf.\ Remark \ref{global-remark}).
In fact, in that case $F_{1}$ can be lifted to $(F_{1})_{\mathbb{S}}$ using the exponential map for 
$\mathfrak{spin}(T^{*} N\oplus \Pi^{*}\mathcal G_{B}\oplus TN)$.} 
\end{rem}

\begin{thm}\label{thm-T-duality}  i) The map
\begin{equation}\label{def-tau}
\tau : =  (\tilde{\pi}_{N})_{!} \circ  F_{\mathbb{S}} \circ \pi_{N}^{!} : \Gamma ( \mathbb{S}_{E})\rightarrow \Gamma (\mathbb{S}_{\tilde{E}}) 
\end{equation}
intertwines the canonical Dirac generating operators  of $E$ and $\tilde{E}$ and 
maps invariant spinors to invariant spinors.
In particular,   there is the following commutative diagram
\begin{equation*} \begin{xy}
    \xymatrix{ 
 \Gamma_{\mathfrak{t}^{2k}}(\mathbb{S}_{\pi_N^{!}E}) \ar@{->}[r]^{F_\mathbb{S}}&  
 \Gamma_{\mathfrak{t}^{2k}}(\mathbb{S}_{\tilde{\pi}_N^{!}\tilde{E}})\ar@{->}[d]^{(\tilde{\pi}_N)_!}\\
 \Gamma_{\mathfrak{t}^{k}}(\mathbb{S}_{E})\ar@{->}[u]^{\pi_N^!} \ar@{->}[r]^\tau & \Gamma_{\tilde{\mathfrak{t}}^{k}}(\mathbb{S}_{\tilde{E}})}
    \end{xy}
\end{equation*}

ii) There is an isomorphism $\rho : \Gamma_{\mathfrak{t}^{k} }(E) \rightarrow \Gamma_{ \tilde{\mathfrak{t}}^{k}} (\tilde{E})$ 
of $C^{\infty}(B)$-modules  
which preserves Courant brackets, scalar products and is compatible with $\tau$, i.e. 
\begin{equation}\label{properties-rho}
\tau (\gamma_{u} s) = \gamma_{\rho (u)} \tau (s),\ 
 [\rho (u), \rho (v) ]_{\tilde{E}}   = \rho [u, v]_{E},\ 
\langle \rho (u), \rho (v) 
\rangle_{ \tilde{E}}= \langle u, v\rangle_{ E},
\end{equation}
for any $u, v\in \Gamma_{\mathfrak{t}^{k}} (E)$ and $s\in \Gamma_{\mathfrak{t}^{k}}( \mathbb{S}_{E}).$
\end{thm}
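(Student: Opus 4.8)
\textbf{Proof strategy for Theorem \ref{thm-T-duality}.}

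The plan is to reduce everything to the standard-form situation and then combine the results already proved about pullback, pushforward, isomorphisms and invariant spinors. For part i), the map $\tau$ is a composite of three maps, each of which has already been shown to intertwine the canonical Dirac generating operators: the pullback $\pi_N^!$ by Proposition \ref{lem-pull}, the isomorphism-induced map $F_{\mathbb{S}}$ by Proposition \ref{iso-dirac}, and the pushforward $(\tilde{\pi}_N)_!$ by Proposition \ref{lem-push}. Hence $\tau\circ\slashed{d}_E=\slashed{d}_{\tilde E}\circ\tau$ follows immediately by composition. For the invariance statement, first I would observe that $\pi_N^!$ maps $\Gamma_{\mathfrak{t}^k}(\mathbb{S}_E)$ into $\Gamma_{\mathfrak{t}^{2k}}(\mathbb{S}_{\pi_N^!E})$: the $\mathfrak{t}^k$-invariance is preserved by Proposition \ref{p-back-forward-inv} ii) (applied with $f=\pi_N$, together with the fact that $\pi_N^!\Psi$ is $\pi_N$-related to $\Psi$), while annihilation by the extra $\tilde{\mathfrak{t}}^k$-directions follows because, on a distinguished section pulled back from $M$, the operator $\Psi^{\pi_N^!E}(b)$ for $b\in\tilde{\mathfrak{t}}^k$ acts as in \eqref{extindere-actiuni} and a pullback spinor $\pi_N^!s$ is constant along the $\tilde{T}^k$-fibres. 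Next, $F_{\mathbb{S}}$ preserves $\mathfrak{t}^{2k}$-invariance because $F$ is an invariant isomorphism: this is the spinorial counterpart of the statement, contained in Lemma \ref{de-adaugat} (invariance of the action on spinors is transported by isomorphisms compatible with invariant Courant algebroid isomorphisms), using Lemma \ref{extensie-action} ii) to know that $F$ intertwines $\Psi^{\pi_N^!E}$ and $\tilde{\Psi}^{\tilde{\pi}_N^!\tilde E}$. Finally, the pushforward $(\tilde{\pi}_N)_!$ maps $\Gamma_{\mathfrak{t}^{2k}}(\mathbb{S}_{\tilde{\pi}_N^!\tilde E})$ into $\Gamma_{\tilde{\mathfrak{t}}^k}(\mathbb{S}_{\tilde E})$: again by the pushforward half of Proposition \ref{p-back-forward-inv} ii) the $\tilde{\mathfrak{t}}^k$-invariance (these are the directions tangent to the base $\tilde M$) survives, and integration over the $T^k$-fibre kills the remaining dependence; concretely, on a section of the form $\omega\otimes\Pi^*s$ the pushforward is fibre integration on the form factor, and $\tilde{\mathfrak{t}}^k$-invariance of the image is inherited from $\tilde{\mathfrak{t}}^k$-invariance of the source via $f_*\mathcal{L}_{X_a^{M}}=\mathcal{L}_{X_a^{\tilde M}}f_*$. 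Chaining these three inclusions gives $\tau(\Gamma_{\mathfrak{t}^k}(\mathbb{S}_E))\subset\Gamma_{\tilde{\mathfrak{t}}^k}(\mathbb{S}_{\tilde E})$, and the commutative diagram is exactly the bookkeeping of these maps.

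For part ii), I would construct $\rho$ at the level of standard dissections and then check it is intrinsic. By Lemma \ref{schimbare-disectie} choose invariant dissections $I:E\to E_M=T^*M\oplus\pi^*\mathcal G_B\oplus TM$ and $\tilde I:\tilde E\to\tilde E_{\tilde M}=T^*\tilde M\oplus\tilde\pi^*\mathcal G_B\oplus T\tilde M$ with the \emph{same} quadratic Lie algebra bundle $\mathcal G_B$ on $B$. An invariant section of $E$ corresponds, under $I$, to an $\mathfrak{t}^k$-invariant section of $E_M$, which by the analogue of Proposition \ref{Omega_b:prop} and Lemma \ref{cond-triv-ext} is of the form $\pi^*(\beta_B^{(1)})+\pi^*(r_B)+\widehat{X}$ with $\beta_B^{(1)}\in\Omega^1(B)$, $r_B\in\Gamma(\mathcal G_B)$, $X\in\mathfrak{X}(B)$ and $\widehat{X}$ an invariant lift; equivalently, invariant sections of $E_M$ and of $\tilde E_{\tilde M}$ are both parametrised by the same data $(\beta_B^{(1)},r_B,X)\in\Omega^1(B)\oplus\Gamma(\mathcal G_B)\oplus\mathfrak{X}(B)$ \emph{plus} the choice of a connection-dependent lift. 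I would then \emph{define} $\rho$ by requiring $\tau(\gamma_u s)=\gamma_{\rho(u)}\tau(s)$; since $\tau$ on invariant spinors is (up to sign) the fibre integration composed with $F_{\mathbb{S}}$ and both $\pi_N^*$, $F_1$, $\tilde{\pi}_N$ send Clifford elements to Clifford elements (by \eqref{pull-back-clifford}, \eqref{pull-compat-cliff}, Lemma \ref{iso-bdle} and Remark \ref{later}), there is a unique bundle map $\rho:\Gamma_{\mathfrak{t}^k}(E)\to\Gamma_{\tilde{\mathfrak{t}}^k}(\tilde E)$ making the first identity in \eqref{properties-rho} hold, and it is given explicitly on the standard models by $\rho=\tilde\pi^!\circ (\text{``}F\text{''})\circ(\pi^!)^{-1}$ where the middle arrow is the induced map on invariant sections of the pulled-back Courant algebroids over $N$, using that an invariant section of $\pi_N^!E_M$ over $N$ is the common pullback of an invariant section of $E_M$ and of $\tilde E_{\tilde M}$ — this is the Courant-algebroid analogue of the identification used for $F_{\mathbb{S}}$ and rests on Lemma \ref{standard-properties} ii). That $\rho$ preserves scalar products follows because $\pi_N^*$ and $\tilde{\pi}_N^*$ are fibrewise isometries and $F$ is an isometry; that it preserves Courant brackets follows because $\pi_N^!$, $F$ and (the inverse of) $\tilde{\pi}_N^!$ are Courant algebroid morphisms when restricted to invariant sections (invariant sections of the pullback form a subalgebra isomorphic to the invariant sections downstairs). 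Finally $C^\infty(B)$-linearity is clear since everything is built from pullbacks by maps over $B$ and fibre integration, which are all $C^\infty(B)$-linear on invariant objects.

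The compatibility of $\tau$ with Clifford multiplication in \eqref{properties-rho} deserves a separate word, as it is the point that glues i) and ii) together: for $u\in\Gamma_{\mathfrak{t}^k}(E)$ and $s\in\Gamma_{\mathfrak{t}^k}(\mathbb{S}_E)$ one computes
\begin{align*}
\tau(\gamma_u s)&=(\tilde\pi_N)_!\,F_{\mathbb{S}}\,\pi_N^!(\gamma_u s)
=(\tilde\pi_N)_!\,F_{\mathbb{S}}\big(\gamma_{\pi_N^!u}\,\pi_N^!s\big)\\
&=(\tilde\pi_N)_!\big(\gamma_{F(\pi_N^!u)}\,F_{\mathbb{S}}\pi_N^!s\big)
=\gamma_{\rho(u)}\,(\tilde\pi_N)_!\,F_{\mathbb{S}}\pi_N^!s
=\gamma_{\rho(u)}\,\tau(s),
\end{align*}
where the first Clifford identity uses \eqref{pull-compat-cliff}, the third uses Lemma \ref{iso-bdle}, and the fourth uses Remark \ref{later} (equivalently \eqref{clifford-push-1}) together with the fact that for invariant $u$ the section $F(\pi_N^!u)\in\Gamma(\tilde\pi_N^!\tilde E)$ is the $\tilde\pi_N$-pullback of the invariant section $\rho(u)$ of $\tilde E$ — this last fact is precisely the definition of $\rho$ in the standard models and holds because $F$ is fibre preserving and invariant. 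The main obstacle, I expect, is not any single computation but the careful verification that the various ``pullback of an invariant object over $N$ is the common pullback from both $M$ and $\tilde M$'' identifications are mutually consistent — i.e.\ that the horizontal lifts implicit in \eqref{fstar:eq} used on the $M$-side and on the $\tilde M$-side agree on invariant sections up to the $(\beta,\Phi,K)$-data, so that $\rho$ is well defined independently of all auxiliary choices (connections $\theta,\tilde\theta$, frames, the sign ambiguity in $F_{\mathbb{S}}$ and in $(\tilde\pi_N)_!$). This is where Lemma \ref{standard-properties} ii), Lemma \ref{schimbare-disectie} and the invariance clauses of Lemma \ref{cond-inv-iso} do the real work, and where one must be most careful with signs and with the distinction between $\mathfrak{t}^{2k}$-invariance upstairs and the two different $\mathfrak{t}^k$-invariances downstairs.
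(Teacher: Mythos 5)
Your part i) is essentially the paper's argument: the Dirac intertwining is the composition of Propositions \ref{lem-pull}, \ref{iso-dirac} and \ref{lem-push}, and the invariance statement is the same three-step chain (pullback spinors of $\mathfrak{t}^k$-invariant spinors are $\mathfrak{t}^{2k}$-invariant because $(\pi_N)_*X_a^N=0$ for $a\in\tilde{\mathfrak{t}}^k$; $F_{\mathbb{S}}$ preserves invariance by Lemma \ref{de-adaugat}; the pushforward preserves it by Proposition \ref{p-back-forward-inv} ii)). That part is fine.

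In part ii) there is a genuine gap in the construction of $\rho$. You assert that for invariant $u$ the section $F(\pi_N^! u)$ is the $\tilde\pi_N$-pullback of an invariant section of $\tilde E$ ``because $F$ is fibre preserving and invariant.'' This is false as stated: writing $u=\xi+r+X$ and choosing \emph{any} invariant lift $\widehat X$ of $X$, the $T^*N$-component of $F_1(\pi_N^*(\xi+r)+\widehat X)$ is $\pi_N^*\xi-2\Phi^*K(\pi_N^*r)+i_{\widehat X}\beta-\Phi^*\Phi(\widehat X)$, which is in general not $\tilde\pi_N$-horizontal, so the section does not descend to $\tilde M$. The paper's proof shows that there is a \emph{unique} $\mathfrak{t}^{2k}$-invariant lift $\widehat X_0$ for which this component is $\tilde\pi_N$-basic, and the existence and uniqueness of $\widehat X_0$ rest precisely on the non-degeneracy of $\beta-\Phi^*\Phi$ on $\mathrm{Ker}\,d\pi_N\times\mathrm{Ker}\,d\tilde\pi_N$, i.e.\ on condition (\ref{pairing-non-deg}) in the definition of $T$-duality. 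You never invoke this condition; you flag the well-definedness of $\rho$ as ``the main obstacle'' but attribute its resolution to Lemmas \ref{standard-properties}, \ref{schimbare-disectie} and \ref{cond-inv-iso}, none of which supplies the needed non-degeneracy. Without (\ref{pairing-non-deg}) the map $\rho$ simply does not exist, so this is not a bookkeeping issue but the central point of part ii).

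A secondary weakness: you claim bracket preservation because ``invariant sections of the pullback form a subalgebra isomorphic to the invariant sections downstairs.'' The map $u\mapsto\pi_N^*(\xi+r)+\widehat X_0$ is not a Courant morphism in any obvious sense (the lift depends on $F$, and brackets of horizontal lifts acquire curvature terms), so this needs proof. The paper sidesteps it entirely: having established $\tau_1\gamma_u=\gamma_{\rho_1(u)}\tau_1$ and $\tau_1\circ\slashed d_M=\slashed d_{\tilde M}\circ\tau_1$, it derives $[\rho_1(u),\rho_1(v)]=\rho_1[u,v]$ from the derived-bracket identity $\gamma_{[u,v]}=[[\slashed d,\gamma_u],\gamma_v]$, and the scalar-product identity from $\gamma_u^2=\langle u,u\rangle$. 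You should adopt that argument (or supply the missing one).
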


The claim  from Theorem \ref{thm-T-duality}  i) concerning the canonical Dirac generating operators follows 
from  Propositions  \ref{iso-dirac},  \ref{lem-pull} and  \ref{lem-push}.  The remaining claims  from Theorem 
\ref{thm-T-duality} 
will be proved in
the next two lemmas.

\begin{lem} 
In the above setting, $E_{M}$ and $\tilde{E}_{\tilde{M}}$ are $T$-dual.
The assertions of Theorem \ref{thm-T-duality}
hold for the pair $(E, \tilde{E})$
and canonical weighted spinor bundles $\mathbb{S}_{E}$ and $\mathbb{S}_{\tilde{E}}$ 
if and only if they hold  for the pair $(E_{M}, \tilde{E}_{\tilde{M} })$ and  canonical
weighted spinor bundles
$\mathbb{S}_{M}: = \Lambda (T^{*}M)\hat{\otimes}\pi^{*} \mathcal S_{B}$ and 
$\mathbb{S}_{\tilde{M}}: = \Lambda (T^{*}\tilde{M})\hat{\otimes}\tilde{\pi}^{*} \tilde{\mathcal S}_{B}$
of $E_{M}$ and $\tilde{E}_{\tilde{M}}$ respectively. 
\end{lem}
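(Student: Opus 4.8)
The plan is to reduce the general statement about $T$-dual transitive Courant algebroids to the corresponding statement about their standard models $E_M$ and $\tilde{E}_{\tilde{M}}$, using the invariant dissections $I$ and $\tilde{I}$ and the compatibility results established earlier for pullback, pushforward and isomorphisms on spinors. First I would observe that $E_M$ and $\tilde{E}_{\tilde{M}}$ are $T$-dual: since $I:E\to E_M$ and $\tilde{I}:\tilde{E}\to \tilde{E}_{\tilde{M}}$ are invariant fiber-preserving Courant algebroid isomorphisms, Lemma~\ref{T-duality-iso} applies directly and shows that the isomorphism $F_1 := \tilde{I}^{\tilde{\pi}_N}\circ F\circ (I^{\pi_N})^{-1}$ is an invariant fiber-preserving isomorphism $\pi_N^{!}E_M\to \tilde{\pi}_N^{!}\tilde{E}_{\tilde{M}}$ satisfying the non-degeneracy condition of Definition~\ref{def-T-duality} (the condition is independent of the dissections chosen, which we have already proved).

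Next I would unwind the definitions of the two $T$-duality maps. By definition $\tau = (\tilde{\pi}_N)_! \circ F_{\mathbb{S}} \circ \pi_N^{!}$, where $\pi_N^{!}$ and $(\tilde{\pi}_N)_!$ are the pullback and pushforward on spinors built from the admissible pairs $(I,S_{\mathcal G})$ and $(\tilde{I},S_{\tilde{\mathcal G}})$. Writing $\tau_M := (\tilde{\pi}_N)_!\circ (F_1)_{\mathbb{S}}\circ \pi_N^{!}$ for the analogous map for the pair $(E_M,\tilde{E}_{\tilde{M}})$ with spinor bundles $\mathbb{S}_M$, $\mathbb{S}_{\tilde{M}}$, I would show
\begin{equation*}
\tau = (\tilde{I}_{\mathbb{S}})^{-1}\circ \tau_M \circ I_{\mathbb{S}}
\end{equation*}
up to a global sign, where $I_{\mathbb{S}}:\mathbb{S}_E\to \mathbb{S}_M$ and $\tilde{I}_{\mathbb{S}}:\mathbb{S}_{\tilde{E}}\to \mathbb{S}_{\tilde{M}}$ are the induced isomorphisms. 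This follows from the definition of the pullback on spinors for non-standard Courant algebroids, $\pi_N^{!} = (I^{\pi_N}_{\mathbb{S}})^{-1}\circ \pi_N^{*}\circ I_{\mathbb{S}}$ (Lemma~\ref{well-def-lem}), the analogous formula for the pushforward (equation~(\ref{P-F})), and the identity $F_{\mathbb{S}} = (\tilde{I}^{\tilde{\pi}_N}_{\mathbb{S}})^{-1}\circ (F_1)_{\mathbb{S}}\circ I^{\pi_N}_{\mathbb{S}}$, which expresses that $F_{\mathbb{S}}$ and $(F_1)_{\mathbb{S}}$ are the isomorphisms on spinors induced by $F$ and $F_1$ and that induced isomorphisms are functorial up to sign.

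Having this conjugation formula, each assertion of Theorem~\ref{thm-T-duality} transfers between the two pairs. For the intertwining of canonical Dirac generating operators: by Proposition~\ref{iso-dirac}, $I_{\mathbb{S}}$ and $\tilde{I}_{\mathbb{S}}$ conjugate the canonical Dirac generating operators of $E$ and $\tilde{E}$ into those of $E_M$ and $\tilde{E}_{\tilde{M}}$; hence $\tau$ intertwines $\slashed{d}_E$ and $\slashed{d}_{\tilde{E}}$ iff $\tau_M$ intertwines $\slashed{d}_{E_M}$ and $\slashed{d}_{\tilde{E}_{\tilde{M}}}$. For the statement on invariant spinors, I would use Lemma~\ref{de-adaugat} together with Proposition~\ref{p-back-forward-inv} and Lemma~\ref{extensie-action}~ii): an invariant dissection intertwines the actions $\Psi^{\mathbb{S}}$ on the canonical weighted spinor bundles, so $I_{\mathbb{S}}$ and $\tilde{I}_{\mathbb{S}}$ map invariant spinors isomorphically to invariant spinors, and the commutative diagram for one pair is carried to the commutative diagram for the other. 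Similarly, part~ii) about the isomorphism $\rho$ of invariant sections transfers because $I$ and $\tilde{I}$ are invariant Courant algebroid isomorphisms, hence restrict to isomorphisms $\Gamma_{\mathfrak{t}^k}(E)\to \Gamma_{\mathfrak{t}^k}(E_M)$ and $\Gamma_{\tilde{\mathfrak{t}}^k}(\tilde{E})\to \Gamma_{\tilde{\mathfrak{t}}^k}(\tilde{E}_{\tilde{M}})$ preserving brackets and scalar products, and these are compatible with the conjugated Clifford actions via~(\ref{i-s}). The main obstacle is purely bookkeeping: carefully tracking the various $\pm 1$ ambiguities (in $I_{\mathbb{S}}$, $F_{\mathbb{S}}$, the pairings, and the pushforward sign from Lemma~\ref{push:lem}) so that they cancel consistently, and checking that the admissible pairs for the non-standard algebroids are genuinely induced by those for the standard ones — i.e. that global existence of $F_{\mathbb{S}}$ is equivalent to global existence of $(F_1)_{\mathbb{S}}$, which is exactly the assumption recorded before the statement.
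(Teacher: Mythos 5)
Your proposal is correct and follows essentially the same route as the paper: both reduce the statement via Lemma \ref{T-duality-iso} and the conjugation relation $\tau_{1}=\epsilon\,\tilde{I}_{\mathbb{S}}\circ\tau\circ(I_{\mathbb{S}})^{-1}$ (your $\tau=(\tilde{I}_{\mathbb{S}})^{-1}\circ\tau_{M}\circ I_{\mathbb{S}}$ up to sign), then transfer each assertion using the invariance of $I$ and $\tilde{I}$ and the compatibility results for induced isomorphisms, pullback and pushforward. The only cosmetic difference is that you spell out the derivation of the conjugation formula from Lemma \ref{well-def-lem} and (\ref{P-F}), whereas the paper cites (\ref{iso-1}) directly.
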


\begin{proof} 
The fact that $E_{M}$ and $\tilde{E}_{\tilde{M}}$  are $T$-dual follows
from  Lemma \ref{T-duality-iso}.
We will assume that the assertions of Theorem \ref{thm-T-duality}
hold for $(E, \tilde{E})$ and show that they hold for $(E_{M}, \tilde{E}_{\tilde{M} })$ as well.  The same arguments prove also the converse statement. Using  relation (\ref{iso-1}), we obtain   that the map
$\tau_{1}$ defined for the pair $(E_{M}, \tilde{E}_{\tilde{M}})$ 
as is defined $\tau$ for the pair $(E, \tilde{E})$, 
that is, 
\begin{equation}\label{def-tau-1}
\tau_{1} : =  (\tilde{\pi}_{N})_{*} \circ  (F_{1})_{\mathbb{S}} \circ \pi_{N}^{*} : \Gamma ( \mathbb{S}_{M})\rightarrow \Gamma (\mathbb{S}_{ \tilde{M} }),  
\end{equation}
where
\begin{align*}
\nonumber& \pi_{N}^{*} : \Gamma (\mathbb{S}_{M})\rightarrow \Gamma  (\mathbb{S}_{N}) =
\Omega (N,  \Pi^{*}\mathcal S_{B})\\
\nonumber& (\tilde{\pi}_{N})_{*} :\Gamma (\tilde{\mathbb{S}}_{{N}})= \Omega (N,  \Pi^{*}\tilde{\mathcal S}_{B})\rightarrow \Gamma (\mathbb{S}_{\tilde{M}})
\end{align*}
are the pullback and pushforward maps
(\ref{pull-back-forms})  and (\ref{push-forward-spinors}), 
is related to $\tau$  by $\tau_{1}= \epsilon  \tilde{I}_{\mathbb{S}} \circ \tau\circ ( I_{\mathbb{S}})^{-1}$. 
Here $\epsilon \in \{ \pm 1\} $ and 
$I_{\mathbb{S}} : \mathbb{S}_{E} \rightarrow  \mathbb{S}_{M}$, 
$\tilde{I}_{\mathbb{S}} : \mathbb{S}_{\tilde{E}} \rightarrow  \mathbb{S}_{\tilde{M}}$ 
are induced  by $I$, $\tilde{I}$.
As $I$ and $\tilde{I}$ are invariant, $I_{\mathbb{S}}$ and $\tilde{I}_{\mathbb{S}}$ 
map invariant spinors to invariant spinors. This is true also for $\tau .$
We obtain that  $\tau_{1}$ maps invariant spinors to invariant spinors. 
Define
$$
\rho_{1}: \Gamma_{\mathfrak{t}^{k}} (E_{M}) \rightarrow \Gamma_{\mathfrak{t}^{k}} (\tilde{E}_{\tilde{M} }),\
\rho_{1}:= \tilde{I} \circ \rho\circ I^{-1}.
 $$
Since  $(\tau , \rho )$ satisfy  (\ref{properties-rho}), so do  $(\tau_{1}, \rho_{1})$. 
\end{proof}

\begin{lem} The statements from Theorem \ref{thm-T-duality} hold for 
the pair $(E_{M}, \tilde{E}_{\tilde{M}})$ and 
canonical weighted spinor bundles $\mathbb{S}_{M}$ and $\mathbb{S}_{\tilde{M}}$.
\end{lem}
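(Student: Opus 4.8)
The plan is to reduce the statement to an explicit computation on the fiber product $N = M\times_B\tilde M$, exploiting that we are now in the most concrete possible situation: both $E_M = T^*M\oplus\pi^*\mathcal G_B\oplus TM$ and $\tilde E_{\tilde M} = T^*\tilde M\oplus\tilde\pi^*\mathcal G_B\oplus T\tilde M$ are standard Courant algebroids whose $\mathcal G$-parts descend from the same bundle $\mathcal G_B$ on $B$, and $F_1 = \tilde I^{\tilde\pi_N}\circ F\circ (I^{\pi_N})^{-1}$ is an invariant automorphism of the single bundle $T^*N\oplus\Pi^*\mathcal G_B\oplus TN$ given by invariant data $(\beta,K,\Phi)$, with $(F_1)_{\mathbb S}$ globally defined on $\mathbb S_N = \Lambda(T^*N)\hat\otimes\Pi^*\mathcal S_B$. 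First I would record that the claim about the Dirac generating operators is immediate: $\tau_1 = (\tilde\pi_N)_*\circ (F_1)_{\mathbb S}\circ\pi_N^*$ is a composite of a pullback on spinors, an isomorphism-induced map, and a pushforward on spinors, each of which intertwines (up to sign, harmlessly) the canonical Dirac generating operators by Propositions~\ref{lem-pull}, \ref{iso-dirac} and \ref{lem-push} respectively; hence so does $\tau_1$.

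Next I would handle invariance. By Proposition~\ref{p-back-forward-inv} ii) the pullback $\pi_N^!$ intertwines $\Psi^{\mathbb S}$ with $(\pi_N^!\Psi)^{\mathbb S}$, and in fact (Lemma~\ref{extensie-action} i) and ii), Lemma~\ref{standard-properties}) carries $\mathfrak t^k$-invariant spinors on $E_M$ into $\mathfrak t^{2k}$-invariant spinors on $\pi_N^!E_M$: the pullback of an invariant spinor is automatically annihilated by the extra $\tilde{\mathfrak t}^k$-directions, since $\Psi^{\pi_N^!E}(b)$ acts on a distinguished section $[(\pi_N^*u,\xi+X)]$ by $[(0,{\mathcal L}_{X_b^N}(\xi+X))]$ and the pullback of a spinor from $M$ is ${\mathcal L}_{X_b^N}$-invariant. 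Since $F$ (hence $F_1$) is invariant, $(F_1)_{\mathbb S}$ sends $\mathfrak t^{2k}$-invariant spinors to $\mathfrak t^{2k}$-invariant spinors (Lemma~\ref{de-adaugat} together with the explicit formula~(\ref{lie-spinors})). Finally the pushforward $(\tilde\pi_N)_*$ intertwines the $\mathfrak t^{2k}$-action with the $\tilde{\mathfrak t}^k$-action (again Proposition~\ref{p-back-forward-inv} ii), last displayed relation) and, crucially, kills the $\mathfrak t^k$-directions: integration over the fiber of $\tilde\pi_N$ (which is a $T^k$-orbit) of a form produces a $\tilde{\mathfrak t}^k$-basic form. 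Chaining these three facts yields that $\tau_1$ maps $\Gamma_{\mathfrak t^k}(\mathbb S_M)$ into $\Gamma_{\tilde{\mathfrak t}^k}(\mathbb S_{\tilde M})$, and the commutative square is just the definition $\tau_1 = (\tilde\pi_N)_!\circ F_{\mathbb S}\circ\pi_N^!$ restricted to invariant sections.

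For part ii) I would define $\rho_1:\Gamma_{\mathfrak t^k}(E_M)\to\Gamma_{\tilde{\mathfrak t}^k}(\tilde E_{\tilde M})$ by $\rho_1(u) := $ the unique invariant section of $\tilde E_{\tilde M}$ whose pullback $\tilde\pi_N^*\rho_1(u)$ equals $F_1(\pi_N^*u)$. The content here is: (a) $F_1(\pi_N^*u)$ is a $\tilde\pi_N$-pullback of a section of $\tilde E_{\tilde M}$ — this is where the non-degeneracy condition~(\ref{pairing-non-deg}) on $\beta-\Phi^*\Phi$ enters, exactly as in Cavalcanti--Gualtieri: using the explicit form~(\ref{concrete-iso}) of $F_1$ one checks that $F_1(\pi_N^*u)$ is annihilated by $i_{X^N_b}$ for $b\in\tilde{\mathfrak t}^k$ (i.e.\ is $\tilde\pi_N$-horizontal) precisely because $\beta-\Phi^*\Phi$ pairs the two vertical distributions non-degenerately, so the vertical part of $\pi_N^*u$ along $\mathrm{Ker}\,d\pi_N$ is matched; and (b) the result is $\tilde{\mathfrak t}^k$-invariant, because $F_1$ is invariant. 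That $\rho_1$ is $C^\infty(B)$-linear, preserves the scalar products ($F_1$ is an isometry and the scalar products are pullbacks, see~(\ref{scalar-products-pull})), and preserves Courant brackets ($F_1$ is a Courant algebroid isomorphism and the brackets on invariant sections are computed through pullback) is then formal. The compatibility $\tau_1(\gamma_u s) = \gamma_{\rho_1(u)}\tau_1(s)$ follows from $(F_1)_{\mathbb S}\circ\gamma_v = \gamma_{F_1(v)}\circ(F_1)_{\mathbb S}$ together with the Clifford-compatibility of pullback and pushforward on spinors, equations~(\ref{pull-compat-cliff}) and~(\ref{clifford-push-1}), applied to $v = \pi_N^*u$ and using $\tilde\pi_N^*\rho_1(u) = F_1(\pi_N^*u)$. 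The main obstacle is step (a): verifying that the non-degeneracy hypothesis is exactly what makes $\rho_1$ well-defined, which requires writing out~(\ref{concrete-iso}) for $F_1$ on vertical vectors and tracking how $\beta$ and $\Phi^*\Phi$ conspire — this is the transitive-Courant-algebroid analogue of the classical isomorphism $\Gamma_{\mathfrak t^k}(\mathbb T M)\cong\Gamma_{\tilde{\mathfrak t}^k}(\mathbb T\tilde M)$ and of the fact that $\tau$ is an isomorphism (not merely a map) on invariant spinors, which one gets by constructing the inverse $\tau_1^{-1}$ from $F_1^{-1}$ in the same way.
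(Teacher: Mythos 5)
Your proposal is correct and follows the same overall strategy as the paper: the Dirac-intertwining claim is delegated to Propositions \ref{iso-dirac}, \ref{lem-pull}, \ref{lem-push}; invariance of $\tau_1$ is obtained by showing the pullback of a $\mathfrak{t}^k$-invariant spinor is $\mathfrak{t}^{2k}$-invariant (using $(\pi_N)_*X_a^N=0$ for $a\in\tilde{\mathfrak{t}}^k$), then applying invariance of $(F_1)_{\mathbb S}$ and the pushforward; and $\rho_1$ is built exactly as in the paper, by using the non-degeneracy of $\beta-\Phi^*\Phi$ and the last line of (\ref{expr-F-proj}) to single out the unique invariant lift $\widehat X_0$ of $X$ for which $\mathrm{pr}_{T^*N}F_1(\pi_N^*(\xi+r)+\widehat X_0)$ is basic for $\tilde\pi_N$. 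Two points of comparison. First, a small slip: $\tilde\pi_N$-horizontality of that $1$-form means annihilation by $i_{X_a^N}$ for $a\in\mathfrak{t}^k$ (the fibers of $\tilde\pi_N$ are the $T^k$-orbits), not for $b\in\tilde{\mathfrak{t}}^k$ as you wrote; the ambiguity in the lift $\widehat X_0$ lies in $\mathrm{Ker}\,d\pi_N$ and is paired against $\mathrm{Ker}\,d\tilde\pi_N$ by $\beta-\Phi^*\Phi$, which is exactly the content of (\ref{pairing-non-deg}). Second, where you declare bracket- and metric-preservation of $\rho_1$ to be ``formal'' from $F_1$ being a Courant isomorphism, the paper instead deduces them from the already-proved identities $\tau_1\gamma_u=\gamma_{\rho_1(u)}\tau_1$ and $\tau_1\circ\slashed{d}_M=\slashed{d}_{\tilde M}\circ\tau_1$ via the derived-bracket formula $\gamma_{[u,v]}=[[\slashed{d},\gamma_u],\gamma_v]$ and $\gamma_u^2=\langle u,u\rangle$. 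This spinorial route is worth adopting: it avoids having to verify directly that the $u$-dependent special lifts $\widehat X_0$ interact correctly with the Dorfman bracket in the quotient $C/C^\perp$, a verification your sketch leaves implicit.
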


\begin{proof} 
i) We prove that the map $\tau_{1}$ 
defined by   (\ref{def-tau-1}) maps invariant spinors to invariant spinors. 
We  denote by $\nabla^{\Psi}$ 
and $\nabla^{\tilde{\Psi }}$ the partial connections defined by the actions 
on the standard Courant algebroids $E_{M}$ and  $\tilde{E}_{\tilde{M}}$. For the various other  partial connections
(like  $\nabla^{\Psi , \pi_{N}^{!} E_{M}}$,  $\nabla^{\Psi ,\pi^{*}  \mathcal S_{B}}$, 
$\nabla^{ \Psi ,\Pi^{*} {\mathcal S}_{B}}$ etc) 
we use the definition   explained 
after Lemma \ref{extensie-action}
 (keeping in mind that $S_{\mathcal G} = \pi^{*}S_{B}$ and
$\pi_{N}^{*} S_{\mathcal G}  = \Pi^{*} S_{B}$).
We prove  that if  $\omega \otimes s\in \Gamma (\mathbb{S}_{M})$ 
is $\mathfrak{t}^{k}$-invariant then 
$\pi_{N}^{*} ( \omega \otimes s)\in \Gamma ( \mathbb{S}_{N})$ 
is $\mathfrak{t}^{2k}$-invariant. The $\mathfrak{t}^{k}$-invariance
of $\pi_{N}^{*} ( \omega \otimes s)$  follows from 
Proposition \ref{p-back-forward-inv} ii).  In order to prove that $\pi_{N}^{*} ( \omega \otimes s)$ 
is $\tilde{\mathfrak{t}}^{k}$-invariant, we apply the formula  
\begin{equation}
\Psi^{\mathbb{S}_{ N}} (a) \pi_{N}^{*} (\omega \otimes s) = {\mathcal L}_{X_{a}^{N}}( \pi_{N}^{*}\omega )\otimes
\pi_{N}^{*} \omega + (\pi_{N}^{*}\omega )\otimes  \nabla^{ \Psi ,\Pi^{*} {\mathcal S}_{B}}(\pi_{N}^{*}s).
\end{equation}
If $a\in \tilde{\mathfrak{t}}^{k}$ then 
\begin{equation}\label{i1}
{\mathcal L}_{X_{a}^{N}}( \pi_{N}^{*} \omega )= \pi_{N}^{*} {\mathcal L}_{ (\pi_{N})_{*} X_{a}^{N}}\omega=0, 
\end{equation}
since  $(\pi_{N})_{*} X_{a}^{N}=0$.  On the other hand, from 
(\ref{more-precisely}), 
we deduce that 
$$
\nabla^{\Psi , \Pi^{*}  \mathcal S_{B}}_{X_{a}^{N}}( \pi_{N}^{*}s)= \pi_{N}^{*} (\nabla^{\Psi , \pi^{*}  \mathcal S_{B}}_{(\pi_{N})_{*} X_{a}^{N}} r) =0 
$$
by using again   $(\pi_{N})_{*} X_{a}^{N}=0$.
It follows  that $\pi_{N}^{*} (\omega\otimes s)$ is $\tilde{\mathfrak{t}}^{k}$-invariant.
We have proven that  $\pi_{N}^{*}(\omega \otimes s)$ is 
$\mathfrak{t}^{2k}$-invariant. 
From Lemma  \ref{de-adaugat},  $(F_{1})_{\mathbb{S}} \pi_{N}^{*} (\omega \otimes s)$ 
is   $\mathfrak{t}^{2k}$-invariant (in particular, $\tilde{\mathfrak{t}}^{k}$-invariant) and  from 
Proposition  \ref{p-back-forward-inv} ii) we obtain that  $\tau_{1} (\omega \otimes s)$ 
is $\tilde{\mathfrak{t}}^{k}$-invariant, as needed.\

ii)  Let $u= \xi + r + X\in \Gamma_{\mathfrak{t}^{k}} (E_{M})$.  Then 
$X$ and $\xi$ are invariant with respect to the standard (by Lie derivatives) action of $\mathfrak{t}^{k}$
on $M$  and  $r$ is $\nabla^{\Psi }$-parallel.
We obtain 
\begin{equation}\label{invariance-pull-part}
{\mathcal L}_{X_{a}^{N}}( \pi_{N}^{*} \xi )=0,\ \forall a\in \mathfrak{t}^{2k},\ \nabla^{\Psi ,  \pi_{N}^{!} E_{M}} (\pi_{N}^{*} r)=
0,
\end{equation}
where in the second relation (\ref{invariance-pull-part}) we used (\ref{pull-back-extension}).
We claim that there is a unique $\mathfrak{t}^{2k}$-invariant  lift $\widehat{X}_{0}\in {\mathfrak X}_{\mathfrak{t}^{2k}} (N)$ 
of $X$ with the property that  
\begin{equation}\label{nn}
\mathrm{pr}_{T^{*}N} F_{1}( \pi_{N}^{*}( \xi + r) + \widehat{X}_{0}) = \tilde{\pi}_{N}^{*} (\tilde{\xi})
\end{equation}
for  an (invariant)  $1$-form $\tilde{\xi }\in \Omega^{1}( \tilde{M}).$ To prove the claim we assume that 
the isomorphism $F_{1}$ is defined by data $( \beta  , K, \Phi )$ as in Section \ref{trans-sect-basic},
where $\beta  \in \Omega^{2}(N)$, $K\in \mathrm{Isom} (\Pi^{*}\mathcal G_{B}, \Pi^{*}\tilde{\mathcal G}_{B} )$
and $\Phi \in \Omega^{1}( N,  \Pi^{*}\tilde{\mathcal G}_{B}  ).$ 
Let $\widehat{X}$ be an arbitrary  $\mathfrak{t}^{2k}$-invariant lift of $X$.
Then 
\begin{align}
\nonumber& \mathrm{pr}_{TN} F_{1}( \pi_{N}^{*} ( \xi + r) +\widehat{X}) = \widehat{X},\
\mathrm{pr}_{\tilde{\pi}_{N}^{*} \tilde{\mathcal G }} F_{1}( \pi_{N}^{*} ( \xi + r) +\widehat{X}) =  K (\pi^*_Nr) +\Phi (\widehat{X})\\
\label{expr-F-proj}&  \mathrm{pr}_{T^{*}N} F_{1}( \pi_{N}^{*} ( \xi + r) +\widehat{X}) = \pi_{N}^{*} (\xi ) - 2 \Phi^{*} K( \pi_{N}^{*} r) + i_{\widehat{X}} \beta - \Phi^{*} \Phi (\widehat{X}).
\end{align}
From (\ref{invariance-pull-part}) $\pi_{N}^{*} (\xi + r)$ is $\mathfrak{t}^{2k}$-invariant
and we obtain that $F_{1}( \pi_{N}^{*} (\xi + r) +\widehat{X})$ is also $\mathfrak{t}^{2k}$-invariant.
The non-degeneracy of (\ref{pairing-non-deg})
together with the $\mathfrak{t}^{2k}$-invariance of  $\mathrm{pr}_{T^{*}N} F_{1}( \pi_{N}^{*} ( \xi + r) +\widehat{X}) $
and the last formula (\ref{expr-F-proj}) 
imply that there is a unique $\mathfrak{t}^{2k}$- invariant lift $\widehat{X}_{0}$ of $X$ such that 
$\mathrm{pr}_{T^{*}N} F_{1}( \pi_{N}^{*} ( \xi + r) +\widehat{X}_{0}) $
is horizontal with respect to $\tilde{\pi}_{N}$ and invariant, hence basic.  For this  lift
relation (\ref{nn}) holds.

On the other hand, since  $F_{1}( \pi_{N}^{*} (\xi + r) +\widehat{X}_{0})$ is  $\mathfrak{t}^{2k}$-invariant,
its projection to $\Pi^{*}\tilde{\mathcal G}_{B}$ is $\nabla^{\tilde{\Psi}, \tilde{\pi}_{N}^{!}\tilde{E}_{\tilde{M}}}$-parallel
and is therefore the pullback of a $\nabla^{\tilde{\Psi}}$-parallel section $\tilde{r}$ of
$\tilde{\pi}^{*} {\mathcal G}_{B} .$ To summarize,
\begin{equation}\label{FS}
F_{1}(  \pi_{N}^{*} (\xi + r) +\widehat{X}_{0})=\tilde{\pi}_{N}^{*} (\tilde{\xi} + \tilde{r}) + \widehat{X}_{0}
\end{equation} 
where $\tilde{\xi}\in \Omega^{1}(\tilde{M})$ is $\tilde{\mathfrak{t}}^{k}$-invariant
and $\nabla^{\tilde{\Psi}}( \tilde{r})=0.$
We define
\begin{equation}
\rho_{1} ( u):= \tilde{\xi} + \tilde{r} + (\tilde{\pi}_{N})_{*} \widehat{X}_{0}.
\end{equation}
Obviously, $\rho_{1} (u)$ is $\tilde{\mathfrak{t}}^{k}$-invariant  and the resulting map 
$\rho_{1} : \Gamma_{\mathfrak{t}^{k}} (E_{M}) \rightarrow  \Gamma_{\tilde{t}^{k}}(\tilde{E}_{\tilde{M}})$ is
$C^{\infty}(B)$-linear. 
It remains to prove 
that $(\rho_{1}, \tau_{1})$  satisfy relations   (\ref{properties-rho}).
In order to prove the first relation (\ref{properties-rho}), let
$u:= \xi + r + X\in \Gamma_{\mathfrak{t}^{k}} (E_{M})$, $\rho_{1} (u)= \tilde{\xi} + \tilde{r} +(\tilde{\pi}_{N})_{*} \widehat{X}_{0}$ constructed as above, 
$s\in \Gamma_{\mathfrak{t}^{k}} (\mathbb{S}_{M})$ and $\sigma \in \Gamma (\mathbb{S}_{\tilde{N}})$. 
Using (\ref{pull-compat-cliff}) and Remark \ref{later}, 
\begin{equation}
\pi_{N}^{*} \gamma_{u} (s) = \gamma_{\pi_{N}^{*} (\xi + r) + \widehat{X}_{0}} \pi_{N}^{*} s,\ 
(\tilde{\pi}_{N})_{*} \gamma_{\tilde{\pi}_{N}^{*}(\tilde{\xi} + \tilde{r})+ \widehat{X}_0}\sigma  = \gamma_{\rho_{1} (u)} (\tilde{\pi}_{N})_{*} \sigma ,    
\end{equation}
and  we write
\begin{eqnarray}
 \nonumber \tau_{1} \gamma_{u} (s) &=& (\tilde{\pi}_{N})_{*}( F_{1})_{\mathbb{S}} ( \pi_{N})^{*}\gamma_{u}(s) =
(\tilde{\pi}_{N})_{*} (F_{1})_{\mathbb{S}}  \gamma_{\pi_{N}^{*} (\xi + r) + \widehat{X}_{0}} \pi_{N}^{*} s\\
\nonumber  &=&
(\tilde{\pi}_{N})_{*}  \gamma_{\tilde{\pi}_{N}^{*} (\tilde{\xi} + \tilde{r})+ \widehat{X}_0} (F_{1})_{\mathbb{S}} \pi_{N}^{*} (s) 
= \gamma_{\rho_{1} (u)} (\tilde{\pi}_{N})_{*} (F_{1})_{\mathbb{S}}  (\pi_{N})^{*} (s)\\
\nonumber&  =& \gamma_{\rho_{1} (u)} \tau_{1} (s),
\end{eqnarray}
where in the third equality we used  Lemma \ref{iso-bdle}
and relation (\ref{FS}). 
The first relation of (\ref{properties-rho}) is proved.  The second relation of (\ref{properties-rho}) follows from the next computation, which uses the first relation of (\ref{properties-rho}) 
together with $\tau_{1}\circ \slashed{d}_{M} = \slashed{d}_{\tilde{M}}\circ \tau_{1}$
 proved in part i) of Theorem \ref{thm-T-duality}
(where $\slashed{d}_{M}$ and $\slashed{d}_{\tilde{M}}$ are the Dirac generating operators
of $E_{M}$ and $E_{\tilde{M}}$ acting on $\Gamma (\mathbb{S}_{M})$ and $\Gamma (\mathbb{S}_{\tilde{M}})$ respectively).
For any $u, v\in \Gamma_{\mathfrak{t}^{k}} (E_{M})$ and
$s\in \Gamma_{\mathfrak{t}^{k}} (\mathbb{S}_M)$, we have 
\begin{align*}
\nonumber\gamma_{\rho_{1} [u, v]_{E_{M}}} \tau_{1} (s)& = \tau_{1} \gamma_{[u, v]_{E_{M}}}( s) = \tau_{1} [[\slashed{d}_{M}, \gamma_{u}], \gamma_{v} ] (s)
= [[\slashed{d}_{\tilde{M}} ,\gamma_{\rho_{1} (u) }], \gamma_{\rho_{1} (v)}] \tau_{1} (s)\\
\nonumber& =\gamma_{[\rho_{1} (u), \rho_{1} (v) ]_{\tilde{E}_{\tilde{M}}}}\tau_{1} (s) .
\end{align*}
In order to prove the third relation of (\ref{properties-rho}), we remark that for any  $u\in \Gamma_{{\mathfrak{t}^k}} (E_{M})$,
$\langle u, u\rangle_{E_{M}}$ is  $\mathfrak{t}^{k}$-invariant and hence is the pullback of a function $g\in C^{\infty}(B).$ 
Similarly, $\langle \rho_{1} (u), \rho_{1} (u)\rangle_{\tilde{E}_{\tilde{M}}}$ is the pullback of a function 
$\tilde{g}\in C^{\infty}(B).$ We need to show that $g = \tilde{g}.$ This follows from the next computation which uses the first relation of (\ref{properties-rho}):
\begin{align*}
\nonumber \tilde{\pi}^{*} (g)  \tau_{1} (s)& = \tau_{1} (\pi^{*} (g) s) =\tau_{1} ( \langle u, u\rangle_{E_{M}} s)=
\tau_{1} \gamma_{u}^{2} (s) = \gamma_{\rho_{1} (u)}^{2} \tau_{1} (s)\\
& = \langle \rho_{1} (u), \rho_{1} (u)\rangle_{\tilde{E}_{\tilde{M}}} \tau_{1} (s) = \tilde{\pi}^{*} (\tilde{g}) \tau_{1} (s).
\end{align*}  
From the third relation of (\ref{properties-rho}) we obtain that $\rho_{1}$ is an isomorphism (of vector spaces and even of $C^\infty (B)$-modules).
The proof of the theorem is completed.
\end{proof}

\begin{cor} The  map 
\begin{equation}\label{def-tau-invariant}
\tau : =  (\tilde{\pi}_{N})_{!} \circ  F_{\mathbb{S}} \circ \pi_{N}^{!} : \Gamma_{\mathfrak{t}^{k}} ( \mathbb{S}_{E})\rightarrow
 \Gamma_{\tilde{\mathfrak{t}}^{k}} (\mathbb{S}_{\tilde{E}})
\end{equation}
is an isomorphism of $C^\infty (B)$-modules.
\end{cor}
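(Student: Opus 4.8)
The plan is to show that $\tau$ is bijective by exhibiting an explicit two-sided inverse built from the same construction applied to the ``reverse'' $T$-duality, or, more economically, by leveraging the structural results already established. I would first note that the target space of $\tau$ on invariant spinors is indeed $\Gamma_{\tilde{\mathfrak{t}}^k}(\mathbb{S}_{\tilde{E}})$ by Theorem~\ref{thm-T-duality} i); so only surjectivity and injectivity remain to be checked. The cleanest route is to observe that each of the three maps composing $\tau$ becomes an isomorphism when restricted to the appropriate spaces of invariant sections.

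First I would argue that $\pi_N^{!}: \Gamma_{\mathfrak{t}^k}(\mathbb{S}_E) \to \Gamma_{\mathfrak{t}^{2k}}(\mathbb{S}_{\pi_N^{!}E})$ is an isomorphism. After reducing via an admissible invariant pair to the standard situation $E_M = T^*M\oplus \pi^*\mathcal{G}_B\oplus TM$ with $\mathbb{S}_M = \Lambda(T^*M)\hat\otimes \pi^*\mathcal{S}_B$, an invariant spinor is a sum of terms $\omega\otimes \Pi^*s_B$ with $\omega$ a $\mathfrak{t}^k$-invariant form on $M$ and $s_B\in \Gamma(\mathcal{S}_B)$ over $B$; such forms are exactly pullbacks under $\pi$ of $\Omega(B,\mathcal{S}_B)$-data together with the Cartan/contraction slots along the fibre, and since $N\to M$ is a principal $\tilde{T}^k$-bundle, pullback $\pi_N^*$ identifies $\mathfrak{t}^k$-invariant forms on $M$ with $\mathfrak{t}^{2k}$-invariant forms on $N$ that are additionally $\tilde{\mathfrak{t}}^k$-horizontal; but by the non-degeneracy condition this horizontality is automatically built into the image of $F_{\mathbb{S}}\circ\pi_N^{!}$, so $\pi_N^{!}$ is injective with image precisely the $\mathfrak{t}^{2k}$-invariant spinors. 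Dually, $(\tilde{\pi}_N)_{!}$ restricted to $\Gamma_{\mathfrak{t}^{2k}}(\mathbb{S}_{\tilde{\pi}_N^{!}\tilde{E}})$ is surjective onto $\Gamma_{\tilde{\mathfrak{t}}^k}(\mathbb{S}_{\tilde{E}})$, with kernel complementary to the horizontal part, by the concrete formula \eqref{concrete-expr-push} for the pushforward (fibre integration recovers every invariant spinor downstairs from a suitable horizontal lift). Finally $F_{\mathbb{S}}$ is a bundle isomorphism, hence an isomorphism on invariant sections by Lemma~\ref{de-adaugat} combined with Lemma~\ref{extensie-action} ii).

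The key point making the composition an isomorphism rather than merely injective or merely surjective is that the non-degeneracy of $\beta - \Phi^*\Phi$ on $\mathrm{Ker}(d\pi_N)\times\mathrm{Ker}(d\tilde{\pi}_N)$ forces the image of $F_{\mathbb{S}}\circ\pi_N^{!}$ to land in exactly the subspace of $\Gamma_{\mathfrak{t}^{2k}}(\mathbb{S}_{\tilde{\pi}_N^{!}\tilde{E}})$ on which $(\tilde{\pi}_N)_{!}$ is injective: this is the spinorial shadow of the fact that, in the proof of Theorem~\ref{thm-T-duality} ii), the non-degeneracy produces a \emph{unique} invariant lift $\widehat{X}_0$. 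I would make this precise by running the same lift argument one categorical level up, showing that $F_{\mathbb{S}}$ carries the ``$\pi_N$-basic slice'' of $\mathfrak{t}^{2k}$-invariant spinors isomorphically onto the ``$\tilde{\pi}_N$-basic slice,'' on which fibre integration $(\tilde{\pi}_N)_{*}$ is a bijection with inverse given by pullback $\tilde{\pi}_N^*$ up to the sign in \eqref{concrete-expr-push}. Concretely, the inverse of $\tau$ is $\epsilon\,\pi_{N!}\circ (F_{\mathbb{S}})^{-1}\circ \tilde{\pi}_N^{!}$ for a suitable sign $\epsilon\in\{\pm 1\}$, and one checks $\tau\circ\tau^{-1}=\mathrm{Id}$ and $\tau^{-1}\circ\tau=\mathrm{Id}$ on invariant spinors using $\pi_{N!}\circ\pi_N^{!} = \pm\mathrm{Id}$ on invariant sections (immediate from \eqref{concrete-expr-push} and $\int_{F}\mathrm{vol}_F$ normalization, since the fibres of $\pi_N\colon N\to M$ have volume one) together with the analogous identity for $\tilde{\pi}_N$.

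The $C^\infty(B)$-linearity is then essentially free: $\pi_N^{!}$ and $\tilde{\pi}_N^{!}$ are $C^\infty(B)$-linear because $\Pi=\pi\circ\pi_N=\tilde\pi\circ\tilde\pi_N$ and pullback of functions from $B$ commutes with all the operations; $F_{\mathbb{S}}$ is $C^\infty(N)$-linear hence a fortiori $C^\infty(B)$-linear; and fibre integration satisfies $(\tilde\pi_N)_{!}((\Pi^*g)\cdot s) = g\cdot(\tilde\pi_N)_{!}(s)$ by the projection formula \eqref{prop-push-forward}. The main obstacle I anticipate is bookkeeping the precise subspace of $\Gamma_{\mathfrak{t}^{2k}}$ on which $(\tilde\pi_N)_{!}$ is injective and verifying that $F_{\mathbb{S}}$ maps the corresponding $\pi_N$-side subspace onto it — i.e.\ transporting the uniqueness-of-lift argument from sections of Courant algebroids to sections of the weighted spinor bundles. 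This should follow formally from the compatibility $\tau(\gamma_u s) = \gamma_{\rho(u)}\tau(s)$ of Theorem~\ref{thm-T-duality} ii) together with the fact that $\rho$ is already known to be an isomorphism: indeed, injectivity of $\tau$ on invariant spinors can be deduced from injectivity of $\rho$ by a Clifford-module argument (an invariant spinor annihilated by $\tau$ would, via the intertwining relation, be annihilated by $\gamma_{\rho(u)}$ for all invariant $u$, and since $\rho$ is onto and the $\gamma$-action of $\tilde E$ on $\mathbb{S}_{\tilde E}$ is irreducible fibrewise, it must vanish), and surjectivity then follows by a dimension/duality count fibrewise over $B$.
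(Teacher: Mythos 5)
Your closing paragraph is, in essence, the paper's entire proof: $\tau$ is $C^\infty(B)$-linear, nonzero, and intertwines the Clifford actions through the isomorphism $\rho$, so by irreducibility of the spinor modules $\ker\tau$ and $\mathrm{im}\,\tau$ are submodules and must be $0$ and everything, respectively. (Your phrasing of the injectivity step is slightly garbled -- from $\tau(s)=0$ and $\tau(\gamma_u s)=\gamma_{\rho(u)}\tau(s)$ you conclude that $\gamma_u s\in\ker\tau$, i.e.\ that $\ker\tau$ is a submodule, not that $s$ is annihilated by $\gamma_{\rho(u)}$; and surjectivity needs no ``dimension count,'' since a nonzero submodule of an irreducible module is the whole module.) The long preliminary analysis of the three constituent maps is not needed for this corollary and, as written, contains a false claim: the image of $\pi_N^!$ on invariant spinors is \emph{not} all of $\Gamma_{\mathfrak{t}^{2k}}(\mathbb{S}_{\pi_N^!E})$ but only the $\pi_N$-basic slice (spinors with no $\tilde\theta$-components), exactly as your own preceding sentence says; invariant spinors such as $\tilde\theta_1\otimes\Pi^*s_B$ are not pullbacks from $M$.

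The one genuine gap, present in both of your arguments, is that you never verify the hypothesis that $\tau$ is not the zero map on invariant spinors. This is not automatic: the fibre integration $(\tilde\pi_N)_!$ annihilates every spinor whose component of top vertical degree along $\tilde\pi_N$ vanishes, and a spinor of the form $\pi_N^!s$ has \emph{no} vertical components along $\pi_N$, so one must check that $F_{\mathbb{S}}$ creates a full top-degree component in the $\tilde\theta$-directions. This is precisely where the non-degeneracy of $\beta-\Phi^*\Phi$ on $\mathrm{Ker}(d\pi_N)\times\mathrm{Ker}(d\tilde\pi_N)$ enters concretely (in the exact case it is the statement that $e^{-\beta}$ has invertible top bi-vertical component). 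You gesture at this in the middle of your proposal but explicitly defer it as ``the main obstacle,'' so the argument is not closed. Supplying that one computation, and then discarding the rest of the three-map analysis in favour of your final Schur-type paragraph, gives exactly the paper's proof.
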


\begin{proof} This follows from the irreducibility of the spinor bundles together with the fact that $\tau$
is $C^{\infty}(B)$-linear,    is not the zero map and  intertwines
the Clifford multiplications in the commutative diagram 
 \begin{equation*} \begin{xy}
    \xymatrix{ 
 \Gamma_{\mathfrak{t}^{k}}(E) \times \Gamma_{\mathfrak{t}^{k}}(\mathbb{S}_E)\ar@{->}[r] \ar@{->}[d]_{\rho \times \tau}
 &  
 \Gamma_{\mathfrak{t}^{k}}(\mathbb{S}_{\tilde{E}})\ar@{->}[d]^{\tau}
 \\
 \Gamma_{\tilde{\mathfrak{t}}^{k}}(\tilde{E})\times  \Gamma_{\tilde{\mathfrak{t}}^{k}}(\mathbb{S}_{\tilde{E}})\ar@{->}[r]& \Gamma_{\tilde{\mathfrak{t}}^{k}}(\mathbb{S}_{\tilde{E}}).}
    \end{xy}
\end{equation*}
\end{proof}

\begin{rem}{\rm 
As in the $T$-duality for exact or heterotic Courant algebroids, the map $\rho$ 
constructed in Theorem 
\ref{thm-T-duality}
can be interpreted as
an isomorphism between Courant algebroids over $B$ (see  \cite{baraglia} and \cite{T-duality-exact}).
}
\end{rem}

In the next remark we  discuss Theorem
\ref{thm-T-duality} without the assumption that $F_{\mathbb{S}}$ is globally defined.

\begin{rem}\label{global-remark2} {\rm i) 
We claim  that 
the isomorphism $(F_{1})_{\mathbb{S}}$  introduced before Remark \ref{F1:rem}  (hence also $F_{\mathbb{S}}$) 
is always defined on  subsets of $N$ of the form
$\Pi^{-1}(V)$, where $V\subset B$ is open  and sufficiently small.  Indeed,
$(F_{1})_{\mathbb{S}\vert_{ U}}\in \mathrm{Isom}
(\mathbb{S}_{N}\vert_{U}, \tilde{\mathbb{S}}_{N}\vert_{U})$
 is defined, whenever  $U\subset N$ is open and sufficiently small (see Lemma \ref{iso-bdle}).
Letting  $V: = \Pi (U)$, we can find (reducing $V$ is necessary)
invariant frames $(s_{i})$ and $(\tilde{s}_{i})$
of  $\mathbb{S}_{N}$ and  $\mathbb{S}_{\tilde{N}}$ 
on $\Pi^{-1} (V).$ 
With respect to these frames, $(F_{1})_{\mathbb{S}\vert_{U}}$ is given by
\begin{equation}
(F_{1})_{\mathbb{S}\vert_{U}} (s_{i}) =\sum_{j}  C_{ji} \tilde{s}_{j}
\end{equation}
for some functions $C_{ji} \in C^{\infty}( U)$.  From  the invariance of  $(F_{1})_{\mathbb{S}\vert_{U}}$, we deduce that 
${\mathcal L}_{X_{a}^{N}} C_{ji} =0$ for any $a\in \mathfrak{t}^{2k}$, i.e.\   
$C_{ji} = \Pi_{0}^{*} (c_{ji})$ where $c_{ji}\in C^{\infty}(V)$ and $\Pi_{0} : U \rightarrow V$
is the restriction of $\Pi .$ Using that  all $\pi_{N}^{!} E_{M}$,  $\tilde{\pi}_{N}^{!} \tilde{E}_{\tilde{M}}$, 
 $\mathbb{S}_{N}$ and  $ \mathbb{S}_{\tilde{N}}$ 
admit invariant frames on $\Pi^{-1}(V)$, 
we deduce from the compatibility of $(F_{1})_{\mathbb{S}\vert_{U}}$  with $F_{1}\vert_{U}$ that 
\begin{equation}
(F_{1})_{\mathbb{S}\vert_{\Pi^{-1} (V)}} (s_{i}):= \sum_{j}  \Pi^{*} (c_{ji}) \tilde{s}_{j}, 
\end{equation} 
defined on $\Pi^{-1} (V)$,  is compatible with $F_{1}\vert_{\Pi^{-1} (V)}.$\

ii) From the above, the map 
$$
\tau_{V}: = (\tilde{\pi}_{N})_{!}\circ F_{\mathbb{S}}\circ 
\pi_{N}^{!}  : \Gamma ( \mathbb{S}_{E}\vert_{\pi^{-1}  (V)})
\rightarrow \Gamma ( \mathbb{S}_{\tilde{E}}\vert_{\tilde{\pi}^{-1} (V)})
$$
is defined.  Theorem 
\ref{thm-T-duality} still holds,   
the only difference being that
$\tau$ is replaced by the locally defined maps $\tau_{V}$,    for any $V\subset B$  open and sufficiently small. (The isomorphism $\rho$ remains defined
globally.)
}
\end{rem}

\subsection{Existence of a $T$-dual}

Let $\pi : M \rightarrow B$ be a principal $T^{k}$-bundle and $\mathcal H$ a principal
connection on $\pi$, with connection form $\theta = \sum_{i=1}^{k}\theta_{i} e_{i}\in \Omega^{1} (M, \mathfrak{t}^{k})$,
where $(e_{i})$ is a basis of $\mathfrak{t}^{k}$
 such that $T^{k} = \mathfrak{t}^{k}/\mathrm{span}_{\mathbb{Z}} \{ e_{i}\} .$ 
Let $(E, \Psi) $ be a standard  Courant algebroid with an action 
$\Psi :\mathfrak{t}^{k}\rightarrow \mathrm{Der} (E)$ 
which lifts the vertical paralellism of $\pi$,  defined by a quadratic Lie algebra bundle $(\mathcal G_{B}, [\cdot , \cdot ]_{\mathcal G_{B}},\langle\cdot ,
\cdot \rangle_{\mathcal G_{B}})$ whose adjoint representation is an isomorphism, a 
connection $\nabla^{B}$ on $\mathcal G_{B}$ which preserves $[\cdot , \cdot ]_{\mathcal G_{B}}$
and $\langle \cdot , \cdot \rangle_{\mathcal G_{B}}$, 
a $3$-form
$H_{(3)}^{B}$, $2$-forms $H_{(2)}^{i, B}$ and sections $r_{i}^{B}\in \Gamma (\mathcal G_{B})$ 
($1\leq i\leq k$)  as 
in  Example \ref{particular-class}
(see also Corollary \ref{data-extended}).  We denote by $(e^{i})$ the dual basis of $(e_{i})$
and by $\tilde{T}^{k}=(\mathfrak{t}^{k})^{*}/ \mathrm{span}_{\mathbb{Z}}\{ e^{i}\}$
the dual torus.

\begin{thm}\label{statement-T-dual} Assume that the closed forms
$\mathcal K_{i}$ defined by 
 (\ref{k-i}) represent   integral cohomology classes 
in  $H^{2} (B, \mathbb{R})$ and let   $\tilde{\pi} : \tilde{M} \rightarrow B$ be a principal
$\tilde{T}^{k}$-bundle with connection form $\tilde{\theta} =\sum_{i=1}^{k} \tilde{\theta}_{i} e^{i}$,  such
that $(d\tilde{\theta}_{i})^{B}=\mathcal K_{i}$ for  any $i$.  
Then $E$ admits a $T$-dual $\tilde{E}$,  defined on 
$\tilde{M}$ and  
\begin{equation}\label{chern-classes}
\left[\sum_{i=1}^{k}(d\theta_{i})^{B}\wedge (d \tilde{\theta}_{i} )^{B}\right]= [ \langle {\mathfrak r}^{B}\wedge \mathfrak{r}^{B} \rangle_{\mathcal G_{B}} ]
\in H^{4} (B, \mathbb{R}).
\end{equation} 

\end{thm}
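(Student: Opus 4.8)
The strategy is to construct $\tilde E$ explicitly as a standard Courant algebroid on $\tilde M$ using the data on $B$ that defines $E$, together with the forms $\tilde\theta_i$ on $\tilde M$, and then to exhibit an invariant isomorphism $F:\pi_N^!E\to\tilde\pi_N^!\tilde E$ satisfying the non-degeneracy condition of Definition~\ref{def-T-duality}. First I would set up $\tilde E$ via Example~\ref{particular-class} (equivalently Corollary~\ref{data-extended}): take the same quadratic Lie algebra bundle $\mathcal G_B$ on $B$, the same connection $\nabla^B$, the same sections $r_i^B$, but swap the roles of the two tori. Concretely, on $\tilde M$ one defines $\tilde{\mathcal G}=\tilde\pi^*\mathcal G_B$, lifts the vertical parallelism of $\tilde\pi$ to an action $\tilde\Psi$ whose partial connection has the $A_i^B=\mathrm{ad}_{r_i^B}$ encoded as before, and one specifies the invariant components $\tilde H_{(3)}^B$, $\tilde H_{(2)}^{i,B}$, $\tilde H_{(1)}^{ij,B}$, $\tilde H_{(0)}^{ijs,B}$ of $\tilde H$ and $\tilde R_{(\cdot)}^{(\cdot),B}$ of $\tilde R$ by the analogues of \eqref{expr-R}--\eqref{h-1-ij} with $(d\theta_i)^B$ everywhere replaced by $(d\tilde\theta_i)^B=\mathcal K_i$. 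The point of the integrality hypothesis is precisely that it guarantees the existence of the principal $\tilde T^k$-bundle $\tilde\pi:\tilde M\to B$ with a connection whose curvature components realize the prescribed classes $\mathcal K_i$; this is the analogue of the Bouwknegt--Hannabuss--Mathai construction in the exact case.

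Next I would verify that this data indeed satisfies the reduced compatibility relations \eqref{reduced-rel} on $\tilde M$ (with $\theta$ replaced by $\tilde\theta$), which by Corollary~\ref{data-extended} is exactly what is needed for $\tilde E$ to be a well-defined standard Courant algebroid carrying the action $\tilde\Psi$. The key observation is symmetry: in Example~\ref{particular-class} the forms $H_{(3)}^B$ and $H_{(2)}^{i,B}$ are constrained only through the requirement that $\mathcal K_i$ be closed and that $dH_{(3)}^B=\langle\mathfrak r^B\wedge\mathfrak r^B\rangle-\mathcal K_i\wedge(d\theta_i)^B$ be exact; one checks the latter form is cohomologous to $\langle\mathfrak r^B\wedge\mathfrak r^B\rangle-\mathcal K_i\wedge(d\tilde\theta_i)^B$ up to the class $\sum_i(d\theta_i)^B\wedge(d\tilde\theta_i)^B$, so a suitable $\tilde H_{(3)}^B$ exists iff $[\langle\mathfrak r^B\wedge\mathfrak r^B\rangle]=[\sum_i(d\theta_i)^B\wedge(d\tilde\theta_i)^B]$ in $H^4(B,\mathbb R)$. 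This forces \eqref{chern-classes} and simultaneously supplies the last ingredient of the construction; I would isolate this cohomological computation as the crux.

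Having built $(\tilde E,\tilde\Psi)$, I would then write down the candidate isomorphism $F$ on the fiber product $N=M\times_B\tilde M$. Working with the preferred dissections of Lemma~\ref{schimbare-disectie} so that $\pi_N^!E$ and $\tilde\pi_N^!\tilde E$ have the same underlying bundle $T^*N\oplus\Pi^*\mathcal G_B\oplus TN$, take $K=\mathrm{Id}_{\Pi^*\mathcal G_B}$, $\Phi=\sum_i\tilde\theta_i\otimes r_i^B$ (so that $\Phi$ is invariant), and $\beta=\sum_i\theta_i\wedge\tilde\theta_i$ (the pullback to $N$ of both connection forms, wedged). One then checks the three relations \eqref{con-T}--\eqref{H-tH} of Lemma~\ref{iso-upstairs} hold; these reduce, after separating bi-degrees with respect to the two tori, precisely to the relations among the $H^{(\cdot),B}_{(\cdot)}$ and $R^{(\cdot),B}_{(\cdot)}$ and $A_i^B$ already verified, together with the defining identities of $\tilde\theta_i$. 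Finally, with $\Phi$ and $\beta$ as above, $\beta-\Phi^*\Phi$ evaluated on $\mathrm{Ker}(d\pi_N)\times\mathrm{Ker}(d\tilde\pi_N)$ is computed from $\theta$ and $\tilde\theta$: the $\theta_i\wedge\tilde\theta_j$-part of $\beta$ pairs $\partial_{\theta_i}$ with $\partial_{\tilde\theta_j}$ as $\delta_{ij}$, while $\Phi^*\Phi=\sum_{i,j}\langle r_i^B,r_j^B\rangle\tilde\theta_i\tilde\theta_j$ is vertical only in the $\tilde M$ directions and hence contributes nothing to the mixed pairing, so $\beta-\Phi^*\Phi$ restricts to the tautological pairing $\mathfrak t^k\times(\mathfrak t^k)^*\to\mathbb R$, which is non-degenerate. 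By Lemma~\ref{iso-upstairs} this establishes that $E$ and $\tilde E$ are $T$-dual, and \eqref{chern-classes} has already been obtained as the obstruction-vanishing condition, completing the proof. The main obstacle I anticipate is the bookkeeping in the bi-degree decomposition of \eqref{con-T}--\eqref{H-tH}: one must carefully track which components of $\tilde H$ and $\tilde R$ are determined and check that the swap $\theta\leftrightarrow\tilde\theta$ is consistent across all of them, the genuinely substantive point being the $H^4(B,\mathbb R)$ identity which is where the hypothesis enters and which yields the stated cohomological relation.
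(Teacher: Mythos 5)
Your overall strategy coincides with the paper's: construct $\tilde E$ from the base data of Example \ref{particular-class} with the roles of $\theta$ and $\tilde\theta$ interchanged, then produce the invariant isomorphism $F$ through a triple $(\beta,K,\Phi)$ as in Lemma \ref{iso-upstairs} and check the non-degeneracy condition. Your treatment of the cohomological statement is also essentially right, though it is cleaner to observe that \eqref{chern-classes} is automatic from the existence of $E$ (the form $\langle\mathfrak r^B\wedge\mathfrak r^B\rangle_{\mathcal G_B}-\sum_i\mathcal K_i\wedge(d\theta_i)^B$ equals $dH^B_{(3)}$, hence is exact) and that one may simply take $\tilde H^B_{(3)}:=H^B_{(3)}$, so there is no further obstruction to check.

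The genuine problem is your ansatz for $(\beta,K,\Phi)$. Once $K=\mathrm{Id}$ is fixed, relation \eqref{con-T} together with \eqref{B-con} and the injectivity of the adjoint representation determines $\Phi$ uniquely: one needs $\mathrm{ad}_{\Phi}=\sum_i\tilde\theta_i\otimes\mathrm{ad}_{\tilde r_i^B}-\sum_i\theta_i\otimes\mathrm{ad}_{r_i^B}$, i.e.\ $\Phi=\sum_i\tilde\pi_N^*\tilde\theta_i\otimes\Pi^*\tilde r_i^B-\sum_i\pi_N^*\theta_i\otimes\Pi^*r_i^B$ as in \eqref{K-P}. Your $\Phi=\sum_i\tilde\theta_i\otimes r_i^B$ omits the $-\sum_i\theta_i\otimes r_i^B$ term, so \eqref{con-T} already fails whenever some $r_i^B\neq 0$, and the bi-degree verification of \eqref{con-T}--\eqref{H-tH} that you assert cannot go through: your $F$ is not a Courant algebroid isomorphism. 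The error propagates. With the correct $\Phi$ one has $(\Phi^*\Phi)(X_{\tilde a},X_b)=-\langle\tilde r_a^B,r_b^B\rangle$, which is not zero, contrary to your claim that $\Phi^*\Phi$ contributes nothing to the mixed pairing; the $2$-form $\beta$ must therefore carry a compensating term, and the correct choice is $\beta=(\langle r_i^B,\tilde r_j^B\rangle-\delta_{ij})\,\theta_i\wedge\tilde\theta_j$ rather than $\sum_i\theta_i\wedge\tilde\theta_i$, both to satisfy \eqref{H-tH} (whose $\theta_i\wedge\tilde\theta_j$ component forces $df_{ij}=d\langle\tilde r_j^B,r_i^B\rangle$ in the notation of the paper) and to make $\beta-\Phi^*\Phi$ restrict to $\delta_{ab}$ on $\mathrm{Ker}(d\pi_N)\times\mathrm{Ker}(d\tilde\pi_N)$. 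The repair is local to these two formulas, but as written the central step of the proof fails.
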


\begin{proof}
From the expression (\ref{k-i}) of ${\mathcal K}_{i}=(d\tilde{\theta}_{i})^{B}$, we have 
\begin{equation}
(d\tilde{\theta}_{i})^{B} =   H_{(2)}^{i, B} + 2\langle {\mathfrak r}^{ B}, r_{i}^{B}\rangle_{\mathcal G_{B}} -
 \langle r_{i}^{B}, r_{j}^{B}\rangle_{\mathcal G_{B}} (d\theta_{j})^{B}.
\end{equation}
We consider the data formed by the quadratic Lie algebra  bundle   
$$
 (\tilde{\mathcal G}_{B}, [\cdot , \cdot ]^{\tilde{} }_{\mathcal G_{B}},\langle \cdot , \cdot \rangle_{\mathcal G_{B}}^{\tilde{}}):=
(\mathcal G_{B}, [\cdot , \cdot ]_{\mathcal G_{B}},\langle \cdot , \cdot \rangle_{\mathcal G_{B}}),
$$ 
connection $\tilde{\nabla}^{ B}  := \nabla^{B}$, 
sections $\tilde{r}_{i}^{B}\in \Gamma ({\mathcal G}_{B})$ (arbitrarily chosen),  $3$-form $\tilde{H}_{(3)}^{B}:= H_{(3)}^{B}$
and $2$-forms 
\begin{equation}\label{tilde-h2} 
\tilde{H}_{(2)}^{i, B}:= (d\theta_{i} )^{B}- 2\langle {\mathfrak r}^{B}, \tilde{r}^{B}_{i}\rangle_{\mathcal G_{B}} 
+\langle \tilde{r}_{i}^{B}, \tilde{r}_{j}^{B} \rangle_{\mathcal G_{B}} (d\tilde{\theta}_{j})^{B}.
\end{equation}
From (\ref{tilde-h2}),
the $2$-form 
$$
\tilde{\mathcal{K}}_i := \tilde{H}_{(2)}^{i, B}+  2\langle {\mathfrak r}^{B}, \tilde{r}^{B}_{i}\rangle_{\mathcal G_{B}} 
-\langle \tilde{r}_{i}^{B}, \tilde{r}_{j}^{B} \rangle_{\mathcal G_{B}} (d\tilde{\theta}_{j})^{B} = (d\theta_{i})^{B}
$$
is closed. Since 
$$
d\tilde{H}_{(3)}= dH_{(3)}   = \langle \mathfrak{r}^{B}\wedge  \mathfrak{r}^{B}\rangle_{\mathcal G_{B}}
- \mathcal{K}_i \wedge (d\theta_{i})^{B}= \langle \mathfrak{r}^{B}\wedge  \mathfrak{r}^{B}\rangle_{\mathcal G_{B}}
- \tilde{\mathcal{K}}_i \wedge (d\tilde{\theta}_{i})^{B},
$$
we obtain from Example \ref{particular-class}  a standard  Courant algebroid  $(\tilde{E}, \tilde{\Psi} )$ together with an action which lifts the vertical paralellism of $\tilde{\pi}$, such that   
\begin{align}
\nonumber& \tilde{H}_{(0)}^{pqs, B} :=- \frac{1}{3} \langle [\tilde{r}_{p}^{B}, \tilde{r}_{q}^{B}]_{\mathcal G_{B}} , \tilde{r}_{s}^{B}\rangle_{\mathcal G_{B}}\\
\nonumber&  \tilde{H}_{(1)}^{ij, B} := \frac{1}{2} \left( \langle \nabla^{B }\tilde{r}_{i}^{B},\tilde{r}_{j}^{B}\rangle_{\mathcal G_{B}}
- \langle \nabla^{B }\tilde{r}_{j}^{B}, \tilde{r}_{i}^{B}\rangle_{\mathcal G_{B}}\right) 
\end{align} 
and the $\mathcal G_{B}$-valued forms $\tilde{R}_{(0)}^{ij, B}$,  $\tilde{R}_{(1)}^{i, B}$ and $\tilde{R}_{(2)}^{B}$  given by the tilde analogue of  (\ref{expr-R}). The quadratic Lie algebra bundles of $E$ and $\tilde{E}$ are the
pullbacks of $(\mathcal G_{B},  [\cdot , \cdot ]_{\mathcal G_{B}}, \langle \cdot , \cdot \rangle_{\mathcal G_{B}})$
and,  as vector bundles with scalar products, 
 $$
\pi_{N}^{!} E = \tilde{\pi}_{N}^{!}\tilde{E} = T^{*}N \oplus \Pi^{*}\mathcal G_{B} \oplus TN,
$$
where the scalar products are given by (\ref{scalar-products-pull}) and  $\Pi = \pi \circ\pi_{N} = 
\tilde{\pi}\circ \tilde{\pi}_{N}$.

We claim  that  $E$ and $\tilde{E}$ are $T$-dual, i.e.\ not only that 
$\pi_{N}^{!} E$ and $\tilde{\pi}_{N}^{!} \tilde{E}$ are isomorphic 
as Courant algebroids but  that  one can choose the isomorphism to be invariant and 
such that the non-degeneracy condition (\ref{pairing-non-deg})  is satisfied.
Such an  isomorphism $F: \pi_{N}^{!} E \rightarrow \tilde{\pi}_{N}^{!} \tilde{E}$ (if it exists)  is given by a triple  $(\beta , K, \Phi )$, where $\beta \in \Omega^{2} (N)$,  $\Phi \in  \Omega^{1}(N,  \Pi^{*}\mathcal G_{B})$ and
$K = \Pi^{*} K_{B}$ where $K_{B}\in \mathrm{Aut} (\mathcal G_{B})$ is a quadratic Lie algebra bundle automorphism
(see the proof of Lemma  \ref{schimbare-disectie}). 
Let 
\begin{align*}
\nabla^{\theta} := \nabla^{E}  + \theta_{i}\otimes \mathrm{ad}_{r_{i}} =\pi^{*}\nabla^{B}\\
\nabla^{\tilde{\theta}}:= {\nabla}^{\tilde{E}} +\tilde{\theta}_{i}\otimes \mathrm{ad}_{\tilde{r}_{i}}=\tilde{\pi}^{*}\nabla^{B}
\end{align*}
be the connections on $E$ and $\tilde{E}$ defined  before Lemma  \ref{basictohor:lem},
where $r_{i} := \pi^{*}(r_{i}^{B})$, $\tilde{r}_{i}:= \tilde{\pi}^{*}(\tilde{ r}_{i}^{B})$ and to simplify notation
we continue to  omit the summation sign and we denote by the same symbol  `$\mathrm{ad}$' the adjoint action in the  Lie algebra bundles
$\mathcal G_{B}$, $\mathcal G$, 
$\tilde{\mathcal G}$ or their pullbacks to $N$. Then
\begin{align}
\nonumber& \tilde{\pi}_{N}^{*} \nabla^{\tilde{E}} = \Pi^{*} \nabla^{B} -  (\tilde{\pi}_{N}^{*} \tilde{\theta}_{i})
\otimes\Pi^{*} ( \mathrm{ad}_{\tilde{r}_{i}^{B}})\\
\label{B-con} & {\pi}_{N}^{*} \nabla^{{E}} = \Pi^{*} \nabla^{B} - ({\pi}_{N}^{*} {\theta}_{i})
\otimes\Pi^{*} ( \mathrm{ad}_{{r}_{i}^{B}}).
\end{align}
With these preliminary remarks, we now consider separately the relations from Lemma 
\ref{iso-upstairs} and we look for $(\beta , K = \Pi^{*} K_{B}, \Phi )$ such that these relations are satisfied.
Relation 
(\ref{con-T})
can be written in the equivalent way
\begin{equation}\label{con-T-alt}
\Pi^{*}\left(  K_{B}  (\nabla^{B}) K_{B}^{-1} - \nabla^{B} \right) =
(\pi_{N}^{*} \theta_{i} )\otimes \Pi^{*} ( \mathrm{ad}_{K_{B}(r_{i}^{B})}) - (\tilde{\pi}_{N}^{*}\tilde{\theta}_{i})\otimes  (\Pi^{*} \mathrm{ad}_{\tilde{r}_{i}^{B}})+\mathrm{ad}\circ \Phi .
\end{equation}
Letting
\begin{equation}\label{K-P}
K_{B}:=\mathrm{Id}_{\mathcal G_{B}},\ \Phi :=  (\tilde{\pi}_{N}^{*} \tilde{\theta}_{i})
\otimes \Pi^{*} (\tilde{r}_{i}^{B} ) - (\pi_{N}^{*} \theta_{i}) \otimes  \Pi^{*} (r_{i}^{B} ),
\end{equation}
relation  (\ref{con-T-alt}) is obviously satisfied. 
Relation (\ref{tilde-T}) is 
automatically satisfied from Lemma \ref{simplificare-second} and our hypothesis that the adjoint representation of
$(\mathcal G_{B}, [\cdot , \cdot ]_{\mathcal G_{B}})$ is an isomorphism. 
It remains to 
find an invariant $2$-form $\beta \in \Omega^{2}(N)$ such that relation
(\ref{H-tH}) is satisfied.
Now, a straightforward computation which uses the definition of  $\Phi$ 
shows that the $3$-form 
$$
c_{3}(X, Y, Z) :=\langle \Phi (X), [\Phi (Y), \Phi (Z) ]_{\Pi^{*}\mathcal G_{B}}\rangle_{\Pi^{*}\mathcal G_{B}},\
\forall X, Y, Z\in {\mathfrak X}(N)
$$
is given by 
\begin{align}
\nonumber&c_{3}=  \frac{1}{6}\left(\langle [\tilde{ r}_{s}^{B}, \tilde{r}_{i}^{B}]_{\mathcal G_{B}},\tilde{r}_{j}^{B}\rangle
\tilde{\theta}_{s}\wedge \tilde{\theta}_{i}\wedge\tilde{\theta}_{j}
-\langle [{ r}_{s}^{B}, {r}_{i}^{B}]_{\mathcal G_{B}},{r}_{j}^{B} \rangle{\theta}_{s}\wedge {\theta}_{i}\wedge{\theta}_{j}  \right)\\
\label{tilde-h3}&+ \frac{1}{2} \left( \langle [ r_{i}^{B}, r_{j}^{B}]_{\mathcal G_{B}}, \tilde{r}^{B}_{s}\rangle
\tilde{\theta}_{s}\wedge \theta_{i}\wedge \theta_{j} -\langle [ \tilde{r}_{j}^{B},\tilde{ r}_{s}^{B}]_{\mathcal G_{B}}, {r}_{i}^{B}\rangle 
{\theta}_{i} \wedge\tilde{\theta}_{j}\wedge \tilde{\theta}_{s} \right) ,
\end{align}
where we identify forms on $M$, $\tilde{M}$ or $B$ with their pullback to $N$   (we omit the pullback signs) 
and we denote $\langle \cdot , \cdot\rangle_{\mathcal G_{B}}$ by $\langle \cdot , \cdot \rangle$ for simplicity. 
On the other hand, 
\begin{align}
\nonumber& \pi_{N}^{*} R = R_{(2)}^{B} + \theta_{i}  \wedge  R_{(1)}^{i, B}
+ R_{(0)}^{ij,B}\otimes  (\theta_{i}\wedge \theta_{j}),\\
\label{expr-r-r} & \tilde{\pi}_{N}^{*}\tilde{R} = \tilde{R}_{(2)}^{B} +
 \tilde{\theta}_{i}  \wedge \tilde{R}_{(1)}^{i, B}+ \tilde{R}_{(0)}^{ ij, B}
\otimes  (\tilde{\theta}_{i}\wedge \tilde{\theta}_{j}),
\end{align}
where we recall that
\begin{equation}\label{r-ijk}
R_{(0)}^{ij, B} = \frac{1}{2} [r_{i}^{B}, r_{j}^{B}]_{\mathcal G_{B}},\   R_{(1)}^{i, B} 
=\nabla^{B}r_{i}^{B},\ R_{(2)}^{B} ={\mathfrak r}^{B} - d\theta_{i}\otimes r_{i}^{B} 
\end{equation}
and  similarly for  $\tilde{R}_{(0)}^{ij, B}$,   $\tilde{R}_{(1)}^{i, B}$ and  $\tilde{R}_{(2)}^{B}$,
with  $r_{i}^{B}$ replaced by $\tilde{r}_{i}^{B}$ and $\theta_{i}$ replaced by $\tilde{\theta}_{i}.$
From (\ref{expr-r-r}) and (\ref{r-ijk}) we obtain that
\begin{align}
\nonumber& \langle ( \pi_{N}^{*} R + \tilde{\pi}_{N}^{*} \tilde{R})\wedge \Phi  \rangle_{\Pi^*\mathcal{G}_B} =
 - \theta_{i}\wedge \tilde{\theta}_{j} \wedge d \langle r_{i}^{B}, \tilde{r}_{j}^{B}\rangle\\
\nonumber&  + \langle ( 2 {\mathfrak r}^{B} - (d\theta_{i} )\otimes r_{i}^{B} - (d\tilde{\theta}_{i}) \otimes
\tilde{r}_{i}^{B}) \wedge \tilde{r}_{j}\rangle \wedge \tilde{\theta}_{j}\\   
\nonumber& -   \langle ( 2 {\mathfrak r}^{B} - (d\theta_{i} )\otimes r_{i}^{B} - (d\tilde{\theta}_{i}) \otimes
\tilde{r}_{i}^{B}) \wedge {r}_{j}\rangle \wedge {\theta}_{j}\\   
\nonumber&+ \frac{1}{2}\left(  \langle [r_{i}^{B}, r_{j}^{B}], \tilde{r}^{B}_{p}\rangle \theta_{i}\wedge \theta_{j}
\wedge \tilde{\theta}_{p}-   \langle [\tilde{r}_{i}^{B},\tilde{ r}_{j}^{B}], {r}^{B}_{p}\rangle \tilde{\theta}_{i}\wedge\tilde{ \theta}_{j}
\wedge{\theta}_{p}\right)\\ 
\nonumber& + \frac{1}{2}\left(  \langle [\tilde{r}_{i}^{B}, \tilde{r}_{j}^{B}], \tilde{r}^{B}_{p}\rangle \tilde{\theta}_{i}\wedge 
\tilde{\theta}_{j}\wedge \tilde{\theta}_{p}-   \langle [{r}_{i}^{B},{ r}_{j}^{B}], {r}^{B}_{p}\rangle {\theta}_{i}\wedge{ \theta}_{j}
\wedge {\theta}_{p}\right)\\
\label{R-wedge}& +\langle \nabla^{B} r_{i}^{B}, r_{j}^{B}\rangle \wedge \theta_{i}\wedge \theta_{j}
- \langle \nabla^{B} \tilde{r}_{i}^{B}, \tilde{r}_{j}^{B}\rangle \wedge \tilde{\theta}_{i}\wedge \tilde{\theta}_{j}.
\end{align}
We write the $2$-form $\beta$ as
$$
\beta =\beta_{(2)} +\theta_{i}\wedge \beta_{(1)}^{i} +\tilde{\theta}_{i}\wedge \tilde{\beta}_{(1)}^{i}
+ f_{ij} \theta_{i}\wedge \tilde{\theta}_{j}
$$
where  $\beta_{(2)}$, $\beta_{(1)}^{i}$, $ \tilde{\beta}_{(1)}^{i}$ and $f_{ij}$ are defined on $B$, so that
\begin{align}
\nonumber& d\beta =d\beta_{(2)} +d\theta_{i}\wedge \beta_{(1)}^{i} +  d\tilde{\theta}_{i}\wedge \tilde{\beta}_{(1)}^{i}\\
\nonumber& - \theta_{i}\wedge ( d\beta_{(1)}^{i}  + f_{ij} d\tilde{\theta}_{j})+\tilde{\theta}_{i}\wedge ( - d\tilde{\beta}_{(1)}^{i} + f_{ji} d\theta_{j} )\\
\label{d-gamma} &- df_{ij} \wedge \tilde{\theta}_{j} \wedge \theta_{i}.
\end{align}
Finally, we write,  as in Section \ref{example-t1},
\begin{align}
\nonumber&\pi_{N}^{*} H = H_{(3)}^{B} + \theta_{i}\wedge H_{(2)}^{i, B} +\theta_{i}\wedge \theta_{j}\wedge H_{(1)}^{ij, B}
+ H_{(0)}^{ijs, B} \theta_{i}\wedge \theta_{j}\wedge\theta_{s}\\ 
\label{H-E}&\tilde{\pi}_{N}^{*} \tilde{H} = \tilde{H}_{(3)}^{B} + \tilde{\theta}_{i}\wedge \tilde{H}_{(2)}^{i} +
\tilde{\theta}_{i}\wedge \tilde{\theta}_{j}\wedge \tilde{H}_{(1)}^{ij, B}
+ \tilde{H}_{(0)}^{ijs, B} \tilde{\theta}_{i}\wedge \tilde{\theta}_{j}\wedge\tilde{\theta}_{s}.
\end{align}
Using  the expressions   of 
${H}_{(0)}^{ijs, B}$ and 
 $\tilde{H}_{(0)}^{ijs, B}$,   and 
(\ref{tilde-h3}), 
  (\ref{R-wedge}), (\ref{d-gamma}) and (\ref{H-E}), we obtain that
relation (\ref{H-tH}) reduces to the following relations:
\begin{align}
\nonumber& 
H_{(3)}^{B} -\tilde{H}_{(3)}^{B} -d\beta_{(2)}
- (d\theta_{i})^{B} \wedge \beta_{(1)}^{i} -( d\tilde{\theta}_{i} )^{B}\wedge \tilde{\beta}_{(1)}^{i}=0\\
\nonumber& d\beta_{(1)}^{i}+ f_{ij} (d\tilde{\theta}_{j})^{B}  
= - H_{(2)}^{i, B} +\langle r_{i}^{B}, \tilde{r}_{j}^{B}\rangle (d\tilde{\theta}_{j})^{B} - 2 
\langle {\mathfrak r}^{B}, r_{i}^{B}\rangle +\langle r_{i}^{B}, r_{j}^{B}\rangle  (d\theta_{j} )^{B}\\
\nonumber&  d\tilde{\beta}_{(1)}^{i}- f_{ji}( d\theta_{j})^{B}  
= \tilde{H}_{(2)}^{i, B} - \langle \tilde{r}_{i}^{B}, \tilde{r}_{j}^{B}\rangle  (d\tilde{\theta}_{j})^{B} + 2 
\langle {\mathfrak r}^{B},\tilde{ r}_{i}^{B}\rangle - \langle \tilde{r}_{i}^{B}, r_{j}^{B}\rangle ( d\theta_{j})^{B} \\
\label{final-rel} & d f_{ij} = d\langle \tilde{r}_{j}^{B}, r_{i}^{B}\rangle .
\end{align}
Recall now that $H_{(3)}^{B} = \tilde{H}_{(3)}^{B}$ and 
\begin{align*}
& (d\tilde{\theta}_{i})^{B} = H_{(2)}^{i, B} + 2\langle \mathfrak{r}^{B}, r_{i}^{B}\rangle - \langle r_{i}^{B}, r_{j}^{B}\rangle  (d\theta_{j})^{B}\\
& (d{\theta}_{i} )^{B}= \tilde{H}_{(2)}^{i, B} + 2\langle \mathfrak{r}^{B},\tilde{ r}_{i}^{B}\rangle - \langle\tilde{ r}_{i}^{B}, \tilde{r}_{j}^{B}\rangle ( d\tilde{\theta}_{j})^{B} .
\end{align*}

It follows that $\beta_{(2)}:=0$, $\beta_{(1)}^{i} :=0$, $\tilde{\beta}_{(1)}^{i} :=0$ and
$f_{ij} := \langle r_{i}^{B}, \tilde{r}^{B}_{j} \rangle -\delta_{ij}$ satisfy relations (\ref{final-rel}), 
We obtain that  the $2$-form 
$$
\beta := (\langle r_{i}^{B}, \tilde{r}^{B}_{j} \rangle -\delta_{ij} )\theta_{i}\wedge \tilde{\theta}_{j}
$$
satisfies (\ref{H-tH}).  The existence of $F$ is proved.  It is clear that it is invariant.  The non-degeneracy condition
 (\ref{pairing-non-deg})  is satisfied, since 
\[
\beta ( X_{\tilde{a}}, X_{b}) = -  \langle \tilde{r}^{B}_{a}, r^{B}_{b}\rangle + \delta_{ab},\ 
(\Phi^{*}\Phi )(X_{\tilde{a}}, X_{b}) = -  \langle \tilde{r}_{a}, r_{b}\rangle . \qedhere
\]
\end{proof}

\subsection{Examples of $T$-duality}

In this section we  apply Theorem \ref{statement-T-dual}  to various classes of transitive Courant algebroids. In particular, we recover,
in our setting,  the $T$-duality for exact Courant algebroids \cite{T-duality-exact}
and  the $T$-duality for heterotic Courant algebroids \cite{baraglia}.

 \subsubsection{$T$-duality for exact Courant algebroids}
 \label{exact:sec}

Let  $E= T^{*}M \oplus TM$ 
be an exact Courant algebroid over the total space of a principal $T^{k}$-bundle  
$\pi : M \rightarrow B$,   with 
Dorfmann bracket $[\cdot , \cdot ]_{H}$  twisted by an invariant, closed,  $3$-form $H\in \Omega^{3}(M)$, that is,
\begin{equation}
[\xi + X, \eta + Y]_{H}:=  {\mathcal L}_{X}(Y+\eta )- i_{Y} d\xi + i_{Y}i_{X}H, 
\end{equation}
for any $X, Y\in {\mathfrak X}(M)$, $\xi , \eta\in \Omega^{1}(M)$, 
scalar product  $\langle \xi + X, \eta + Y \rangle := \frac{1}{2} ( \xi (Y) +\eta (X))$
and anchor the natural projection from $E$ to $TM.$ The  action  of $T^{k}$ on $M$ lifts naturally to an action on $E$.   
Assuming that $H\vert_{\Lambda^{2}(\mathrm{Ker}\, \pi )} =0$, we obtain a Courant algebroid  
 of the type described
in  Example \ref{particular-class}. 
Choose  a connection $\mathcal H$ on $\pi$, with connection form $\theta =\sum_{i=1}^{k}\theta_{i}e_{i}$
(where $(e_{i})$ is a basis of $\mathfrak{t}^{k}$)
and write
$$
H = H_{(3)} +\sum_{i=1}^{k} \theta_{i}\wedge H_{(2)}^{i},
$$
where $H_{(3)}$ and $H_{(2)}^{i}$ are basic. 
If $[H]\in H^{3}(M, \mathbb{R})$ is an integral cohomology class, then so is
$[H_{(2)}^{i, B}]\in H^{2}(B, \mathbb{R})$ (for any $i$) and  Theorem \ref{statement-T-dual}  
can be applied. 
We recover the existence of a $T$-dual for exact Courant algebroids, which  was proved in
\cite{BHM}  (see also  Proposition 2.1 of  \cite{T-duality-exact}).

\subsubsection{Heterotic $T$-duality}

Let $G$ be  a compact semi-simple Lie group,  with a fixed  invariant scalar product  of neutral signature  $\langle\cdot , \cdot \rangle_{\mathfrak{g}}$ on $\mathfrak{g}= \mathrm{Lie}(G)$. 
Let $\sigma :P\rightarrow  M$ be a principal $G$-bundle and  $\mathcal H$ a connection on $\sigma .$ 
By definition, the heterotic Courant algebroid  defined by the principal $G$-bundle
$\sigma : P \rightarrow M$, connection $\mathcal H$
and a $3$-form $H\in \Omega^{3}(M)$   is the standard Courant algebroid $E = T^{*}M \oplus \mathcal G \oplus   TM$ 
with  the following properties:

\begin{enumerate}
  \item  $(\mathcal G , [\cdot , \cdot ]_{\mathcal G}, \langle\cdot ,\cdot
\rangle_{\mathcal  G})$, as a quadratic  Lie algebra bundle, is given by the adjoint bundle  $\mathfrak{g}_{P}:= P\times_{\mathrm{Ad}} \mathfrak{g}$. Recall that sections 
$r\in \Gamma (\mathfrak{g}_{P})$ are invariant vertical vector fields on $P$ and can be identified
with functions  $f: P\rightarrow \mathfrak{g}$ which satisfy the equivariance  condition
$f(p g) = \mathrm{Ad}_{g^{-1}} f(p)$ for any $p\in P$ and $g\in G$. 
We shall use the notation $r\equiv f$ to denote this identification.
Since  the Lie bracket
$[\cdot , \cdot ]_{\mathfrak{g}}$ and scalar product 
$\langle \cdot , \cdot \rangle_{\mathfrak{g}}$ of $\mathfrak{g}$ are $\mathrm{Ad}$-invariant, they induce a  Lie bracket 
and a scalar product on
$\mathfrak{g}_{P}$, which make $\mathfrak{g}_{P}$ a quadratic Lie algebra bundle.
The Lie bracket of $\mathfrak{g}_{P}$ so defined coincides with the usual Lie bracket of invariant, vertical vector fields on $P$. 

\item  The connection $\nabla$ which is part of the data $(\nabla , R, H)$ which defines  the standard Courant
algebroid  $E$ is induced 
 by $\mathcal H$,   $R= R^{\mathcal H}\in \Omega^{2}(M ,{\mathfrak g}_{P})$ is the curvature of $\mathcal H$
and  $dH = \langle R^{\mathcal H}\wedge R^{\mathcal H}\rangle_{\mathfrak{g}}$. 
\end{enumerate}

The following proposition describes all invariant scalar products on compact 
semi-simple Lie algebras. A similar description can be given for arbitrary reductive Lie algebras. 
Recall that a semi-simple Lie algebra is called {\cmssl compact}
it it is the Lie algebra of a compact group.
\begin{prop} Let $\mathfrak{g} = \bigoplus_{i=1}^s k_i\mathfrak{g}_i$ be the decomposition of a real 
semi-simple Lie algebra into its simple ideals $\mathfrak{g}_i$, of multiplicity $k_i\ge 1$. Assume that 
$\mathfrak{g}$ is compact (or, more generally, that none of the $\mathfrak{g}_i$ has an invariant complex structure).
Then every invariant scalar product on $\mathfrak{g}$ is of the form 
\begin{equation} \label{scp:eq}\sum B_i \otimes b_i,\end{equation}
where $B_i$ is the Killing form of $\mathfrak{g}_i$ and $b_i$ is a scalar product on $\mathbb{R}^{k_i}$. 
The scalar product (\ref{scp:eq}) is of neutral signature if and only if $\sum (\dim \mathfrak{g}_i)p_i=\sum (\dim \mathfrak{g}_i)q_i$, 
where $(p_i,q_i)$ is the signature of $b_i$. 
\end{prop}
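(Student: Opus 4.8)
The plan is to reduce the statement to the classical fact about invariant bilinear forms on a simple Lie algebra and then handle the multiplicity spaces by a standard averaging/decomposition argument. First I would fix the decomposition $\mathfrak{g}=\bigoplus_{i=1}^s k_i\mathfrak{g}_i$ into simple ideals, grouping the simple factors so that within each block all $k_i$ copies are mutually isomorphic; write $V_i:=\mathbb{R}^{k_i}$, so that the $i$-th block is $\mathfrak{g}_i\otimes V_i$ and $\mathfrak{g}=\bigoplus_i \mathfrak{g}_i\otimes V_i$ as Lie algebras, with $V_i$ carrying the trivial adjoint action. The key elementary input is that for a simple real Lie algebra $\mathfrak{h}$ which admits no invariant complex structure, $\dim_{\mathbb{R}}\mathrm{Sym}^2(\mathfrak{h}^*)^{\mathfrak{h}}=1$, spanned by the Killing form $B_{\mathfrak{h}}$; equivalently, by Schur's lemma over $\mathbb{R}$, the commutant of the adjoint representation is $\mathbb{R}$. (When an invariant complex structure is present the commutant is $\mathbb{C}$ and one gets a two-dimensional space of invariant forms, which is why that case is excluded; for compact $\mathfrak{g}$ no such factor occurs, giving the stated hypothesis.)

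Next I would decompose an arbitrary invariant symmetric bilinear form $\langle\cdot,\cdot\rangle$ on $\mathfrak{g}$ according to the block structure. Invariance under the ideal $\mathfrak{g}_i\otimes V_i$ forces $\langle\cdot,\cdot\rangle$ to vanish on $(\mathfrak{g}_i\otimes V_i)\times(\mathfrak{g}_j\otimes V_j)$ for $i\neq j$: indeed, for $x\in\mathfrak{g}_i$ we have $\mathfrak{g}_i=[\mathfrak{g}_i,\mathfrak{g}_i]$, so any element of $\mathfrak{g}_i\otimes V_i$ is a sum of brackets, and the ad-invariance identity together with $[\mathfrak{g}_i,\mathfrak{g}_j]=0$ kills the cross terms. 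So $\langle\cdot,\cdot\rangle=\bigoplus_i\langle\cdot,\cdot\rangle_i$ with $\langle\cdot,\cdot\rangle_i$ an invariant form on $\mathfrak{g}_i\otimes V_i$. Now an invariant symmetric form on $\mathfrak{g}_i\otimes V_i$ lies in $(\mathrm{Sym}^2(\mathfrak{g}_i\otimes V_i)^*)^{\mathfrak{g}_i}$; expanding $\mathrm{Sym}^2(\mathfrak{g}_i^*\otimes V_i^*)\cong (\mathrm{Sym}^2\mathfrak{g}_i^*\otimes\mathrm{Sym}^2 V_i^*)\oplus(\Lambda^2\mathfrak{g}_i^*\otimes\Lambda^2 V_i^*)$ and taking $\mathfrak{g}_i$-invariants, the first summand contributes $\mathbb{R}B_{\mathfrak{g}_i}\otimes\mathrm{Sym}^2 V_i^*$ while the second contributes $(\Lambda^2\mathfrak{g}_i^*)^{\mathfrak{g}_i}\otimes\Lambda^2 V_i^*=0$ because $B_{\mathfrak{g}_i}$ is symmetric and spans all invariants, so there is no nonzero invariant alternating form on a simple Lie algebra. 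Hence $\langle\cdot,\cdot\rangle_i=B_{\mathfrak{g}_i}\otimes b_i$ for a unique $b_i\in\mathrm{Sym}^2 V_i^*$, and nondegeneracy of $\langle\cdot,\cdot\rangle$ on $\mathfrak{g}_i\otimes V_i$ is equivalent to nondegeneracy of $b_i$ on $V_i$ (using that $B_{\mathfrak{g}_i}$ is nondegenerate since $\mathfrak{g}_i$ is semisimple). This is exactly formula (\ref{scp:eq}).

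Finally I would compute the signature. Diagonalise each $b_i$ with signature $(p_i,q_i)$ in an orthogonal basis $(v_\alpha)$ of $V_i$; then $\langle\cdot,\cdot\rangle$ restricted to the block is an orthogonal sum of $p_i$ copies of $B_{\mathfrak{g}_i}$ and $q_i$ copies of $-B_{\mathfrak{g}_i}$. If $B_{\mathfrak{g}_i}$ itself has signature $(a_i,b_i')$ with $a_i+b_i'=\dim\mathfrak{g}_i$, the total positive (resp.\ negative) index of $\langle\cdot,\cdot\rangle$ is $\sum_i(p_ia_i+q_ib_i')$ (resp.\ $\sum_i(p_ib_i'+q_ia_i)$). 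For $\mathfrak{g}$ compact, $B_{\mathfrak{g}_i}$ is negative definite, so $a_i=0$, $b_i'=\dim\mathfrak{g}_i$, and the two indices become $\sum_i q_i\dim\mathfrak{g}_i$ and $\sum_i p_i\dim\mathfrak{g}_i$; neutral signature means these are equal, i.e.\ $\sum_i(\dim\mathfrak{g}_i)p_i=\sum_i(\dim\mathfrak{g}_i)q_i$, as claimed. (In the more general reductive setting one would track $a_i,b_i'$ as well, but for the stated compact case this collapses.) The only mildly delicate point — and the one I would present carefully — is the Schur-type argument that $\dim\mathrm{Sym}^2(\mathfrak{g}_i^*)^{\mathfrak{g}_i}=1$ precisely when $\mathfrak{g}_i$ admits no invariant complex structure; everything else is bookkeeping. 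I expect this step to be the main obstacle, though it is classical and can be cited; the rest is routine linear algebra of tensor products.
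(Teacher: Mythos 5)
Your proof is correct and takes essentially the same route as the paper's: the same splitting $\mathrm{Sym}^2(\mathfrak{g}_i^*\otimes V_i^*)\cong(\mathrm{Sym}^2\mathfrak{g}_i^*\otimes\mathrm{Sym}^2V_i^*)\oplus(\Lambda^2\mathfrak{g}_i^*\otimes\Lambda^2V_i^*)$, the same one-dimensionality of invariant bilinear forms on a simple Lie algebra without invariant complex structure, and the same signature bookkeeping; you merely make the Schur-type justification and the vanishing of cross terms between distinct blocks explicit. One remark that applies equally to your argument and to the paper's: the $k_i$ copies of $\mathfrak{g}_i$ inside a block are themselves distinct ideals, so your own perfectness argument kills the cross terms between them too (the adjoint action of the full block on $\mathfrak{g}_i\otimes V_i$ is not of the form $\mathrm{ad}\otimes\mathrm{id}$, so invariance under the diagonal copy of $\mathfrak{g}_i$ is strictly weaker than full $\mathfrak{g}$-invariance), which forces $b_i$ to be diagonal in the basis of $V_i$ determined by the ideal decomposition --- this only shrinks the invariant space further and therefore does not affect the stated ``is of the form'' conclusion or the signature criterion, but it is worth being aware that not every $\sum B_i\otimes b_i$ is invariant.
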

\begin{proof} We compute the space of invariant symmetric bilinear forms on $\mathfrak{g}$ as 
\[ (\mathrm{Sym}^2\mathfrak{g}^*)^\mathfrak{g} = \bigoplus (\mathrm{Sym}^2\mathfrak{g_i}^*)^{\mathfrak{g}_i} \otimes \mathrm{Sym}^2 (\mathbb{R}^{k_i})^* \oplus \bigoplus  (\mathrm{\Lambda}^2\mathfrak{g_i}^*)^{\mathfrak{g}_i} \otimes \mathrm{\Lambda}^2 (\mathbb{R}^{k_i})^*.\]
Since $\mathfrak{g}_i$ is simple and not complex, every invariant bilinear form on $\mathfrak{g}_i$ is a multiple of $B_i$ and the right-hand side reduces to $\bigoplus B_i \otimes \mathrm{Sym}^2 (\mathbb{R}^{k_i})^*$. This implies the first claim. 
The second claim follows from observing that the signature $(p,q)$ of (\ref{scp:eq}) is given by $p= -\sum (\dim \mathfrak{g}_i)p_i$, $q=-\sum (\dim \mathfrak{g}_i)q_i$. 
\end{proof}

Assume that $M$ is  the total space of a principal $T^{k}$-bundle
$\pi : M \rightarrow B$ and  that   $\sigma : P \rightarrow M$ is the pullback of a principal $G$-bundle
$\sigma_{0} : P_{0}\rightarrow B$.  Then, 
for any $m\in M$ and $g\in T^{k}$  there is a natural identification between the fibers $P_{m} := \sigma^{-1}(m)$, 
$P_{mg}:=\sigma^{-1} (mg)$
and $(P_{0})_{\pi (m)}:=\sigma_{0}^{-1} (\pi (m))$,  and the $T^{k}$-action on $M$ lifts naturally 
to an action on $P$ (such that $g$ acts as the identity map between $P_{m}$ and $P_{mg}$ in the above identification).   
We deduce that on  any heterotic Courant algebroid $E = T^{*}M \oplus \mathfrak{g}_{P}\oplus TM$ 
there is an induced action
\begin{equation}\label{Psi-het}
\Psi : \mathfrak{t}^{k}\rightarrow \mathrm{Der} (E),\Psi (a) (\xi + r + X) := {\mathcal L}_{X_{a}^{M}}\xi + 
{\mathcal L}_{X_{a}^{P}} r + {\mathcal L}_{X_{a}^{M}} X
\end{equation}
where $X_{a}^{M}$ and $X_{a}^{P}$ denote the fundamental vector fields of the $T^{k}$-action on $M$ and $P$
defined by $a\in \mathfrak{t}^{k}$ and  $r\in \Gamma ({\mathfrak  g}_{P})$ is viewed as an invariant
vertical vector field on $P$.     
If $r\equiv f$, then  ${\mathcal L}_{X_{a}^{P}} r\equiv X_{a}^{P}(f).$

Following \cite{baraglia}, we shall be interested in heterotic Courant algebroids defined by $\sigma$ and a particular class of connections $\mathcal H := \mathcal H^{\sigma}$ on $\sigma $. More precisely, we
consider a connection $\mathcal{H}^{\pi}$  on the principal $T^{k}$-bundle $\pi : M \rightarrow B$, with connection form $\theta =\sum_{i=1}^{k}\theta_{i}e_{i}\in \Omega^{1}(M, \mathfrak{t}^{k})$
(where $(e_{i})$ is a basis of $\mathfrak{t}^{k}$), 
 a connection ${\mathcal H}^{\sigma_{0}}$ on the principal $G$-bundle $\sigma_{0} : P_{0}\rightarrow B$,
with connection form $A_{0}\in \Omega^{1}(P_{0},
\mathfrak{g})$ 
and a $G\times T^{k}$-equivariant function  $\hat{v} : P \rightarrow (\mathfrak{t}^{k})^{*} \otimes
\mathfrak{g}$.  They define a connection $\mathcal H^{\sigma}$ on $\sigma$,  with connection form
\begin{equation}\label{conn-form-A}
A := \pi_{0}^{*} A_{0} - \langle  \sigma^{*} \theta , \hat{v} \rangle =
 \pi_{0}^{*} A_{0} - \sum_{i=1}^{k}\sigma^{*} \theta_{i} \otimes  \hat{v}_{i},  
\end{equation}
where $\pi_{0} : P \rightarrow P_{0}$ is the natural projection,  $\langle \cdot , \cdot \rangle$ denotes the natural contraction between $\mathfrak{t}^{k}$ and $(\mathfrak{t}^{k})^{*}$,   and $\hat{v}_{i} = \langle \hat{v}, e_{i}\rangle : P \rightarrow \mathfrak{g}$. 
From the equivariance of $\hat{v}$, the functions $\hat{v}_{i}$ define sections of $\mathfrak{g}_{P}
=\pi^{*} \mathfrak{g}_{P_{0}}$ which are pullback of sections of $\mathfrak{g}_{P_{0}}$, i.e.
$\hat{v}_{i} =\pi^{*} \hat{v}_{i}^{B}$ for $\hat{v}_{i}^{B} \in \Gamma (\mathfrak{g}_{P_{0}})$
(we use the same notation for the 
functions  $\hat{v}_{i}$ , $\hat{v}_{i}^{B}$ and the corresponding sections of $\mathfrak{g}_{P}$ and
$\mathfrak{g}_{P_{0}}$ respectively).  
We shall denote by $\widetilde{X}^{A_{0}}$,  $\widetilde{Y}^{\pi^{*}_{0} A_{0}}$,  $\widetilde{Y}^{A}$,
the horizontal lifts of 
$X\in {\mathfrak X}(B)$ and $Y\in {\mathfrak X}(M)$ with respect to $\mathcal H^{\sigma_{0}}$,
$\pi_0^{*} \mathcal H^{\sigma_{0}}$ and 
${\mathcal H}^{\sigma}$ respectively. Here $\pi_0^{*} {\mathcal H}^{\sigma_{0}}\subset TP$ denotes 
the $G$-invariant horizontal distribution in $\sigma : P \rightarrow M$ defined by $(\pi_0^{*} {\mathcal H}^{\sigma_{0}})_p= (d_p\pi_0)^{-1}\mathcal{H}_{\pi_0(p)}^{\sigma_0}$, $p\in P$. It coincides with the kernel of the connection form 
$\pi^{*}_{0} A_{0}$.

\begin{lem} 
Let $(E = T^{*}M \oplus \mathfrak{g}_{P}\oplus TM, \Psi )$ be the heterotic Courant algebroid defined by $\sigma : P \rightarrow M$,  the connection 
$\mathcal H^{\sigma}$ with connection form (\ref{conn-form-A})  and a $3$-form $H\in \Omega^{3}(M)$
such that  $dH = \langle R^{{\mathcal H}^{\sigma}}\wedge R^{{\mathcal H}^{\sigma}}\rangle_{\mathfrak{g}}$,
together  
with  the  $\mathfrak{t}^{k}$-action (\ref{Psi-het}).  
Then  the connection  $\nabla^{\theta}$ 
on  $\mathfrak{g}_{P} = \pi^{*}(\mathfrak{g}_{P_{0}})$  defined in (\ref{nabla-theta})   is the pullback 
of the connection  $\nabla^{A_{0}}$ on  $\mathfrak{g}_{P_{0}}$  induced by $\mathcal H^{\sigma_{0}}$
(in the notation of Corollary \ref{data-extended}, $\mathcal G_{B} = \mathfrak{g}_{P_{0}}$
and $\nabla^{\theta , B} =\nabla^{A_{0}}$). 
\end{lem}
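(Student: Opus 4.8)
The plan is to unwind the definitions of the two connections and check that they coincide fiberwise over each point of $B$, then conclude by pullback. Recall that $\nabla^\theta = \nabla + \sum_i \theta_i \otimes A_i$, where $\nabla$ is the connection on $\mathfrak{g}_P$ induced by the connection form $A$ of (\ref{conn-form-A}), and $A_a = \nabla^\Psi_{X_a} - \nabla_{X_a}$ with $\nabla^\Psi_{X_a} r = \mathcal{L}_{X_a^P} r$ (viewing $r \equiv f$, so that $\mathcal{L}_{X_a^P}r \equiv X_a^P(f)$). The key is to express the covariant derivative $\nabla_X r$ of a section $r \equiv f : P \to \mathfrak{g}$ along $X \in \mathfrak{X}(M)$ in terms of $A$: the standard formula is $\nabla_X r \equiv \widetilde{X}^A(f)$, the derivative of $f$ along the $A$-horizontal lift of $X$ to $P$. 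Similarly, the connection $\nabla^{A_0}$ on $\mathfrak{g}_{P_0}$ satisfies $\nabla^{A_0}_Y r^B \equiv \widetilde{Y}^{A_0}(f^B)$ for $Y \in \mathfrak{X}(B)$ and $r^B \equiv f^B : P_0 \to \mathfrak{g}$.

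First I would compute $A_i = A_{e_i}$ explicitly. From $\nabla^\Psi_{X_i} r \equiv X_i^P(f)$ and $\nabla_{X_i}r \equiv \widetilde{X_i}^A(f)$, we get $A_i r \equiv (X_i^P - \widetilde{X_i}^A)(f)$. Now $\widetilde{X_i}^A$ is the $A$-horizontal lift of the fundamental vector field $X_i^M = X_{e_i}^M$ of $\pi$, while $X_i^P$ is the fundamental vector field of the lifted $T^k$-action on $P$, which projects to $X_i^M$ under $\sigma$. Hence $X_i^P - \widetilde{X_i}^A$ is $\sigma$-vertical, so it equals the fundamental vector field $\zeta(A(X_i^P))$ of the $G$-action on $P$ determined by the $\mathfrak{g}$-valued function $A(X_i^P)$. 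Plugging in (\ref{conn-form-A}): $A(X_i^P) = \pi_0^*A_0(X_i^P) - \sum_j \sigma^*\theta_j(X_i^P)\,\hat{v}_j$. Since the lifted $T^k$-action on $P$ preserves the distribution $\pi_0^*\mathcal{H}^{\sigma_0}$ (because $\mathcal{H}^{\sigma_0}$ lives downstairs on $P_0$ over $B$ and $X_i^P$ covers $X_i^M$ which is $\pi$-vertical, hence maps to zero in $TB$), the vector $X_i^P$ is $\pi_0^*A_0$-horizontal, i.e.\ $\pi_0^*A_0(X_i^P) = 0$. Also $\sigma^*\theta_j(X_i^P) = \theta_j(X_i^M) = \delta_{ij}$. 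Therefore $A(X_i^P) = -\hat{v}_i$, and acting on $f$ the fundamental vector field contributes $\zeta(-\hat{v}_i)(f)$; identifying this with a Clifford/adjoint-type action, one checks $A_i r \equiv \mathrm{ad}_{-\hat{v}_i}$ applied appropriately — more precisely $A_i = -\mathrm{ad}_{\hat{v}_i}$ as an endomorphism of $\mathfrak{g}_P$ — but for the present purpose the cleaner route is to keep $A_i r \equiv -\zeta(\hat{v}_i)(f)$ and feed it directly into $\nabla^\theta$.

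Next I would assemble $\nabla^\theta_X r \equiv \widetilde{X}^A(f) + \sum_i \theta_i(X)\,(-\zeta(\hat{v}_i))(f)$ and show the right-hand side equals $\widetilde{X}^{\pi_0^*A_0}(f)$, the derivative along the $\pi_0^*A_0$-horizontal lift of $X$. Indeed, decompose the $\pi_0^*A_0$-horizontal lift $\widetilde{X}^{\pi_0^*A_0}$ of $X \in \mathfrak{X}(M)$ against the $A$-connection: the difference $\widetilde{X}^{\pi_0^*A_0} - \widetilde{X}^A$ is $\sigma$-vertical and equals $\zeta(A(\widetilde{X}^{\pi_0^*A_0}))$. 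But $A(\widetilde{X}^{\pi_0^*A_0}) = \pi_0^*A_0(\widetilde{X}^{\pi_0^*A_0}) - \sum_i \sigma^*\theta_i(\widetilde{X}^{\pi_0^*A_0})\hat{v}_i = 0 - \sum_i \theta_i(X)\hat{v}_i$, using that $\widetilde{X}^{\pi_0^*A_0}$ is $\pi_0^*A_0$-horizontal by construction and projects to $X$. Hence $\widetilde{X}^{\pi_0^*A_0}(f) = \widetilde{X}^A(f) + \sum_i \theta_i(X)(-\zeta(\hat{v}_i))(f) = \nabla^\theta_X r$, which is exactly the claim that $\nabla^\theta = \pi_0^*$-connection defined by the horizontal distribution $\pi_0^*\mathcal{H}^{\sigma_0}$. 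Since $\pi_0^*\mathcal{H}^{\sigma_0}$ is by definition the pullback under $\pi_0 : P \to P_0$ of $\mathcal{H}^{\sigma_0}$, the connection it induces on $\mathfrak{g}_P = \pi^*\mathfrak{g}_{P_0}$ is precisely $\pi^*\nabla^{A_0}$. Combined with Lemma \ref{basictohor:lem} (which guarantees $\nabla^\theta$ descends to $\nabla^{\theta,B}$ on $\mathcal{G}_B$), this identifies $\mathcal{G}_B = \mathfrak{g}_{P_0}$ and $\nabla^{\theta,B} = \nabla^{A_0}$, as asserted.

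The main obstacle I anticipate is purely bookkeeping: being careful about the three different horizontal lifts ($\widetilde{X}^{A_0}$ on $P_0$, $\widetilde{Y}^{\pi_0^*A_0}$ and $\widetilde{Y}^A$ on $P$), and about the sign and normalization conventions in the identification $r \equiv f$ of sections of $\mathfrak{g}_P$ with equivariant $\mathfrak{g}$-valued functions and in the formula $\nabla_X r \equiv \widetilde{X}^{(\cdot)}(f)$. In particular one must verify cleanly that the lifted $T^k$-action on $P$ preserves $\pi_0^*\mathcal{H}^{\sigma_0}$ — this follows from the fact that $P = \pi^*P_0$ and $\mathcal{H}^{\sigma_0}$ is a connection on $P_0 \to B$ together with the description of the $T^k$-action on $P$ as acting trivially in the fiberwise identification $P_m \cong (P_0)_{\pi(m)}$ — and that $\theta_i(X_i^M) = \delta_{ij}$ from the normalization of the basis $(e_i)$ of $\mathfrak{t}^k$ and the connection form $\theta$. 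None of these steps is deep; the proof is a direct unraveling of definitions, which is why the authors can state it as a lemma with a short proof.
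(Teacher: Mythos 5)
Your proof is correct and takes essentially the same route as the paper's: both arguments reduce the lemma to the decomposition $\widetilde{X}^{\pi_0^*A_0} = \widetilde{X}^{A} - \sum_i \theta_i(X)\,\zeta(\hat{v}_i)$ combined with the horizontality of $X_i^P$ with respect to $\pi_0^*A_0$ (the paper's relation (\ref{lifts})), the only real difference being that the paper verifies the latter by an explicit computation in a local trivialization whereas you deduce it invariantly from $(\pi_0)_*X_i^P=0$. One small correction: since equivariance gives $\zeta_w(f) = -\mathrm{ad}_w\circ f$, one has $A_i r \equiv \zeta_{-\hat{v}_i}(f) = +\,\mathrm{ad}_{\hat{v}_i}\circ f$, so your parenthetical claim that $A_i = -\mathrm{ad}_{\hat{v}_i}$ has the wrong sign (compare the paper's (\ref{aad})); this is harmless because your final assembly uses the form $A_i r \equiv -\zeta(\hat{v}_i)(f)$, which is correct.
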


\begin{proof} We claim  that the horizontal  lift
${\widetilde{X_{a}^{M}}}^{\pi_{0}^{*} A_{0}}\in {\mathfrak X}(P)$ of the  $\pi$-vertical vector field
$X_{a}^{M}$ determined by $a\in \mathfrak{t}^{k}$ coincides with the fundamental vector field
$X_{a}^{P}$ of the $T^{k}$-action on $P$, i.e.
\begin{equation}\label{lifts}
{\widetilde{X_{a}^{M}}}^{\pi_{0}^{*} A_{0}}= X_{a}^{P},\ \forall a\in \mathfrak{t}^{k}.
\end{equation}
In order to prove (\ref{lifts}), 
let $U\subset B$  be open and sufficiently small such that, over $U$,
$\sigma_{0}$  is the trivial $G$-bundle and   
$$
\pi_{0} : P|_{\pi^{-1}(U)}= \pi^{-1} (U)\times G\rightarrow P_{0}|_U= U\times G,\ \pi_{0} (p, g)= (\pi (p), g).
$$
For any  $X\in {\mathfrak X} (\pi^{-1} (U))$, 
\begin{equation}\label{lift-2}
\widetilde{X}^{\pi_0^{*}A_{0}} = X -\langle  (\pi^{*}_{0} A_{0})(X), f_{i}^{*}\rangle X_{f_{i}}^{P},
\end{equation}
where $(f_{i})$ is a basis of $\mathfrak{g}$ with dual basis $(f_{i}^{*})$ and for $f\in \mathfrak{g}$, 
$X_{f}^{P}$ is the left invariant vector field on $G$ determined by $f$ (viewed as a vector field on
$\pi^{-1}(U)\times G$). On the other hand, $X_{a}^{M}\in {\mathfrak X}(\pi^{-1}(U))$, viewed as a vector field on
$P= \pi^{-1} (U)\times G$, satisfies $(\pi_{0})_{*} X_{a}^{M} =0$, since $\pi_{*} X_{a}^{M}=0$ and $\pi_0 = \pi \times \mathrm{Id}$ in our trivializations.
 Applying (\ref{lift-2})  to $X:= X_{a}^{M}$  
and using $(\pi_{0})_{*} X_{a}^{M}=0$  we obtain  $\widetilde{X_{a}^{M}}^{\pi_{0}^{*}A_{0}} =
X_{a}^{M}$. 
On the other hand, the action of $T^{k}$ on $P=\pi^{-1} (U)\times G$ is given by
$R_{g}(m ,\tilde{g}) = (mg, \tilde{g})$ 
which implies that $X_{a}^{P} = X_{a}^{M}. $ Relation (\ref{lifts}) follows.

Let $\nabla$ be the connection on $\mathfrak{g}_{P}$ induced by ${\mathcal H}^{\sigma}.$  
Its covariant derivative is given by
\begin{equation}\label{nabla-rr}
\nabla_{X} r\equiv\widetilde{X}^{A} (f) = \widetilde{X}^{\pi_{0}^{*} A_{0}}(f) -\theta_{i}(X) \mathrm{ad}_{\hat{v}_{i}}\circ f ,\
X\in{ \mathfrak X}(M), 
\end{equation}
where  $r\in \Gamma (\mathfrak{g}_{P})$ and $r\equiv f.$   Here we have used that 
\[ \widetilde{X}^{A} = \widetilde{X}^{\pi_{0}^{*} A_{0}} + \theta_i(X)X_{\hat{v}_i}^P\]
and the $G$-equivariance of $f$, which implies $X_{v}^P(f) = - \mathrm{ad}_{v}\circ f$ for all $v\in \mathfrak{g}$.
Applying relation  (\ref{nabla-rr}) to $X:= X_{a}^{M}$  and using (\ref{lifts}) we obtain
\begin{equation}\label{nabla-fct}
\nabla_{X_{a}^{M}} r  \equiv X_{a}^{P}(f)- \langle a, e_{i}^{*}\rangle \mathrm{ad}_{\hat{v}_{i}} \circ f,\ \forall 
a\in \mathfrak{t}^{k},
\end{equation}
which implies that the skew-symmetric derivation  $A_{a}$ of $\mathfrak{g}_{P}$,  from Lemma  \ref{cond-triv-ext},  is  given by
\begin{equation}\label{aad}
A_{a} (r)=({\mathcal L}_{X_{a}^{P}} - \nabla_{X_{a}^{M}}) r\equiv  \mathrm{ad}_{\langle \hat{v}, a\rangle }\circ f,\
\forall a\in  \mathfrak{t}^{k}.
\end{equation}
From its definition  (\ref{nabla-theta}) and relations (\ref{nabla-rr}),  (\ref{aad}),   $\nabla^{\theta}$ is given by
\begin{equation}\label{THETA}
\nabla^{\theta}_{X} r= \nabla_{X} r +\sum_{i=1}^{k} \theta_{i}(X) A_{i} (r)\equiv
\widetilde{X}^{\pi_{0}^{*}A_{0}}(f), 
\end{equation}
which implies that $\nabla^{\theta} = \pi^{*} \nabla^{A_{0}}$ as needed.
\end{proof}

Since 
$\nabla^{A_{0}}$ preserves the Lie bracket and scalar product of $\mathfrak{g}_{P_{0}}$, 
its curvature takes values in the bundle of skew-symmetric derivations 
of $\mathfrak{g}_{P_{0}}$ and is of the form $\mathrm{ad}_{\mathfrak{r}^{A_{0}}}$
where  $\mathfrak{r}^{A_{0}}\in \Omega^{2}(B, \mathfrak{g}_{P_{0}})$
(since $\mathfrak{g}$ is semi-simple).   
Like in  (\ref{HR:eq}), we decompose $H\in \Omega^{3}(M)$ using the connection $\theta .$

\begin{prop}\label{T-dual-het-prop} In the above setting, assume that 
\begin{align}
\nonumber& H_{(0)}^{ijs, B} =- \frac{1}{3} \langle [\hat{v}^{B}_{i},\hat{v}^{B}_{j}]_{\mathfrak{g} } , \hat{v}_{s}^{B}\rangle_{\mathfrak{g}}\\
\label{het-H}& H_{(1)}^{ij, B} := \frac{1}{2} \left( \langle \nabla^{A_{0}}\hat{v}_{i}^{B}, \hat{v}_{j}^{B}
\rangle_{\mathfrak{g} }
- \langle \nabla^{A_{0} }\hat{v}_{j}^{B}, \hat{v}_{i}^{B}\rangle_{\mathfrak{g}}\right) .
\end{align} 
and  that
the (closed) forms
\begin{equation}
H_{(2)}^{i, B} + 2\langle {\mathfrak r}^{ A_{0}}, \hat{v}_{i}^{B}\rangle_{\mathfrak{g}} -
 \langle\hat{v}_{i}^{B}, \hat{v}_{j}^{B}\rangle_{\mathfrak{g}} (d\theta_{j})^{B}.
\end{equation}
represent integral cohomology classes.  
Then $(E, \Psi )$ admits a $T$-dual which is a heterotic Courant algebroid. 
\end{prop}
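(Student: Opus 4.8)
The plan is to reduce Proposition~\ref{T-dual-het-prop} to Theorem~\ref{statement-T-dual} by recognizing the heterotic Courant algebroid $(E,\Psi)$ as a Courant algebroid of the type covered by that theorem, and then to check that the $T$-dual produced is again heterotic. The first step is to verify that the hypotheses of Theorem~\ref{statement-T-dual} are met. For this I would invoke the preceding lemma: the quadratic Lie algebra bundle of $E$ is $\mathfrak{g}_P = \pi^*\mathfrak{g}_{P_0}$, its adjoint representation is an isomorphism onto skew-symmetric derivations because $\mathfrak{g}$ is compact semisimple (see Remark~\ref{quadraticLA:rem}~i)), the partial connection $\nabla^\Psi$ has trivial holonomy (its parallel sections over a fiber are the pullbacks of sections of $\mathfrak{g}_{P_0}$, by the analogue of Lemma~\ref{standard-properties}~ii) or directly from~\eqref{THETA}), and $\nabla^{\theta,B}=\nabla^{A_0}$ with curvature $\mathrm{ad}_{\mathfrak{r}^{A_0}}$. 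Thus $\mathcal G_B = \mathfrak{g}_{P_0}$ and $\mathfrak{r}^B = \mathfrak{r}^{A_0}$. The sections $r_i^B$ of Corollary~\ref{data-extended} are here $\hat v_i^B \in \Gamma(\mathfrak{g}_{P_0})$; indeed, from~\eqref{aad} we have $A_i = \mathrm{ad}_{\hat v_i^B}$ after pullback, matching the identification $A_i^B = \mathrm{ad}_{r_i^B}$ in~\eqref{def-deriv}. Hypotheses~\eqref{het-H} are exactly~\eqref{h-0-ijs} and~\eqref{h-1-ij} of Example~\ref{particular-class} with $r_i^B := \hat v_i^B$ and $\nabla^B := \nabla^{A_0}$; the integrality assumption on the closed forms $H_{(2)}^{i,B}+2\langle\mathfrak{r}^{A_0},\hat v_i^B\rangle_{\mathfrak g} - \langle \hat v_i^B,\hat v_j^B\rangle_{\mathfrak g}(d\theta_j)^B$ is precisely the integrality of the classes $\mathcal K_i$ of~\eqref{k-i}. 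Hence Theorem~\ref{statement-T-dual} applies and produces a standard Courant algebroid $(\tilde E,\tilde\Psi)$ over a dual torus bundle $\tilde\pi:\tilde M\to B$ which is $T$-dual to $E$.

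The second and main step is to show that this $\tilde E$ is itself heterotic, i.e.\ that it arises from a principal $G$-bundle over $\tilde M$ with a connection of the special form~\eqref{conn-form-A} and a suitable $3$-form. Looking at the construction in the proof of Theorem~\ref{statement-T-dual}: the dual data are the \emph{same} quadratic Lie algebra bundle $\tilde{\mathcal G}_B = \mathcal G_B = \mathfrak{g}_{P_0}$, the same connection $\tilde\nabla^B = \nabla^{A_0}$, arbitrary sections $\tilde r_i^B \in \Gamma(\mathfrak{g}_{P_0})$, the same $\tilde H_{(3)}^B = H_{(3)}^B$, and $\tilde H_{(2)}^{i,B}$ given by~\eqref{tilde-h2}. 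I would choose the principal $G$-bundle over $\tilde M$ to be $\tilde\sigma: \tilde P := \tilde\pi^* P_0 \to \tilde M$, the pullback of $\sigma_0:P_0\to B$, and note that $\mathfrak{g}_{\tilde P} = \tilde\pi^*\mathfrak{g}_{P_0}$, consistent with $\tilde{\mathcal G} = \tilde\pi^*\tilde{\mathcal G}_B$. The connection $\mathcal H^{\tilde\sigma}$ on $\tilde\sigma$ should be defined, in analogy with~\eqref{conn-form-A}, by the principal connection $\tilde\theta$ on $\tilde\pi$, the same base connection $\mathcal H^{\sigma_0}$ with form $A_0$ on $\sigma_0$, and the $G\times\tilde T^k$-equivariant function $\hat{\tilde v}:\tilde P \to (\tilde{\mathfrak t}^k)^* \otimes \mathfrak{g}$ determined by the sections $\tilde r_i^B$. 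With this choice, the lemma preceding the proposition (applied to $\tilde\sigma$, $\mathcal H^{\tilde\sigma}$) gives $\tilde\nabla^{\tilde\theta} = \tilde\pi^*\nabla^{A_0} = \tilde\nabla^B$, so the connection part of the data of $\tilde E$ matches. One then has to check that the curvature $\tilde R$ of $E$, computed from $\mathcal H^{\tilde\sigma}$ and related to $\tilde{\mathfrak r}^{A_0}$ exactly as in the decomposition~\eqref{HR:eq}--\eqref{expr-R}, agrees with the $\tilde R_{(\cdot)}^{\cdot}$ produced by Theorem~\ref{statement-T-dual}; since in both cases $\tilde R_{(0)}^{ij,B} = \tfrac12[\tilde r_i^B,\tilde r_j^B]_{\mathfrak g}$, $\tilde R_{(1)}^{i,B} = \nabla^{A_0}\tilde r_i^B$, $\tilde R_{(2)}^B = \tilde{\mathfrak r}^{A_0} - (d\tilde\theta_i)^B\otimes \tilde r_i^B$, this is a direct comparison. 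Finally, the $3$-form $\tilde H$ of the heterotic structure must satisfy $d\tilde H = \langle R^{\mathcal H^{\tilde\sigma}}\wedge R^{\mathcal H^{\tilde\sigma}}\rangle_{\mathfrak g}$; but this is exactly the compatibility relation~\eqref{cond-system} that the standard Courant algebroid $\tilde E$ already satisfies, once one identifies $\tilde R = R^{\mathcal H^{\tilde\sigma}}$, so $\tilde H$ is the $3$-form coming out of Theorem~\ref{statement-T-dual}.

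I expect the main obstacle to be the bookkeeping that identifies the connection $\mathcal H^{\tilde\sigma}$ with connection form~\eqref{conn-form-A} (in its tilde version) as precisely the connection whose induced structures on $\tilde E$ reproduce the data $\tilde\nabla^B$, $\tilde r_i^B$, $\tilde H_{(\cdot)}^{\cdot}$ generated by the existence theorem — in other words, verifying that the heterotic ``ansatz'' is rich enough to realize the dual data, and that no extra constraint on $\tilde H_{(0)}^{ijs,B}$ or $\tilde H_{(1)}^{ij,B}$ is imposed beyond~\eqref{h-0-ijs}--\eqref{h-1-ij}, which is guaranteed because in the heterotic case~\eqref{het-H} these are forced by the same formulas. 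A subsidiary point to handle carefully is that Theorem~\ref{statement-T-dual} allows the dual sections $\tilde r_i^B$ to be chosen arbitrarily; I would simply fix them (e.g.\ $\tilde r_i^B := 0$, or any convenient choice) and then the equivariant function $\hat{\tilde v}$ and hence $\mathcal H^{\tilde\sigma}$ are determined. Everything else — invariance of the isomorphism $F$, the non-degeneracy condition~\eqref{pairing-non-deg} — is already part of the output of Theorem~\ref{statement-T-dual} and needs no further argument. The proof therefore consists of the two identifications above plus the remark that the required closedness/integrality hypotheses of Theorem~\ref{statement-T-dual} are literally the hypotheses of the proposition.
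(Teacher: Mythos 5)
Your proposal is correct and follows essentially the same route as the paper's proof: the hypotheses \eqref{het-H} place $(E,\Psi)$ in the class of Example \ref{particular-class} with $r_i^B=\hat v_i^B$, Theorem \ref{statement-T-dual} then yields a $T$-dual $(\tilde E,\tilde\Psi)$, and one identifies it as the heterotic Courant algebroid of the pullback bundle $\tilde\sigma:\tilde P=\tilde\pi^*P_0\to\tilde M$ with connection form $\tilde A=\tilde\pi_0^*A_0-\sum_i\tilde\sigma^*\tilde\theta_i\otimes\tilde r_i$ via the lemma preceding the proposition. Your additional checks (matching the curvature decomposition and the relation $d\tilde H=\langle\tilde R\wedge\tilde R\rangle_{\mathfrak g}$) are exactly the bookkeeping the paper compresses into the phrase ``the arguments from Theorem \ref{statement-T-dual} and the above lemma show that...''.
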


\begin{proof} The conditions (\ref{het-H}) mean that  $(E, \Psi )$ belongs to the class 
of standard  Courant algebroids with $\mathfrak{t}^{k}$-action described in Example \ref{particular-class}
(in the notation of that example, $r_{i} = \hat{v}_{i}$ and $r_{i}^{B} =\hat{v}_{i}^{B}$). 
Let $(\tilde{E}, \tilde{\Psi })$ be a $T$-dual of $(E, \Psi )$,  
provided by  Theorem  \ref{statement-T-dual}.
Then $(\tilde{E}, \tilde{\Psi} )$  is defined 
on the total space of a principal
$T^{k}$-bundle $\tilde{\pi} : \tilde{M} \rightarrow B$, with connection form $\tilde{\theta} =\sum_{i=1}^{k} \tilde{\theta}_{i} e^{i}$, in terms of arbitrarily chosen sections
$\tilde{r}_{i}^{B}\in \Gamma (\mathfrak{g}_{P_{0}})$.
We define the pullback bundle $\tilde{\sigma } : \tilde{P}\rightarrow \tilde{M}$ of $\sigma_{0}: P_{0} \rightarrow B$ by  the map $\tilde{\pi}.$
The arguments from Theorem \ref{statement-T-dual} and the above lemma show that $\tilde{E}$ is the heterotic
Courant algebroid defined by the principal $G$-bundle $\tilde{\sigma}$, connection ${\mathcal H}^{\tilde{\sigma}}$
with connection form 
\begin{equation}
\tilde{A} = \tilde{\pi}_{0}^{*} A_{0} -\sum_{i=1}^{k}\tilde{\sigma}^{*}\tilde{\theta}_{i}\otimes \tilde{r}_{i}
\end{equation}
where $\tilde{\pi}_{0} :\tilde{P}\rightarrow P_{0}$ is the natural projection,  
$\tilde{r}_{i}= \tilde{\pi}^{*} (\tilde{r}_{i}^{B})\in \Gamma (\mathfrak{g}_{\tilde{P}})$  
and $3$-form 
$\tilde{H}$ is constructed as in Theorem \ref{statement-T-dual}
(in particular,  $\tilde{H}_{(0)}^{ijs, B}$ and  $\tilde{ H}_{(1)}^{ij, B}$ are given by (\ref{het-H}) with 
$\hat{v}_{i}^{B}$ replaced by $\tilde{r}_{i}^{B}$). 
\end{proof}

\begin{rem}{\rm The above treatment  provides an alternative view-point   for the heterotic  $T$-duality 
developed
in \cite{baraglia}. Heterotic Courant algebroids can be obtained 
from exact Courant algebroids  by a reduction procedure
described in \cite{baraglia} and the heterotic $T$-duality from \cite{baraglia}  was obtained as a reduction of the $T$-duality for exact Courant algebroids \cite{T-duality-exact}.  Our approach   is more direct and makes no reference to
exact Courant algebroids.}
\end{rem}

In our setting it is natural to relax the definition of a heterotic Courant algebroid \cite{baraglia} by 
allowing as structure groups of the principal bundle not only compact semi-simple Lie groups but any connected Lie group $G$ such that 
\begin{itemize}
\item[$P_1$)] 
$\mathrm{Ad} : G \rightarrow \mathrm{Aut}(\mathfrak{g},\langle \cdot ,\cdot \rangle_\mathfrak{g})_0$ is a covering for some 
invariant scalar product $\langle \cdot ,\cdot \rangle_\mathfrak{g}$ on $\mathfrak{g}= \mathrm{Lie}\, G$.  
 (Equivalently,
$\mathrm{ad} : \mathfrak{g} \rightarrow \mathrm{Der}(\mathfrak{g}, \langle \cdot ,\cdot \rangle_\mathfrak{g})$ is an isomorphism 
onto the Lie algebra of skew-symmetric derivations, cf.\ Remark \ref{quadraticLA:rem}.) As before, we restrict to scalar products of neutral signature.
\end{itemize}
The resulting 
Courant algebroids are transitive and the corresponding bundles of quadratic Lie algebras $\mathcal G$ have the property 
\begin{itemize}
\item[$P_2$)] 
$\mathrm{ad} : \mathcal{G} \rightarrow \mathrm{Der}(\mathcal{G})$ is an isomorphism. 
\end{itemize}

\begin{prop}\label{description-heterotic-CA}
The class of transitive Courant algebroids $E\rightarrow M$ over simply connected manifolds 
for which the bundle of quadratic Lie algebras $\mathcal{G}$ has the property $P_2$ coincides with the above 
(relaxed) class of heterotic Courant algebroids. 
\end{prop}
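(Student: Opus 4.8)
The plan is to prove both inclusions between the two classes of Courant algebroids. The easier direction is already sketched in the text preceding the statement: any Courant algebroid arising from the relaxed heterotic construction (principal $G$-bundle $\sigma : P \to M$ with $G$ satisfying $P_1$, a connection $\mathcal H$, and a $3$-form $H$ with $dH = \langle R^{\mathcal H}\wedge R^{\mathcal H}\rangle_\mathfrak{g}$) is transitive, with bundle of quadratic Lie algebras the adjoint bundle $\mathfrak g_P = P\times_{\mathrm{Ad}}\mathfrak g$, whose fibrewise adjoint representation is an isomorphism onto skew-symmetric derivations precisely because this holds fibrewise by $P_1$. So the first step is simply to record that this construction lands in the target class. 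The substantive content is the converse: given a transitive Courant algebroid $E\to M$ over a simply connected $M$ whose bundle $\mathcal G$ has property $P_2$, reconstruct the principal bundle, connection and $3$-form.

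First I would pass to a dissection $E \cong T^*M \oplus \mathcal G \oplus TM$ defined by data $(\nabla, R, H)$ as in Section \ref{transitive-sect}, so that $\nabla$ preserves $[\cdot,\cdot]_{\mathcal G}$ and $\langle\cdot,\cdot\rangle_{\mathcal G}$, the curvature satisfies $R^\nabla = \mathrm{ad}_R$, and $d^\nabla R = 0$, $dH = \langle R\wedge R\rangle_{\mathcal G}$. The key structural input is property $P_2$: since $\mathrm{ad}: \mathcal G \to \mathrm{Der}(\mathcal G)$ is a bundle isomorphism onto the skew-symmetric derivations, the $\langle\cdot,\cdot\rangle_{\mathcal G}$-compatible connection $\nabla$ has holonomy group contained in the automorphism group of the fibre Lie algebra $(\mathfrak g, \langle\cdot,\cdot\rangle_\mathfrak g)$, and in fact (because $\mathrm{Der}=\mathrm{ad}(\mathcal G)$ and $\nabla$ is a Lie algebra connection) the curvature $R^\nabla$ takes values in $\mathrm{ad}(\mathcal G)$, which is exactly the Lie algebra of the group appearing in $P_1$. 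The plan is then to build the frame bundle of $\mathcal G$ relative to the structure group $\mathrm{Aut}(\mathfrak g,\langle\cdot,\cdot\rangle_\mathfrak g)_0$: the reduction exists because $\mathcal G$ is a bundle of quadratic Lie algebras all of whose fibres are isomorphic to $\mathfrak g$ (fibre transport along $\nabla$ gives Lie-algebra isometries, and since $M$ is connected all fibres are indeed isomorphic). Choosing $G$ with $\mathrm{Ad}: G \to \mathrm{Aut}(\mathfrak g, \langle\cdot,\cdot\rangle_\mathfrak g)_0$ a covering as in $P_1$, I would lift this reduced frame bundle to a principal $G$-bundle $\sigma: P\to M$ — here is where simple connectedness of $M$ is used, to guarantee that the $\mathrm{Aut}_0$-bundle lifts through the covering $G \to \mathrm{Aut}_0$ (the obstruction lives in $H^2(M;\pi_1\text{-kernel})$ computed from the classifying map, and the relevant lifting argument goes through when $M$ is simply connected, or more precisely when the pullback of the covering is trivial; one can also argue via the fact that the connection $\nabla$ gives a flat-up-to-$\mathrm{ad}$ reduction). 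Then $\mathfrak g_P = P\times_{\mathrm{Ad}}\mathfrak g \cong \mathcal G$ as bundles of quadratic Lie algebras, and $\nabla$ corresponds to a principal connection $\mathcal H$ on $P$ with curvature $R^{\mathcal H}$; the identity $R^\nabla = \mathrm{ad}_R$ together with the injectivity of $\mathrm{ad}$ identifies $R \in \Omega^2(M,\mathcal G)$ with $R^{\mathcal H} \in \Omega^2(M,\mathfrak g_P)$, and the remaining data relation $dH = \langle R\wedge R\rangle_{\mathcal G}$ becomes $dH = \langle R^{\mathcal H}\wedge R^{\mathcal H}\rangle_\mathfrak g$, so $(P, \mathcal H, H)$ exhibits $E$ as a heterotic Courant algebroid in the relaxed sense. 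One should also check that a change of dissection (governed by $(\beta, K, \Phi)$ as in \eqref{iso-chen}) changes $(P, \mathcal H, H)$ within the allowed data, using Lemma \ref{simplificare-second} to see that the constraint on $R$ is automatic when $\mathrm{ad}$ is an isomorphism.

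I expect the main obstacle to be the lifting step: producing the principal $G$-bundle $P$ from the $\mathrm{Aut}(\mathfrak g,\langle\cdot,\cdot\rangle_\mathfrak g)_0$-reduction of the frame bundle of $\mathcal G$, through the covering $\mathrm{Ad}: G \to \mathrm{Aut}(\mathfrak g,\langle\cdot,\cdot\rangle_\mathfrak g)_0$. This is exactly where the hypothesis that $M$ is simply connected enters, and the argument must be made carefully — either via obstruction theory for the classifying space fibration $BG \to B\mathrm{Aut}_0$ with fibre $B(\ker\mathrm{Ad})$ (a discrete group, so the obstruction and the set of lifts are controlled by $H^*(M; \ker\mathrm{Ad})$, which vanish in the relevant degrees when $\pi_1(M)=0$, at least for the existence of a lift), or more concretely by noting that the connection $\nabla$ already trivializes the bundle locally in a way compatible with parallel transport, so that transition functions can be chosen with values in a neighbourhood where $\mathrm{Ad}$ is a diffeomorphism and then globalized using simple connectedness. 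The other steps — producing the dissection, using $P_2$ to pin down the holonomy reduction, translating the data relations into heterotic form — are essentially bookkeeping once this topological point is settled. I would also remark, as the paragraph after the statement in the text does implicitly, that the relaxation is genuine (non-semisimple but $P_1$-satisfying groups such as $\mathrm{SO}(3)\ltimes\mathbb R^3$ occur), so the proposition is not vacuous beyond the classical heterotic case.
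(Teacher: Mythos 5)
Your overall skeleton (all fibres of $\mathcal G$ isomorphic to a fixed quadratic Lie algebra $(\mathfrak g,\langle\cdot,\cdot\rangle_{\mathfrak g})$ via parallel transport, pass to a frame bundle, recover the principal connection, and translate $R^\nabla=\mathrm{ad}_R$ and $dH=\langle R\wedge R\rangle$ into heterotic form) matches the paper's, and the easy inclusion is fine. But the step you single out as the main obstacle --- lifting the $\mathrm{Aut}(\mathfrak g,\langle\cdot,\cdot\rangle_{\mathfrak g})_0$-frame bundle through a covering $\mathrm{Ad}\colon G\to\mathrm{Aut}(\mathfrak g,\langle\cdot,\cdot\rangle_{\mathfrak g})_0$ --- is both unnecessary and incorrectly resolved. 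It is unnecessary because the relaxed definition allows \emph{any} connected $G$ satisfying $P_1$, and $G:=\mathrm{Aut}(\mathfrak g,\langle\cdot,\cdot\rangle_{\mathfrak g})_0$ itself is such a group: by $P_1$ for $\mathfrak g$ its Lie algebra $\mathrm{Der}(\mathfrak g,\langle\cdot,\cdot\rangle_{\mathfrak g})$ is identified with $\mathfrak g$ via $\mathrm{ad}$, under which $\mathrm{Ad}$ becomes the tautological map $G\to G$, a covering. This is exactly the choice the paper makes, so no lifting problem ever arises and $\mathcal G=P\times_{\mathrm{Ad}}\mathfrak g$ holds tautologically for the reduced frame bundle $P$. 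It is incorrectly resolved because the obstruction to lifting a principal bundle through a covering of its structure group lives in $H^2(M;\ker\mathrm{Ad})$ and does \emph{not} vanish merely because $\pi_1(M)=0$ (a nontrivial $\mathrm{SO}(3)$-bundle over $S^2$ does not lift to $\mathrm{SU}(2)$); so if you insisted on a proper covering group $G$, your argument would genuinely fail.

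Relatedly, you have placed the hypothesis $\pi_1(M)=0$ in the wrong spot. The reason you give for reducing the frame bundle to the identity component --- that all fibres are isomorphic --- only produces the full $\mathrm{Aut}(\mathfrak g,\langle\cdot,\cdot\rangle_{\mathfrak g})$-bundle of standard frames. The reduction to $\mathrm{Aut}(\mathfrak g,\langle\cdot,\cdot\rangle_{\mathfrak g})_0$ is where simple connectedness actually enters: the holonomy group of the connection induced by $\nabla$ on the frame bundle is connected when $M$ is simply connected, so the holonomy bundle gives a reduction to a connected subgroup, hence to the identity component. With these two corrections --- use the holonomy argument for the reduction, and take $G=\mathrm{Aut}(\mathfrak g,\langle\cdot,\cdot\rangle_{\mathfrak g})_0$ --- your argument closes and coincides with the paper's.
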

\begin{proof}
We first remark that the fibers  
$(\mathcal{G}, [\cdot , \cdot]_{\mathcal G}, \langle \cdot ,\cdot \rangle_\mathcal{G})|_p$, $p\in M$,  are all isomorphic to a fixed quadratic Lie algebra $(\mathfrak{g}, \langle \cdot ,\cdot \rangle_\mathfrak{g})$. In fact, for a transitive Courant algebroid $E$, any two fibers of $\mathcal G$ are related by parallel transport,  which 
preserves the tensor fields  $[\cdot , \cdot ]_{\mathcal G}$ and $\langle \cdot , \cdot \rangle_{\mathcal G}.$ 
Note that $\mathcal G$ satisfies $P_2$ if and only if $\mathfrak{g}$ satisfies $P_1$. 

Let us fix a basis a basis in $\mathfrak{g}$. 
The connection $\nabla$ in 
the bundle $\mathcal G$ induces a connection in the bundle $P$ of standard frames of $\mathcal{G}$. A frame is called  
{\cmssl standard} if its structure constants and the Gram matrix of the scalar product coincide with those of the underlying quadratic Lie algebra $(\mathfrak{g},\langle \cdot ,\cdot \rangle_\mathfrak{g})$ with respect to the fixed basis in $\mathfrak{g}$. 
The structure group of $P$ is $\mathrm{Aut}(\mathfrak{g},\langle \cdot ,\cdot \rangle_\mathfrak{g})$ and 
can be always reduced to the connected group $\mathrm{Aut}(\mathfrak{g},\langle \cdot ,\cdot \rangle_\mathfrak{g})_0$ by holonomy reduction if $M$ is simply connected. The property $P_2$ implies $\mathrm{Lie}\,  \mathrm{Aut}(\mathfrak{g},\langle \cdot ,\cdot \rangle_\mathfrak{g})_0\cong \mathfrak{g}$ and then $G := \mathrm{Aut}(\mathfrak{g},\langle \cdot ,\cdot \rangle_\mathfrak{g})_0$ satisfies $P_1$. In that case we can rewrite the bundle $\mathcal{G}$ as the adjoint bundle 
with connection induced from the connection in the principal $G$-bundle $P$. This shows that $E$ belongs to the (relaxed) class of heterotic Courant algebroids.
\end{proof}

V.\ Cort\'es: vicente.cortes@uni-hamburg.de\

Department of Mathematics and Center for Mathematical Physics, University of Hamburg,  Bundesstrasse 55, D-20146, Hamburg, Germany.\\

\noindent
L.\ David: liana.david@imar.ro\

Institute of Mathematics  `Simion Stoilow' of the Romanian Academy,   Calea Grivitei no. 21,  Sector 1, 010702, Bucharest, Romania.
\end{document}